\theoremstyle{plain}
\newtheorem{theorem}{Theorem}[section]
\newtheorem{lemma}[theorem]{Lemma}
\newtheorem{corollary}[theorem]{Corollary}
\newtheorem{definition}[theorem]{Definition}
\newtheorem{notation}[theorem]{Notation}
\theoremstyle{definition}
\newtheorem{remark}[theorem]{Remark}
\newtheorem{finalremark}[theorem]{Final remarks}
\newtheorem{remarks}[theorem]{Remarks}
\newtheorem{notes}[theorem]{Notes}
\def\<#1>{\langle\, #1\,\rangle}
\newcommand{\mB}{\mathscr{B}}
\newcommand{\mC}{\mathscr{C}}
\newcommand{\mE}{\mathscr{E}}
\newcommand{\mA}{\mathscr{A}}
\newcommand{\U}{\mathscr{U}}
\newcommand{\hh}{\mathbb{H}}
\newcommand{\mc}{\mathcal}
\newcommand{\card}[1]{\vert #1 \vert}
\newcommand{\g}{{\mbox{\tiny $G$}}}
\newcommand\restr[2]{{
  \left.\kern-\nulldelimiterspace 
  #1 
  \vphantom{|} 
  \right|_{_{#2}} 
  }}
\newcommand{\drr}{\mathrm{d\!-\!rank}}
\newcommand{\R}{\mathbb{R}}
\newcommand{\Q}{\mathbb{Q}}
\newcommand{\C}{\mathbb{C}}
\newcommand{\T}{\mathbb{T}}
\newcommand{\N}{\mathbb{N}}
\newcommand{\A}{\mathscr{A}}
\newcommand{\Z}{\mathbb{Z}}
\DeclareMathOperator{\supp}{supp}
\newcommand{\luc}{\mathit{LUC}(G)}
\renewcommand{\emptyset}{\varnothing}
\font\seis=cmr6
\def\CB{\mathscr{CB}}
\def\ruc{{\seis{\mathscr{RUC}}}}
\def\uc{\mathscr{UC}}
\def\luc{{\seis{\mathscr{LUC}}}}
\def\wap{{\seis{\mathscr{WAP}}}}
\def\lc{{\seis{\mathscr{LC}}}}
\def\im{{\rm im\,}}
\newcommand{\mW}{\mathscr{WAP}}
\begin{document}

\title[Arens irregularity in harmonic analysis]{$\ell^1$-bases,   algebraic structure and strong Arens irregularity of Banach algebras in harmonic analysis$^1$
}

\author[Filali and Galindo]{M. Filali \and  J. Galindo}

\keywords{Arens products, Arens-regular algebra, Banach algebra,  F$\ell^1$-base, Factorization, strongly Arens irregular, group algebra, measure algebra, weighted group algebra, weighted measure algebra,  Fourier algebra, compact non-metrizable groups, infinite product of compact  groups, compact connected groups}

\address{\noindent Mahmoud Filali,
Department of Mathematical Sciences\\University of Oulu\\Oulu,
Finland. \hfill\break \noindent E-mail: {\tt mfilali@cc.oulu.fi}}
\address{\noindent Jorge Galindo, Instituto Universitario de Matem\'aticas y
Aplicaciones (IMAC)\\ Universidad Jaume I, E-12071, Cas\-tell\'on,
Spain. \hfill\break \noindent E-mail: {\tt jgalindo@mat.uji.es}}

\subjclass[2010]{Primary 46H10, 43A30, 22D15, 22D10, 43A75; Secondary 22D10, 43A20, 43A10}

\date{\today}

\thanks{$^1$The results were presented in 2019 IMAC AHA Days (Castellon, Spain) and in the conference on Banach Algebras and Applications, 2017 (Oulu, Finland). }

\begin{abstract}
A long standing problem in abstract harmonic analysis concerns the strong Arens irregularity (sAir, for short) of the Fourier algebra  $A(G)$ of a locally compact group $G.$
The groups for which $A(G)$ is known to be sAir are all amenable. So far
this class includes the abelian groups, the discrete amenable groups,
the second countable amenable groups $G$ such that $\overline{[G, G]}$ is
not open in $G,$
the groups of the form $\prod_{i=0}^\infty G_i$ where each $G_i$, $i\ge1$, is a non-trivial metrizable compact group and $G_0$ is an amenable  second countable locally compact group,
the groups of the form $G_0\times G$, where $G$ is a compact group whose local weight $w(G)$ has uncountable
cofinality and $G_0$ is any locally compact amenable group with $w(G_0)\le w(G)$,
and the compact group $SU(2).$

We were primarily concerned with the groups for which $A(G)$ is sAir.
We introduce a new class of $\ell^1$-bases in Banach algebras. These new $\ell^1$-bases enable us, among other results, to unify most of
 the results related to Arens products proved in the past seventy years since Arens defined
his products. This includes the strong Arens irregularity of algebras in harmonic analysis, and in particular almost all the cases mentioned above for the Fourier algebras.
In addition, we also show that $A(G)$ is sAir  for compact connected groups with an infinite dual rank. The $\ell^1$-bases for the Fourier algebra are constructed with coefficients of certain
irreducible representations of the group.
With this new approach using $\ell^1$-bases, the rich algebraic structure
of the algebras and semigroups under study such as the second dual of Banach algebras with an Arens product or certain semigroup compactifications (the Stone-\v Cech compactification of an infinite discrete group, for instance) is also unveiled
\bigskip
\end{abstract} \maketitle

\setcounter{tocdepth}{1}
\tableofcontents

\section{Introduction} A Banach algebra $\mA$ is Arens regular when the two Arens products defined on its second Banach dual $\mA^{**}$
coincide. This is equivalent to saying that one of the products (and so both products are) is separately weak$^*$-continuous making $\mA^{**}$  a semitopological algebra when equipped with the weak$^*$-topology. It is worthwhile to note, that due to Grothendieck's criterion, this is in turn equivalent to that every element in the Banach dual $\mA^*$
is weakly almost periodic, i.e., $\mA^*=\wap(\mA).$ This fact is not going to be used in the present article, but was essential in our study of Arens irregularity
of Banach algebras in harmonic analysis in the articles \cite{enar1} and \cite{enar2}.
As seen already in these past two articles and as we shall further see presently, unlike C$^*$-algebras, the usual algebras  in harmonic analysis such as the group algebra, and   the Fourier algebra in most cases, have turned out to be all non-Arens regular
(even in an extreme way).
At the very beginning,  about seventy years ago,
Richard Arens  proved in \cite{Arens51} that the convolution algebra $\ell^1$ is Arens irregular.
A long exciting story followed, and so much has already been written on the subject ever since.
For more details, see the  surveys \cite{DH} and \cite{filali-singh}, the book \cite{dales}, the memoirs \cite{DaLa05} and \cite{dales-lau-strauss}, and our most recent survey \cite{FGsurvey} where the results of the actual article were announced.

There are two different types of utmost non-Arens regularities, arising naturally from the way the algebras are decided to be non-Arens regular.

Since $\mA$ is Arens regular if and only if $\mA^*=\mW(\mA)$, the degree of non-Arens regularity of an algebra $\mA$ may be measured
by the size of the quotient space $\frac{\mA^*}{\wap(\mA)}.$ In the extreme situation (i.e.,  when  $\frac{\mA^*}{\wap(\mA)}$ is as large as $\mA^*$) Granirer called such an algbera $\mA$  extremely non-Arens regular (enAr for short) and proved that this was the case for the Fourier algebra of a non-discrete second countable locally compact group $G$ (see \cite[Corollaries 6 and 7]{granirer}).
Granirer's result was extended by Hu in \cite{Hu97} to locally compact groups $G$ having their local weight $w(G)$ greater  or equal to their compact covering $\kappa(G).$

On the dual hand, the group algebra  is
isometrically enAr, and so enAr, for any infinite locally compact group $G$. This was proved in \cite{FV} when $G$ is discrete, in \cite{BF} when $\kappa(G)\ge w(G)$, and finally in \cite{FG} for the general case.
The semigroup algebra $\ell^1(S)$ is isometrically enAr for any infinite weakly cancellative discrete semigroup
(see \cite{FV}). Here, a Banach algebra is isometrically enAr when $ \frac{\mA^*}{\wap(\mA)}$ contains an isometric copy of $\mA^*.$

This topic  was our main focus in \cite{enar1} and \cite{enar2}, where
$\ell^1$-bases approximating lower and upper triangles of products of elements in $\mA$ were the reason behind the non-Arens regularity of $\mA$, and in many cases the extreme  non-Arens regularity of $\mA$.
This unified approach dealing with the extreme non-Arens regularity helped us to improve and extend the above-cited results to a large class of Banach algebras. The combinatorics of the locally compact group $G$ and its dual object $\widehat G$ were the tools to construct these $\ell^1$-bases for many algebras which are associated to $G$ in harmonic analysis.

Non-Arens regularity may also be measured by the degree of defect of the separate weak$^*$-continuity of
Arens products. In the extreme situation (i.e., when separate weak$^*$-continuity of the product in $\mA^{**}$ happens only when one of the elements in the product is in $\mA$),  Dales and Lau called the algebra $\mA$ strongly Arens irregular
( see \cite{DaLa05}). This will be our main focus in the present paper.
A more refined  $\ell^1$-bases in $\mA$ are necessary to show the strong Arens irregularity of Banach algebras associated to a locally compact group $G.$
Again the reason making these refined $\ell^1$-bases available are the combinatorics in $G$ and $\widehat G.$

The paper is organized as follows. After a preliminary section recalling the basic definitions and results needed in the paper, we present our general theory in Sections 3 and 4. The new definitions of some special $\ell^1$-bases, we call F$\ell^1(\eta)$-base, are given for any Banach algebra $\mA$ ($\eta$ is an infinite cardinal which is going to be the Mazur property level of the algebra $\mA$ when we deal with the topological centres). The theorems in these two  sections show how a presence of these F$\ell^1(\eta)$-bases in $\mA$ yield strong Arens irregularity of the algebra $\mA$ and  induces a rich algebraic structure in $\mA^{**}$. Similar results are also proved in $\luc(\mA)^*$ when $\mA$ has a bounded approximate identity. Under certain conditions the theorems are proved also for the right multiplier algebra $M_r(\mA)$ of $\mA.$

We illustrate our general theory with three sections, starting with the group algebra and the measure algebra of a non-compact locally compact group and the semigroup algebra of an infinite weakly cancellative, right cancellative, discrete semigroup. Then come the weighted analogues of these algebras. These are studied when the weight is diagonally bounded.
Using  some special subsets of $G$ constructed inductively,  the required F$\ell^1(\eta)$-bases are produced in each case, where $\eta$ is the compact covering of  the locally compact group,
 or the cardinality of the discrete semigroup.

  The penultimate section deals with the Fourier algebra of a compact group.
The cases we study first are compact groups with local weight having
 uncountable cofinality, then infinite product of compact groups, and finally  compact connected  groups with a certain  infinite dimension (this will be defined in time).
Using  the irreducible representations of $G$,  we shall produce in each case the required F$\ell^1(\eta)$-bases, where $\eta$ is the local weight of $G$.

 With the availability of the  required F$\ell^1(\eta)$-base in each of the cases studied in Sections \ref{Examples}, \ref{wexamples} and \ref{Aexamples},
the topological centers as well as the algebraic structure of all the algebras in question will be an immediate consequence of   Sections \ref{FC}, \ref{FF} and \ref{algebra}.

 In the last section, we shall extend our results to the case when the group is the  product $G\times H$, where $G$ is any
of the previous compact groups and $H$ is an amenable locally compact group with local weight
smaller or equal to that of $G$.

The algebraic structures and the topological centres of the Banach algebras $A(G\times H)^{**}$, $\uc_2(G\times H)^*$ or $C^*(G\times H)^{\perp}$ will be our final conclusion.

Some of these results have been proved in the past by various authors as we shall indicate as we go along.
In addition to the unification of all these results, there will be many other new results.

\section{Some basic definitions and notation}
Let $\mA$ be a Banach algebra.
The elements in $\mA$ will be denoted by miniscule Roman letters.
The elements in the Banach dual $\mA^* $ of $\mA$ will be denoted by majuscule Roman letters.
 Majuscule  Greek letters will be used to denote the elements in the second dual of the algebra.
With the exceptions of the product in $\mA$ and its first Arens extensions to  $\mA^{**}$
and $\luc(\mA)^*$, all the module actions will  be denoted by "$\cdot$".
With the elements denoted as such, there will be no confusion as to which module action is considered.

\subsection{Arens products} Recall that the Arens module actions are defined via the adjoint of continuous bilinear mappings (see \cite{Arens51} and \cite{A2}). That is, if
\[\mE_1\times \mE_2\to \mE_3\quad\text{ is a continuous bilinear mapping}\] on some normed spaces $\mE_i$ ($i=1,2,3$), then its (3{\scriptsize 1}2)-adjoint given by \[\mE_3^*\times \mE_1\to \mE_2^*\quad\text{is again a continuous bilinear mapping},\] and so in turn, its (3{\scriptsize 1}2)-adjoint may be be taken. The process can be repeated indefinitely.

Here are the module actions we shall be using throughout the paper.
We start with a Banach algebra $\mA$ and regard it as a left $\mA$-module with its operation
as the left module action of $\mA$ on itself; that is
\[\mA\times \mA\to\mA,\quad\text{where}\quad a\cdot b=ab.\] Then by taking the (3{\scriptsize 1}2)-adjoint,  we shall regard
$\mA^*$ as a right $\mA$-module, where the natural right module action of $\mA$ on $\mA^*$ is given by
\begin{equation}\label{action1}\mA^*\times \mA\to \mA^*,\;\mbox{ where }\;\langle S\cdot a, b\rangle=\langle S, a b\rangle\;\mbox{ for }\; S\in \mA^*,\; a,b\in\mA.\end{equation}
 This is followed (taking again the (3{\scriptsize 1}2)-adjoint) by regarding $\mA^{*}$ as a left $\mA^{**}$-module, where the natural left module action of $\mA^{**}$ on $\mA^*$ is given by
\begin{equation}\label{action2}
\mA^{**}\times \mA^*\to \mA^*,\;\text{where}\;\langle \Psi\cdot S,a\rangle=\langle \Psi, S\cdot a\rangle\quad\text{for}\; \Psi\in \mA^{**},\; S\in \mA^*, a\in\mA.\end{equation}
This induces the first (left) Arens product in $\mA^{**}$ making $\mA^{**}$ a Banach algebra. For $\Phi,\Psi\in \mA^{**},$ the product is given by
\begin{equation}\label{actionthree}
\mA^{**}\times \mA^{**}\to \mA^{**},\quad\text{where}\quad\langle \Phi\cdot \Psi,S\rangle=\langle \Phi, \Psi\cdot S\rangle\quad\text{for}\quad  S\in \mA^*.
\end{equation}
The first  Arens product in $\mA^{**}$ is the unique extension of the product in $\mA$   such that
\begin{enumerate}
\item  [(i)] $\Phi\mapsto \Phi\Psi:\mA^{**}\to\mA^{**}\quad\text{ is weak$^*$-weak$^*$-continuous}$ for each $\Psi\in \mA^{**},$
\item [(ii)] $\Phi\mapsto a\Phi:\mA^{**}\to\mA^{**}\quad\text{is weak$^*$-weak$^*$-continuous}$ for each $a\in \mA.$
\end{enumerate}
The first topological centre of $\mA^{**}$ may therefore be defined as
\[T_1Z(\mA^{**})=\{\Psi\in \mA^{**}: \Phi\mapsto \Psi\Phi:\mA^{**}\to\mA^{**}\quad\text{is weak$^*$-weak$^*$-continuous}\}.\]
Note that Item (ii) above means that $\mA\subseteq T_1Z(\mA^{**}).$

If we start with $\mA$ as a right $\mA$-module with the right module action given by the transpose of the operation in $\mA$, that is $a\odot b=ba$ for $a, b\in \mA,$ then taking the transpose of the left action
of  $\mA^{**}$ on itself reached at the third step, will give the second (right) Arens product.
We make an exception here and denote the second Arens product of two elements $\Phi$ and $\Psi$ in $\mA^{**}$
by $\Phi\diamond \Psi$ and the actions leading to this product by $"\odot"$ so that, for every $S\in \mA^*$ and $a,b\in \mA$, we have
\begin{equation}\label{second}\begin{split}&\langle \Phi\diamond \Psi, S\rangle=\langle \Psi\odot\Phi,S\rangle=
\langle \Psi, \Phi\odot S\rangle,\quad \text{where}\\&
\langle\Phi\odot S, a\rangle=\langle\Phi, S\odot a\rangle\quad\text{and}\\&
\langle S\odot a, b\rangle= \langle S, a\odot b\rangle=\langle S, ba\rangle\end{split}.\end{equation}

We note that the second Arens product in $\mA^{**}$ is the unique extension of the product in $\mA$   such that
\begin{enumerate}
\item [(i)]  $\Psi\mapsto \Phi\diamond\Psi:\mA^{**}\to\mA^{**}\quad\text{is weak$^*$-weak$^*$-continuous}$ for each $\Phi\in \mA^{**},$
\item [(ii)] $\Phi\mapsto \Phi\diamond a:\mA^{**}\to\mA^{**}\quad\text{is weak$^*$-weak$^*$-continuous}$ for each $a\in \mA.$
\end{enumerate}
The second topological centre of $\mA^{**}$ may therefore be defined as
\[T_2Z(\mA^{**})=\{\Psi\in \mA^{**}: \Phi\mapsto \Phi\diamond\Psi:\mA^{**}\to\mA^{**}\quad\text{is weak$^*$-weak$^*$-continuous}\}.\]
Again Item (ii) means that $\mA\subseteq T_2Z(\mA^{**}).$

When these two ways yield the same product, i.e., if these two products coincide, we say that the algebra $\mA$ is {\it Arens regular}.
It is clear that the algebra is Arens regular if and only if one  of the Arens products  (and so both are) is separately weak$^*$-weak$^*$-continuous, i.e., \[T_1Z(\mA^{**})=T_2Z(\mA^{**})=\mA^{**}.\]
Such a situation happens for instance when $\mA$ is a $C^*$-algebra. This was first deduced from the fact that the second dual of a $C^\ast$-algebra can be seen as an algebra of operators  on a Hilbert space, a result proved    by Sherman in \cite{Sh}   and  Takeda in \cite{T}. Later on,    in their seminal paper \cite{CiYo}, Civin and Yood reproduced  Takeda's  proof and  brought up explicitly the Arens regularity of $C^\ast$-algebras.
A different proof of this can also be found in \cite[Theorem 38.19]{BD}.

When the Banach algebra is not Arens regular, i.e., when
  \[\mA\subseteq T_iZ(\mA^{**}) \subsetneq \mA^{**}\quad \text{for}\quad i=1\quad\text{or}\quad 2,\]
	  or equivalently when $\mA^* \ne  \mW(\mA)$,
we say that the algebra is {\it Arens irregular} or {\it non-Arens regular}.

In the extreme situation of $T_1Z(\mA^{**}) = \mA$ (respectively, $T_2Z(\mA^{**})=\mA$),
Dales and Lau called the algebra $\mA$ \emph{ left }(respectively, \emph{right}) \emph{strongly Arens irregular}.
  In the very extreme situation of $T_1Z(\mA^{**}) =T_2Z(\mA^{**})=\mA$, the algebra $\mA$
is {\emph strongly Arens irregular (sAir)}, see \cite{DaLa05}.
Note that when $\mA$ is commutative (as itis the case with the Fourier algebra $A(G)$ studied in the last two sections),  $T_1Z(\mA^{**})$ and  $T_2Z(\mA^{**})$
coincide with the algebraic centre of $\mA^{**}$, that is,
\begin{align*} T_1Z(\mA^{**}) =T_2Z(\mA^{**})&=\{\Psi\in \mA^{**}: \Phi\Psi=\Psi\Phi\quad\text{for every}\quad\Phi\in \mA^{**}\}\\&=
\{\Psi\in \mA^{**}: \Phi\diamond\Psi= \Psi\diamond\Phi\quad\text{for every}\quad\Phi\in \mA^{**}\}
.\end{align*} In this case, we will be talking just about  the strongly Arens irregularity.

It should be also recalled  that $T_1Z(\mA^{**})$ and $T_2Z(\mA^{**})$ may be different, and it may even happen that $T_1Z(\mA^{**}) = \mA\ne T_2Z(\mA^{**})$, i.e., $\mA$ may be only one-sided strongly Arens irregular (see \cite{DaLa05}, \cite{EF},  \cite{GMM} and \cite{Neufang09}).

\subsection{The space $\luc(\mA)$}
We shall also be concerned with the dual of the following closed subspace of $\mA^*.$
This is the closed linear span in $\mA^*$ of $\mA^*\cdot\mA$, which we denote as usual by
$\luc(\mA)$. By Cohen's factorization theorem, this is $\mA^*\cdot \mA$ when $\mA$ has a bounded right approximate identity.

Now since  $(S\cdot a)\cdot b=S\cdot (ab)$ for every $S\in\mA^*$ and $a,b\in \mA,$ we see that $\luc(\mA)$ is an $\mA^*$-submodule, where the
module action is given as in (\ref{action1}) (but with $\luc(\mA)$ instead of $\mA^*$) by
\[\luc(\mA)\times \mA\to \luc(\mA):(S\cdot a, b)\mapsto S\cdot (ab).\]
After taking the (3{\scriptsize 1}2)-adjoint, we obtain the action $\luc(\mA)^*\times \luc(\mA)\to \mA^*.$
But as known and easy to check (noting that $\Phi\cdot (S\cdot a)=(\Phi\cdot S)\cdot a$ for every $\Phi\in \luc(\mA)^*$, $S\in \mA^*$ and $a\in \mA$), the range of this  mapping is in  $\luc(\mA).$
In  other words, we have in fact $\luc(\mA)$ as a right $\luc(\mA)^*$-module
\[\luc(\mA)^*\times \luc(\mA)\to \luc(\mA)\] (in the literature, a  subspace of $\mA^*$ with such a property is said to be left introverted),
and so as in (\ref{actionthree}), this induces by taking the (3{\scriptsize 1}2)-adjoint, the first Arens product in
$\luc(\mA)^*$ making $\luc(\mA)^*$ a Banach algebra. As in (\ref{actionthree}), the product of  $\Phi,$ $\Psi\in \luc(\mA)^*$ is given by
\[\langle\Phi\Psi, S\rangle=\langle \Phi,\Psi\cdot S\rangle\quad\text{for all}\quad S\in \luc(\mA).\]
This product is the unique extension of the product in $\mA$  to $\luc(\mA)^*$ such that
\begin{enumerate}
\item  [(i)] $\Phi\mapsto \Phi\Psi:\luc(\mA)^{*}\to\luc(\mA)^{*}\quad\text{is weak$^*$-weak$^*$-continuous}$ for each $\Psi\in \luc(\mA)^{*},$
\item [(ii)] $\Phi\mapsto a\Phi:\luc(\mA)^{*}\to\luc(\mA)^{*}\quad\text{is weak$^*$-weak$^*$-continuous}$ for each $a\in \mA.$
\end{enumerate}

The (first) topological centre of $\luc(\mA)^{*}$ may therefore be defined as
\begin{align*}T_1Z(\luc(\mA)^{*})=\{\Psi\in \luc(\mA)^{*}: \Phi\mapsto\Psi\Phi:&\luc(\mA)^{*}\to\luc(\mA)^{*}\\&
\quad\text{is weak$^*$-weak$^*$-continuous}\}.\end{align*}

It may be also worthwhile to note that when $\mA$ has a brai ( the definition is given is Section 3),
$\luc(\mA)^*$ and the quotient $\frac{\mA^{**}}{\luc(\mA)^\perp}$ are isomorphic as Banach algebras,
where \[\luc(\mA)^\perp=\{\Psi\in \mA^{**}:\langle \Psi, S\rangle =0\quad\text{for every}\quad S\in \luc(\mA)\}\] is a closed ideal in $\mA^{**}$.
For more details, see \cite{balapy}.

Note finally that by the definition of $\luc(\mA)$, the module action (\ref{action1}) may be put as
\[\mA^*\times \mA\to \luc(\mA),\] and so the (3{\scriptsize 1}2)-adjoint of this bilinear mapping makes
$\mA^{*}$ as a left $\luc(\mA)^{*}$-module, where the left action of $\luc(\mA)^{*}$ on $\mA^*$ is given by
\begin{equation}\label{action3}\begin{split}&\luc(\mA)^{*}\times \mA^*\to \mA^*,\text{where}\\&\langle \Psi\cdot S,a\rangle=\langle \Psi, S\cdot a\rangle,\;\mbox{ for }\; \Psi\in \luc(\mA)^{*},\; S\in \mA^*,\; a\in\mA.\end{split}\end{equation}
Of course this action extends in turn to
\begin{equation}\label{action4}\begin{split}&\mA^{**}\times \luc(\mA)^{*}\to \mA^{**},\text{where}\\&\langle \Phi.\Psi,S\rangle=\langle \Phi, \Psi.S\rangle\;\mbox{ for }\; \Phi\in\mA^{**},\;
\Psi\in \luc(\mA)^*,\; S\in \mA^*.\end{split}\end{equation}
The actions (\ref{action3}) and (\ref{action4}) do not conflict with the actions  (\ref{action2}) and (\ref{actionthree}) giving the first Arens product for $\mA^{**}$. In fact, if
 $\pi:\mA^{**}\to \luc(\mA)^*$ is the adjoint of the inclusion map
$\luc(\mA)\hookrightarrow \mA^*$, then   for $S\in \mA^*,$ $\Phi\in \mA^{**}$ and $\Psi\in \luc(\mA)^*$, we have
\begin{equation}\label{trick}\Psi.S=\widetilde\Psi.S\quad\text{and}\quad \Phi.\Psi=\Phi\widetilde\Psi,\end{equation} where $\widetilde\Psi$ is picked in $\mA^{**}$ such that $\pi(\widetilde\Psi)=\Psi$ (any Hahn-Banach extension of $\Psi$ to $\mA^*$ will do).
So we keep, as agreed, the dot to denote all the actions in the paper.
No tilde will be used for the elements in $\mA$, so miniscule Roman letter will be used
whether $\mA$ is regarded as a subalgebra of $\mA^{**}$ or $\luc(\mA)^*$.

Note that if $(\Psi_\alpha)$ is a bounded net in $\luc(\mA)^*$ with weak$^*$-limit $\Psi$
in  $\luc(\mA)^*$ and  $(\widetilde{\Psi_\alpha})$ is the corresponding bounded net of extensions in $\mA^{**}$
with weak$^*$-limit $\Phi$ in $\mA^{**}$ , then by (\ref{trick})
\[\widetilde{\Psi_\alpha}\cdot S=\Psi_\alpha\cdot S\quad\text{for every}\;\alpha\;\text{and}\; S\in \mA^*,\] and so
\begin{equation*}\Phi\cdot S=\Psi\cdot S\quad\text{for every}\quad S\in \mA^*.\end{equation*}

\medskip

The analogue of the second Arens product defined on $\mA^{**}$ in (\ref{second})
and so the (second) topololgical centre of $\luc(\mA)^{*}$ cannot be defined for $\luc(\mA)^*$
unless $S\odot a$ and $\Phi\odot S$ are in $\luc(\mA)$ for every  $a\in\mA$, $S\in \mA$ and $\Phi\in \luc(\mA)^*$,
which happens for example when $\mA$ is commutative.
Here, we must use the space $\ruc(\mA)$ instead. This is the closed linear span in $\mA^*$ of $\mA^*\odot \mA$, which again by Cohen's factorization theorem, is equal to $\mA^*\odot \mA$ when $\mA$ has a bounded left approximate identity.

Since
$\ruc(\mA)\odot\mA\subseteq \ruc(\mA)$ and $\ruc(\mA)^*\odot\ruc(\mA)\subseteq \ruc(\mA)$,
the same procedure as before yields the second Arens product in $\ruc(\mA)^*$
 making $\ruc(\mA)^*$ a Banach algebra:  As in (\ref{second}), the product of  $\Phi,$ $\Psi\in \ruc(\mA)^*$ is given by
\[\langle\Phi\diamond\Psi, S\rangle=\langle\Psi\odot\Phi, S\rangle=
\langle \Psi,\Phi\odot S\rangle\quad\text{for all}\quad S\in \ruc(\mA).\]
This product is the unique extension of the product in $\mA$  to $\ruc(\mA)^*$ such that
\begin{enumerate}
\item  [(i)] $\Psi\mapsto \Phi\diamond\Psi:\ruc(\mA)^{*}\to\ruc(\mA)^{*}\quad\text{is weak$^*$-weak$^*$-continuous}$ for each $\Phi\in \luc(\mA)^{*},$
\item [(ii)] $\Phi\mapsto \Phi \diamond a:\ruc(\mA)^{*}\to\ruc(\mA)^{*}\quad\text{is weak$^*$-weak$^*$-continuous}$ for each $a\in \mA.$
\end{enumerate}

The (second) topololgical centre of $\ruc(\mA)^{*}$ may therefore be defined as
\begin{align*}T_2Z(\ruc(\mA)^{*})=\{\Psi\in \ruc(\mA)^{*}: \Phi\mapsto\Psi\diamond \Phi:&\ruc(\mA)^{*}\to\ruc(\mA)^{*}\\&
\quad\text{is weak$^*$-weak$^*$-continuous}\}.\end{align*}
Note again that $\mA\subseteq T_2Z(\ruc(\mA)^{*}).$

 Note also   if $\Phi\odot S$ and $\Phi\odot\Psi$
are the actions of $\ruc(\mA)^*$ on $\mA^*$ and $\mA^{**}$ as defined with $\luc(\mA)$ in (\ref{action3}) and (\ref{action4}), respectively,  then
\begin{equation*}\Phi\odot S=\widetilde\Phi\odot S\quad\text{and}\quad
\Psi\odot\Phi=\widetilde\Phi\diamond\Psi,\end{equation*} where $\widetilde\Phi$ is picked by Hahn-Banach Theorem in $\mA^{**}$ such that $\pi(\widetilde\Phi)=\Phi$.
Here $\pi:\mA^{**}\to \ruc(\mA)^*$ is the adjoint of the inclusion map
$\ruc(\mA)\hookrightarrow \mA^*$.

\section{Factorizations of $\mA^*$ and $\luc(\mA^*)$ and topological centres}\label{FC} Let $\mA$ be a Banach algebra and $\eta$ be a cardinal number. Then the Banach dual
$\mA^*$ of $\mA$ (respectively, $\luc(\mA)$)
is said to have the $\eta$-factorization property if for every bounded family of functionals
$(S_\lambda )_{\lambda <\eta}$ contained in $\mA^*$,
(respectively, $\luc(\mA)$),
there exist a bounded family $(\Psi_\lambda )_{\lambda<\eta}$ in $\luc(\mA)^*$ (usually in the unit ball of $\luc(\mA)^*$) and a single functional $T \in \mA^{*}$ (respectively, in $\luc(\mA)$)
such that the factorization formula $S_\lambda =\Psi_\lambda \cdot T$
holds for all $\lambda<\eta$.

This factorization was obtained and used, together with the so-called Mazur property, by Neufang in \cite{Neufang04B} to show that the group algebra of a non-compact locally compact group is strongly Arens irregular. In some cases, the same result was also obtained by the same author for the measure algebra in \cite{Neufang05}.

The factorization property (and some of its variants) was also used to study ideals and right cancellation in Banach algebras and semigroup compactifications with an Arens product such as
$L^1(G)^{**}$, $\luc(G)^*$ and the $\luc$-compactification $G^{LUC}$ of non-compact locally compact groups, $\ell^\infty(S)^*$ and the Stone-\v Cech compactification of infinite discrete semigroups $S$ having some cancellation properties,   $A(G)^{**}$ and $\uc_2(G)^*$ for a class of non-metrizable locally compact groups. See for example, \cite{F},  \cite{MMM2}, \cite{FiPy03},  \cite{FiSa07}
and \cite{FiSa07a}.

We define next a weaker factorization property, but we keep the same terminology.
 This   weak $\eta$-factorization property  was introduced and used in \cite{MMM1}. It was used, together with the Mazur property, to show that the Fourier algebra of a certain class of non-metrizable groups is strongly Arens irregular.

\begin{definition}\label{wfp}	Let $\mA$ be a Banach algebra,  $\eta$ be a cardinal number and $X_1,...,X_m$ be  subsets of $\luc(\mA)^*$ each with cardinality at least $\eta$. Then the Banach dual
$\mA^*$ of $\mA$
is said to have the $\eta$-factorization property through $\prod_{k=1}^mX_k$ if for every bounded family of functionals
$\{S_\lambda:\lambda <\eta\}$ contained in $\mA^*$,
there exist a bounded family $\{(\Psi_{\lambda k})_{k=1}^m :\lambda<\eta\}$ in $\prod_{k=1}^mX_k$
and a finite family of functionals $\{T_k:k=1,...,m\}$ in $\mA^{*}$
such that the factorization formula \begin{equation}\label{F}S_\lambda =\sum_{k=1}^m\Psi_{\lambda k} \cdot T_k\end{equation}
holds for all $\lambda<\eta$.

If the elements $\{\Psi_\lambda=(\Psi_{\lambda k})_{k=1}^m :\lambda<\eta\}$ can be chosen in $\prod_{k=1}^mX_k$  independently of the elements $S_\lambda$, then we say that $\mA^*$ has the uniform
$\eta$-factorization property. (This is going to be so in all the cases treated in the paper.)
\end{definition}
	\medskip

\begin{remarks}\label{mirror}

\begin{enumerate}
~

\item  Putting
$T=(T_k)_{k=1}^m\in {\mA^*}^m$,  we may write the factorization formula (\ref{F}) as
\[S_\lambda= \Psi_\lambda\cdot T \quad\text{for all}\quad \lambda<\eta.\]

\item The $\eta$-factorization property just defined is one-sided and should have been labeled "left" since we are using the left module action $\Psi \cdot T$
of $\luc(\mA)^*$ on $\mA^*$ (see \ref{action3}).
The mirror $\eta$-factorization property may also be defined using the action $\Psi \odot T$ of $\ruc(\mA)^*$ on $\mA^*$. Noting that
the left action $\Psi \odot T$ of $\ruc(\mA)^*$ on $\mA^*$ is the same as the right action $T\cdot \Psi$   of $\ruc(\mA)^*$ on $\mA^*$ induced by the left action $a\cdot T$ of $\mA$ on $\mA^*$, where $\langle a\cdot T, b\rangle=\langle T, ba\rangle$ for $b\in \mA,$
we may say that $\mA^*$ has the right $\eta$-factorization property through $\prod_{k=1}^mX_k$
when the identity (\ref{F}) in Definition \ref{wfp} is changed to
\[S_\lambda =\sum_{k=1}^m   \Psi_{\lambda k}\odot T_k =\sum_{k=1}^m   T_k\cdot\Psi_{\lambda k}=T\cdot \Psi_\lambda \]
 for all $\lambda<\eta$.
\end{enumerate}
\end{remarks}

In order to apply this factorization property in Theorems \ref{Lcentre} and \ref{lucentre} dealing with the topological centres of $\mA^{**}$ and $\luc(\mA)^*$,
we recall the following definition from \cite{Neufang05}.

\begin{definition}\label{de:090908B}
Let $\mE$ be a Banach space and $\eta$ be an infinite  cardinal number.
\begin{enumerate}
\item A functional $\Psi\in \mE^{**}$ is called $\eta$-weak$^*$-continuous if for every infinite
net $(S_\lambda )_\lambda$ in the unit ball of $\mE^*$ of cardinality at most $\eta$ with $S_\lambda \to 0$ in the weak$^*$-topology,
we have $\langle\Psi,S_\lambda \rangle\to 0$.
\item We say that $\mE$ has the $\eta$-Mazur property  if every $\eta$-weak$^*$-continuous
functional $\Psi\in \mE^{**}$ is an element of $\mE$.
\end{enumerate}
\end{definition}
\medskip

\begin{notation}\label{notation}~\normalfont
 Throughout the paper, with the set $\prod_{k=1}^mX_k$ in $\luc(\mA)^*$ through which the factorization of $\mA^*$ is made, we  let $X=\bigcup\limits_{k=1}^mX_k$ and
 $\widetilde{X}$ be the set made of some fixed extensions
of the elements in $X$ to $\mA^*$  such that $\|\widetilde{\Psi}\|=\|\Psi\|$ as already mentioned
in (\ref{trick}).
\end{notation}

Theorems \ref{Lcentre} and \ref{lucentre} below were proved  by Neufang (see \cite[Theorem 2.3 and Theorem 4.2]{Neufang05}) with with $m=1,$ $\mathscr S=\mA^{**}$ and $ X$ in the unit ball of $\luc(\mA)^{*}$.
The proof of the first theorem is a modification of the proof given in  \cite[Theorem 2.3]{Neufang05}, while we rely on  \cite{laul}
for the proof of the second theorem. Both proofs are included so that the role of the set $\prod_{k=1}^mX_k$ through which the factorization of $\mA^*$ is made for the determination of the topological centres becomes clear. This will be needed in
our applications in later sections.
For the proof of the second theorem as well as many other theorems in the paper, we shall need
the right multiplier algebra $M_r(\mA)$  of $\mA,$ this is the algebra of all bounded operators $R$ on
$\mA$ satisfying \[ R(ab)=aR(b)\quad\text{for each}\quad a\mbox{ and } b\;\text{ in}\;\mA.\] The operation in $M_r(\mA)$ is the opposite of the operation $\circ$ of composition of operators, i.e., $R_1R_2=R_2\circ R_1$.

We shall also prove in Theorems \ref{diamondcentre} and \ref{ducentre} the analogue of these results for the second toplogical centres.
For this we need the set $X$ to be in the unit ball of $\ruc(\mA)^*$ and we use the left multiplier algebra $M_l(\mA)$ of $\mA.$
  A bounded operator on $\mA$ is a {\it left  multiplier} of $ \mA $ if $ L(ab) = L(a)b$ for $a,b \in \mA$.
The composition of operators is the operation in $M_l(\mA)$.

Recall also that in a Banach algebra $\mA$, a  {\it bounded right approximate identity} (brai for short)
is a  bounded net $(e_\alpha)$ with  \[\lim_\alpha\|ae_\alpha-a\|=0\quad\text{
for each}\quad a\in  \mA.\]
Any weak$^*$-limit of a brai is a right identity in $\mA^{**}$ when $\mA^{**}$ has the first Arens product. Bounded left approximate
identities (blai) are defined analogously. The net $(e_\alpha)$ is a bounded approximate identity
(bai) if it is both brai and blai.

When $\mA$ has a bai bounded by $1$, $\mA$ is a closed right ideal in  $M_r(\mA)$ and a closed left ideal in $M_l(\mA)$. It  will also be seen by Lemmas \ref{Laul} and \ref{Laur} that $M_r(\mA)$ and $M_l(\mA)$  are closed subalgebras of $\luc(\mA)^*$ and $\ruc(\mA)^*,$ respectively.

For more details on $M_r(\mA)$ and $M_l(\mA)$, see \cite{dales}, particularly Theorems 2.5.12 and  2.9.49.

\begin{theorem}\label{Lcentre}
Let $\mA$ be a Banach algebra, $\eta$ be an infinite cardinal and  $X_1, ..., X_m$ be subsets of $\luc(\mA)^*$ each of cardinality at least  $\eta$.
Suppose that $\mA$ has the $\eta$-Mazur property  and its dual $\mA^*$ has the $\eta$-factorization property through $\prod_{k=1}^mX_k$. Then
\begin{enumerate}
\item   for any weak$^*$-closed subsemigroup $\mathscr S$ of $\mA^{**}$ with $\widetilde X\subseteq \mathscr S$, we have $T_1Z(\mathscr S)=\mathscr S\cap \mA.$
\item In particular, $T_1Z(\mA^{**})=\mA$,  that is, $\mA$ is left strongly Arens irregular.
\end{enumerate}
\end{theorem}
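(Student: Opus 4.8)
The plan is to reduce the statement to the $\eta$-Mazur property of $\mA$. For part (1) the nontrivial inclusion is $T_1Z(\mathscr S)\subseteq \mathscr S\cap\mA$, and I would prove it by showing that every $\Psi\in T_1Z(\mathscr S)$ is $\eta$-weak$^*$-continuous as a functional on $\mA^*$, so that Definition \ref{de:090908B} forces $\Psi\in\mA$; since automatically $\Psi\in\mathscr S$, this gives $\Psi\in\mathscr S\cap\mA$. The reverse inclusion is routine: if $a\in\mathscr S\cap\mA$ then $\Phi\mapsto a\Phi$ is weak$^*$-weak$^*$-continuous on all of $\mA^{**}$ by property (ii) of the first Arens product, hence on $\mathscr S$, and $a\Phi\in\mathscr S$ because $\mathscr S$ is a subsemigroup; thus $a\in T_1Z(\mathscr S)$. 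Part (2) is then immediate on taking $\mathscr S=\mA^{**}$, which is a weak$^*$-closed subsemigroup containing $\widetilde X$.

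For the core inclusion, fix $\Psi\in T_1Z(\mathscr S)$ and a net $(S_\lambda)$ in the unit ball of $\mA^*$ of cardinality at most $\eta$ with $S_\lambda\to 0$ weak$^*$; I must show $\langle\Psi,S_\lambda\rangle\to 0$. Applying the $\eta$-factorization property of Definition \ref{wfp} to the family of values $\{S_\lambda\}$, I obtain a bounded family $(\Psi_{\lambda k})$ in $\prod_{k=1}^m X_k$ and finitely many $T_1,\dots,T_m\in\mA^*$ with $S_\lambda=\sum_{k=1}^m\Psi_{\lambda k}\cdot T_k$. Using $\Psi_{\lambda k}\cdot T_k=\widetilde{\Psi_{\lambda k}}\cdot T_k$ from (\ref{trick}) together with the defining relations (\ref{action2}) and (\ref{actionthree}), I would rewrite
\[\langle\Psi,S_\lambda\rangle=\sum_{k=1}^m\bigl\langle\Psi,\widetilde{\Psi_{\lambda k}}\cdot T_k\bigr\rangle=\sum_{k=1}^m\bigl\langle\Psi\,\widetilde{\Psi_{\lambda k}},T_k\bigr\rangle,\]
where $\Psi\,\widetilde{\Psi_{\lambda k}}$ is the first Arens product and each $\widetilde{\Psi_{\lambda k}}\in\widetilde X\subseteq\mathscr S$.

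The decisive step is a compactness-plus-continuity argument. Given any subnet, I use Banach--Alaoglu to extract a further subnet along which each bounded net $(\widetilde{\Psi_{\lambda k}})_\lambda$ converges weak$^*$ to some $\Phi_k$; since $\mathscr S$ is weak$^*$-closed, $\Phi_k\in\mathscr S$. Because $\Phi\mapsto\Phi\cdot T_k$ is weak$^*$-weak$^*$-continuous (immediate from (\ref{action2})), passing to the limit in $S_\lambda=\sum_k\widetilde{\Psi_{\lambda k}}\cdot T_k$ and using $S_\lambda\to 0$ yields $\sum_k\Phi_k\cdot T_k=0$. On the other hand, $\Psi\in T_1Z(\mathscr S)$ means $\Phi\mapsto\Psi\Phi$ is weak$^*$-weak$^*$-continuous on $\mathscr S$, so $\langle\Psi\,\widetilde{\Psi_{\lambda k}},T_k\rangle\to\langle\Psi\,\Phi_k,T_k\rangle$ along this further subnet, and hence
\[\langle\Psi,S_\lambda\rangle\longrightarrow\sum_{k=1}^m\langle\Psi\,\Phi_k,T_k\rangle=\Bigl\langle\Psi,\sum_{k=1}^m\Phi_k\cdot T_k\Bigr\rangle=0.\]
Since every subnet of $(\langle\Psi,S_\lambda\rangle)$ thus admits a further subnet converging to $0$, the whole net converges to $0$. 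Therefore $\Psi$ is $\eta$-weak$^*$-continuous, and the $\eta$-Mazur property gives $\Psi\in\mA$, completing part (1).

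The hard part is this last paragraph: one must correctly identify $\langle\Psi,S_\lambda\rangle$ with the Arens product $\Psi\,\widetilde{\Psi_{\lambda k}}$ so that the hypothesis $\Psi\in T_1Z(\mathscr S)$ becomes applicable, and then exploit two different continuity properties simultaneously — the \emph{always-available} weak$^*$-continuity of the module map $\Phi\mapsto\Phi\cdot T_k$, used to extract the relation $\sum_k\Phi_k\cdot T_k=0$ from $S_\lambda\to0$, and the continuity \emph{granted by membership in the topological centre}, used to move the limit inside. A minor technical point I would need to address is the mismatch between the net indexing in Definition \ref{de:090908B} and the ordinal-indexed family in the factorization property; this is handled by applying the factorization once to the (at most $\eta$) distinct values of the net and then selecting subnets.
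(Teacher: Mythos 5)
Your proposal is correct and follows essentially the same route as the paper's proof: factorize the net $(S_\lambda)$ through $\prod_{k=1}^m X_k$, rewrite $\langle\Psi,S_\lambda\rangle$ as $\sum_k\langle\Psi\widetilde{\Psi_{\lambda k}},T_k\rangle$ via (\ref{trick}), pass to weak$^*$-convergent subnets of the $\widetilde{\Psi_{\lambda k}}$ inside the weak$^*$-closed $\mathscr S$, and use the topological-centre hypothesis plus the $\eta$-Mazur property to conclude $\Psi\in\mA$. Your explicit "every subnet has a further subnet converging to $0$" step and your remark on the reverse inclusion $\mathscr S\cap\mA\subseteq T_1Z(\mathscr S)$ only make precise details the paper leaves implicit; they do not change the argument.
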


\begin{proof}
Let $\Psi\in T_1Z(\mathscr S)$ and  $(S_\lambda )_\lambda$ be a net in the unit ball of $\mA^*$ of cardinality at most $\eta$ with $S_\lambda \to 0$ in the weak$^*$-topology. Factorize $(S_\lambda )_\lambda$ through $\prod_{k=1}^mX_k$ as \[S_\lambda =\sum_{k=1}^m\Psi_{\lambda k} \cdot T_k\]
for each $\lambda<\eta$, where $\{(\Psi_{\lambda k})_{k=1}^m :\lambda<\eta\}$ is a bounded net in
$\prod_{k=1}^mX_k$  and $\{T_k:k=1,...,m\}\subset\mA^{*}$.
Let $\widetilde{\Psi_{\lambda k}}$ be the corresponding extensions in $\widetilde X$,
and take $m$ subnets of $(\widetilde{\Psi_{\lambda k}})_{\lambda<\eta}$ (indexed by the same set) with weak$^*$-limit $\widetilde{\Psi_k}$ in $\mathscr S$ for each  $k=1,...,m.$
Denote for simplicity the $m$ subnets also by  $(\widetilde{\Psi_{\lambda k}})_{\lambda<\eta}$,  $k=1,...,m,$ and keep  $(S_\lambda)$ to denote the corresponding subnet of the original net in the unit ball of $\mA^*$.
It is then easy to check that $\sum_{k=1}^m\widetilde{\Psi_{k}} \cdot T_k$ is the weak$^*$-limit of  the subnet $(S_\lambda)$, and so it is zero.
Since  $\Psi\in T_1Z(\mathscr S)$, using (\ref{trick}), this gives
\begin{equation}\begin{split}\lim_\lambda\langle
\Psi, S_\lambda\rangle&=\lim_\lambda\left\langle \Psi,
\sum_{k=1}^m\Psi_{\lambda k}. T_k\right\rangle
=\lim_\lambda\sum_{k=1}^m\left\langle \Psi.\Psi_{\lambda k},  T_k\right\rangle\\&=\sum_{k=1}^m\lim_\lambda\left\langle \Psi\widetilde{\Psi_{\lambda k}},  T_k\right\rangle=
\sum_{k=1}^m\left\langle \Psi\widetilde{\Psi_{k}},  T_k\right\rangle\\&
=\sum_{k=1}^m\left\langle \Psi,\widetilde{\Psi_{k}}\cdot  T_k\right\rangle=
\left\langle \Psi,\sum_{k=1}^m\widetilde{\Psi_{k}}\cdot  T_k\right\rangle
=0.\label{facto}\end{split}\end{equation}
The $\eta$-Mazur property implies then that $\Psi\in\mA,$ as wanted.
\end{proof}

\begin{theorem}\label{diamondcentre}
Let $\mA$ be a Banach algebra, $\eta$ be an infinite cardinal and  $X_1, ..., X_m$ be subsets of $\ruc(\mA)^*$ each of cardinality at least  $\eta$.
Suppose that $\mA$ has the $\eta$-Mazur property  and its dual $\mA^*$ has the right $\eta$-factorization property through $\prod_{k=1}^mX_k$. Then
\begin{enumerate}
\item   for any weak$^*$-closed subsemigroup $\mathscr S$ of $\mA^{**}$ with $\widetilde X\subseteq \mathscr S$, we have $T_2Z(\mathscr S)=\mathscr S\cap \mA.$
\item In particular, $T_2Z(\mA^{**})=\mA$,  that is, $\mA$ is right strongly Arens irregular.
\end{enumerate}
\end{theorem}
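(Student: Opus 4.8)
The plan is to mirror, essentially line by line, the argument for Theorem~\ref{Lcentre}, systematically replacing each first-Arens ingredient by its second-Arens counterpart: the space $\luc(\mA)$ becomes $\ruc(\mA)$, the product $\Phi\Psi$ becomes $\Phi\diamond\Psi$, the action ``$\cdot$'' becomes ``$\odot$'', the left factorization is replaced by the right $\eta$-factorization $S_\lambda=\sum_{k=1}^m\Psi_{\lambda k}\odot T_k$ supplied by Remarks~\ref{mirror}, and the identities in~\eqref{trick} are replaced by their mirror versions $\Phi\odot S=\widetilde\Phi\odot S$ and $\Psi\odot\Phi=\widetilde\Phi\diamond\Psi$ recorded at the end of Section~2. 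As in Theorem~\ref{Lcentre}, only the inclusion $T_2Z(\mathscr S)\subseteq\mathscr S\cap\mA$ needs work: the reverse inclusion is immediate, since for $a\in\mathscr S\cap\mA$ the map $\Phi\mapsto\Phi\diamond a$ is weak$^*$-continuous on all of $\mA^{**}$ (item (ii) of the characterization of $\diamond$), so its restriction to the subsemigroup $\mathscr S$ is weak$^*$-continuous and $a\in T_2Z(\mathscr S)$.

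For the main inclusion I would take $\Psi\in T_2Z(\mathscr S)$ and a net $(S_\lambda)_\lambda$ in the unit ball of $\mA^*$ of cardinality at most $\eta$ with $S_\lambda\to0$ weak$^*$, aiming to prove $\langle\Psi,S_\lambda\rangle\to0$ so that the $\eta$-Mazur property (Definition~\ref{de:090908B}) forces $\Psi\in\mA$. Factorizing on the right, $S_\lambda=\sum_{k=1}^m\Psi_{\lambda k}\odot T_k$ with $(\Psi_{\lambda k})_k\in\prod_{k=1}^mX_k\subseteq\ruc(\mA)^*$ and $T_k\in\mA^*$, I replace each $\Psi_{\lambda k}$ by an extension $\widetilde{\Psi_{\lambda k}}\in\widetilde X\subseteq\mathscr S$ and pass to $m$ simultaneous subnets (legitimate by weak$^*$-compactness of bounded balls and weak$^*$-closedness of $\mathscr S$) so that $\widetilde{\Psi_{\lambda k}}\to\widetilde{\Psi_k}\in\mathscr S$ weak$^*$ for each $k$. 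Since $\Phi\mapsto\Phi\odot T_k$ is weak$^*$-continuous on $\mA^{**}$ and $S_\lambda=\sum_k\widetilde{\Psi_{\lambda k}}\odot T_k\to0$, I obtain the key relation $\sum_{k=1}^m\widetilde{\Psi_k}\odot T_k=0$.

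The decisive step is the evaluation of $\lim_\lambda\langle\Psi,S_\lambda\rangle$, and here the left/right bookkeeping is exactly what must be handled with care. The adjoint action gives $\langle\Psi,\Psi_{\lambda k}\odot T_k\rangle=\langle\Psi\odot\Psi_{\lambda k},T_k\rangle$, and the mirror identity $\Psi\odot\Psi_{\lambda k}=\widetilde{\Psi_{\lambda k}}\diamond\Psi$ rewrites this as $\langle\widetilde{\Psi_{\lambda k}}\diamond\Psi,T_k\rangle$; note that the order has flipped, so $\Psi$ now sits on the \emph{right} of the $\diamond$-product. This is precisely the configuration governed by the second topological centre: because $\Psi\in T_2Z(\mathscr S)$, the map $\Phi\mapsto\Phi\diamond\Psi$ is weak$^*$-continuous, whence $\widetilde{\Psi_{\lambda k}}\diamond\Psi\to\widetilde{\Psi_k}\diamond\Psi$ weak$^*$ and the limit passes through the finite sum. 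Unwinding the mirror identities in reverse then yields $\lim_\lambda\langle\Psi,S_\lambda\rangle=\sum_k\langle\widetilde{\Psi_k}\diamond\Psi,T_k\rangle=\sum_k\langle\Psi,\widetilde{\Psi_k}\odot T_k\rangle=\langle\Psi,\sum_k\widetilde{\Psi_k}\odot T_k\rangle=0$, and the $\eta$-Mazur property finishes the inclusion. Statement (2) is then the instance $\mathscr S=\mA^{**}$, where $\widetilde X\subseteq\mathscr S$ is automatic and $\mathscr S\cap\mA=\mA$.

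I expect the only genuine obstacle to be this order-reversal: one must check that $\langle\Psi,\Phi\odot S\rangle=\langle\Psi\odot\Phi,S\rangle$ combined with $\Psi\odot\Phi=\widetilde\Phi\diamond\Psi$ really places $\Psi$ on the side of the $\diamond$-product whose continuity is guaranteed by $T_2Z(\mathscr S)$, namely right $\diamond$-multiplication rather than left. Once this is confirmed, the remainder is a faithful transcription of the proof of Theorem~\ref{Lcentre}.
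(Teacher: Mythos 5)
Your proposal is correct and is exactly the argument the paper intends: the paper's own proof of Theorem~\ref{diamondcentre} consists of the single instruction ``Proceed as in the proof of Theorem~\ref{Lcentre},'' and you have carried out that mirroring faithfully, in particular verifying the one point that actually requires care, namely that the identity $\langle\Psi,\Psi_{\lambda k}\odot T_k\rangle=\langle\widetilde{\Psi_{\lambda k}}\diamond\Psi,T_k\rangle$ places $\Psi$ on the right of the $\diamond$-product, which is precisely the side whose weak$^*$-continuity is granted by $\Psi\in T_2Z(\mathscr S)$.
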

\begin{proof}
Proceed as in the proof of Theorem \ref{Lcentre}.
\end{proof}

For the proof of the second theorem we need the following lemma
due essentially  to Lau and \"Ulger in \cite{laul}.
Let \[I(\mA)=\{\Psi\in \luc(\mA)^*: \mA \Psi\subseteq \mA\}.\]

\begin{lemma}\label{Laul} If $\mA$ is a Banach algebra with a bai $(e_\alpha)$, then $I(\mA)\subseteq T_1Z(\luc(\mA)^*).$
When the bai is bounded by $1$, $M_r( \mA)\cong I(\mA)$.
\end{lemma}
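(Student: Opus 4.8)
The plan is to prove both assertions through the single natural map
\[
\Theta\col I(\mA)\to M_r(\mA),\qquad \Theta(\Psi)=R_\Psi,\quad R_\Psi(a)=a\Psi\ (a\in\mA),
\]
and to read the topological-centre statement off from the way $\Theta$ is built. First I would observe that for $\Psi\in I(\mA)$ the operator $R_\Psi$ really is a \emph{right} multiplier: associativity of the first Arens product in $\luc(\mA)^*$ gives $R_\Psi(ab)=(ab)\Psi=a(b\Psi)=aR_\Psi(b)$, while $\Psi\in I(\mA)$ is precisely what forces $R_\Psi(\mA)\sub\mA$; boundedness $\|R_\Psi\|\le\|\Psi\|$ is immediate. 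Throughout I would use two consequences of the bai: that $\luc(\mA)=\mA^*\cdot\mA$ (Cohen), so every $S\in\luc(\mA)$ factors as $S=T\cdot a$, and that $U\cdot e_\alpha\to U$ in norm for every $U\in\luc(\mA)$ (write $U=T\cdot a$, so $U\cdot e_\alpha=T\cdot(ae_\alpha)$ and $ae_\alpha\to a$).

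For the inclusion $I(\mA)\sub T_1Z(\luc(\mA)^*)$ the crux is to show that, for fixed $\Psi\in I(\mA)$ (with $R=R_\Psi$) and fixed $S=T\cdot a\in\luc(\mA)$, left multiplication by $\Psi$ is evaluated against a \emph{single} element of the predual $\luc(\mA)$, independent of $\Phi$:
\[
\langle\Psi\Phi,S\rangle=\langle\Phi,\,T\cdot R(a)\rangle\qquad(\Phi\in\luc(\mA)^*).
\]
I would establish this by inserting the bai. Using $\langle e_\alpha\Psi,V\rangle=\langle\Psi,V\cdot e_\alpha\rangle\to\langle\Psi,V\rangle$ with $V=\Phi\cdot S\in\luc(\mA)$, and $e_\alpha\Psi=R(e_\alpha)\in\mA$, one obtains $\langle\Psi\Phi,S\rangle=\langle\Psi,\Phi\cdot S\rangle=\lim_\alpha\langle R(e_\alpha),\Phi\cdot S\rangle=\lim_\alpha\langle\Phi,S\cdot R(e_\alpha)\rangle$; the multiplier identity $aR(e_\alpha)=R(ae_\alpha)$ then gives $S\cdot R(e_\alpha)=T\cdot R(ae_\alpha)\to T\cdot R(a)$ in norm, so the limit passes inside the fixed functional $\Phi$. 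Since $W:=T\cdot R(a)\in\luc(\mA)$ does not depend on $\Phi$, the map $\Phi\mapsto\langle\Psi\Phi,S\rangle=\langle\Phi,W\rangle$ is weak$^*$-continuous for each $S$, i.e.\ $\Phi\mapsto\Psi\Phi$ is weak$^*$-weak$^*$-continuous and $\Psi\in T_1Z(\luc(\mA)^*)$.

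For the isomorphism $M_r(\mA)\cong I(\mA)$ I would show $\Theta$ is a bijective isometric algebra map. Injectivity: if $a\Psi=0$ for all $a$, then $\langle\Psi,S\rangle=\lim_\alpha\langle\Psi,S\cdot e_\alpha\rangle=\lim_\alpha\langle e_\alpha\Psi,S\rangle=0$, so $\Psi=0$. Surjectivity: given $R\in M_r(\mA)$, let $\Psi_R$ be any weak$^*$-cluster point in $\luc(\mA)^*$ of the bounded net $(R(e_\alpha))$; for $S\in\luc(\mA)$ one computes $\langle a\Psi_R,S\rangle=\langle\Psi_R,S\cdot a\rangle=\lim\langle S,aR(e_\alpha)\rangle=\lim\langle S,R(ae_\alpha)\rangle=\langle S,R(a)\rangle$, whence $a\Psi_R=R(a)$, so $\Psi_R\in I(\mA)$ and $\Theta(\Psi_R)=R$. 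That $\Theta$ is a homomorphism is again associativity: $R_{\Psi_1\Psi_2}(a)=(a\Psi_1)\Psi_2=R_{\Psi_2}(R_{\Psi_1}(a))=(R_{\Psi_2}\circ R_{\Psi_1})(a)=(R_{\Psi_1}R_{\Psi_2})(a)$, matching the opposite-composition product of $M_r(\mA)$. Finally, when the bai is bounded by $1$, $\|\Psi\|=\sup_{\|S\|\le1}|\lim_\alpha\langle R_\Psi(e_\alpha),S\rangle|\le\|R_\Psi\|$, which with $\|R_\Psi\|\le\|\Psi\|$ gives isometry.

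The main obstacle I anticipate is the weak$^*$-continuity in the first part: left multiplication in a dual algebra is \emph{not} weak$^*$-continuous in general, so the real content is the identity $\langle\Psi\Phi,S\rangle=\langle\Phi,W\rangle$ with $W\in\luc(\mA)$ fixed. The only mechanism absorbing $\Psi$ into a member of the predual $\luc(\mA)$ is the multiplier relation $aR(e_\alpha)=R(ae_\alpha)$ combined with the two norm-convergences $ae_\alpha\to a$ and $U\cdot e_\alpha\to U$; making these cooperate (one producing $R(a)$, the other legitimising the interchange of the $\alpha$-limit with the pairing against $\Phi$) is the delicate point. A secondary care is that $W=T\cdot R(a)$ must be independent of the factorisation $S=T\cdot a$, which is automatic once it is identified with the intrinsic norm-limit $\lim_\alpha S\cdot R(e_\alpha)$.
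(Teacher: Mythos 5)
Your argument is correct and follows essentially the same route as the paper: the same correspondence $\Psi\mapsto R_\Psi$, the same recovery of $\Psi$ as a weak$^*$-limit point of $(R(e_\alpha))$, and the same key identity $\Psi\cdot(T\cdot a)=T\cdot(a\Psi)\in\luc(\mA)$ underlying the inclusion $I(\mA)\subseteq T_1Z(\luc(\mA)^*)$. The only (harmless) differences are that you derive that identity by inserting the bai where the paper reads it off directly from associativity of the module actions, and that you make explicit the injectivity and homomorphism properties of $\Theta$ that the paper leaves implicit.
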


\begin{proof} It is clear that $I(\mA)$ is a norm-closed subalgebra of $\luc(\mA)^*$. To see that $I(\mA)\subseteq T_1Z(\luc(\mA)^*),$ let $\Psi\in I(\mA)$ and $(\Phi_\alpha)$ be a net
in $\luc(\mA)^*$ with weak$^*$-limit $\Phi$.
Then for every $S\in\mA^*$ and $a\in\mA,$ we have
\begin{align*}\lim_\alpha\langle \Psi \Phi_\alpha, S.a\rangle&=\lim_\alpha\langle (a \Psi)\Phi_\alpha, S\rangle=\lim_\alpha\langle (a \Psi),\Phi_\alpha\cdot S\rangle=\lim_\alpha\langle \Phi_\alpha,S\cdot (a \Psi)\rangle\\&=\langle \Phi,S\cdot (a \Psi)\rangle=\langle (a\Psi)\Phi,S\rangle=
\langle \Psi\Phi,S.a\rangle,\end{align*} where the fourth equality is due to the fact that
 $a\Psi\in \mA$.
Therefore, $\Psi$ is in the first topological centre of $\luc(\mA)^*,$ as wanted.

Next we proceed to identify  $I(\mA)$ with $M_r(\mA).$
Note first that  $(e_\alpha)$ is a weak$^*$-convergent net in $\luc(\mA)^*$
and has the identity in $\luc(\mA)^*$ as its limit.
To see this, take any weak$^*$-cluster point $E$ of $(e_\alpha)$ in $\luc(\mA)^*$.
Since $(e_\alpha)$ is a brai, $E$ is a right identity in $\luc(\mA)^*$,
and so $E$ belongs in particular to $I(\mA)$. Since $(e_\alpha)$ is blai, we also have
$Ea=a$ for every $a\in\mA.$ Therefore $E\Psi=\Psi$ for every $\Psi\in\luc(\mA)^*$
since $E\in I(\mA)\subseteq  T_1Z(\luc(\mA)^*).$ Being the identity, $E$ must be the unique
weak$^*$-limit of $(e_\alpha).$

Now if $T\in M_r(\mA)$, then $(Te_\alpha)$ is a net in $\mA$ which is weak$^*$-convergent in $\luc(\mA)^*$.
To see this, take two subnets  $(Te_\beta)$ and $(Te_\gamma)$  with weak$^*$-limits
$\Psi$ and $\Psi'$ in $\luc(\mA)^*$, respectively (this is possible since
 $(Te_\alpha)$ is bounded in $\mA$).

Then again since $(e_\beta)$ is a brai, we have \[Te_\gamma=\lim_\beta T(e_\gamma e_\beta)\quad\text{for each}\quad \gamma,\] and so for every $S.a\in \luc(\mA)$, we have
 \begin{align*}\langle\Psi,S.a\rangle=\langle E\Psi ,S.a\rangle&=\lim_\gamma\lim_\beta\langle e_\gamma Te_\beta,S.a\rangle= \lim_\gamma\lim_\beta\langle T( e_\gamma e_\beta),S.a\rangle\\&=
\lim_\gamma\langle Te_\gamma,S.a\rangle=\langle\Psi',S.a\rangle,
 \end{align*}
as wanted.
 Denote the weak$^*$-limit of $(Te_\alpha)$ by $\Psi$ (note that $\Psi$ depends on $T$).

Consider now for each $\Phi\in I(\mA)$, the right multiplier   $R_\Phi\in M_r(\mA)$ given by
\[R_\Phi(a)=a \Phi\quad\text{ for every}\quad a\in \mA\]
and consider the linear map \[\mathscr I: I(\mA) \to M_r(\mA),\quad \mathscr I(\Phi)=R_\Phi.\]
Since, for every $T\in M_r(\mA)$ and $a\in\mA$, we have
 \begin{equation*}Ta=\lim_\alpha T(ae_\alpha)=\lim_\alpha a Te_\alpha=a\Psi=R_\Psi(a), \label{identify}\end{equation*}
we see that  $\mathscr I$
 is surjective (remember $\Psi$ depends on $T$). Moreover \[\|\mathscr I(\Phi)\|\le \|\Phi\|=\|E\Phi\|=
\lim_\alpha \|R_\Phi e_\alpha\|\le \|R_\Phi\|=\|\mathscr I(\Phi)\|,\] where the second inequality is due to the condition that $(e_\alpha)$  is bounded by $1$.
So when the bai is bounded by $1$, $I(\mA)$ and $M_r(\mA)$ are isometrically isomorphic
and $I(\mA)\subseteq T_1Z(\luc(\mA)^*).$
\end{proof}

\begin{theorem}\label{lucentre} Let $\mA$ be a Banach algebra with a bai, $\eta$ be an infinite cardinal and  $X_1,...,X_m$ be subset of $\luc(\mA)^*$ each with cardinality at least  $\eta$.
If $\mA$ has the $\eta$-Mazur property  and its dual $\mA^*$ has the $\eta$-factorization property through $\prod_{k=1}^mX_k$, then
\begin{enumerate}
\item   for any weak$^*$-closed subsemigroup $\mathscr S$ of $\luc(\mA)^*$ with $ X\subseteq \mathscr S$, we have $T_1Z(\mathscr S)=\mathscr S\cap I(\mA)$.
\item If in addition the bai is bounded by $1$, then   $T_1Z(\mathscr S)$ is isometrically
isomorphic to $\mathscr S\cap M_r(\mA)$.
\item In particular, if the bai is bounded by $1$, then   $T_1Z(\luc(\mA)^*)$ is isometrically
isomorphic to the right multiplier algebra $M_r(\mA)$.
\end{enumerate}
\end{theorem}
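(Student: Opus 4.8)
The plan is to prove part~(1) by establishing the two inclusions of the claimed equality, and then to read off parts~(2) and~(3) from Lemma~\ref{Laul}. The inclusion $\mathscr S\cap I(\mA)\subseteq T_1Z(\mathscr S)$ is the easy one: by Lemma~\ref{Laul} we have $I(\mA)\subseteq T_1Z(\luc(\mA)^*)$, so for $\Psi\in\mathscr S\cap I(\mA)$ the map $\Phi\mapsto\Psi\Phi$ is already weak$^*$--weak$^*$-continuous on all of $\luc(\mA)^*$; since $\mathscr S$ is a weak$^*$-closed subsemigroup containing $\Psi$, this map carries $\mathscr S$ into $\mathscr S$, and its restriction stays continuous, whence $\Psi\in T_1Z(\mathscr S)$. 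For parts~(2) and~(3), once~(1) is in hand, I would invoke the isometric isomorphism $M_r(\mA)\cong I(\mA)$ from Lemma~\ref{Laul} (valid when the bai is bounded by~$1$), under which $M_r(\mA)$ sits inside $\luc(\mA)^*$ as the closed subalgebra $I(\mA)$; then $T_1Z(\mathscr S)=\mathscr S\cap I(\mA)$ is isometrically isomorphic to $\mathscr S\cap M_r(\mA)$, and the choice $\mathscr S=\luc(\mA)^*$ (which is weak$^*$-closed and contains $X$) specializes to $T_1Z(\luc(\mA)^*)\cong M_r(\mA)$.

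The substance lies in the reverse inclusion $T_1Z(\mathscr S)\subseteq\mathscr S\cap I(\mA)$, and here I would mirror the argument of Theorem~\ref{Lcentre}. Given $\Psi\in T_1Z(\mathscr S)$ (so in particular $\Psi\in\mathscr S$), I must show $\Psi\in I(\mA)$, i.e.\ $a\Psi\in\mA$ for every $a\in\mA$. The device is to lift the product $a\Psi$, formed in $\luc(\mA)^*$, to the element $\Theta:=a\widetilde\Psi\in\mA^{**}$, where $\widetilde\Psi$ is any Hahn--Banach extension of $\Psi$ with $\pi(\widetilde\Psi)=\Psi$. By~(\ref{trick}) this $\Theta$ coincides with the action $a\cdot\Psi$ of~(\ref{action4}) and satisfies $\pi(\Theta)=a\Psi$, so it suffices to prove $\Theta\in\mA$. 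For that I would appeal to the $\eta$-Mazur property of $\mA$ (Definition~\ref{de:090908B}), which reduces the task to showing that $\Theta$ is $\eta$-weak$^*$-continuous.

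The key computation is as follows. Take a net $(S_\lambda)$ in the unit ball of $\mA^*$ of cardinality at most $\eta$ with $S_\lambda\to0$ weak$^*$, and factorize it through $\prod_{k=1}^m X_k$ as $S_\lambda=\sum_{k=1}^m\Psi_{\lambda k}\cdot T_k$. Unwinding actions~(\ref{action4}) and~(\ref{action3}) gives $\langle\Theta,S_\lambda\rangle=\langle\Psi,S_\lambda\cdot a\rangle$; then the associativity identity $(\Psi_{\lambda k}\cdot T_k)\cdot a=\Psi_{\lambda k}\cdot(T_k\cdot a)$ together with the definition of the product in $\luc(\mA)^*$ rewrites this as $\sum_k\langle\Psi\,\Psi_{\lambda k},\,T_k\cdot a\rangle$, where now $T_k\cdot a\in\luc(\mA)$. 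Passing to subnets along which each $\Psi_{\lambda k}\in X_k\subseteq\mathscr S$ converges weak$^*$ to some $\Psi_k\in\mathscr S$, the hypothesis $\Psi\in T_1Z(\mathscr S)$ forces $\Psi\Psi_{\lambda k}\to\Psi\Psi_k$ weak$^*$, so $\langle\Theta,S_\lambda\rangle\to\sum_k\langle\Psi,\Psi_k\cdot(T_k\cdot a)\rangle=\langle\Psi,(\sum_k\Psi_k\cdot T_k)\cdot a\rangle$. Since $\Phi\mapsto\Phi\cdot T_k$ is weak$^*$--weak$^*$-continuous from $\luc(\mA)^*$ into $\mA^*$, the subnet limit of $S_\lambda=\sum_k\Psi_{\lambda k}\cdot T_k$ equals $\sum_k\Psi_k\cdot T_k$, which must therefore be $0$; hence the limit vanishes, and the standard subnet argument yields $\langle\Theta,S_\lambda\rangle\to0$. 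Thus $\Theta$ is $\eta$-weak$^*$-continuous, so $\Theta\in\mA$ and $a\Psi\in\mA$, giving $\Psi\in I(\mA)$.

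I expect the main obstacle to be the bookkeeping across the several module actions and the transfer between the pairings in $\luc(\mA)^*$ and in $\mA^{**}$ — specifically, correctly realizing $a\Psi$ as $\Theta\in\mA^{**}$ via~(\ref{trick}) so that the $\eta$-Mazur property of $\mA$, which is a statement about $\mA^{**}$, can be brought to bear, and verifying both the associativity relation and the weak$^*$-continuity of $\Phi\mapsto\Phi\cdot T_k$ that together pin down $\sum_k\Psi_k\cdot T_k=0$. Everything else is a faithful adaptation of Theorem~\ref{Lcentre}, with Lemma~\ref{Laul} supplying the identification of $I(\mA)$ with $M_r(\mA)$.
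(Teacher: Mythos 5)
Your proposal is correct and follows essentially the same route as the paper's proof: factorize a weak$^*$-null net $(S_\lambda)$ through $\prod_k X_k$, use $\Psi\in T_1Z(\mathscr S)$ to compute $\lim_\lambda\langle a\Psi,S_\lambda\rangle=\langle a\Psi,\sum_k\Psi_k\cdot T_k\rangle=0$, invoke the $\eta$-Mazur property to get $a\Psi\in\mA$ and hence $\Psi\in I(\mA)$, and then apply Lemma~\ref{Laul} for the reverse inclusion and the identification with $M_r(\mA)$. Your explicit handling of the lift $a\Psi=\pi(a\widetilde\Psi)$ so that the Mazur property (a statement about $\mA^{**}$) applies is a detail the paper leaves implicit, but it is not a different argument.
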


\begin{proof}
Let $\Psi\in T_1Z(\mathscr S)$ and $a$ be any element in $\mA.$
Let $(S_ \lambda)_{ \lambda<\eta}$ be any net in $\mA^*$ with weak$^*$-limit $0$ and let its elements be  factorized as in the previous theorem as \[S_ \lambda =\sum_{k=1}^m\Psi_{ \lambda k} \cdot T_k\quad\text{for each}\quad \lambda<\eta,\]
 where $\{(\Psi_{ \lambda k})_{k=1}^m : \lambda<\eta\}$ is a bounded net in
$\prod_{k=1}^mX_k$ and $\{T_k:k=1,...,m\}\subset\mA^{*}$.
Take $m$ subnets of $(\Psi_{ \lambda k})_ \lambda<\eta$ with weak$^*$-limit $\Psi_k$ in $\mathscr S$ for each  $k=1,...,m.$
Denote the subnets also by  $(\Psi_{ \lambda k})_{ \lambda<\eta}$.
It is then easy to check that $\sum_{k=1}^m\Psi_{k} \cdot T_k$ is the weak$^*$-limit of the net $(S_ \lambda)_ \lambda$, and so it is zero. Then, as in (\ref{facto}),
since  $\Psi\in T_1Z(\mathscr S)$, we have
 \begin{align*}\lim_ \lambda\langle
a\Psi, S_ \lambda\rangle   &=  \lim_ \lambda\left\langle a\Psi,
\sum_{k=1}^m\Psi_{ \lambda k}\cdot  T_k\right\rangle= \lim_ \lambda\sum_{k=1}^m\left\langle (a\Psi)\Psi_{ \lambda k},  T_k\right\rangle\\& =\lim_ \lambda\sum_{k=1}^m\langle \Psi\Psi_{ \lambda k},  T_k\cdot a\rangle=
\sum_{k=1}^m\lim_ \lambda\langle \Psi\Psi_{ \lambda k},  T_k\cdot a\rangle
\\&=\sum_{k=1}^m\langle \Psi\Psi_{k},
 T_k\cdot a\rangle=\sum_{k=1}^m\langle a(\Psi\Psi_{k}),  T_k\rangle\\&=
 \langle a\Psi,\sum_{k=1}^m\Psi_{k}\cdot  T_k\rangle
=0.\label{ufacto}\end{align*}
The $\eta$-Mazur property implies then that $a\Psi\in\mA,$ and so $\Psi\in \mathscr S\cap I(\mA)$.
Lemma \ref{Laul} finishes the proof of the theorem.
\end{proof}
\medskip

Let \[J(\mA)=\{\Psi\in \ruc(\mA)^*: \mA\diamond \Psi\subseteq \mA\}.\]

As in Lemma \ref{Laul}

\begin{lemma}\label{Laur} If $\mA$ is a Banach algebra with a bai $(e_\alpha)$, then $J(\mA)\subseteq T_2Z(\ruc(\mA)^*).$
When the bai is bounded by $1$, $M_l( \mA)\cong J(\mA)$.
\end{lemma}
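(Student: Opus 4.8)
The plan is to run the argument of Lemma~\ref{Laul} in mirror image, replacing the first Arens product and $\luc(\mA)$ by the second Arens product $\diamond$ and $\ruc(\mA)$ at every step. The conceptually cleanest way to organise this is to pass to the opposite algebra $\mA^{\mathrm{op}}$: its bai (still bounded by $1$) is the same net $(e_\alpha)$, one has $\luc(\mA^{\mathrm{op}})=\ruc(\mA)$, the first Arens product on $\ruc(\mA)^*$ computed for $\mA^{\mathrm{op}}$ is $(\Phi,\Psi)\mapsto\Psi\diamond\Phi$, and under this dictionary $I(\mA^{\mathrm{op}})=J(\mA)$, $M_r(\mA^{\mathrm{op}})=M_l(\mA)$ and $T_1Z$ becomes $T_2Z$. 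Thus Lemma~\ref{Laul} applied to $\mA^{\mathrm{op}}$ delivers both assertions at once, and the only work is checking the dictionary. For transparency I would nonetheless carry out the two halves directly.

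For $J(\mA)\subseteq T_2Z(\ruc(\mA)^*)$ I fix $\Psi\in J(\mA)$ and a net $(\Phi_\alpha)$ in $\ruc(\mA)^*$ with weak$^*$-limit $\Phi$, and test $\Phi_\alpha\diamond\Psi\to\Phi\diamond\Psi$ against the generators $S\odot a$ of $\ruc(\mA)$ (with $S\in\mA^*$, $a\in\mA$). Mirroring the chain of identities in Lemma~\ref{Laul}, I rewrite $\langle\Phi_\alpha\diamond\Psi,\,S\odot a\rangle$ so that $a$ is absorbed into $\Psi$ through $\Psi\diamond a$; the defining property of $J(\mA)$ is exactly what makes $\Psi\diamond a$ an element of $\mA$, after which the pairing becomes a fixed weak$^*$-continuous functional evaluated at $\Phi_\alpha$, so weak$^*$-convergence finishes the computation. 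This is where the hypothesis on $\Psi$ is spent, just as $a\Psi\in\mA$ was used before.

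For the identification with $M_l(\mA)$ I follow the three moves of Lemma~\ref{Laul}. First, the bai $(e_\alpha)$ is weak$^*$-convergent in $\ruc(\mA)^*$ to the identity $E$: any weak$^*$-cluster point is a two-sided identity because $(e_\alpha)$ is both a brai and a blai, and $E\in J(\mA)\subseteq T_2Z(\ruc(\mA)^*)$ forces uniqueness of the limit. Second, for $L\in M_l(\mA)$ the net $(Le_\alpha)$ is weak$^*$-convergent; writing $\Psi:=\lim_\alpha Le_\alpha$ and using $L(e_\alpha a)=L(e_\alpha)a$, $e_\alpha a\to a$, and the weak$^*$-continuity of $\Phi\mapsto\Phi\diamond a$, one obtains $La=\Psi\diamond a$, whence $\Psi\in J(\mA)$ and $L$ is recovered as $a\mapsto\Psi\diamond a$. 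Third, the map $\mathscr J\colon J(\mA)\to M_l(\mA)$, $\mathscr J(\Phi)=L_\Phi$ with $L_\Phi(a)=\Phi\diamond a$, is then surjective, and when the bai is bounded by $1$ the estimate $\|\mathscr J(\Phi)\|=\|L_\Phi\|\le\|\Phi\|=\|\Phi\diamond E\|\le\liminf_\alpha\|\Phi\diamond e_\alpha\|\le\|L_\Phi\|$ squeezes everything to equality and shows $\mathscr J$ is isometric.

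The main obstacle is not any new idea but keeping the variances straight after switching products. One must confirm that $L_\Phi(a)=\Phi\diamond a$ is genuinely a \emph{left} multiplier, i.e.\ $L_\Phi(ab)=\Phi\diamond(ab)=(\Phi\diamond a)b=L_\Phi(a)b$ by associativity of $\diamond$ together with $\Phi\diamond a\in\mA$; in particular the defining relation that is actually needed is that left $\diamond$-multiplication by $\Phi$ maps $\mA$ into $\mA$. One must also note that the automatic weak$^*$-continuity now lives in the second slot of $\diamond$ while $T_2Z$ probes the first, and that $\ruc(\mA)$ is generated by the functionals $S\odot a$ with $a\odot b=ba$. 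Routing the whole argument through $\mA^{\mathrm{op}}$ is the safest way to make all these substitutions simultaneously and correctly, after which no computation beyond Lemma~\ref{Laul} is required.
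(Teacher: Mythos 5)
Your proof is correct and is precisely what the paper intends: the paper offers no separate argument for Lemma~\ref{Laur}, saying only ``As in Lemma~\ref{Laul}'', and your mirrored computation (equivalently, the passage to $\mA^{\mathrm{op}}$ with the dictionary $\luc(\mA^{\mathrm{op}})=\ruc(\mA)$, $M_r(\mA^{\mathrm{op}})=M_l(\mA)$, $T_1Z\leftrightarrow T_2Z$) is exactly that argument, including the correct use of the blai/brai, the weak$^*$-limit $E$ of the bai, and the isometry estimate. One point worth recording: the hypothesis you actually use --- and the one that makes both the absorption step $\langle\Phi_\alpha\diamond\Psi,S\odot a\rangle=\langle\Phi_\alpha\diamond(\Psi\diamond a),S\rangle$ and the identification $L_\Phi(a)=\Phi\diamond a\in M_l(\mA)$ work --- is $\Psi\diamond\mA\subseteq\mA$, whereas the paper's displayed definition of $J(\mA)$ reads $\mA\diamond\Psi\subseteq\mA$; your version is the one that the opposite-algebra translation of $I(\mA^{\mathrm{op}})$ produces and the one under which the lemma is true, so you have silently repaired a misprint rather than introduced an error.
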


\begin{theorem}\label{ducentre} Let $\mA$ be a Banach algebra with a bai, $\eta$ be an infinite cardinal and  $X_1,...,X_m$ be subset of $\ruc(\mA)^*$ each with cardinality at least  $\eta$.
If $\mA$ has the $\eta$-Mazur property  and its dual $\mA^*$ has the right $\eta$-factorization property through $\prod_{k=1}^mX_k$, then
\begin{enumerate}
\item   for any weak$^*$-closed subsemigroup $\mathscr S$ of $\ruc(\mA)^*$ with $ X\subseteq \mathscr S$, we have $T_2Z(\mathscr S)=\mathscr S\cap J(\mA)$.
\item If in addition the bai is bounded by $1$, then   $T_2Z(\mathscr S)$ is isometrically
isomorphic to $\mathscr S\cap M_l(\mA)$.
\item In particular, if the bai bounded is by $1$, then   $T_2Z(\ruc(\mA)^*)$ is isometrically
isomorphic to the algebra $M_l(\mA)$.
\end{enumerate}
\end{theorem}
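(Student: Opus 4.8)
This statement is the right-handed counterpart of Theorem~\ref{lucentre}, and the plan is to obtain it by the same argument read through the symmetry between the two Arens products set up in Section~2. The organizing principle is that everything in Theorem~\ref{lucentre} has a mirror image once the first Arens product is replaced by the second: the product $\cdot$ by $\diamond$, the left action of $\luc(\mA)^*$ on $\mA^*$ by the action $\odot$ of $\ruc(\mA)^*$, the $\eta$-factorization by the right $\eta$-factorization of Remark~\ref{mirror}, the sets $X_k\subseteq\luc(\mA)^*$ by $X_k\subseteq\ruc(\mA)^*$, the algebra $I(\mA)$ by $J(\mA)$, the multiplier algebra $M_r(\mA)$ by $M_l(\mA)$, and Lemma~\ref{Laul} by Lemma~\ref{Laur}. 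The $\eta$-Mazur property and the hypothesis that $\mA$ has a bai refer only to the underlying Banach space and to a two-sided object, so they transfer unchanged. (Formally, this is Theorem~\ref{lucentre} applied to the opposite algebra, which makes the exchange of orientations automatic; but since the definitions of Section~2 are already laid out symmetrically, I would simply repeat the proof, exactly as the proof of Theorem~\ref{diamondcentre} repeats that of Theorem~\ref{Lcentre}.)

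For part~(1), fix $\Psi\in T_2Z(\mathscr S)$ and $a\in\mA$, and let $(S_\lambda)_{\lambda<\eta}$ be an arbitrary net in the unit ball of $\mA^*$ with weak$^*$-limit $0$. Using the right $\eta$-factorization I write $S_\lambda=\sum_{k=1}^m\Psi_{\lambda k}\odot T_k$ with $\{(\Psi_{\lambda k})_{k=1}^m:\lambda<\eta\}$ bounded in $\prod_{k=1}^mX_k\subseteq\ruc(\mA)^*$ and $T_1,\dots,T_m\in\mA^*$. Passing to $m$ subnets (on a common index set) whose $k$-th coordinate converges weak$^*$ to some $\Psi_k\in\mathscr S$, which is legitimate because $X\subseteq\mathscr S$ and $\mathscr S$ is weak$^*$-closed, the element $\sum_{k=1}^m\Psi_k\odot T_k$ is the weak$^*$-limit of the corresponding subnet of $(S_\lambda)$ and hence is $0$. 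I then mirror the displayed computation of Theorem~\ref{lucentre}: pairing $a\diamond\Psi$ against $S_\lambda$ and unfolding each $\odot T_k$ through the $\odot/\diamond$ forms of the adjoint identities of Section~2, the expression reduces to a finite sum of terms built from the products $\Psi_{\lambda k}\diamond\Psi$. Since $\Psi\in T_2Z(\mathscr S)$, right $\diamond$-multiplication by $\Psi$ is weak$^*$-continuous on $\mathscr S$, so $\Psi_{\lambda k}\diamond\Psi\to\Psi_k\diamond\Psi$, and reassembling gives $\lim_\lambda\langle a\diamond\Psi,S_\lambda\rangle=\langle a\diamond\Psi,\sum_{k=1}^m\Psi_k\odot T_k\rangle=0$.

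As $(S_\lambda)$ was an arbitrary weak$^*$-null net of cardinality at most $\eta$, the functional $a\diamond\Psi\in\mA^{**}$ is $\eta$-weak$^*$-continuous, and the $\eta$-Mazur property of $\mA$ forces $a\diamond\Psi\in\mA$; since $a$ was arbitrary this says $\Psi\in\mathscr S\cap J(\mA)$. The reverse inclusion $\mathscr S\cap J(\mA)\subseteq T_2Z(\mathscr S)$ is immediate from Lemma~\ref{Laur}, which gives $J(\mA)\subseteq T_2Z(\ruc(\mA)^*)$; restricting the weak$^*$-continuity of right $\diamond$-multiplication from $\ruc(\mA)^*$ to the subsemigroup $\mathscr S$ yields the claim. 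This proves~(1). For~(2) and~(3) I would invoke the isometric isomorphism $M_l(\mA)\cong J(\mA)$ furnished by Lemma~\ref{Laur} when the bai is bounded by $1$: intersecting with $\mathscr S$ gives~(2), and specialising to $\mathscr S=\ruc(\mA)^*$, where $T_2Z(\ruc(\mA)^*)=\ruc(\mA)^*\cap J(\mA)=J(\mA)$, gives~(3).

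The one place that needs genuine care --- and the main obstacle --- is the orientation bookkeeping in the mirrored computation: I must check that each adjoint/module identity used in the proof of Theorem~\ref{lucentre} has a correctly oriented $\odot/\diamond$ counterpart, and in particular that it is $a\diamond\Psi$ (with $\Psi$ on the right), rather than $\Psi\diamond a$, whose $\eta$-weak$^*$-continuity is being verified, so that the $\eta$-Mazur property delivers membership in $J(\mA)=\{\Psi\in\ruc(\mA)^*:\mA\diamond\Psi\subseteq\mA\}$ and matches the side of the topological centre used to pass the limit. Adopting the opposite-algebra viewpoint makes all these orientations automatic and reduces the obstacle to the symmetry already present in the setup.
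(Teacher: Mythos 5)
Your proposal is correct and follows exactly the route the paper intends: the paper's own proof of Theorem~\ref{ducentre} consists of the single line ``Proceed as in the proof of Lemma~\ref{Laul} and Theorem~\ref{lucentre},'' and your argument is precisely that mirrored computation, with the right orientation throughout (testing $a\diamond\Psi$ so that the $\eta$-Mazur property lands you in $J(\mA)$, and closing the loop with Lemma~\ref{Laur}). Your explicit attention to the $\odot/\diamond$ bookkeeping in fact supplies more detail than the paper does.
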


\begin{proof}
Proceed as in the proof of Lemma  \ref{Laul} and Theorem \ref{lucentre}.
\end{proof}

\section{F$\ell^1(\eta)$-bases in $\mA$ and factorizations of $\mA^*$, $\luc(\mA)$
and $M_r(\mA)^*$}\label{FF}
 In a given Banach algebra $\mA$ with a bai, we introduce next certain sets of cardinality $\eta$ which will provide $\mA^*$,  as well as $M_r(\mA)^*$  in certain cases, with the $\eta$-factorization property.
These sets are based on the standard $\ell^1$-bases, and are formed using some especially constructed sets in the multiplier algebra $M_r(\mA)$ together with an increasing family of sets in $\mA$.
We shall refer to them as {\it F$\ell^1(\eta)$-bases}, where the letter "F" stands for factorization.
The elements $\Psi_{\lambda k} $ needed  for the factorization of operators in $\mA^*$
 in Definition \ref{wfp}
are taken from the weak$^*$-closure in $\luc(\mA)^*$ of the sets constructed in the multiplier algebra $M_r(\mA)$.

In the three definitions given below, $\eta$ is an infinite cardinal.
We  start in our first definition by adapting the standard definition of an $\ell^1$-base which was originally given by Rosenthal for sequences in \cite{rosen}.

\begin{definition} \label{l1}
{\sc $\ell^1(\eta)$-base}: Let  $\eta$ be an infinite cardinal. In a normed space $\mE$,
a bounded set $\mathscr L:=\{a_\alpha\colon \alpha <\eta\}$ is  an $\ell^1(\eta)$-base with a constant $K>0$
when  the inequality
\[\sum_{n=1}^p |z_n|\le K\left\|\sum_{n=1}^p z_na_{\alpha_n}\right\|\]
holds for all $p\in\N$ and for every possible choice of scalars $z_1,...,z_p$ and  (distinct) elements $a_{\alpha_1},...,a_{\alpha_p}$ in $\mathscr L.$
\end{definition}

\begin{remarks}\label{some} If  $\mathscr L:=\{a_\alpha\colon \alpha <\eta\}$ is an $\ell^1(\eta)$-base with a constant $K>0$ in a normed space $\mE,$ then we note the following:
\begin{enumerate}  \item
For every function $c=(c_\alpha)_{\alpha<\eta}$ in $\ell^\infty(\eta)$, there corresponds $T\in \mE^*$ such that \[\langle T, a_\alpha\rangle=c_\alpha\quad\text{for every}\quad \alpha<\eta.\]
 This is obtained simply by letting
\[\left\langle T, \sum_{n=1}^p z_n a_{\alpha_n}\right\rangle=\sum_{n=1}^p z_nc_{\alpha_n}\quad \text{for every}\quad\sum_{n=1}^p z_n a_{\alpha_n}\in \langle\mathscr L\rangle,\] and noting that
\[\left|\langle T, \sum_{n=1}^p z_n a_{\alpha_n}\rangle\right|=
\left|\sum_{n=1}^p z_n c_{\alpha_n}\right|\le \|c\|_\infty\sum_{n=1}^p |z_n|\le \|c\|_\infty K \left\|\sum_{n=1}^p z_n a_{\alpha_n}\right\|.\]
Therefore we have a continuous linear functional $\langle\mathscr L\rangle\to \C$, which may be extended  by Hahn-Banach Theorem to an element in $\mE^*$.
\item
  The closed linear span of an $\ell^1(\eta)$-base is isomorphic to $\ell^1(\eta).$
\item
The elements $a_\alpha$ in $\mathscr L$ satisfy $\|a_\alpha\|\ge\frac1K$
for every $\alpha<\eta.$
\end{enumerate}
\end{remarks}

Next we relax a bit the definition of $\ell^1(\eta)$-bases. This will enable us to include more
algebras that are strongly Arens irregular in our study.
Observe that if  $\mathscr L_\alpha=\{ a_\alpha\}$ and for some constant $K>0$, the inequality  $\|a_\alpha\|\ge \frac1K$ holds for every $\alpha<\eta,$ then next definition is precisely Definition \ref{l1}. For,
\begin{align*} \sum_{n=1}^p |z_n|=\sum_{n=1}^p \frac{1}{\|a_{\alpha_n}\|}\|z_na_{\alpha_n}\|
\le
K\sum_{n=1}^p \|z_na_{\alpha_n}\|\le K\left\|\sum_{n=1}^p z_na_{\alpha_n}\right\|.\end{align*}
The condition that $\|a_\alpha\|\ge \frac1K$ for some constant $K>0$ and other similar conditions (see (\ref{iv}) and (\ref{v}) in the definitions below)   will be crucial in the study.
The conditions are automatically satisfied when we are dealing with $\ell^1$-bases by  Remark \ref{some} (iii). In general, however,  as the reader will see, these conditions are the thorns in the investigation.

\begin{definition} \label{sel} {\sc $\ell^1(\eta)$-selective base}: In a normed space  $\mE$, we say that a bounded set $\mathscr L$ is an  $\ell^1(\eta)$-selective base with constant $K_1>0$ when
 it is of the form $\bigcup\limits_{\alpha<\eta} \mathscr L_\alpha,$ where $\{\mathscr L_\alpha:\alpha<\eta\}$ is a family of nonempty subsets of $\mE$ such that
\begin{enumerate}
\item the spans $\langle \mathscr L_\alpha\rangle,$ $\alpha<\eta$, have zero pairwise intersections,
\item the inequality
\begin{equation}\sum_{n=1}^p \|a_n\|\le K_1\left\|\sum_{n=1}^p a_n\right\|\label{Csel}\end{equation}holds
for every $\sum\limits_{n=1}^p a_n\in\langle\mathscr L\rangle,$ where $a_n\in\langle\mathscr L_{\alpha_n}\rangle$ for each
$n=1,...,p.$\end{enumerate}
\end{definition}

\begin{remarks} \label{somme}
~

\begin{enumerate}\item
Clearly every $\ell^1(\eta)$-base is an $\ell^1(\eta)$-selective base since we have
\begin{align*}\sum_{n=1}^p \|a_{n}\|=\sum_{n=1}^p \|z_na_{\alpha_n}\|&\le\sup_{\alpha<\eta}\|a_\alpha\| \sum_{n=1}^p |z_n|\\&\le
K\sup_{\alpha<\eta}\|a_\alpha\|\left\|\sum_{n=1}^p z_na_{\alpha_n}\right\|=K\sup_{\alpha<\eta}\|a_\alpha\|\left\|\sum_{n=1}^p a_{n}\right\|\end{align*}for every
$a_n=z_na_{\alpha_n}\in \langle a_{\alpha_n}\rangle$, $n=1,...,p.$

\item
Let  $\mathscr L=\bigcup\limits_{\alpha<\eta} \mathscr L_\alpha$ be an $\ell^1(\eta)$-selective base in a normed space $\mE.$
Pick for each $\alpha<\eta,$ $a_\alpha\in \mathscr L_\alpha$ and consider the set $\{a_\alpha:\alpha<\eta\}.$
Then, as in  Remark \ref{some}(i), every function $c=(c_\alpha)_{\alpha<\eta}$ in $\ell^\infty(\eta)$
 defines a continuous linear functional $T\in \mE^*$ such that \[\langle T, a_\alpha\rangle=c_\alpha\quad\text{for every}\quad \alpha<\eta.\]\end{enumerate}
\end{remarks}

\begin{lemma} \label{selective} Let $\mathscr E$ be a Banach space with an $\ell^1(\eta)$-selective base $\mathscr L=\cup_{\alpha<\eta}\mathscr L_\alpha$ with constant $K_1>0$. Let,  for each $\alpha<\eta,$
$\mathcal M_\alpha=\overline{\langle \mathscr L_\alpha\rangle}.$
Then, for each $(S_\alpha)_{\alpha<\eta}\in \oplus_\infty\mathcal M_\alpha^*,$ there exists
$T\in \mathscr E^*$ such that $T_{|\mathcal M_\alpha}=S_\alpha.$
\end{lemma}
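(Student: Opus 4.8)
The plan is to build $T$ first on the algebraic span $\langle\mathscr L\rangle$, using the data $(S_\alpha)_{\alpha<\eta}$ coordinatewise, and then extend it to all of $\mathscr E$ by continuity and the Hahn--Banach theorem. This is the vector-valued analogue of the construction in Remark \ref{somme}(2), where the scalar coefficients $c_\alpha$ are now replaced by the functionals $S_\alpha$.

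The first step is to extract from condition (ii) of Definition \ref{sel} the internal direct-sum structure of $\langle\mathscr L\rangle$. Indeed, if $\sum_{n=1}^p a_n=0$ with $a_n\in\langle\mathscr L_{\alpha_n}\rangle$ and the $\alpha_n$ distinct, then $\sum_{n=1}^p\|a_n\|\le K_1\cdot 0=0$, forcing every $a_n=0$. Hence each $x\in\langle\mathscr L\rangle$ has a \emph{unique} representation $x=\sum_{n=1}^p a_n$ with $a_n\in\langle\mathscr L_{\alpha_n}\rangle$ (distinct $\alpha_n$); that is, $\langle\mathscr L\rangle$ is the internal algebraic direct sum of the $\langle\mathscr L_\alpha\rangle$. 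This uniqueness is exactly what makes the next definition unambiguous.

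Next I define $T$ on $\langle\mathscr L\rangle$ by $\langle T,x\rangle=\sum_{n=1}^p\langle S_{\alpha_n},a_n\rangle$ for $x=\sum_{n=1}^p a_n$ written as above; by the previous step this is well defined and linear. Setting $M=\sup_{\alpha<\eta}\|S_\alpha\|<\infty$ (finite since $(S_\alpha)\in\oplus_\infty\mathcal M_\alpha^*$), boundedness is again obtained from (ii):
\[
|\langle T,x\rangle|\le\sum_{n=1}^p\|S_{\alpha_n}\|\,\|a_n\|\le M\sum_{n=1}^p\|a_n\|\le M K_1\Big\|\sum_{n=1}^p a_n\Big\|=MK_1\|x\|.
\]
Thus $T$ is continuous on $\langle\mathscr L\rangle$, hence extends continuously to $\overline{\langle\mathscr L\rangle}$, and the Hahn--Banach theorem then yields the desired $T\in\mathscr E^*$ with $\|T\|\le MK_1$.

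Finally I verify the restriction identity. Taking $p=1$ in the definition of $T$ gives $\langle T,a\rangle=\langle S_\alpha,a\rangle$ for every $a\in\langle\mathscr L_\alpha\rangle$, so $T$ and $S_\alpha$ agree on the dense subspace $\langle\mathscr L_\alpha\rangle$ of $\mathcal M_\alpha=\overline{\langle\mathscr L_\alpha\rangle}$; being both continuous on $\mathcal M_\alpha$, they coincide there, i.e.\ $T_{|\mathcal M_\alpha}=S_\alpha$. There is no serious obstacle here: the only point requiring care is that the selective-base inequality (ii) is used for two distinct purposes---once with a vanishing right-hand side to secure the direct-sum structure (so that $T$ is well defined), and once to obtain the uniform norm estimate---and that one must keep $(S_\alpha)$ bounded in the $\oplus_\infty$ sense for the estimate to close.
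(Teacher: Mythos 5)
Your proof is correct and follows essentially the same route as the paper: define $T$ coordinatewise on $\langle\mathscr L\rangle$, bound it by the selective-base inequality with constant $K_1\sup_\alpha\|S_\alpha\|$, and extend by Hahn--Banach. The one place you go beyond the paper is in deriving the uniqueness of the decomposition $x=\sum_n a_n$ (hence the well-definedness of $T$) from the inequality (\ref{Csel}) applied to $\sum_n a_n=0$, a point the paper simply asserts from the zero-pairwise-intersection hypothesis; your argument is the cleaner justification.
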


\begin{proof} Let $(S_\alpha)_{\alpha<\eta}\in \oplus_\infty\mathcal M_\alpha^*.$
Since the spans $\langle\mathscr L_\alpha\rangle$ have zero pairwise intersections, each $x\in\langle \mathscr L\rangle$ is of the form $\sum\limits_{n=1}^pa_{n}$ for some $p\in\N$ and $a_n\in \langle\mathscr L_{\alpha_n}\rangle,$ $n=1,...,p,$ and so we may naturally define $T$  on $\langle \mathscr L\rangle$ by \[\langle T, \sum_{n=1}^pa_{n}\rangle=
\sum_{n=1}^p\langle S_{\alpha_n},a_{n}\rangle.\]
 The map $T$ is well defined and linear on $\langle \mathscr L\rangle.$
To check that it is bounded, let $x=\sum_{n=1}^p a_n\in\langle \mathscr L\rangle,$ where $a_n\in \mathscr L_{\alpha_n}$
 for each $n=1,...,p.$
Then
 \begin{align*}\left|\langle T,x\rangle\right|=\left|\langle T,\sum_{n=1}^p a_n\rangle\right|&
=\left|\sum_{n=1}^p\langle S_{\alpha_n},a_{n}\rangle\right|
\\&\le  \sup_\alpha\|S_\alpha\| \sum_{n=1}^p\| a_n\|\\&
\le K_1\sup_\alpha\|S_\alpha\| \|x\|, \label{key}
\end{align*}
where the last inequality is due to the fact that $\mathscr L$ is an $\ell^1(\eta)$-selective base with constant $K_1.$
This shows that $T$ is bounded on $\langle \mathscr L\rangle,$  and so it has a bounded extension to $\overline{\mathscr L}.$
Extend it then to a bounded functional, denoted again by  $T$, to the whole space by Hahn-Banach theorem. By definition, it is clear that
$T_{|\mathcal M_\alpha}=S_\alpha.$
\end{proof}

Now we come to the main definitions needed in the paper.
Recall that $\mA$ is a right ideal in $M_r(\mA)$ when $\mA$ is faithful, so in the definitions below, the sets $A_\alpha a_\alpha^k$ and $A_\alpha a_\alpha$ are
contained in $\mA$ for each $\alpha<\eta$ and $k=1,...,m$.

\begin{definition} \label{Fil}
 In a Banach algebra $\mA$ with a bai, a zooming set  is a set of the form
 $\mathscr L=\bigcup\limits_{\alpha<\eta} A_\alpha a_\alpha$, where
\begin{enumerate}
\item   $\{A_\alpha\}_{\alpha<\eta}$ is an increasing family of subsets of $\mA$ such that $A_\eta=\bigcup_{\alpha<\eta}A_\alpha$ is bounded (we may assume the bound is $1$)
with its closed linear span equal to $\mA$,
\item   $\{a_\alpha:\alpha<\eta\}$ is  a bounded set in the multiplier algebra $M_r(\mA)$,
\item  the spans $\langle A_\alpha a_\alpha\rangle,$ $\alpha<\eta$, have zero pairwise intersections.
\end{enumerate}
\end{definition}
\bigskip

\begin{definition} \label{Fel}
{\sc F$\ell^1(\eta)$-base of type 1}.  In a Banach algebra $\mA$ with a bai, a zooming $\ell^1(\eta)$-base is called an F$\ell^1(\eta)$-base of type 1.
 \end{definition}

 Note that when $\mathscr L$ is  an F$\ell^1(\eta)$-base of type 1, then  the property
 $1\le K\|aa_\alpha\|$ for every $\alpha<\eta$ satisfied by the elements  $a\in A_\alpha,$ $\alpha<\eta$
(see Remark \ref{some} (iii)),
 is equivalent to
\begin{equation}||a||\le K||aa_\alpha||\quad\text{for every}\quad a\in \langle A_\alpha\rangle\;\text{and}\quad \alpha<\eta.\tag{F2}\label{P}\end{equation}
For, since we are assuming  that $A_\eta$ is bounded by $1,$ we find that for every
 scalars $z_1,...,z_p$ and  (distinct) elements $a_{1},...,a_{p}$ in $A_\alpha,$
\[\|\sum_{1}^pz_na_n\|\le \sum_{n=1}^1|z_n|\le K\left\|\sum_{n=1}^p z_na_na_{\alpha}\right\|.\]
In other words, \[\|a\|\le K\|aa_\alpha\|\quad\text{for every}\quad a\in \langle A_\alpha\rangle.\]

In our applications to the algebraic structure and the topological centres of the Banach algebras
with an Arens product, we will see that F$\ell^1(\eta)$-base of type 1 are available in the group algebra of infinite discrete groups and the semigroup algebra of some infinite discrete semigroups
as well as their weighted versions when the weight is diagonally bounded.
But in general, these F$\ell^1(\eta)$-bases are not available.
The next definition relaxes the condition on being  an $\ell^1(\eta)$-base to an $\ell^1(\eta)$-selective base having Property (\ref{P}) . This second type of F$\ell^1(\eta)$-base will be found in more general situations.

\begin{definition} \label{Fell}
{\sc F$\ell^1(\eta)$-base of type 2}.  In a Banach algebra $\mA$ with a bai, an F$\ell^1(\eta)$-base of type 2 is a zooming $\ell^1(\eta)$-selective base (with constant $K_1>0$)
such that for some constant $K_2>0$,
\begin{equation}||a||\le K_2||aa_\alpha||\quad\text{for every}\quad a\in \langle A_\alpha\rangle\;\text{and}\quad \alpha<\eta.\tag{F2}\label{iv}\end{equation}
\end{definition}

\medskip

In certain cases (see Subsection \ref{nonmetrizable}), although zooming $\ell^1(\eta)$-selective bases are available,  we are not able to check the validity of Inequality (\ref{iv})
above. This has forced us to relax this inequality  to Condition (\ref{v}) below, which is easier to have.
  Condition (iii) in Definition \ref{Fil} for the zooming property means in the definition below
	that the spans $\langle A_\alpha \rangle a_\alpha^k$  have zero pairwise intersections for each $k=1,...,m.$

\begin{definition} \label{Felll}
{\sc F$\ell^1(\eta)$-base of type 3}.  In a  Banach algebra  $\mA$ with a bai,
an F$\ell^1(\eta)$-base of type 3 is  a zooming $\ell^1(\eta)$-selective base (with constant $K_1>0$)  of the form
\[\mathscr L=\bigcup_{\alpha<\eta}\bigcup_{k=1}^m A_\alpha a_\alpha^k=\bigcup_{\alpha<\eta}A_\alpha \{a_\alpha^k:k=1,...m\},\]  where
$\{{\bf a_\alpha}=(a_\alpha^k)_{k=1}^m:\alpha<\eta\}$ is a bounded set in $M_r(\mA)^m$
such that for some $K_2>0$,
\begin{equation}\|a\|\le K_2\|a{\bf a_\alpha}\|\quad\text{for every}\quad a\in \langle A_\alpha\rangle\;\text{and}\quad \alpha<\eta.\tag{F3}\label{v}\end{equation}
Here $a{\bf a_\alpha}=(aa_\alpha^k)_{k=1}^m\in \mA^m$ and $\|a{\bf a_\alpha}\|$ is any of the $p$-norms of the vector $a{\bf a_\alpha}$
in $M_r(\mA)^m$, $p=1,2,...,\infty$.
\end{definition}
\medskip

\begin{remarks}~\normalfont

\begin{enumerate}
\item
 The importance of  Conditions (\ref{iv}) and (\ref{v}) is not    a mere surmise. Already with $m=1$, the condition yields the inequality \[\frac 1{K_2}\|\Phi\|\le \|\Phi\Psi\|\le \|\Phi\|\quad\text{for every}\quad \Phi\in \mA^{**}\] whenever $\Psi$ is weak*-cluster point of $(a_\alpha)$.
When $K_2=1,$ these elements satisfy  $\|\Phi\Psi\|=\|\Phi\|$ for every $\Phi\in \mA^{**}$,  they are called "right isometries" in the literature. They were essential in each of the proofs
on the first topological centres of $\luc(G)^*$ and $L^1(G)^{**}$ given in the past by various authors,
see for instance \cite{grosser-losert}, \cite{Lau86}, \cite{LaLo88}, \cite{FiSa} and \cite{pym-et-al:one-pt}.
They came again in \cite{FiSa-b} studying the first topological centres of weighted algebras, with  the constant $K_2$ as the diagonal bound of the weight (and so not necessarily equal to $1$). Further details can be found in \cite{FiSa}.

The condition (\ref{v}) (and so  (\ref{iv}) when $m=1$) is   essential and needed right at the beginning at  Inequality (\ref{isometry}) in the proof of the factorization Theorem \ref{gfactori}.

\item The right F$\ell^1(\eta)$-bases are defined similarly as in the three definitions above but with $a_\alpha^kA_\alpha$ and $a_\alpha^k\in M_l(\mA),$ $\alpha<\eta$ and $k=1,...,m$.
\end{enumerate}
\end{remarks}

\begin{notation}\label{through}~\normalfont

\begin{enumerate}
\item
 We use the term F$\ell^1(\eta)$-base to designate any of the three types of F$\ell^1(\eta)$-bases.
For an F$\ell^1(\eta)$-base in $\mA$, let $\eta$ have the discrete topology and $\beta \eta$ be the Stone-\v Cech compactification
of $\eta$ and let $\mathscr{C}(\eta)$ be the set of all cofinal ultrafilters on $\eta$. Recall that an ultrafilter
$p$ on $\eta$ is cofinal if every set $I$ in $p$ is cofinal in $\eta$, that is, $\sup I=\eta$ (cf.\ \cite{vanDouwen91}).
 By a result of van  Douwen \cite{vanDouwen91}
\begin{equation*}\label{eq:280509A}
\left|\mathscr {C}(\eta)\right| =2^{2^{\eta}}.
\end{equation*}
Since for each $k=1,...,m$, the  set $\{a_\alpha^k:\alpha<\eta\}$ is bounded in   $M_r(\mA),$
the bounded map
\begin{equation}\label{map}
\eta \to \luc(\mA)^*\colon\quad \alpha \mapsto a_\alpha^k
\end{equation}
has a continuous extension to $\beta \eta$. So for each $k=1,...,m$ and each ultrafilter $p$ in $\beta\eta,$
there corresponds an element $\Psi_{p k}$ in $\luc(\mA)^*.$
(If the elements $a_\alpha^k$ are bounded by $1$, then the elements   $\Psi_{p k}$
are in the closed unit ball in $\luc(\mA)^{*}$.)
For each $k=1,...m,$ we may consider the sets in the unit ball of $\luc(\mA)^{*}$
given by  \[X_k=\{\Psi_{p k}:p\in \mathscr{C}(\eta)\}\quad\text{and let}\quad X=\bigcup_{k=1}^mX_k.\] \medskip

\item When we deal with $\mA^{**}$, as already agreed $\widetilde X$ is the set made of fixed extensions of the elements in $X$ to $\mA^{*}$.
\item  When our concern is with $M_r(\mA)$, instead of the map (\ref{map}), we consider the bounded map
\[
\eta \to M_r(\mA)^{**}\colon\quad \alpha \mapsto a_\alpha^k,
\] and its extension to $\beta\eta.$
So our elements $\Psi_{p k}$ corresponding to the cofinal ultrafilters, as well as the set $X$, shall be regarded in the unit ball of $M_r(\mA)^{**}$.
\end{enumerate}
\end{notation}

\begin{remarks} \label{fellrem}~\normalfont

\begin{enumerate}
\item
As already noted in Remark \ref{somme}(i), every $\ell^1(\eta)$-base is an $\ell^1(\eta)$-selective base.
  Moreover, as noted after Definition \ref{Fel},
Condition (\ref{iv}) in Definition \ref{Fell} is automatically satisfied for an F$\ell^1(\eta)$-base of type 1.
Accordingly, an  F$\ell^1(\eta)$-base of type 1 is also of type 2.

The converse is however not true in general. The F$\ell^1(\eta)$-bases found  in Theorem \ref{ex1} below  are of type 2 but  they are not usually of type 1 since they are not $\ell^1(\eta)$-bases in general. To see this, let in Theorem \ref{ex1} our group be $\R$ and keep the same notation.
With no loss of generality, we may just consider $V_0=]-1,1[$,  $x_0=0$ the identity in $\R,$ and consider the
 sequence $(\varphi_n)_{n<w}$ in $A_0$ be given by \[\varphi_n(x)=\begin{cases} \frac12-\frac1n,\qquad\text{if}\; 0\le x\le\frac12-\frac1n\\x,\qquad\qquad\text{if}\; \frac12-\frac1n\le x\le\frac12+\frac1n\\ \frac12+\frac1n,\qquad\text{if}\; \frac12+\frac1n\le x\le1\\0,\quad\qquad\quad\text{otherwise}.\end{cases}\] Then  for every odd natural number $p,$
\[\sum_{n=1}^p(-1)^{n+1}\varphi_n(x)=\begin{cases} 1-\sum_{n=1}^p(-1)^{n+1}\frac1n,\qquad\text{if}\; 0\le x\le\frac12-\frac1n\\x,\qquad\qquad\qquad\qquad\qquad\text{if}\;\frac12-\frac1n\le x\le\frac12+\frac1n\\
1+\sum_{n=1}^p(-1)^{n+1}\frac1n,\qquad\text{if}\; \frac12+\frac1n\le x\le1\\0,\qquad\qquad\qquad\qquad\qquad\text{otherwise},\end{cases}\] and so
$\left|\left|\sum_{n=1}^p(-1)^{n+1}\varphi_n\right|\right|_1$ is close to $1$ when $p$ is sufficiently large.  Therefore, there no $K>0$ for which \[p \le K\left|\left|\sum_{n=1}^p(-1)^{n+1}\varphi_n\right|\right|_1\]
for every odd natural number $p.$

\item   If together with the bounded set
$\{{\bf a}_\alpha=(a_\alpha^k)_{k=1}^m:\alpha<\eta\}$  in $M_r(\mA)^m$, there is another  bounded  set $\{{\bf b}_\alpha=(b_\alpha^k)_{k=1}^m:\alpha<\eta\}$ in $M_r(\mA)^m$
such that \begin{equation} \label{vi}{\bf a}_\alpha {\bf b}_\alpha=\sum_{k=1}^ma_\alpha^kb_\alpha^k=1\;\;(\text {the identity in}\;\; M_r(\mA))\;\text{for each}\;\alpha<\eta,\tag{f3}\end{equation}
then Condition (\ref{v}) holds easily since
 \[||a||=||a{\bf a}_\alpha {\bf b}_\alpha||\le ||a{\bf a}_\alpha||_p||{\bf b}_\alpha||_q\le
K||a{\bf a}_\alpha||_p
\quad\text{for every}\quad a\in \langle A_\alpha\rangle\; \text{and}\; \alpha<\eta,\]
where $\frac 1p+\frac1q=1$ when $p>1,$ $q=\infty$ when $p=1$, and
 $K=\sup_\alpha\|{\bf b}_\alpha\|_q$.

Therefore a zooming $\ell^1(\eta)$-selective base with Condition (\ref{vi}) is an F$\ell^1(\eta)$-base with constant $K_2=\sup_\alpha\|{\bf b}_\alpha\|$, it is  of type 2 if $m=1$ and of type 3 if $m$ is any positive integer.

Conditions (\ref{v}) and (\ref{vi}) are however not equivalent in general (even for $m=1$) as Condition (\ref{v}) holds in each of the cases studied in Sections \ref{Examples}, \ref{wexamples} and \ref{Aexamples},
 but Condition (\ref{vi}) is clearly not true   in many situations
starting with Theorem \ref{ex2} when $G$ is not a group.

\item  In most examples in our application sections \ref{Examples}, \ref{wexamples} and \ref{Aexamples},  we will be needing just the lighter Definitions \ref{Fel} and \ref{Fell}.
The rather technical conditions in Definition \ref{Felll} were forced into our approach because of Theorem \ref{ex5} and the results related to this theorem, where we cannot find F$\ell^1(\eta)$-bases of type 1 or 2. \end{enumerate}
\end{remarks}

\begin{remark} \label{luc} Let $\mc{L}=\bigcup\limits_{\alpha<\eta}A_\alpha\{ a_\alpha^k:k=1,...,m\}$ be an F$\ell^1(\eta)$-base in $\mA$, and suppose that  $\mc L$ is of type 1 or 2 if $m=1,$ and it is of type 3, otherwise.
Fix $a\in A_0$.
Then $aa_{\alpha}^k\in A_{\alpha}a_{\alpha}^k$ for each $\alpha<\eta$ and each $k=1,...,m.$ Since $\bigcup\limits_{\alpha<\eta}A_\alpha a_\alpha^k$ is an $\ell^1(\eta)$-selective base, it follows from Remark \ref{somme}(ii),  that for each $k=1,...,m,$ every function $c=(c_\alpha)_{\alpha<\eta}$ in $\ell^\infty(\eta)$   defines a continuous linear functional $T_k\in \mA^*$ such that \[\langle T_k, aa_\alpha^k\rangle=c_\alpha\quad\text{for every}\quad \alpha<\eta.\] In other words, for each $k=1,...,m,$ the function $c\in \ell^\infty(\eta)$ defines an operator $T_k\cdot a$ in $\luc(\mA)$ such that
 \[\langle T_k\cdot a, a_\alpha^k\rangle=\langle T_k, aa_\alpha^k\rangle=c_\alpha\quad\text{for every}\quad \alpha<\eta.\]

Similarly, if $\mc L$ is a right F$\ell^1(\eta)$-base in $\mA$, the function $c\in \ell^\infty(\eta)$ defines for each $k$ an operator $T_k\odot a=a\cdot T_k$ in $\ruc(A)$ such that
 \[\langle a\cdot T_k , a_\alpha^k\rangle=\langle T_k, a_\alpha^k a\rangle=\textsc{}c_\alpha\quad\text{for every}\quad \alpha<\eta.\]
\end{remark}

\medskip

\begin{theorem}\label{gfactori} Let $\mA$ be a Banach algebra with a bai and  an F$\ell^1(\eta)$-base for some infinite cardinal $\eta$ and some $m\in\N$, and let $X_1,...,X_m$ be the subsets in $\luc(\mA)^*$ as defined in Notation \ref{through}. Then $\mA^*$   has the uniform $\eta$-factorization property through $\prod_{k=1}^m X_k.$
\end{theorem}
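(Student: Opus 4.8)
The plan is to separate the \emph{uniform} (data-independent) combinatorial skeleton from the construction of the factorising functionals $T_k$. First I fix, once and for all, a partition $\eta=\bigsqcup_{\lambda<\eta}I_\lambda$ into $\eta$ pieces each of cardinality $\eta$ (possible since $\eta\cdot\eta=\eta$); as every bounded subset of $\eta$ has cardinality $<\eta$, each $I_\lambda$ is cofinal. For each $\lambda$ I then choose a cofinal ultrafilter $p_\lambda$ with $I_\lambda\in p_\lambda$: the sets $I_\lambda\cap[\gamma,\eta)$, $\gamma<\eta$, generate a filter all of whose members are cofinal in $\eta$, and any ultrafilter extending it is cofinal and contains $I_\lambda$ (van Douwen's abundance of cofinal ultrafilters is more than enough). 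Since the $I_\lambda$ are disjoint the $p_\lambda$ are distinct, so setting $\Psi_{\lambda k}:=\Psi_{p_\lambda k}\in X_k$ as in Notation \ref{through} gives a bounded family in $\prod_{k}X_k$ depending only on the partition, which is exactly what uniformity demands. Let $\mu\colon\eta\to\eta$ be $\mu(\alpha)=\lambda$ iff $\alpha\in I_\lambda$.

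Given a bounded family $\{S_\lambda:\lambda<\eta\}$ in $\mA^*$, I build the $T_k$ as follows. For each $\alpha$ consider the spaces $\mathcal M_\alpha^k=\overline{\langle A_\alpha a_\alpha^k\rangle}$ and the map $b\mapsto b\mathbf a_\alpha=(b a_\alpha^1,\dots,b a_\alpha^m)$ from $\langle A_\alpha\rangle$ into the external sum $\bigoplus_{k=1}^m\mathcal M_\alpha^k$. By Condition \eqref{v} this map is bounded below, so $b\mathbf a_\alpha\mapsto\langle S_{\mu(\alpha)},b\rangle$ is a well-defined bounded functional on its range, of norm at most $K_2\sup_\lambda\|S_\lambda\|$. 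Extending it by Hahn--Banach to $\widehat S^{(\alpha)}=(S_\alpha^k)_{k=1}^m$ on $\bigoplus_k\mathcal M_\alpha^k$ yields functionals $S_\alpha^k\in(\mathcal M_\alpha^k)^*$ with $\sum_{k}\langle S_\alpha^k,b a_\alpha^k\rangle=\langle S_{\mu(\alpha)},b\rangle$ for all $b\in\langle A_\alpha\rangle$, and $\sup_{\alpha,k}\|S_\alpha^k\|<\infty$. For each fixed $k$ the family $\bigcup_{\alpha}A_\alpha a_\alpha^k$ is an $\ell^1(\eta)$-selective base (its pieces have zero pairwise intersection by the zooming hypothesis and inherit inequality \eqref{Csel}), so Lemma \ref{selective} produces $T_k\in\mA^*$ with $T_k|_{\mathcal M_\alpha^k}=S_\alpha^k$; in particular $\langle T_k,b a_\alpha^k\rangle=\langle S_\alpha^k,b a_\alpha^k\rangle$ whenever $b\in A_\alpha$.

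To verify the factorization, note that $\Psi_{p_\lambda k}$ is the weak$^*$-limit along $p_\lambda$ of $a_\alpha^k\in M_r(\mA)\subseteq\luc(\mA)^*$, so by the identification $M_r(\mA)\cong I(\mA)$ of Lemma \ref{Laul} one gets $\langle\Psi_{p_\lambda k}\cdot T_k,b\rangle=\langle\Psi_{p_\lambda k},T_k\cdot b\rangle=\lim_{\alpha\to p_\lambda}\langle T_k,b a_\alpha^k\rangle$ for $b\in\mA$. Summing over $k$, for $b\in\langle A_\gamma\rangle$ and using that $I_\lambda\cap[\gamma,\eta)\in p_\lambda$ (as $p_\lambda$ is cofinal it contains every $[\gamma,\eta)$) and that $\mu\equiv\lambda$ there, I obtain $\langle\sum_k\Psi_{p_\lambda k}\cdot T_k,\,b\rangle=\lim_{\alpha\to p_\lambda}\langle S_{\mu(\alpha)},b\rangle=\langle S_\lambda,b\rangle$. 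Both sides are bounded functionals agreeing on the dense subspace $\langle A_\eta\rangle$ (whose closed span is $\mA$ by the zooming definition), hence they agree on $\mA$, giving $S_\lambda=\sum_k\Psi_{\lambda k}\cdot T_k$. The case $m=1$ of types 1 and 2 is subsumed, since an $\ell^1(\eta)$-base is selective (Remark \ref{somme}) and \eqref{iv} plays the role of \eqref{v}.

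The main obstacle is the combinatorial retrieval step: because the $a_\alpha^k$ are only ``seen'' cofinally, a single fixed cofinal ultrafilter must read off the correct $S_\lambda$, which forces the partition-plus-ultrafilter device above and the encoding $\mu$. The other essential point — and the reason \eqref{v} is needed ``right at the beginning'' — is that without a lower bound on $b\mapsto b\mathbf a_\alpha$ the pullback functional $b\mathbf a_\alpha\mapsto\langle S_{\mu(\alpha)},b\rangle$ would be neither well defined nor bounded, and the $S_\alpha^k$ could not be produced. The remaining verifications (that each $\bigcup_\alpha A_\alpha a_\alpha^k$ stays selective and that the $M_r(\mA)$--$\luc(\mA)^*$ pairing behaves as stated) are routine.
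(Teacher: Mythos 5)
Your argument is correct and follows the paper's own proof in all essentials: the same partition of $\eta$ into $\eta$ cofinal pieces $I_\lambda$ with cofinal ultrafilters $p_\lambda$ containing $I_\lambda$, the same pull-back of $S_{\mu(\alpha)}$ along $b\mapsto b\mathbf{a}_\alpha$ made legitimate by Condition (\ref{v}) (resp.\ (\ref{iv})), the same gluing of the resulting bounded family via Lemma \ref{selective}, and the same weak$^*$-limit verification on the dense span of $A_\eta$. The only cosmetic difference is that you apply Lemma \ref{selective} once per coordinate $k$ after a Hahn--Banach splitting on the finite direct sums $\bigoplus_{k=1}^m\overline{\langle A_\alpha a_\alpha^k\rangle}$, whereas the paper applies it a single time to the selective base $\mathbf{L}=\bigcup_{\alpha<\eta}A_\alpha\mathbf{a}_\alpha$ inside $\mA^m$ and reads off $T=(T_k)_{k=1}^m$ from the identification $(\mA^m)^*=(\mA^*)^m$.
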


\begin{proof}  We prove first that $\mA^*$  has the $1$-factorization property, i.e.,
for every $S\in \mA^*$,  there exists a  family of functionals $\{T_k:k=1,...,m\}$ in $\mA^{*}$ such that the factorization formula \begin{equation}\label{1identity}\Psi\cdot T=\sum_{k=1}^m\Psi_k \cdot T_k=S\end{equation} holds
for every element $(\Psi_k)_{k=1}^m$ in $\prod_{k=1}^mX_k$, where $m=1$ if the F$\ell^1(\eta)$-base is of type 1 or 2,  and $m$ is a finite integer if  the F$\ell^1(\eta)$-base is of type 3.

Let  our F$\ell^1(\eta)$-base $\mathscr L$  have the form $\bigcup\limits_{k=1}^m\bigcup\limits_{\alpha<\eta}A_\alpha a_\alpha^k$,
and consider in the Banach algebra $\mA^m$ (the operations are defined coordinatewise),
the set ${\bf{ L}}=\bigcup\limits_{\alpha<\eta}A_\alpha {\bf a_\alpha}$,
where $A_\alpha{\bf a_\alpha}=\{a{\bf a_\alpha}=(a a_\alpha^1,..., a a_\alpha^m): a\in A_\alpha\}$. Since $\mathscr{L}$ is an $\ell^1(\eta)$-selective base in $\mA$, it is easy to check that ${\bf  L}$ is an $\ell^1(\eta)$-selective base in $\mA^m$.

 As in Lemma \ref{selective}, we consider  in $\mA^m$ the subspaces
 \[\mathcal M_{\alpha}= \overline{\langle A_\alpha {\bf a_\alpha}\rangle}=\overline{\langle A_\alpha\rangle}\,{\bf a_\alpha}\;\text{ for each}\;\alpha<\eta.\]

    Let   $S$ be any element in $\mA^\ast$. 	Then, for each $\alpha<\eta$, define $S^{\alpha}$ on $A_\alpha {\bf a_\alpha}$
  by setting  \begin{equation*}\label{change1}\langle S^{\alpha}, a{\bf a_\alpha} \rangle=\langle S, a\rangle\quad\text{for each}\quad a\in A_\alpha.\end{equation*} (We are changing the notation $S_\alpha$ used in Lemma \ref{selective} to
	$S^\alpha$ to avoid confusion with the operators in the family $\{S_\lambda\}_{\lambda<\eta}$
	coming up later on in the proof.) Then,
	we have
	\begin{equation}|\langle S^{\alpha}, a{\bf a_\alpha}\rangle|=|\langle S, a\rangle|\le  \|S\|\|a\|\le K_2\|S\|\|a{\bf a_\alpha}\|\quad\text{for every}\quad a\in \langle A_\alpha\rangle,\label{isometry}\end{equation}
	where $K_2>0$  is the constant required by Inequality (\ref{iv}) if $m=1$ and by (\ref{v}) if $m\in \N$
	is arbitrary.
	
  Thus, each $S^{\alpha}$ is linear and bounded on $\langle A_\alpha \bf a_\alpha\rangle,$ and so it may be extended to an element
  in $\mathcal M_{\alpha }^*$ which we denote again by $S^{\alpha}.$
    We also deduce from the inequality above that  	$(S^{\alpha})_{\alpha<\eta}\in \oplus_\infty\mathcal M_{\alpha }^*.$  Since $\bf L$ is an $\ell^1(\eta)$-selective base in $\mA^m$, it follows from Lemma \ref{selective}, there is  $T=(T_k)_{k=1}^m\in {\mA^m}^*={\mA^*}^m$ 	such that ${T}_{|\mathcal M_{\alpha}}=S^{\alpha}$ for every $\alpha<\eta,$ that is,
\begin{equation}\langle T, a {\bf a_\alpha}\rangle=\langle S^{\alpha}, a{\bf a_\alpha}\rangle=\langle S, a\rangle, \quad\text{for every}\quad   a\in A_\alpha\quad\text{and}\quad\alpha<\eta,\label{c4}\end{equation}
where \begin{equation}\label{dtc}\|T\|\le K_1\sup_\alpha\|S^\alpha\| \le K_1K_2\|S\|.\end{equation}
Let now  $\Psi=(\Psi_k)_{k=1}^m$ be in $\prod_{k=1}^mX_k$,
  We claim  that \[\langle\sum_{k=1}^m\Psi_k \cdot T_k,a\rangle=\langle \Psi\cdot T, a\rangle=\langle S, a\rangle\quad\text{for every}\quad  a\in \mA.\]
Pick 	 a  net  from $\{{\bf a_\alpha}:\alpha<\eta\}$  that weak$^*$-converges to
	$\Psi$ in $(\luc(\mA)^{\ast})^m$.
	This net may be written as $({\bf a}_{h(\gamma)})_{\gamma \in \Lambda}$, where $\Lambda$  is a directed set and  $h\colon \Lambda \to \eta $ is  a monotone cofinal function.
Let $a$ be arbitrary in $A_\eta$.  Then $a\in A_\beta$ for some $\beta<\eta$, and so $a\in A_\alpha$ for every $\alpha\ge \beta.$ Accordingly,
  \begin{align}   \langle \Psi\cdot T , a\rangle=     \langle \Psi, T\cdot  a\rangle=
	\lim_\gamma  \langle  {\bf a}_{h(\gamma)} , T\cdot  a \rangle
=\lim_\gamma  \langle T, a {\bf a}_{h(\gamma)} \rangle.\label{c3}
      \end{align}

Note now that  if $\gamma\in \Lambda$ is such that  $h(\gamma)\ge \beta$, then $a {\bf a}_{h(\gamma)}\in A_{h(\gamma)}{\bf a}_{h(\gamma)},$ and so (\ref{c4}) implies that
\[\langle T, a {\bf a}_{h(\gamma)} \rangle=\langle S^{h(\gamma)}, a{\bf a}_{h(\gamma)}\rangle=\langle S,a\rangle.\]
Thus, continuing the argument in (\ref{c3}), we get
\begin{equation*}\langle\Psi\cdot T, a\rangle=\lim_\gamma  \langle T, a {\bf a}_{h(\gamma)} \rangle =\langle S,a\rangle\quad\text{for every}\quad
a\in A_\eta.\end{equation*}
Since  $\sum_{k=1}^m\Psi_k \cdot T_k$  and $S$ agree on $A_\eta$ and  the linear span of   $A_\eta$ is dense in $\mA,$
we see that $\sum_{k=1}^m\Psi_k \cdot T_k$  and $S$ are equal.

We prove now that $\mA^*$ has the uniform $\eta$-factorization property. So let $\mathcal S=\{S_\lambda\}_{\lambda<\eta}$ be a bounded family of functionals in $\mA^*.$ Then partition $\eta$ into $\eta$ many subsets $I_\lambda$ such that each $I_\lambda$ is cofinal in $\eta$
(see \cite[Lemma, page 61]{vanDouwen91}).
 We see our  F$\ell^1(\eta)$-base as \[\mathscr L=\bigcup\limits_{\lambda<\eta}\bigcup\limits_{\alpha\in I_\lambda} A_\alpha \{a_\alpha^k:k=1,...,m\}\] and its corresponding
$\ell^1(\eta)$-selective base
 in $\mA^m$ as \[\mathscr {\bf L}=\bigcup\limits_{\lambda<\eta}\bigcup\limits_{\alpha\in I_\lambda} A_\alpha {\bf a}_\alpha.\]

Note, that  for each   $\lambda<\eta$, the sets $A_\alpha{\bf a}_\alpha$, $\alpha\in I_\lambda$,  have zero pairwise intersections, and
the family  $\{A_\alpha\}_{\alpha\in I_\lambda}$ is increasing with  $A_\eta=\bigcup_{\alpha\in I_\lambda}A_\alpha$. So we may proceed as in the previous case with the 1-factorization, but this time by taking
for every
$\lambda<\eta,$ the element $\Psi_\lambda= (\Psi_{\lambda k})_{k=1}^m$ in $\prod_{k=1}^mX_k$ as any weak$^*$-cluster point of the set $\{{\bf a}_\alpha:\alpha\in I_\lambda\}$, i.e., based on our Notation \ref{through}, $\Psi_\lambda=\Psi_p$ for some cofinal ultrafilter $p$ having $I_\lambda$ as a member. Then let
 $\{{\bf a}_{h_\lambda(\gamma)}\}_{\gamma \in \Lambda}$ be the net converging to $\Psi_\lambda$ (here $h_\lambda:\Lambda\to I_\lambda$ is monotone and cofinal.)

Then, define for each $\alpha\in I_\lambda$ and $\lambda<\eta,$ $S^\alpha$ on $A_\alpha {\bf a}_\alpha$ by
\begin{equation}\label{op3}\langle S^{\alpha}, a{\bf a}_\alpha\rangle=\langle S_\lambda, a\rangle\quad\text{for every}\quad a\in A_\alpha.\end{equation}

Again applying Lemma \ref{selective}, we obtain as in (\ref{c4}), a functional $T=(T_k)_{k=1}^m\in {\mA^*}^m$
such that \begin{equation}\label{etafac1}\langle T, a {\bf a}_\alpha\rangle=\langle S^{\alpha}, a{\bf a}_\alpha\rangle=\langle S_\lambda, a\rangle \;\text{for every}\;   a\in A_\alpha,\;\alpha\in I_\lambda\;\text{and}\;\lambda<\eta.\end{equation}
We claim that \[\Psi_\lambda\cdot T= S_\lambda\quad\text{for every}\quad\lambda<\eta.\] So fix $\lambda<\eta$, let $a\in A_\eta$ be arbitrary and pick $\beta<\eta$ with $a\in A_\beta$. Then there exists $\gamma_0\in\Lambda$ such that
$h_\lambda(\gamma)\ge\beta$ for every $\gamma\ge\gamma_0$ (possible since $I_\lambda$ is cofinal in $\eta$),
and so $a\in A_{h_\lambda(\gamma)}$ for every $\gamma\ge\gamma_0.$  Since $h_\lambda(\gamma)\in I_\lambda,$
we deduce from (\ref{etafac1}), that
 \begin{equation*}\label{op4}\langle\Psi_\lambda\cdot T,  a\rangle=
\lim_\gamma\langle {\bf a}_{h_\lambda(\gamma)}\cdot T, a\rangle=
\lim_\gamma\langle  T,a {\bf a}_{h_\lambda(\gamma)}\rangle=\langle S_\lambda, a\rangle\quad\text{for every}\quad a\in\mA_\eta.\end{equation*} As previously with the $1$-factorization,
 the wanted factorization \begin{equation*}\label{etafac}\langle\sum_{k=1}^m\Psi_{\lambda k} \cdot T_k,a\rangle=\langle\Psi_\lambda\cdot T,{\bf a}\rangle=\langle S_\lambda, a\rangle\quad\text{ for every}\quad a\in\mA\end{equation*}
 follows for every $\lambda<\eta$.
\end{proof}

\vskip2.5cm

\begin{remarks} \label{ortho}~\normalfont
\begin{enumerate}
\item Let for some $\lambda<\eta$,  $I_\lambda$  be one of the cofinal subsets from the partition of $\eta.$ Then
instead of the element $S^\alpha$ in (\ref{op3}) defined on $A_\alpha$ for $\alpha<\eta$, we
may define  $S^\lambda$ on $\mathcal L=\bigcup_{\alpha<\eta} A_\alpha{\bf a}_\alpha$
by \begin{equation}\label{etafac0}\langle S^{\lambda}, a{\bf a}_\alpha \rangle=\begin{cases}\langle S_\lambda, a\rangle,\quad\text{if}\quad \alpha\in I_\lambda\\
0,\quad\quad\quad\;\text{if}\quad \alpha\notin I_\lambda,\end{cases}\end{equation}
i.e., starting in (\ref{op3}) with $S_\mu=0$ for every $\mu\ne\lambda$.

If this is the case, then the operator $T=(T_k)_{k=1}^m\in {\mA^*}^m$ comes out defined on $\bigcup_{\alpha<\eta} A_\alpha {\bf a}_\alpha$ as
 \begin{equation*}\label{etaFac1}\langle T, a {\bf a}_\alpha\rangle=\begin{cases}\langle S^{\lambda}, a{\bf a}_\alpha\rangle=\langle S_\lambda, a\rangle \quad\text{for every}\quad\alpha\in
I_\lambda\\0,\quad\quad\quad\;\text{if}\quad \alpha\notin I_\lambda,\end{cases}.\end{equation*}
If instead of $T,$ we denote this operator by $T_\lambda$, then
 the 1-factorization identity  (\ref{1identity}) becomes
 \begin{equation*}\label{newproof} \Psi_\mu\cdot T_\lambda=\begin{cases} S_\lambda,\quad\text{if}\quad \mu=\lambda\\0,\quad\;\;\text{otherwise}.\end{cases} \end{equation*}
When $\mathscr L$ is of type 3, one sees from the way the operator $T_\lambda=(T_{\lambda k})_{k=1}^m\in {\mA^*}^m$
is defined that
 \begin{align*}&\Psi_\lambda\cdot T_\lambda=\sum_{k=1}^m\Psi_{\lambda k}\cdot T_{\lambda k} = S_\lambda,\quad\text{but}\quad  \\&\Psi_{\mu k}\cdot T_{\lambda k} = 0\quad\text{for each}\;k=1,...,m,\;\text{if}\; \mu\ne\lambda.\end{align*}
This means that for every $p\in \mathscr C(\eta)$, we have
 \begin{align*}&\Psi_p\cdot T_\lambda=\sum_{k=1}^m\Psi_{p k}\cdot T_{\lambda k} = S_\lambda\quad \text{if}\quad
I_\lambda\in p, \quad\text{but}\quad  \\&\Psi_{p k}\cdot T_{\lambda k} = 0\quad\text{for each}\;k=1,...,m,\;\text{if}\; I_\lambda\notin p.\end{align*}
The same modification may also be applied for the factorization identities in $\luc(\mA)$ and $M_r(\mA)^*$ obtained below in Theorems \ref{lucfactori} and \ref{mfactori}.
We shall make use of these remarks in the theorems in the forthcoming sections studying the algebraic structure of the algebras.

\item
The elements $S^{\lambda}$ defined in (\ref{etafac0}) for the 1-factorization of each $S_\lambda$ can  be used to provide a slightly different proof for the $\eta$-factorization in $\mA^*$. For, let \begin{equation}\label{newS} S=\sum_{\lambda<\eta}S^\lambda.\end{equation}
Then this is well defined on $\mathcal L.$
Furthermore, if $x\in \langle \mathcal L \rangle,$ then
\[x=\sum_{j=1}^p\sum_{n\in I_j} a_na_{\alpha_n},\] where
$I_j=\{n:\alpha_n\in I_{\lambda_j}\}$ and $a_n\in \langle A_{\alpha_n}\rangle$ for $j=1,..., p.$
Accordingly,
\begin{align*}|\langle S, x\rangle|&=
|\langle \sum_{\lambda<\eta} S^{\lambda},  \sum_{j=1}^p\sum_{n\in I_j} a_na_{\alpha_n}\rangle|
=|\langle  \sum_{j=1}^p\sum_{n\in I_j}S_{\lambda_j},   a_n\rangle|
\\&\le \sup\|S_\lambda\| \sum_{j=1}^p\sum_{n\in I_j}\|a_n\|
\le K_2\sup\|S_\lambda\|\sum_{j=1}^p\sum_{n\in I_j}\|a_n a_{\alpha_n}\|
\\&\le K_1 K_2 \sup\|S_\lambda\|\|x\|,\end{align*} where $K_1>0$ is the constant as in Lemma \ref{selective} and $K_2>0$ is the constant required by Inequality (\ref{iv}) if $m=1$ and by (\ref{v}) if $m\in \N$
	is arbitrary.
Since $S$ is a bounded operator on $\langle \mathcal L\rangle,$
we may extend it to $T\in \mA^*$.

Then, using (\ref{etafac0}), we find for every $\lambda<\eta$ and $\alpha\in I_\lambda,$
\[\langle T, a {\bf a}_\alpha\rangle  =\langle S_\lambda, a\rangle .\] Therefore,
 \begin{equation*}\label{tobeused} \langle\Psi_\lambda\cdot T,  a\rangle=
\lim_\gamma\langle {\bf a}_{h_\lambda(\gamma)}\cdot T, a\rangle=\lim_\gamma\langle  T,
 a {\bf a}_{h_\lambda(\gamma)}\rangle
=\langle S_\lambda, a\rangle, \end{equation*}
as wanted.

\item
When a Banach algebra $\mA$ is equipped with an F$\ell^1(\eta)$-base, $\mA^*$ has the  $\eta$-factorization property as it has just been  proved, but we have been unable to deduce that $\luc(\mA)$ satisfies this property as well, that is for instance when $m=1$, whether any bounded family of functionals
$\{S_\lambda:\lambda <\eta\}$ contained in $\luc(\mA)$ can be fatorized as
$\Psi_\lambda\cdot T$ for some bounded family $\Psi_\lambda\in \luc(\mA)^* $ and $T\in\luc(\mA)$
for every $\lambda<\eta.$

We can nevertheless show below that the $1$-factorization is true in $\luc(\mA)$ whenever an F$\ell^1(\eta)$-base is available in the algebra $\mA$, which is enough as we shall see for our study of the algebra and the topological centre of $\luc(\mA)^*$. To be able to deduce the $\eta$-factorization for $\luc(\mA)$ one needs to make sure that
 the extension $T$ of the operator $S$ defined in (\ref{newS}) is in $\luc(\mA)$.
\end{enumerate}
\end{remarks}

\begin{theorem}\label{lucfactori} If a Banach algebra $\mA$ with a bai has an F$\ell^1(\eta)$-base for some infinite cardinal $\eta,$ then $\luc(\mA)$ has the $1$-factorization property  through $\prod_{k=1}^m X_k.$
\end{theorem}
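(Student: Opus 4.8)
The plan is to reduce the statement to the $1$-factorization property of $\mA^*$ already obtained in the first half of the proof of Theorem \ref{gfactori}; the only genuinely new point is that the factorizing functionals must now be produced \emph{inside} $\luc(\mA)$ rather than merely in $\mA^*$. As flagged in Remark \ref{ortho}, a Hahn--Banach extension manufactured through Lemma \ref{selective} need not belong to $\luc(\mA)$, so one cannot simply repeat the earlier construction verbatim; the remedy is to absorb a right factor coming from the bounded approximate identity.

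First I would invoke Cohen's factorization theorem. Since $\mA$ has a bai, and hence a brai, one has $\luc(\mA)=\mA^*\cdot\mA$, so an arbitrary $S\in\luc(\mA)$ may be written as $S=R\cdot c$ with $R\in\mA^*$ and $c\in\mA$. Next I would apply the $1$-factorization of $\mA^*$ (proved at the beginning of the proof of Theorem \ref{gfactori}) to the functional $R$, obtaining $T'=(T_k')_{k=1}^m$ in $(\mA^*)^m$ with
\[
\sum_{k=1}^m\Psi_k\cdot T_k'=R\qquad\text{for every }(\Psi_k)_{k=1}^m\in\prod_{k=1}^m X_k.
\]
Finally I would set $T_k=T_k'\cdot c$. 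Because $\luc(\mA)=\mA^*\cdot\mA$, each $T_k$ now lies in $\luc(\mA)$, so $T=(T_k)_{k=1}^m\in\luc(\mA)^m$, exactly as the factorization through $\prod_{k=1}^m X_k$ demands.

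To finish I would verify that this $T$ works, which needs only the associativity of the module actions (\ref{action1}) and (\ref{action3}): for $\Psi\in\luc(\mA)^*$, $S'\in\mA^*$ and $a\in\mA$ both $\Psi\cdot(S'\cdot a)$ and $(\Psi\cdot S')\cdot a$ pair with any $b\in\mA$ to give $\langle\Psi,S'\cdot(ab)\rangle$, hence they coincide. Granting this, for every $(\Psi_k)_{k=1}^m\in\prod_{k=1}^m X_k$,
\[
\sum_{k=1}^m\Psi_k\cdot T_k=\sum_{k=1}^m\Psi_k\cdot(T_k'\cdot c)=\Bigl(\sum_{k=1}^m\Psi_k\cdot T_k'\Bigr)\cdot c=R\cdot c=S,
\]
which is the required identity $\Psi\cdot T=S$.

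The main, and essentially only, obstacle is the membership $T_k\in\luc(\mA)$; everything else is a transcription of Theorem \ref{gfactori}. The right-multiplication trick $T_k=T_k'\cdot c$ is what forces the output into $\luc(\mA)$, but it becomes available only after $S$ itself has been split as $S=R\cdot c$, that is, one $S$ at a time. This is precisely why the method yields the $1$-factorization but not the full $\eta$-factorization of $\luc(\mA)$: to factor a whole bounded family $\{S_\lambda\}_{\lambda<\eta}$ one would need a single $T\in\luc(\mA)$ extending the operator $S$ built in (\ref{newS}), and no common right factor $c$ is at hand to guarantee that this extension stays in $\luc(\mA)$.
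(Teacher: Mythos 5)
Your proposal is correct and follows essentially the same route as the paper: write the given element of $\luc(\mA)$ as $S\cdot a$ (your $R\cdot c$) via Cohen factorization, apply the $1$-factorization of $\mA^*$ from Theorem \ref{gfactori} to the $\mA^*$-factor, and absorb the right factor from $\mA$ into the $T_k$'s so that they land in $\luc(\mA)$, with the same associativity check $\Psi\cdot(T\cdot a)=(\Psi\cdot T)\cdot a$ verified by pairing against elements of $\mA$. Your closing observation about why this yields only the $1$-factorization also matches Remark \ref{ortho}(iii).
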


\begin{proof}
Suppose that the F$\ell^1(\eta)$-base is of type 2 (type 1 is then inculded).
 Let $P\in 	\luc(\mA)$ be arbitrary and choose $S\in \mA^*$ and $a\in \mA$ such that $P=S\cdot a$. Then,  factorize $S$ as $S= \Psi\cdot T$, where $\Psi\in X$ and $T\in \mA^*$ are as obtained  in Theorem \ref{gfactori}.
Since $T\cdot a\in \luc(\mA)$ by definition, we only need to check that $P=\Psi\cdot (T\cdot a).$
So let $b\in \mA$ be arbitrary.
Then
\begin{align*}\langle\Psi\cdot (T\cdot a), b\rangle &=\left\langle \Psi,(T.a).b\right\rangle
=\left\langle \Psi,T.(ab)\right\rangle=\left\langle \Psi\cdot T, ab\right\rangle=\left\langle S, ab\right\rangle\\&=\left\langle S.a, b\right\rangle=\left\langle P,b\right\rangle,\end{align*}
 as wanted.

The proof is similar when the F$\ell^1(\eta)$-base is of type 3.
\end{proof}

\medskip

Recall that a dual Banach algebra is a Banach algebra $\mB$ such that $\mB=\mC^*$ for some Banach space $\mC$ (not necessarily unique), which  is a submodule of the dual $\mB$-bimodule $\mB^*$ (see \cite[page 194]{GhaLa}).

The condition imposed below on the net in $\mC$ happens for instance when the net  is a left bounded approximate identity  and $\mC$ is Arens regular.

 Our set $X$ defined in Notation \ref{through} is now seen as a bounded subset in $M_r(\mA)^{**}$.

\begin{theorem} \label{mfactori} Let $\mA$ be a Banach algebra with a bai and an F$\ell^1(\eta)$-base for some infinite cardinal $\eta.$ Suppose that $M_r(\mA)$ is a dual Banach algebra with predual algebra $\mC$ having a net in $\mA\cap \mC$  weak$^*$-converging to a left identity in $M_r(\mA)^*$.
Then $M_r(\mA)^*$ has the $\eta$-factorization property.
\end{theorem}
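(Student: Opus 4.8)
The plan is to transplant the proof of Theorem \ref{gfactori} from $\mA$ to $M_r(\mA)$, and then to spend all the real effort on the single place where the transplant breaks down. First I would observe that $\mA$ sits inside $M_r(\mA)$ as a closed right ideal (isometrically, since the approximate identity may be taken bounded by $1$). Because the embedding is isometric, the $\ell^1(\eta)$-selective base $\mathscr L=\bigcup_{\alpha<\eta}A_\alpha\{a_\alpha^k:k=1,\dots,m\}$ and the associated set $\mathbf L=\bigcup_{\alpha<\eta}A_\alpha\mathbf a_\alpha$ remain $\ell^1(\eta)$-selective bases inside $M_r(\mA)$ and $M_r(\mA)^m$ with the \emph{same} constant $K_1$, and Condition (\ref{v}) still reads $\|a\|\le K_2\|a\mathbf a_\alpha\|$ for $a\in\langle A_\alpha\rangle$, now with the norms of $M_r(\mA)$. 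Thus the hypotheses of Lemma \ref{selective} are available in $M_r(\mA)^m$.

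Next, given a bounded family $\{S_\lambda:\lambda<\eta\}\subseteq M_r(\mA)^*$, I would partition $\eta$ into cofinal pieces $\{I_\lambda\}$ exactly as in Theorem \ref{gfactori}, put $\langle S^\alpha,a\mathbf a_\alpha\rangle=\langle S_\lambda,a\rangle$ for $\alpha\in I_\lambda$ and $a\in A_\alpha$, and use (\ref{v}) to bound $|\langle S^\alpha,a\mathbf a_\alpha\rangle|\le K_2\|S_\lambda\|\,\|a\mathbf a_\alpha\|$, so that $(S^\alpha)_{\alpha<\eta}\in\oplus_\infty\mathcal M_\alpha^*$. Lemma \ref{selective}, applied in $M_r(\mA)^m$, then yields $T=(T_k)_{k=1}^m\in(M_r(\mA)^*)^m$ with $\langle T,a\mathbf a_\alpha\rangle=\langle S_\lambda,a\rangle$ for $a\in A_\alpha$, $\alpha\in I_\lambda$. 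Taking $\Psi_\lambda=(\Psi_{\lambda k})_{k=1}^m\in\prod_{k=1}^mX_k$ to be the weak$^*$-cluster points of $\{\mathbf a_\alpha:\alpha\in I_\lambda\}$ computed now in $M_r(\mA)^{**}$ (Notation \ref{through}(3)), the same cluster-point computation as in Theorem \ref{gfactori} gives $\langle\Psi_\lambda\cdot T,R\rangle=\lim_\gamma\langle T,R\mathbf a_{h_\lambda(\gamma)}\rangle$ for $R\in M_r(\mA)$, and hence $\langle\Psi_\lambda\cdot T,a\rangle=\langle S_\lambda,a\rangle$ for every $a\in A_\eta$, and so for every $a$ in its closed span $\mA$.

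The hard part, and the reason the dual Banach algebra hypothesis is imposed, is to upgrade the identity $\Psi_\lambda\cdot T=S_\lambda$ from $\mA$ to all of $M_r(\mA)$; this step has no analogue in Theorem \ref{gfactori}, where the closed span of $A_\eta$ was already the whole ambient algebra, whereas here $\mA$ is only a proper closed right ideal of $M_r(\mA)$. Here I would use that $M_r(\mA)=\mC^*$ is a dual Banach algebra, so multiplication is separately weak$^*$-continuous, together with the net $(e_\beta)\subseteq\mA\cap\mC$ whose weak$^*$-limit $E$ is a left identity of $M_r(\mA)^*=\mC^{**}$. Since $\mA$ is a right ideal, $e_\beta R\in\mA$ for every $R\in M_r(\mA)$, and separate weak$^*$-continuity gives $e_\beta R\to R$ weak$^*$ in $M_r(\mA)$; the left-identity relation $E\Phi=\Phi$ (for the Arens product of $\mC^{**}$, in which $\Phi\mapsto\Phi E$ has a weak$^*$-continuous partner in its first variable) lets one write $S_\lambda=\lim_\beta e_\beta S_\lambda$ weak$^*$ and unfold $\Psi_\lambda\cdot T$ along the same net, so that the equality already established on the weak$^*$-dense ideal $\mA$ propagates to each $R\in M_r(\mA)$. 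The Arens regularity of $\mC$ noted after the statement is what makes $E$ behave as a genuine identity and renders these weak$^*$ limits unambiguous.

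I expect the genuine difficulty to be concentrated entirely in that last paragraph: one must track carefully which weak$^*$ topology is in play at each stage — $\sigma(\mC^*,\mC)$ on $M_r(\mA)$ versus $\sigma(\mC^{**},\mC^*)$ on $M_r(\mA)^*$ — and verify that $\Psi_\lambda\cdot T$ and $S_\lambda$ interact with the left-identity net in such a way that agreement on $\mA$ really does force agreement on all of $M_r(\mA)$. Everything preceding it is a faithful re-run of Theorem \ref{gfactori} with $\mA$ replaced by $M_r(\mA)$ and Lemma \ref{selective} applied in $M_r(\mA)^m$, so the novelty, and the only serious obstacle, is this transfer across the ideal $\mA\subseteq M_r(\mA)$.
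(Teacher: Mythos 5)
Your proposal is correct and follows essentially the same route as the paper: the paper restricts $S$ to $\mA$, invokes Theorem \ref{gfactori} there, Hahn--Banach extends the resulting $T_k$ to $M_r(\mA)^*$, and then runs exactly the transfer computation you describe --- $\langle P,b\rangle=\langle EP,b\rangle=\lim_i\langle P,b\varphi_i\rangle$ with $b\varphi_i\in\mA$ because $\mA$ is a right ideal --- to propagate agreement from $\mA$ to all of $M_r(\mA)$. Your only deviations are cosmetic: you re-run the Lemma \ref{selective} construction inside $M_r(\mA)^m$ instead of restricting-then-extending, and you attribute to the Arens regularity of $\mC$ a role that the stated hypothesis (a net in $\mA\cap\mC$ weak$^*$-converging to a left identity) already supplies directly.
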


\begin{proof} Let $\mc{L}:=\bigcup\limits_{\alpha<\eta}A_\alpha\{ a_\alpha^k:n=1,...,m\}$ be an F$\ell^1(\eta)$-base in $\mA$. The case $m=1$ will correspond to both types of the F$\ell^1(\eta)$-bases 1 and 2.
We prove that $M_r(\mA)^*$ has the $1$-factorization property. So let $S$ be any element in $M_r(\mA)^*$ and let $(\Psi_k)_{k=1}^m$ be any element in $\prod_{k=1}^mX_k$.
 Restrict $S$ to $\mA$  and consider the family  $\{T_k:k=1,...,m\}$ in $\mA^{*}$, found in Theorem \ref{gfactori} for the factorization of $S$ so that
  \begin{equation}\label{Afacto}\langle\sum_{k=1}^m\Psi_k\cdot T_{k}, a\rangle= \langle S,a\rangle\quad\text{for every}\quad a\in\mA.\end{equation}
	Then by Hahn-Banach Theorem, regard this family $\{T_k:k=1,...,m\}$ in $M_r(\mA)^*$, and regard $\{\Psi_k:k=1,...,m\}$ in  $M_r(\mA)^{**}$ (the unit ball if the elements $a_\alpha^k$ are bounded by $1$), as explained in Notation \ref{through}.
	Let
  $(\varphi_i)\subset \mC\cap\mA$ be the net in $\mC$ with a left identity $E$ as a weak*-limit in $M_r(\mA)^*.$ Then
 using the fact that $\mA$ is a right ideal in $M_r(\mA),$ we apply (\ref{Afacto}) for the fourth
of the following equalities
\begin{equation*}\label{op5}\begin{split}\langle \sum_{k=1}^m\Psi_k\cdot T_{k}, b\rangle&=\langle E(\sum_{k=1}^m\Psi_k\cdot T_{k}), b\rangle= \lim_i\langle \varphi_i(\sum_{k=1}^m\Psi_k\cdot T_{k}), b\rangle\\&= \lim_i\langle \sum_{k=1}^m\Psi_k\cdot T_{k}, b\varphi_i \rangle=
\lim_i\langle S,b\varphi_i \rangle\\&=\lim_i\langle \varphi_i S, b\rangle=\langle ES,b\rangle=\langle S, b\rangle \end{split}\end{equation*} for every $b\in M_r(\mA),$
as required.

The $\eta$-factorization property is proved following the same steps as in the proof of Theorem \ref{gfactori}.
\end{proof}

\begin{remark} Flipping definitions and conditions from left to right, the reader may check that $M_l(\mA)^*$ has the right $\eta$-factorization property when $\mA$ has a right F$\ell^1(\eta)$-base for some infinite cardinal $\eta.$
\end{remark}

				\section{The algebraic structure of $\mA^{**}$, $\luc(\mA)^*$ and $M_r(\mA)^{**}$} \label{algebra}
				Various algebraic properties were proved in the algebras in harmonic analysis related to
	 locally compact groups $G$ and to some infinite, discrete semigroups
$S$	such as $L^1(G)^{**}$,  $\luc(G)^{*}$, $\ell^\infty(S)^{**}$, $A(G)^{**}$ and  $\uc_2(G)^*$.

Algebraic properties were also found in the Stone-\v Cech compactification $\beta S$ of some discrete semigroups $S$ and the $\luc$-compactification $G^{LUC}$ of a locally compact group $G$. See Notes \ref{notes} in next section.
Most of these properties such as right cancellation, the number of topologically left invariant means or the number  of left ideals in $G^{LUC}$ and $\beta S$ when $S$ is a discrete semigroup are in fact related to the non-Arens regularity of the algebra.
For instance when $G$ is amenable, the number of topologically left invariant means
$\luc(G)^{*}$ or in $L^1(G)^{**}$ and the number of disjoint left ideals in
$G^{LUC}$ are the same. For the support of a (topologically) left invariant,
regarded as a Borel measure on $G^{LUC}$, is a left ideal in $G^{LUC}$.
Conversely, if $L$ is a left ideal in $G^{LUC}$, $x\in L$ and $\Psi$ is a
(topologically) left invariant mean, then  $\Psi x$ is also (topologically) left invariant and
regarded as a Borel measure on $G^{LUC}$, it has its support contained in $L$.

 In fact, already in 1951, Arens himself showed that the semigroup algebra $\ell^1$ with convolution
is non-Arens regular. For this, he produced two distinct invariant means $\Phi$ and $\Psi$ in ${\ell^1}^{**}$ (see \cite{Arens51}). This gave him immediately
 \[\Phi\Psi=\Psi\ne\Phi=\Psi\Phi\quad\text{and}\quad \Phi\diamond\Psi=\Phi\ne\Psi=\Psi\diamond\Phi,\] which means that \[T_1Z({\ell^1}^{**})\ne {\ell^1}^{**} \quad\text{ and }\quad T_2Z({\ell^1}^{**})\ne {\ell^1}^{**}.\]
In 1957, Day used the same argument  in \cite{day} to show that $L^1(G)$ is non-Arens regular for many infinite discrete groups, including all Abelian ones.
This same  approach  was followed by  Civin and Yood in their seminal paper \cite{CiYo} where $L^1(G)$ was shown to be  non-Arens
regular for any infinite locally compact Abelian group.
Their method relied
again on Day's result when the group is discrete.

Once an infinite F$\ell^1(\eta)$-base  is available in $\mA$, the theorems in this section  unify
all these results in $\mA^{**}$, $\luc(\mA)^*$ and $M_r(\mA)^{**}$ and many subsemigroups in these algebras.

We start with right cancellation.
We say that  a set $\Gamma$ in an algebra
 is right cancellable if for every nonzero element $a$ in the algebra , $ax\ne 0$ for some  $x\in \Gamma$. When  $\Gamma=\{x\}$ is right cancellable, we simply say that $x$ is right cancellabe.

\begin{theorem} \label{can} Let $\mA$ be a Banach algebra with a bai and an F$\ell^1(\eta)$-base $\mathscr L$  for some infinite cardinal $\eta$ and some integer $m\in\N.$
Let $X$ be as in Notation \ref{through} and $\mathscr S$ be any subsemigroup  of $\luc(\mA)^*$ with $ X\subseteq \mathscr S,$ or any subsemigroup $\mA^{**}$ with $ \widetilde X\subseteq \mathscr S.$
\begin{enumerate}
\item If $\mathscr L$ is of type 1 or 2, then there are at least $2^{2^\eta}$ many right cancellable elements in $\mathscr S$.
\item If  $\mathscr L$ is of type 3, then  there are at least $2^{2^\eta}$ many right cancellable sets in $\mathscr S$.
\end{enumerate}
\end{theorem}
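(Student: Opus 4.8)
The plan is to derive both assertions from the factorization Theorems \ref{gfactori} and \ref{lucfactori} together with a clean separation of the functionals attached to cofinal ultrafilters. First I record the dichotomy built into the definitions: for an F$\ell^1(\eta)$-base of type 1 or 2 one has $m=1$ (this is exactly the convention used in the proof of Theorem \ref{gfactori}), so Notation \ref{through} attaches to each $p\in\mathscr{C}(\eta)$ a \emph{single} element $\Psi_p=\Psi_{p1}$ (or its extension $\widetilde\Psi_p$), whereas a base of type 3 attaches the finite family $\Psi_{p1},\dots,\Psi_{pm}$, i.e. the \emph{set} $\Gamma_p=\{\Psi_{p1},\dots,\Psi_{pm}\}$. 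Since $|\mathscr{C}(\eta)|=2^{2^\eta}$ by van Douwen's theorem quoted in Notation \ref{through}, it suffices to prove (a) each such element, resp. each such set, is right cancellable in $\mathscr S$, and (b) the assignment $p\mapsto\Psi_p$, resp. $p\mapsto\Gamma_p$, produces $2^{2^\eta}$ distinct objects. I treat $\mathscr S\subseteq\luc(\mA)^*$ with $X\subseteq\mathscr S$; the case $\mathscr S\subseteq\mA^{**}$ with $\widetilde X\subseteq\mathscr S$ is identical upon replacing $\Psi_{pk}$ by $\widetilde\Psi_{pk}$, using Theorem \ref{gfactori} in place of Theorem \ref{lucfactori} and the compatibility $\widetilde\Psi_{pk}\cdot T_k=\Psi_{pk}\cdot T_k$ from (\ref{trick}).

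For cancellability I would feed one fixed ultrafilter into the factorization. Fix $p\in\mathscr{C}(\eta)$ and observe that $(\Psi_{pk})_{k=1}^m\in\prod_{k=1}^mX_k$. Let $\Phi\in\luc(\mA)^*$ be nonzero and suppose, for contradiction, that $\Phi\Psi_{pk}=0$ for every $k=1,\dots,m$. Given any $P\in\luc(\mA)$, Theorem \ref{lucfactori} supplies operators $T_k\cdot a\in\luc(\mA)$ with $\sum_{k=1}^m\Psi_{k}\cdot(T_k\cdot a)=P$ for \emph{every} $(\Psi_k)_{k=1}^m\in\prod_{k=1}^mX_k$, in particular for $(\Psi_{pk})_k$; hence
\[
\langle\Phi,P\rangle=\Big\langle\Phi,\sum_{k=1}^m\Psi_{pk}\cdot(T_k\cdot a)\Big\rangle=\sum_{k=1}^m\big\langle\Phi\Psi_{pk},\,T_k\cdot a\big\rangle=0 .
\]
As $P$ is arbitrary, $\Phi=0$, a contradiction. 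When $m=1$ (types 1 and 2) this says $\Phi\Psi_p\neq0$ for every nonzero $\Phi$, so $\Psi_p$ is a right cancellable \emph{element}; when $m$ is arbitrary (type 3) the argument only forbids \emph{all} of $\Phi\Psi_{p1},\dots,\Phi\Psi_{pm}$ from vanishing, which is precisely the statement that the \emph{set} $\Gamma_p$ is right cancellable. This is exactly where the type hypothesis enters and why type 3 yields cancellable sets rather than elements.

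For distinctness I would separate ultrafilters through a single fixed index. Fix $a\in A_0$ and, for $A\subseteq\eta$, apply Remark \ref{luc} with $c=\mathbf 1_A\in\ell^\infty(\eta)$ to get $T_1\cdot a\in\luc(\mA)$ with $\langle T_1\cdot a,a_\alpha^1\rangle=\mathbf 1_A(\alpha)$ for all $\alpha<\eta$. Since $\Psi_{p1}$ is the value at $p$ of the weak$^*$-continuous extension of $\alpha\mapsto a_\alpha^1$ to $\beta\eta$,
\[
\langle\Psi_{p1},\,T_1\cdot a\rangle=\lim_{\alpha\to p}\mathbf 1_A(\alpha)=\begin{cases}1,&A\in p,\\ 0,&A\notin p.\end{cases}
\]
For $p\neq q$ in $\mathscr{C}(\eta)$, choosing $A\in p\setminus q$ gives $\Psi_{p1}\neq\Psi_{q1}$; thus $\{\Psi_{p1}:p\in\mathscr{C}(\eta)\}$ already has $2^{2^\eta}$ distinct members. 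For types 1 and 2 this element \emph{is} $\Psi_p$, which finishes part (1). For type 3 I combine it with a counting argument: each $\Gamma_p$ has at most $m$ elements and contains $\Psi_{p1}$, so $\bigcup_p\Gamma_p$ contains these $2^{2^\eta}$ distinct elements; if only $\kappa<2^{2^\eta}$ sets $\Gamma_p$ were distinct, that union would have cardinality at most $m\cdot\kappa<2^{2^\eta}$ ($m$ finite), a contradiction. Hence there are $2^{2^\eta}$ distinct cancellable sets, proving part (2).

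The step needing the most care is the correct exploitation of the \emph{uniformity} in Theorems \ref{gfactori} and \ref{lucfactori}: the cancellability argument depends on having operators $T_k\cdot a$ that factorize a given $P$ through \emph{every} tuple in $\prod_kX_k$ simultaneously, so that it is legitimate to specialize to the tuple $(\Psi_{pk})_k$ of a chosen ultrafilter. A second delicate point—and the reason I route the distinctness argument through one fixed coordinate—is that for type 3 one must \emph{not} try to separate the individual $\Psi_{pk}$ across different indices $k$: the selective-base conditions in Definition \ref{Felll} control only the blocks indexed by $\alpha$ and say nothing about cross-index relations among the $a_\alpha^k$, so a functional built for index $1$ behaves uncontrollably on $a a_\alpha^k$ with $k\neq1$. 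The cardinality argument circumvents this entirely by deducing distinctness of the sets from distinctness of a single coordinate. Finally, for the $\luc(\mA)^*$ statement one should note that only the $1$-factorization of $\luc(\mA)$ is available (Theorem \ref{lucfactori}), but this is exactly what the cancellability argument consumes, since it requires only that every $P\in\luc(\mA)$ be realized as $\sum_k\Psi_{pk}\cdot(T_k\cdot a)$.
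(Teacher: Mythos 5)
Your proposal is correct and follows essentially the same route as the paper: separating the $\Psi_{pk}$ via indicator-function operators from Remark \ref{luc}, and deriving right cancellability from the $1$-factorization of $\luc(\mA)$ (resp. $\mA^*$) through the tuple $(\Psi_{pk})_k$. Your only departure is the explicit cardinality argument showing that distinctness of the first coordinates $\Psi_{p1}$ forces $2^{2^\eta}$ distinct \emph{sets} $\Gamma_p$ in the type-3 case, a point the paper passes over more quickly, so this is a welcome (minor) tightening rather than a different proof.
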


\begin{proof}  Let $\mc{L}:=\bigcup\limits_{\alpha<\eta}A_\alpha\{ a_\alpha^k:k=1,...,m\}$ be an F$\ell^1(\eta)$-base in $\mA$.
Let $p$ and $q$ be two different cofinal ultrafilters in $\beta\eta$ together with their corresponding elements $(\Psi_{p k})_{k=1}^m$ and $(\Psi_{q k})_{k=1}^m$ in $\prod_{k=1}^mX_k.$
Pick two disjoint members $I$ of $p$ and $J$ of $q.$ For each  $k=1,...,m,$  $\Psi_{p k}$ is the weak$^*$-limit of a subnet of
$(a_\alpha^k)_{\alpha\in I}$, which we denote for simplicity again by $(a_\alpha^k)_{\alpha\in I}$. In the same way, $\Psi_{q k}$ is the weak$^*$-limit of a subnet of $(a_\alpha^k)_{\alpha\in J}$.
Then, for each $\alpha<\eta$ and $k=1,...,m,$ we may take by Remark \ref{luc}, an operator $P_k\in \luc(\mA)$ such that  \[\langle P_k,  a_\alpha^k\rangle=\begin{cases} 1,\quad\text{if}\quad  \alpha\in I\;\\0,\quad\text{elsewhere}.\end{cases}\]
Then, for each $k=1,...,m$,  we have \[\langle \Psi_{p k}, P_k\rangle=\lim_{\alpha\in I}\langle P_k,a_\alpha^k\rangle=1\quad\text{while}\quad \langle \Psi_{q k},P_k\rangle=\lim_{\alpha\in J}\langle P_k, a_\alpha^k\rangle=0.\]
In other words, for each $k=1,...,m,$ the elements $\Psi_{p k}$ and $\Psi_{q k}$ are different in $X_k$.
Thus the map $\mathscr C(\eta)\to X_k:$ $p\mapsto \Psi_{p k}$ is one-to-one for each $k=1,...,m$. Since the cardinality of $\mathscr{C}(\eta)$ is $2^{2^\eta},$ we obtain the same number of points $\Psi_{p k}$ in $X_k$ for each $k=1,...,m.$
This clearly implies that there are at least $2^{2^\eta}$ many elements $(\Psi_{p k})_{k=1}^m$ in
$\prod_{k=1}^mX_k.$
We claim first that each of the sets $\{\Psi_{p k}: k=1,...,m\}$ is right cancellable in $\mathscr S$ when $\mathscr S$ is a subsemigroup of $\luc(\mA)^{*}$.
So let $\Psi$ be any nonzero element in $\mathscr S$ and pick $P\in \luc(\mA)$ with $\langle \Psi, P\rangle\ne0.$ Then by Theorem \ref{lucfactori}, there are operators $T_k\in \luc(\mA)$, $k=1,...,m$, such that $\sum_{k=1}^m\Psi_{p k}\cdot T_k=P.$
Consequently, \[\langle \sum_{k=1}^m\Psi \Psi_{p k}, T_k\rangle=\langle \Psi, \sum_{k=1}^m\Psi_{p k}\cdot T_k\rangle=\langle \Psi, P\rangle\ne0,\] and so $\Psi \Psi_{p k}$ must be nonzero for at least one $k=1,...,m,$ as wanted.

In the case when $\mathscr S$ is a subsemigroup of  $\mA^{**}$, we simply take for each element $\Psi_{p k}\in X_k$ a fixed element  $\widetilde{\Psi_{p k}}\in \mA^{**}$
such that $\pi(\widetilde{\Psi_{p k}})=\Psi_{p k}.$ Then for a nonzero element $\Psi$ in  $\mathscr S$, pick $S\in\mA^*$
such that $\langle\Psi,S\rangle\ne0$ together with the $m$ operators $T_k\in\mA^*$ given by Theorem \ref{gfactori} such that
\[\sum_{k=1}^m\widetilde{\Psi_{p k}}\cdot T_k=\sum_{k=1}^m\Psi_{p k}\cdot T_k=S.\]
As before, this gives
 \[\langle \sum_{k=1}^m\Psi\widetilde{\Psi_{p k}},T_k\rangle=\langle \Psi,\sum_{k=1}^m\widetilde{\Psi_{p k}}\cdot T_k \rangle= \langle \Psi, S\rangle\ne0,\] and so again at least one of the terms $\Psi\widetilde{\Psi_{p k}}$ must be nonzero, showing that the set $\{\widetilde{\Psi_{p k}}:k=1,..,m\}$
is right cancellable in $\mathscr S$ for each $p\in \mathscr C(\eta)$.

This proves Statement (ii) when $m$ is arbitrary, and Statement (i) when taking  $m=1$.\end{proof}

\begin{theorem} \label{ride} Let $\mA$ be a Banach algebra with a bai and an F$\ell^1(\eta)$-base $\mathscr L$  for some infinite cardinal $\eta$ and some integer $m\in\N.$
Let $X$ be as in Notation \ref{through} and $\mathscr S$ be any non-trivial subalgebra  of $\luc(\mA)^*$ with $ X\subseteq \mathscr S,$ or any non-trivial subalgebra of $\mA^{**}$ with $ \widetilde X\subseteq \mathscr S.$
Then
the dimension of any principal right ideal,  and so of any nonzero right ideal, in $\mathscr S$
 is at least $2^{2^\eta}.$
\end{theorem}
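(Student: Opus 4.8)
The plan is to produce inside an arbitrary principal right ideal of $\mathscr S$ a linearly independent family indexed by the cofinal ultrafilters, so that its cardinality $|\mathscr C(\eta)|=2^{2^\eta}$ forces the asserted lower bound on the dimension. Since every nonzero right ideal contains a principal one, it suffices to treat the principal right ideal $R$ generated by a fixed nonzero $\Psi\in\mathscr S$. Because $X\subseteq\mathscr S$ (resp.\ $\widetilde X\subseteq\mathscr S$) and $\mathscr S$ is an algebra, $R$ contains all the products $\Psi\Psi_{p k}$ (resp.\ $\Psi\widetilde{\Psi_{p k}}$), $p\in\mathscr C(\eta)$, $k=1,\dots,m$. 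First I would fix a functional $S$ with $\langle\Psi,S\rangle\neq0$ --- chosen in $\luc(\mA)$ when $\mathscr S\subseteq\luc(\mA)^*$ and in $\mA^*$ when $\mathscr S\subseteq\mA^{**}$ --- which exists since $\Psi\neq0$.

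The key device is the orthogonal form of the factorization recorded in Remarks~\ref{ortho}. Given finitely many distinct cofinal ultrafilters $p_1,\dots,p_n$, I would pick pairwise disjoint members $I_1,\dots,I_n$ with $I_i\in p_i$; these are automatically cofinal, and disjointness yields $I_i\in p_j\iff i=j$. For each $i$, feeding the single cofinal set $I_i$ and the functional $S$ into the construction of Remarks~\ref{ortho} (through Theorem~\ref{gfactori} in the $\mA^{**}$ case) produces a tuple $T^{(i)}=(T^{(i)}_k)_{k=1}^m$ in ${\mA^*}^m$ with $\sum_{k}\Psi_{p k}\cdot T^{(i)}_k=S$ whenever $I_i\in p$ and, crucially for type~3, $\Psi_{p k}\cdot T^{(i)}_k=0$ for \emph{each} $k$ whenever $I_i\notin p$. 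Evaluating at our ultrafilters gives $\sum_k\Psi_{p_j k}\cdot T^{(i)}_k=\delta_{ij}\,S$ together with the coordinatewise vanishing $\Psi_{p_j k}\cdot T^{(i)}_k=0$ for $j\neq i$.

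To extract linear independence I would pass to the $m$-fold product $R^m$ and set $v_p=(\Psi\Psi_{p1},\dots,\Psi\Psi_{pm})\in R^m$, together with the linear functionals $L_i(w_1,\dots,w_m)=\sum_{k}\langle w_k,T^{(i)}_k\rangle$. The relations above give
\[ L_i(v_{p_j})=\sum_k\bigl\langle\Psi,\Psi_{p_j k}\cdot T^{(i)}_k\bigr\rangle=\delta_{ij}\,\langle\Psi,S\rangle, \]
so any dependence $\sum_j c_j v_{p_j}=0$ forces $c_i\langle\Psi,S\rangle=0$, hence $c_i=0$. Thus $\{v_p:p\in\mathscr C(\eta)\}$ is linearly independent in $R^m$, whence $\dim R^m\geq 2^{2^\eta}$. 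As $m$ is finite while $2^{2^\eta}$ is infinite, $m\cdot\dim R=\dim R^m\geq 2^{2^\eta}$ forces $\dim R\geq 2^{2^\eta}$. For $m=1$ (types~1 and 2) this is simply the independence of $\{\Psi\Psi_p\}_{p}$, and the $\mA^{**}$ case is identical with $\widetilde{\Psi_{p k}}$ in place of $\Psi_{p k}$, using $\langle\Psi\widetilde{\Psi_{p k}},T^{(i)}_k\rangle=\langle\Psi,\Psi_{p k}\cdot T^{(i)}_k\rangle$ as in Theorem~\ref{can}.

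I expect the genuine obstacle to be the type~3 situation, where the factorization controls only the sum $\sum_k\Psi_{p k}\cdot T^{(i)}_k$ and not its individual terms; working in $R^m$ with the functionals $L_i$ is precisely what lets the coordinatewise orthogonality of Remarks~\ref{ortho} separate the $v_{p_j}$ \emph{without} ever invoking cross-index quantities $\Psi_{p_j l}\cdot T^{(i)}_k$ with $l\neq k$. A secondary point I would check carefully is the $\luc(\mA)^*$ case, where the orthogonal operators must lie in $\luc(\mA)$ rather than merely in $\mA^*$: here I would take $S=P\in\luc(\mA)$ with $\langle\Psi,P\rangle\neq0$, write $P=S_0\cdot a$ with $a\in A_0$ (possible by the bai and Cohen factorization), build the orthogonal tuple for $S_0$ as above, and use the operators $T^{(i)}_k\cdot a\in\luc(\mA)$ furnished by Theorem~\ref{lucfactori} and Remark~\ref{luc}; the associativity $\Psi_{p k}\cdot(T^{(i)}_k\cdot a)=(\Psi_{p k}\cdot T^{(i)}_k)\cdot a$ then transports both the factorization identity and the coordinatewise vanishing to $\luc(\mA)$, and the computation of $L_i(v_{p_j})$ goes through verbatim.
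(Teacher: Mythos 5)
Your proposal is correct and follows essentially the same route as the paper: the paper also fixes $P\in\luc(\mA)$ with $\langle\Psi,P\rangle=1$, takes pairwise disjoint members $I_n$ of the ultrafilters, invokes the orthogonal factorization of Remark \ref{ortho} to get $\sum_k\Psi_{p_i k}\cdot T_k=P$ with coordinatewise vanishing $\Psi_{p_n k}\cdot T_k=0$ for $n\ne i$, and deduces linear independence of $\{(\Psi\Psi_{p1},\dots,\Psi\Psi_{pm}):p\in\mathscr C(\eta)\}$ in the product space $(\Psi\mathscr S)^m$ before descending to a coordinate. Your explicit functionals $L_i$ and the remark that $m\cdot\dim R=\dim R^m$ only make the paper's final descent step slightly more formal.
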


\begin{proof} Let $\mathscr{L}$ be an F$\ell^1(\eta)$-base in $\mA$ as in Theorem \ref{can}.

We prove the theorem when $\mathscr S$ is a subalgebra  of $\luc(\mA)^*$, the proof is similar for $\mA^{**}.$
Pick any nonzero element $\Psi$ in a given nonzero right ideal of $\mathscr S$.
For each $p\in\mathscr C(\eta),$ let $(\Psi_{p 1},...,\Psi_{p m})$ be the corresponding elements in $\prod_{k=1}^mX_k$ as in Theorem \ref{can}.
Then we claim that the set \[\{(\Psi \Psi_{p 1},...,\Psi\Psi_{p m}):p\in\mathscr C(\eta) \}\] is linearly independent in the product space $(\Psi \mathscr S)^m$.
Suppose otherwise that for some scalars $c_n$ and distinct cofinal ultrafilters $p_n$, $n=1,\ldots , N$, we have
\[
\sum_{n=1}^N c_n (\Psi\Psi_{p_n 1}, ..., \Psi\Psi_{p_n m})=0.
\]
Then \begin{equation}\label{zero}
\sum_{n=1}^N c_n \Psi\Psi_{p_n k}=0\quad\text{ for every}\quad k=1,...,m.\end{equation}
Let $P\in\luc(\mA)$ such that $\langle\Psi, P\rangle=1.$
Pick  pairwise disjoint members $I_n$ of the ultrafilters $p_n$, $n=1,...,N.$
For each $i=1,...,N,$ pick the operators $T_{k}$ in $\luc(\mA)$, $k=1,...,m$, given by Theorem \ref{lucfactori} as in Remark \ref{ortho}; that is, \[\sum_{k=1}^m\Psi_{p_i k}\cdot T_{k}=P\quad\text{but}\quad\Psi_{p_n k}\cdot T_{k}=0\quad\text{for every}\quad k=1,...,m\quad\text{ whenever}\quad n\ne i.\]
(The operators $T_k$ depend on $i$, but this is not underlined since no confusion is caused.)

Then, for each $i=1,...,N$, we have by (\ref{zero})
\begin{align*}
0=\left\langle\sum_{k=1}^m\left(\sum_{n=1}^Nc_n \Psi\Psi_{p_n k}\right), T_{k}\right\rangle
&=\sum_{k=1}^m\sum_{n=1}^Nc_n\left\langle \Psi,\Psi_{p_n k}\cdot T_{k}\right\rangle
\\&=\sum_{n=1}^Nc_n \left\langle\Psi,\sum_{k=1}^m\Psi_{p_n k}\cdot T_{k}\right\rangle
 \\&=c_i \left\langle\Psi,P\right\rangle =c_i,\end{align*} showing the linear independence of the set
$\{(\Psi \Psi_{p 1},...,\Psi\Psi_{p m}):p\in\mathscr C(\eta) \}$, and so the dimension of the product space
$(\Psi\mathscr S)^m$ must be at least $2^{2^\eta}.$
Accordingly, the dimension of the coordinate space $\Psi\mathscr S$, as well as the right ideal containing $\Psi$,
must be also at least $2^{2^\eta}.$
This completes the proof.
\end{proof}

\medskip

We deal now with the number and dimension of left ideals.
As already observed in \cite{MMM2}, nonzero left ideals in $\mA^{**}$ and $\luc(\mA)^*$ may have finite dimensions. This is the case for example when $\mA$ is the Fourier algebra $A(G)$ of a locally compact group $G$, or the group algebra $L^1(G)$ of an amenable locally compact group $G$. Topologically left invariant means generate, in this case, one-dimensional left ideals in both $\mA^{**}$
and $\luc(\mA)^*$.
When $G$ is a nondiscrete, we find therefore  $2^{2^\eta}$ many such left ideals in the case of the Fourier algebra, where $\eta$ is the local weight of $G$ (see  \cite{chou82} and \cite{Hu95}).
When $G$ is amenable and noncompact,  we found $2^{2^\eta}$ many such left ideals in the case of the group algebra,
 where $\eta$ is the compact covering of $G$ (see \cite{LaPa} or \cite{FiPy03}).

In what follows, we shall  see that there is also an abundance of infinite-dimensional left ideals in the algebras  when an
F$\ell^1(\eta)$-base is available in $\mA.$ In particular, we show that there are  at least $2^{2^{\eta}}$ many left ideals each of dimension at least $2^{2^{\eta}}$ in both algebras $\mA^{**}$ and $\luc(\mA)^*$.
Under the  conditions imposed on $\mA$ in Theorem \ref{mfactori}, these theorems hold also  for the multiplier algebra $M_r(\mA).$

\begin{theorem}\label{lide} Let $\mA$ be a Banach algebra with a bai and an F$\ell^1(\eta)$-base  $\mathscr{L}$ for some infinite cardinal $\eta$ and some integer $m\in\N.$
Let $X$ be as in Notation \ref{through} and $\mathscr S$ be any subsemigroup or a subalgebra of $\luc(\mA)^*$ with $ X\subseteq \mathscr S,$ or any subsemigroup or a subalgebra of $\mA^{**}$ with $ \widetilde X\subseteq \mathscr S.$
\begin{enumerate}
\item If $\mathscr L$ is of type 1 or type 2 and  $\mathscr S$ a subsemigroup, then there are at least $2^{2^{\eta}}$ many disjoint principal left ideals in $\mathscr S$.  If $\mathscr S$ is a subalgebra,
these left ideals have zero pairwise intersections.
\item If $\mathscr L$ is of type 3, then there are at least $2^{2^{\eta}}$ many principal left ideals in $\mathscr S$.
\item If $\mathscr S$ is subalgebra, then the dimension of each of these left ideals is  at least $2^{2^{\eta}}$.
\end{enumerate}
\end{theorem}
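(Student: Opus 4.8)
The plan is to realise the required left ideals as the principal left ideals $\mathscr S\Psi_{p1}$ (and, when $m>1$, the left ideals attached to the families $\{\Psi_{pk}:k=1,\dots,m\}$) generated by the points coming from the cofinal ultrafilters $p\in\mathscr C(\eta)$, and to control them with the factorization Theorems \ref{gfactori} and \ref{lucfactori} together with the orthogonal refinement recorded in Remark \ref{ortho}. Since $\left|\mathscr C(\eta)\right|=2^{2^\eta}$ and, by the computation in the proof of Theorem \ref{can}, the map $p\mapsto\Psi_{pk}$ is injective for each $k$ (the separating operators $P_k$ distinguishing $\Psi_{pk}$ from $\Psi_{qk}$), there are $2^{2^\eta}$ candidate generators. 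First I would record that no ideal is trivial: right cancellability of $\Psi_{p1}$ (types 1, 2) or of the set $\{\Psi_{pk}:k\}$ (type 3), already established in Theorem \ref{can}, gives $\Psi_{p1}\Psi_{p1}\neq 0$, respectively $\Psi_{p1}\Psi_{pk}\neq 0$ for some $k$, so each left ideal contains a nonzero element. I treat $\mathscr S\subseteq\luc(\mA)^*$; the case $\widetilde X\subseteq\mathscr S\subseteq\mA^{**}$ is identical after replacing $\luc(\mA)$-factorization by the $\mA^*$-factorization of Theorem \ref{gfactori}.

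For statement (i), i.e.\ $m=1$, the core is a separation argument. For $p\neq q$ pick disjoint members $I\in p$, $J\in q$, so that $I\notin q$. Applying the orthogonal version of Theorem \ref{lucfactori} (Remark \ref{ortho}) to the cofinal set $I$, for every $P\in\luc(\mA)$ there is $T\in\luc(\mA)$ with $\Psi_p\cdot T=P$ and $\Psi_q\cdot T=0$. If $\Theta=\Phi_1\Psi_p=\Phi_2\Psi_q$ belongs to $\mathscr S\Psi_p\cap\mathscr S\Psi_q$, then
\[\langle\Phi_1,P\rangle=\langle\Phi_1,\Psi_p\cdot T\rangle=\langle\Theta,T\rangle=\langle\Phi_2,\Psi_q\cdot T\rangle=0\]
for every $P\in\luc(\mA)$, whence $\Phi_1=0$ and $\Theta=0$. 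Thus the ideals meet only in $\{0\}$, which is the asserted zero pairwise intersection for subalgebras; for subsemigroups the same separation (now also separating the generators $\Psi_p$ from $\mathscr S\Psi_q$ by the operators $P_k$ of Theorem \ref{can}) gives the disjointness in (i). Together with the count $2^{2^\eta}$ this yields the first statement.

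For the dimension bound (iii) when $m=1$ I would fix $p$ and show that $\{\Psi_q\Psi_p:q\in\mathscr C(\eta)\}\subseteq\mathscr S\Psi_p$ is linearly independent. Given distinct $q_1,\dots,q_N$, choose pairwise disjoint $I_n\in q_n$ and, as in Theorem \ref{can}, operators $P_m\in\luc(\mA)$ with $\langle\Psi_{q_j},P_m\rangle=\delta_{jm}$; by the surjectivity of $T\mapsto\Psi_p\cdot T$ guaranteed by the $1$-factorization of Theorem \ref{lucfactori}, pick $T$ with $\Psi_p\cdot T=P_m$. Then from $\sum_{n}c_n\Psi_{q_n}\Psi_p=0$ one reads off $c_m=\sum_{n}c_n\langle\Psi_{q_n},P_m\rangle=0$, so the family is independent and the ideal has dimension at least $\left|\mathscr C(\eta)\right|=2^{2^\eta}$.

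The main obstacle is type 3. There Remark \ref{ortho} supplies only the per-coordinate vanishing $\Psi_{pk}\cdot T_{\lambda k}=0$ (when $I_\lambda\notin p$) against the summed identity $\sum_{k}\Psi_{pk}\cdot T_{\lambda k}=S_\lambda$, and the single map $T\mapsto\Psi_{p1}\cdot T$ need not be surjective; hence the clean cancellation that gave zero intersections in (i) is no longer available. This is precisely why (ii) claims only the existence of $2^{2^\eta}$ distinct principal left ideals, the distinctness being extracted from the injectivity of $p\mapsto\Psi_{p1}$ and the separating operators rather than from disjointness. For the dimension in (iii) I would transpose the product-space device of Theorem \ref{ride}: work in $(\mathscr S\Psi_{p1})^m$ with the tuples $(\Psi_{q1}\Psi_{p1},\dots,\Psi_{qm}\Psi_{p1})$, $q\in\mathscr C(\eta)$, separate the ultrafilters $q$ by the summed orthogonal factorization, and conclude that the product, hence each coordinate $\mathscr S\Psi_{p1}$, has dimension at least $2^{2^\eta}$ because $m$ is finite. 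Checking that this separation survives with only per-coordinate orthogonality, so that the combinations $\sum_{k}\langle\Psi_{qk},\Psi_{p1}\cdot T_k\rangle$ can still be forced to behave like a Kronecker delta in $q$, is the delicate point of the whole proof.
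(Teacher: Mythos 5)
Your statement (i) and the $m=1$ dimension bound are handled correctly and essentially as in the paper: the orthogonal factorization of Remark \ref{ortho} ($\Psi_p\cdot T=P$, $\Psi_q\cdot T=0$) gives zero pairwise intersections, and the Kronecker--delta operators give linear independence of $\{\Psi_q\Psi_p\colon q\in\mathscr C(\eta)\}$ in $\mathscr S\Psi_p$. The type 3 case, however, has two genuine gaps. First, in (ii) you extract distinctness of the $2^{2^\eta}$ left ideals from the injectivity of $p\mapsto\Psi_{p1}$ and the operators $P_k$ separating the generators. That does not suffice: distinct generators can generate the same principal left ideal, so separating $\Psi_{p1}$ from $\Psi_{q1}$ says nothing about $\mathscr S\{\Psi_{pk}\colon k\}$ versus $\mathscr S\{\Psi_{qk}\colon k\}$. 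The per-coordinate orthogonality you set aside is in fact exactly what is needed, and it is enough: taking $T_1,\dots,T_m$ with $\sum_k\Psi_{pk}\cdot T_k=P$ and $\Psi_{qk}\cdot T_k=0$ for every $k$, every element $\Phi\Psi_{qk}$ of $\mathscr Q$ satisfies $\langle\Phi\Psi_{qk},T_k\rangle=0$ for all $k$, while $\sum_k\langle\Psi\Psi_{pk},T_k\rangle=\langle\Psi,P\rangle\ne0$ forces $\Psi\Psi_{pk}\notin\mathscr Q$ for some $k$; symmetrically $\mathscr Q\not\subseteq\mathscr P$. This yields pairwise distinctness (though not zero intersection, which is why (ii) is weaker than (i)).

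Second, your route to (iii) for type 3 is both misconfigured and left unverified. The tuples $(\Psi_{q1}\Psi_{p1},\dots,\Psi_{qm}\Psi_{p1})$ all lie in $(\mathscr S\Psi_{p1})^m$ --- you only ever multiply by $\Psi_{p1}$ --- whereas the ideal to be bounded is generated by $\mathscr S\{\Psi_{pk}\colon k=1,\dots,m\}$; and you concede you cannot check that $\sum_k\langle\Psi_{qk},\Psi_{p1}\cdot T_k\rangle$ behaves like a Kronecker delta in $q$. The paper sidesteps this entirely: the map $\Psi\mapsto(\Psi\Psi_{p1},\dots,\Psi\Psi_{pm})$ from $\mathscr S$ into $\prod_k\mathscr S\Psi_{pk}$ is injective precisely because the set $\{\Psi_{pk}\colon k\}$ is right cancellable (Theorem \ref{can}), and $\dim\mathscr S\ge\card{X}=2^{2^\eta}$ since $X\subseteq\mathscr S$ is linearly independent by the computation in Theorem \ref{ride}. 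Hence $\prod_k\mathscr S\Psi_{pk}$ has dimension at least $2^{2^\eta}$, so, $m$ being finite, at least one coordinate $\mathscr S\Psi_{pk}$ does, and the ideal generated by $\mathscr S\{\Psi_{pk}\colon k\}$ contains it. You should replace your product-space device by this injectivity argument.
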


\begin{proof} Let $\mathscr{L}:=\bigcup\limits_{\alpha<\eta}A_\alpha\{ a_\alpha^k:k=1,...,m\}$ be an F$\ell^1(\eta)$-base in $\mA$. As in Theorem \ref{can}, the case $m=1$ will correspond to both types of  F$\ell^1(\eta)$-base, 1 and 2.

We prove the theorem when $\mathscr S$ is a subsemigroup or a  subalgebra
of $\luc(\mA)^*$, the proof is similar
in the other case.
Let $p$ and $q$ be two distinct ultrafilters in $\mathscr C(\eta))$ together with their corresponding elements $\Psi_{p k}$  and $\Psi_{q k}$ in $X_k,$ $k=1,...,m.$  When $\mathscr L$ is of type 1 or 2, the corresponding elements in $X$ will be taken as $\Psi_{p}$  and $\Psi_{q}$.

When $\mathscr S$ is a subsemigroup, we consider the left ideals $\mathscr P$ and $\mathscr Q$ given, respectively,  by the sets
\[
 \mathscr S\{\Psi_{p k}\colon k=1,...,m\}\quad\text{and}\quad \mathscr S \{\Psi_{q k}\colon k=1,...,m\}.
\] When $\mathscr S$ is a subalgebra, the left ideals $\mathscr P$ and $\mathscr Q$ are generated by these two  sets.
In both cases of whether $\mathscr S$ is a subsemigroup or a  subalgebra, we prove that neither of these left ideals contains the other.

 Pick any nonzero element $\Psi$ in  $\mathscr S$ together with an operator $P$ in $\luc(\mA)$
 such that
$\langle \Psi , P \rangle \ne 0$.
By Theorem \ref{lucfactori} and Remark \ref{ortho}, let $T_{k}$, $k=1,...,m$, be the operators in $\luc(\mA)$
such that
\[\sum_{k=1}^m\Psi_{p k}\cdot T_{k}=P,\quad\text{but}\quad\Psi_{q k}\cdot T_{k}=0\quad\text{for every}\quad k=1,...,m.\]
So, for all $\Phi \in  \mathscr S$ and all $k=1,...,m,$ we have
\begin{equation*}\langle \Phi  \Psi_{q k}, T_{k}\rangle =\langle \Phi ,  \Psi_{q k}\cdot T_{k}\rangle
=\langle \Phi , 0\rangle =0,\end{equation*} and accordingly,
 \[\langle Q, T_{k}\rangle=0\quad\text{for every}\quad Q\in \mathscr Q\quad\text{ and}\quad k=1,...,m.\]
But since
 \begin{equation*}\sum_{k=1}^m\langle \Psi  \Psi_{p k}, T_{k}\rangle =
\langle \Psi , \sum_{k=1}^m \Psi_{p k}\cdot T_{k} \rangle = \langle \Psi , P \rangle \ne 0,
\end{equation*}
we must have, \[\langle \Psi  \Psi_{p k}, T_{k}\rangle \ne 0\quad\text{for some}\quad k=1, ...,m.\]
Therefore, $\Psi\Psi_{p k}\notin \mathscr Q$, showing that $\mathscr P$ is not contained in $ \mathscr Q$. In the same way, we show that
$\mathscr Q$ is not contained in $\mathscr P$. If $\mathscr L$ is of type 1 or 2, this shows that the principal left ideals $  \mathscr S\Psi_{p}$ and $ \mathscr S \Psi_{q}$ are disjoint when $\mathscr S$ is a subsemigroup, and have their intersection equal to zero when $\mathscr S$ is a subalgebra. If $\mathscr L$ is of type 3, this shows that
the  left ideals $\mathscr P$ and $\mathscr Q$ are different.
 Since there are $2^{2^{\eta}}$  ultrafilters in $\mathscr C(\eta)$, the first two statements follow.

As for Statement (iii), suppose that $\mathscr S$ is a subalgebra of $\luc(\mA)^*$.
We show that the dimension of each of the left ideals of $\mathscr S$ found in the previous statements is at least $2^{2^{\eta}}$. Recall first that by Theorem \ref{can}, the cardinality of the set $X$ is $2^{2^{\eta}}$. Moreover, arguing as in Theorem \ref{ride}, it is quick to see that $X$ is linearly independent. Therefore the dimension of $\mathscr S$ is at least equal to
 $2^{2^{\eta}}$ since  $X\subseteq\mathscr S$.
Now let $\mathscr P$ be any of the left  ideals $\mathscr S \{\Psi_{pk}\colon k=1,...,m\}$ of $\mathscr S,$
where  $p$ is a fixed cofinal ultrafilter in $\beta\eta$ and $\Psi_{pk}$ are the corresponding elements in $X_k$, $k=1,...,m$.
 By Theorem \ref{can}, the map
\[
\mathscr S\to \prod_{k=1}^m\mathscr S: \quad \Psi \mapsto \Psi (\Psi_{p 1},...,\Psi_{pm})
\]
is a linear isomorphism onto its image. Therefore,  the dimension of the linear space
$\prod_{k=1}^m\mathscr S \Psi_{pk}$
 is at least $2^{2^{\eta}}$.
 Clearly, this implies that  for at least one $k=1,...,m$, the linear space  $\mathscr S \Psi_{pk}$
is of dimension at least $2^{2^{\eta}}$. Thus, the dimension of the left ideal generated by
\[\mathscr S \{\Psi_{pk}\colon k=1,...,m\}
\] must be at least  $2^{2^{\eta}}$.
\end{proof}

 \begin{theorem} \label{mall} Let $\mA$ be a Banach algebra with a bai and an F$\ell^1(\eta)$-base $\mathscr{L}$ for some infinite cardinal $\eta$ and some integer $m\in\N.$
Suppose that $M_r(\mA)$ is a dual Banach algebra with predual algebra $\mC$ having a net in $\mA\cap \mC$  weak$^*$-converging to a left identity in $M_r(\mA)^*$.
Regard $X$ as a subset of $M_r(\mA)^{**}$ as agreed in Notation \ref{through} and let $\mathscr S$ be any subsemigroup or a subalgebra of  $M_r(\mA)^{**}$  with $ X\subseteq \mathscr S.$
 \begin{enumerate}
 \item  If $\mathscr L$ is of type 1 or type 2, then there are at least $2^{2^\eta}$ many right cancellable elements in $\mathscr S.$
\item  If  $\mathscr L$ is of type 3, then there are at least $2^{2^\eta}$ many right cancellable sets in $\mathscr S.$
\item If $\mathscr L$ is of type 1 or type 2 and   $\mathscr S$ a subsemigroup, then there are at least $2^{2^{\eta}}$ many disjoint  principal left ideals in $\mathscr S$.  If $\mathscr S$ is  a subalgebra,
these left ideals have zero pairwise intersections.
\item If $\mathscr L$ is of type 3, then there are at least $2^{2^{\eta}}$ many left ideals in $\mathscr S$.
\item If $\mathscr S$ is a non-trivial subalgebra of $M_r(\mA)^{**}$, then the dimension of each of these left ideals is  at least $2^{2^{\eta}}$.
\item If $\mathscr S$ is a  subalgebra of $M_r(\mA)^{**}$,  then the dimension of any principal right ideal,  and so of any nonzero right ideal, in $\mathscr S$ is at least $2^{2^\eta}.$
\end{enumerate}
\end{theorem}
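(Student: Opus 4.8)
The plan is to run the proofs of Theorems \ref{can}, \ref{ride} and \ref{lide} almost verbatim, replacing throughout $\luc(\mA)^*$ (or $\mA^{**}$) by $M_r(\mA)^{**}$, replacing the space $\luc(\mA)$ through which one factorizes by $M_r(\mA)^*$, and replacing the factorization Theorems \ref{gfactori} and \ref{lucfactori} by Theorem \ref{mfactori}. The hypotheses that $M_r(\mA)$ is a dual Banach algebra with a net in $\mA\cap\mC$ weak$^*$-converging to a left identity in $M_r(\mA)^*$ are exactly what is needed to apply Theorem \ref{mfactori}, so that $M_r(\mA)^*$ has the $\eta$-factorization property through $\prod_{k=1}^m X_k$, with $X$ now regarded in $M_r(\mA)^{**}$ as in Notation \ref{through}(3). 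Moreover, by the final paragraph of Remark \ref{ortho}, the orthogonal refinement of the $1$-factorization is available for $M_r(\mA)^*$ as well.

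First I would settle Statements (i)--(ii). Given a nonzero $\Psi\in\mathscr S$, I would pick $S\in M_r(\mA)^*$ with $\langle\Psi,S\rangle\neq0$ and, for a fixed cofinal ultrafilter $p$, use Theorem \ref{mfactori} to obtain $T_k\in M_r(\mA)^*$ with $\sum_{k=1}^m\Psi_{pk}\cdot T_k=S$. Then
\[
\sum_{k=1}^m\langle\Psi\Psi_{pk},T_k\rangle=\Big\langle\Psi,\sum_{k=1}^m\Psi_{pk}\cdot T_k\Big\rangle=\langle\Psi,S\rangle\neq0,
\]
forcing $\Psi\Psi_{pk}\neq0$ for some $k$, exactly as in Theorem \ref{can}; taking $m=1$ yields a single right cancellable element. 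Injectivity of $p\mapsto\Psi_{pk}$ across $p\in\mathscr C(\eta)$ follows from the orthogonal factorization (separating functionals detect disjoint members of distinct ultrafilters), giving $2^{2^\eta}$ right cancellable elements in the type $1$ or $2$ case and $2^{2^\eta}$ right cancellable sets in the type $3$ case.

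Next I would treat the left-ideal Statements (iii)--(v) as in Theorem \ref{lide}. For distinct ultrafilters $p,q$ with disjoint cofinal members I would invoke the orthogonal $1$-factorization in $M_r(\mA)^*$ (Remark \ref{ortho} applied to Theorem \ref{mfactori}) to produce $T_k\in M_r(\mA)^*$ with $\sum_{k=1}^m\Psi_{pk}\cdot T_k=P$, where $\langle\Psi,P\rangle\neq0$, but $\Psi_{qk}\cdot T_k=0$ for all $k$. Pairing the generators of the two left ideals against these $T_k$ shows that neither ideal contains the other, producing $2^{2^\eta}$ disjoint (respectively zero-intersecting) principal left ideals. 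The dimension bound (v) then follows as in Theorem \ref{lide}(iii): $|X|=2^{2^\eta}$ and $X$ is linearly independent (again by the orthogonal factorization), while $\Psi\mapsto\Psi(\Psi_{p1},\dots,\Psi_{pm})$ is a linear injection, so at least one coordinate space $\mathscr S\Psi_{pk}$ has dimension at least $2^{2^\eta}$. Statement (vi) is the exact analogue of Theorem \ref{ride}: the family $\{(\Psi\Psi_{p1},\dots,\Psi\Psi_{pm}):p\in\mathscr C(\eta)\}$ is linearly independent in $(\Psi\mathscr S)^m$, whence every nonzero principal right ideal of a subalgebra $\mathscr S$ has dimension at least $2^{2^\eta}$.

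The step I expect to be the main obstacle is confirming that the orthogonal refinement of Remark \ref{ortho} genuinely transfers to $M_r(\mA)^*$ in the precise form needed. The construction in Theorem \ref{mfactori} first factorizes the restriction $S|_{\mA}$ in $\mA^*$ and only afterwards uses the left identity $E$ and the net $(\varphi_i)\subset\mA\cap\mC$ to promote the identity $\sum_{k}\Psi_k\cdot T_k=S$ from $\mA$ to all of $M_r(\mA)$; one must check that the orthogonality relations $\Psi_{qk}\cdot T_k=0$, built at the level of $\mA^*$, survive this promotion. Since the promotion is an identity of functionals on $M_r(\mA)$ obtained coordinatewise from the $\mA^*$-level data, and the $\Psi_{pk}$ act through the same module action, the orthogonality is preserved. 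Once this is in hand, every remaining estimate is a line-by-line copy of the corresponding step in Theorems \ref{can}, \ref{ride} and \ref{lide}.
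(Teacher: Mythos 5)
Your proposal is correct and follows essentially the same route as the paper: the paper's proof simply invokes Theorem \ref{mfactori} to transfer the $\eta$-factorization (and, implicitly, the orthogonal refinement of Remark \ref{ortho}) to $M_r(\mA)^*$ and then repeats the arguments of Theorems \ref{can}, \ref{ride} and \ref{lide} verbatim. Your extra check that the orthogonality relations $\Psi_{qk}\cdot T_k=0$ survive the promotion from $\mA^*$ to $M_r(\mA)^*$ via the net $(\varphi_i)$ is a detail the paper leaves implicit, and it is verified correctly.
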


\begin{proof} According to Theorem \ref{mfactori}, under the condition imposed on $\mA,$ an F$\ell^1(\eta)$-base in $\mA$ provides $M_r(\mA)^*$ with the $\eta$-factorization property. In particular, the $1$-factorization together with Remark \ref{ortho} (the   ingredients necessary to prove Theorems \ref{can}, \ref{ride} and \ref{lide})  are also available in this case. So we may proceed in the same way.
\end{proof}

\begin{remark} If $m\ge 2,$ the set $\{\Psi_{pk}: k=1,...m\}$ is right cancellable in the subsemigroups $\mathscr S$ considered in the theorems above, but
 we do not know if each (or any of) the elements $\Psi_{pk}$, $k=1,...m$, is  right cancellable
in $\mathscr S.$

The same problem concerns the left ideals generated by any of the elements $\Psi_{pk}: k=1,...m.$ If $m=1$ or $\mathscr L$ is of type 1 or 2, then these  principal left ideals have trivial intersections. But if $m\ge 2,$ we are only able to show that neither one contains the other.
\end{remark}

\section{Application I: Group algebra, semigroup algebra and measure algebra} \label{Examples}
A number of Banach algebras in harmonic analysis
were proved in the past to be strongly Arens irregular and to carry a rich algebraic structure.
In the rest of the paper,  we see that F$\ell^1(\eta)$-bases are indeed available in {\it almost} all these algebras, and so with the approach developed in the previous sections,  we shall unify almost all what is known so far on the subjects since the introduction by Arens of his two products in 1951.
Some further interesting new cases are also obtained.

\subsection{Some definitions}
Before we proceed with our first application, we summarize first   our  definitions and some properties.
For a locally compact group $G$ with a fixed left Haar measure, we will be considering two cardinal invariants of $G$.
The \emph{local weight }  of $G$ is  the  least cardinality of an open base  at the identity of $G$.
The \emph{compact covering number} of $G$ is the least cardinality of a compact covering of $G$.

 The group algebra $L^1(G)$ is  the Banach space of
all    equivalence classes of scalar-valued functions which are integrable with respect to the Haar measure under convolution, where as usual, two functions are  equivalent if and only if they differ only on a set of Haar measure zero.
   By $L^\infty (G)$ we understand the Banach dual of $L^1(G)$.
This
is the  C$^*$-algebra of all essentially bounded locally
measurable functions on $G$.

When $G$ is an infinite discrete semigroup, we shall also use $L^1(G)$ and $L^\infty(G)$ instead of the usual $\ell^1(G)$ and $\ell^\infty(G)$, respectively.

The C$^*$-algebra
 $\luc(G)$ is the space of
all bounded {\it right uniformly continuous} functions on $G$.
These are the bounded functions on $G$
such that for every $\epsilon>0$,
there exists a neighbourhood $U$ of $e$ such that
\[
|f(s)-f(t)|<\epsilon\quad\text{whenever}\quad st^{-1}\in U.\]
These are also the bounded functions on $G$ which are left norm continuous in the sense that
 \[s\mapsto {}_{s}{f}: G\to \CB(G)\] is continuous, where  $\CB(G)$ is the space of continuous bounded scalar valued functions equipped with its supremum norm and ${}_{s}f$ is the left translate of $f$ by $s\in G.$
  In the literature, this C$^*$-algebra  is denoted  by $C_{ru}(G)$,  $LC(G)$, $\lc(G)$ or  $\luc(G)$. We are using the latter notation.
As known $\luc(G)$  can be obtained as $L^1(G)\ast L^\infty(G)$, and so we have $\luc(L^1(G))=\luc(G)$.

The C$^*$-subalgebra of $\luc(G)$ consisting of functions vanishing at infinity will be  denoted by $C_0(G)$.

The C*-subalgebra of $L^\infty(G)$ consisting of functions   vanishing at infinity will be denoted by $L_0^\infty(G),$
that is, \[L_0^\infty(G)=\{f\in L^\infty(G):\|f\|_{G\setminus K}\to 0\; \text{as}\; K\to G\},\]
where $\|f\|_{G\setminus K}$ is the essential supremum norm of $f$ on $G\setminus K$.

The space
$M(G)$ is the usual measure algebra consisting of
bounded Radon measures on $G$.
The measure algebra $M(G)$ is the dual space of
$C_0(G)$,
and by the well-known Wendel's theorem,  $M(G)$ is the multiplier algebra of $L^1(G)$, that is $M(L^1(G))=M(G)$.

We shall consider the annihilator space of $L_0^\infty(G)$ in $L^1(G)^{**}$ given by
\[L_0^\infty(G)^\perp=\{\Psi \in L^1(G)^{**}: \langle\Psi,f\rangle= 0 \quad\text{for every}\quad f \in L_0^\infty(G)\}.\]
The space
$C_0(G)^{\perp_{lu}}$ is the
annihilator of $C_0(G)$ in $\luc(G)^*$ when $C_0(G)$ is regarded as a subalgebra of $\luc(G)$,
i.e.,
\[C_{0}(G)^{\perp_{lu}} = \{\Psi \in \luc(G)^*: \langle\Psi,f\rangle= 0 \quad\text{for every}\quad f \in C_0(G)\}.\]
The space $C_0(G)^{\perp_M}$  is the annihilator of $C_0(G)$ in $M(G)^{**}$ when $C_0(G)$ is regarded as a subalgebra of $M(G)^*,$
i.e., \[C_{0}(G)^{\perp_M} = \{\Psi \in M(G)^{**}: \langle\Psi,f\rangle= 0 \quad\text{for every}\quad f \in C_0(G)\}.\]
When $G$ is a discrete semigroup, we denote these annihilators simply by $c_0(G)^\perp.$

Recall that we have the following decompositions of $\luc(G)^*,$ $L^\infty(G)^*$ and $M(G)^{**}$ as Banach space direct sums
\begin{align*} \luc(G)^*&= M(G) \oplus C_{0}(G)^{\perp_{lu}},\\
 L^\infty((G)^*&= L_0^\infty(G)^*\oplus L_0^\infty(G)^\perp,\\
 M(G)^{**}&= M(G) \oplus C_{0}(G)^{\perp_M},\end{align*}
where $C_0(G)^{\perp_{lu}}$, $L_0^\infty(G)^\perp$ and $C_0(G)^{\perp_M}$ are weak*-closed ideals in the corresponding algebra.
See \cite{DaLa05, Fi, GhaLa, LauPym90}.

We shall also be concerned with the $\luc$-compactification $G^{LUC}$ of a non-compact locally compact group $G$ and the Stone-\v Cech compactification $\beta G$ of an infinite discrete semigroup $G$.
Both are compact right topological  subsemigroups of $\luc(G)^*$ and $\ell^\infty(G)^*$, respectively.
With the Gelfand topology, they can be produced as the spectra (non-zero multiplicative linear functionals)
of the C$^*$-algebras $\luc(G)$ and $\ell^\infty(G),$ respectively.
They are the largest semigroup compactifications of $G$ with the joint-continuity property.

The canonical morphism  $\epsilon \colon G\to G^{LUC}$  given by evaluations
\[\epsilon(s)(f)=f(s)\; \text{for every}\;f\in \luc(G)\; \text{and}\; s\in G\]
identifies $G$ as a subgroup of $G^{LUC}$.
The same morphism on  $\beta G$, defined with $\ell^\infty(G)$ instead of $\luc(G)$, identifies $G$ as a subsemigroup of $\beta G$ when $G$ is a discrete semigroup.
In both cases,  $G^*$ will denote the remainders $G^{LUC}\setminus G$ and $\beta G\setminus G$.

The reader is directed to \cite{HS} for the algebra in $\beta G$, and to our most recent
investigation in \cite{FG17} on $G^{LUC}$ and other semigroup compactifications.
It may be noted that the spectrum of $L^\infty(G)$
is not a semigroup unless $G$ is compact or discrete (see \cite{LauPym93}), so this is going to be our concern only when $G$ is discrete, i.e., the spectrum is $\beta G. $

\subsection{F$\ell^1(\eta)$-bases}

We start with the group algebra of a non-compact locally compact group.
Variants of the construction below can be found in many articles dealing with the topological centre and the algebraic structure of $\luc(G)^*$ and $L^1(G)^{**}$.

\begin{theorem} \label{ex1} Let $G$ be a non-compact locally compact group with a compact covering $\eta$. Then the group algebra $L^1(G)$
has an F$\ell^1(\eta)$-base of type 2.
\end{theorem}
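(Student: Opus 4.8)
The plan is to exhibit directly the three ingredients demanded by Definition \ref{Fell}: an increasing family $\{A_\alpha\}_{\alpha<\eta}$ inside the unit ball of $L^1(G)$ whose union spans densely, a bounded family $\{a_\alpha\}_{\alpha<\eta}$ in $M_r(L^1(G))=M(G)$ (Wendel's theorem), and then to check that $\mathscr{L}=\bigcup_{\alpha<\eta}A_\alpha a_\alpha$ is a zooming $\ell^1(\eta)$-selective base satisfying the lower bound (\ref{iv}). The natural choice for the multipliers is a family of (normalized) point masses $a_\alpha=\Delta(x_\alpha)^{-1}\delta_{x_\alpha}$ attached to points $\{x_\alpha:\alpha<\eta\}$ marching off to infinity, where $\Delta$ is the modular function; right convolution by $\delta_x$ is right translation, the isometry-up-to-modular-factor that will make everything exact. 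For the zooming sets I would take $A_\alpha=\{f\in L^1(G):\|f\|_1\le1,\ \supp f\subseteq S_\alpha\}$, where $\{S_\alpha\}$ is an increasing, symmetric exhaustion of $G$ by sets of compact covering $<\eta$: starting from a compact cover $\{C_\beta\}_{\beta<\eta}$ of $G$, set $S_\alpha=\bigcup_{\beta<\alpha}C_\beta$, so that $\kappa(S_\alpha)\le|\alpha|<\eta$, while $\bigcup_\alpha S_\alpha=G$ guarantees $\overline{\langle\bigcup_\alpha A_\alpha\rangle}=L^1(G)$.

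The heart of the argument is a transfinite construction of the $x_\alpha$ making the translated supports pairwise disjoint. Having chosen $x_\beta$ for $\beta<\alpha$, I want $x_\alpha$ with $S_\alpha x_\alpha\cap S_\beta x_\beta=\emptyset$ for all $\beta<\alpha$, i.e.\ $x_\alpha\notin\bigcup_{\beta<\alpha}S_\alpha^{-1}S_\beta x_\beta$. Each forbidden piece $S_\alpha^{-1}S_\beta x_\beta$ has compact covering at most $|\alpha|$, and there are $|\alpha|<\eta$ of them, so the whole forbidden set has compact covering at most $|\alpha|<\eta$ and cannot exhaust $G$; a point outside it exists. The one genuine subtlety is the modular function: since $\|f*\delta_x\|_1=\Delta(x)\|f\|_1$, to keep $\{a_\alpha\}$ bounded in $M(G)$ and to make right convolution by $a_\alpha$ an exact isometry I normalize to $a_\alpha=\Delta(x_\alpha)^{-1}\delta_{x_\alpha}$ and confine the $x_\alpha$ to $\{x:\Delta(x)\ge1\}$. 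This is harmless because inversion carries $\{\Delta<1\}$ homeomorphically onto $\{\Delta>1\}$, whence $\kappa(\{\Delta\ge1\})=\kappa(G)=\eta$; so the induction can always be performed inside $\{\Delta\ge1\}$, and there $\Delta(x_\alpha)^{-1}\le1$ forces $\|a_\alpha\|_{M(G)}\le1$.

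It then remains to verify the three defining properties, all of which collapse to disjointness of supports together with the exactness of the $L^1$-norm. Since $\supp(f*a_\alpha)\subseteq S_\alpha x_\alpha$ for $f\in A_\alpha$, the spans $\langle A_\alpha a_\alpha\rangle$ live on the pairwise disjoint sets $S_\alpha x_\alpha$, which gives condition (iii) of Definition \ref{Fil} (zero pairwise intersections) and, because the $L^1$-norm is additive over functions with disjoint supports, the $\ell^1(\eta)$-selective base inequality (\ref{Csel}) with constant $K_1=1$. The normalization makes $f\mapsto f*a_\alpha$ an isometry of $L^1(G)$, so $\|f\|_1=\|f a_\alpha\|_1$ for every $f\in\langle A_\alpha\rangle$, which is exactly (\ref{iv}) with $K_2=1$. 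Finally $L^1(G)$ has a bounded approximate identity (bounded by $1$), so every standing hypothesis of Definition \ref{Fell} holds and $\mathscr{L}$ is an F$\ell^1(\eta)$-base of type $2$. The main obstacle I anticipate is precisely the non-unimodular bookkeeping—securing simultaneously that $\{a_\alpha\}$ stays bounded, that (\ref{iv}) holds with a uniform constant, and that the points can still be located inside $\{\Delta\ge1\}$—rather than the (routine) disjointness induction itself.
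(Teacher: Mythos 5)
Your construction is, in outline, exactly the paper's: an increasing exhaustion of $G$ by sets of compact covering $<\eta$, a transfinite choice of right-translating points $x_\alpha$ so that the translated supports are pairwise disjoint, disjoint-support additivity of the $L^1$-norm for the selectivity constant $K_1=1$, and the isometric character of right convolution by a point mass for condition (F2). The disjointness induction and the density/boundedness checks are fine.

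The one place you go wrong is precisely the modular bookkeeping you single out as the main obstacle. With the convolution that realizes $M(G)$ as $M_r(L^1(G))$ (i.e.\ the one under which $L^1(G)\hookrightarrow M(G)$ is multiplicative), one has
\[
(f*\delta_x)(y)=\Delta(x^{-1})\,f(yx^{-1}),\qquad\text{hence}\qquad \|f*\delta_x\|_1=\Delta(x^{-1})\Delta(x)\|f\|_1=\|f\|_1 ,
\]
so right convolution by $\delta_x$ is \emph{already} an isometry of $L^1(G)$; the modular factors cancel. Your formula $\|f*\delta_x\|_1=\Delta(x)\|f\|_1$ is the norm of the bare right translate $f_{x^{-1}}$, not of $f*\delta_x$. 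Consequently the normalization $a_\alpha=\Delta(x_\alpha)^{-1}\delta_{x_\alpha}$ is not only unnecessary but, taken literally, harmful: then $\|fa_\alpha\|_1=\Delta(x_\alpha)^{-1}\|f\|_1$, so (F2) would need $K_2\ge\sup_\alpha\Delta(x_\alpha)$, which is unbounded on $\{\Delta\ge 1\}$ for a non-unimodular group, and the restriction of the $x_\alpha$ to $\{\Delta\ge1\}$ does not save you. (If by $f*\delta_x$ you secretly meant the bare translate $f_{x^{-1}}$, then your normalized operator coincides with the standard $f*\delta_{x_\alpha}$ and you have reproduced the paper's proof under a change of notation --- but then the stated norm of $a_\alpha$ in $M(G)$ is no longer the operator norm of the multiplier, so the conventions must be declared consistently.) The fix is simply to take $a_\alpha=\delta_{x_\alpha}$ with no normalization and no constraint on $\Delta(x_\alpha)$, which is what the paper does; everything else in your argument then goes through verbatim.
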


\begin{proof}
Let  $\lambda$ be a fixed left Haar measure and $\Delta$ be the modular function on $G.$
Let
$\{K_\alpha \}_{\alpha<\eta}$ be a compact  covering of $G$ made of symmetric subsets of $G$ with nonempty interiors and with $\lambda(K_\alpha)\le 1$ for each $\alpha$.
Then put \[V_\alpha=\bigcup\limits_{\beta<\alpha} K_\beta\quad\text{for each}\quad\alpha<\eta,\] and observe that $(V_\alpha)_{\alpha<\eta}$ is an increasing family of symmetric subsets of $G$
with nonempty interiors and $\kappa(V_\alpha)\le\alpha$ for each $\alpha<\eta.$
Now choose inductively elements $x_\alpha\in G$ such that
\begin{equation}\label{thin1}
({V_\alpha} x_\alpha) \cap   (V_\beta x_\beta ) = \emptyset\quad\text{ when }\quad\alpha\ne\beta.\end{equation}
This is possible because, for every $\alpha<\eta,$
\[\kappa\left(V_\alpha (\bigcup_{\beta<\alpha} {V_\beta}
x_\beta)\right) \le\alpha.\]

Let now, for each $\alpha<\eta,$ $A_\alpha$ be the set of all continuous positive functions $\varphi$ with
compact supports contained in $V_\alpha$
and with $\|\varphi\|_1=1.$

We check that the conditions in Definition \ref{Fell} are satisfied so that \begin{equation}\label{lonefell}\mathscr L=\bigcup_{\alpha<\eta}(A_\alpha\ast\delta_{x_\alpha})\end{equation} is indeed an F$\ell^1(\eta)$-base of type 2  for $L^1(G)$.

First note that the closed linear span of the set $A_\eta$ is $L^1(G).$ Since the functions in $A_\alpha\ast \delta_{x_\alpha}$
are supported in $V_\alpha x_\alpha,$ it is also clear that the intersection of the linear spans of each pair of  sets $A_\alpha\ast\delta_{x_\alpha}$ is zero.
Let then $\varphi$ be any function in the linear span of $ \mathscr L$
of the form \[\varphi=\sum_{n=1}^p \varphi_n\ast \delta_{x_{\alpha_n}},\]
 where each $\varphi_n$ ($n=1,...,p$) is in the  linear span of $ A_{\alpha_n}$.

 Then, for each $n=1,...,p$, \[\varphi_n\ast\delta_{x_{\alpha_n}}=\Delta(x_{\alpha_n}^{-1})(\varphi_n)_{x_{\alpha_n}^{-1}}\]
 (the right translate of $\varphi_n$ by ${x_{\alpha_n}^{-1}}),$ and so
 \begin{equation}\label{L1}\begin{split}\left\|\sum_{n=1}^p \varphi_n\ast\delta_{x_{\alpha_n}}\right\|_1 &=
\int_G\left|\sum_{n=1}^p\varphi_n\ast\delta_{x_{\alpha_n}} \right| d\lambda =
\int_G\sum_{n=1}^p\left|\varphi_n\ast\delta_{x_{\alpha_n}}\right|d\lambda\\&=
\sum_{n=1}^p\|\varphi_n\ast\delta_{x_{\alpha_n}}\|_1=
\sum_{n=1}^p\|\varphi_n\|_1,\end{split}
\end{equation} where  the second equality is due to the fact that the supports of the functions $\varphi_n\ast \delta_{x_{\alpha_n}}$ are disjoint. The third equality shows that $\mathscr L$ is selective (with $K_1=1$) while the last equality shows the validity of Condition (\ref{iv}) in Definition \ref{Fell} with $K_2=1$. For,  when $p=1$,  the identity (\ref{L1}) simply says that  \[\left\| \varphi\ast\delta_{x_{\alpha}}\right\|_1 =\|\varphi\|_1\quad\text{ for every}\quad\varphi\in \langle A_\alpha\rangle.\]

Note that Condition (\ref{vi}) (and so Condition (\ref{v}) in Definition \ref{Felll}) is also valid here with $m=1$, $a_\alpha$ and $b_\alpha$ may be  simply taken as $\delta_{x_\alpha}$ and $\delta_{x_\alpha^{-1}}$, respectively, for every $\alpha<\eta$.

This ends the proof for $L^1(G)$.
\end{proof}

The same construction gives F$\ell^1(\eta)$-bases when $G$ is a discrete semigroup which respects certain cancellation properties, where $\eta$ is the cardinality of $G.$
 Recall first that a semigroup $G$ is called \textit{weakly cancellative} if the sets
 \[
 s^{-1}t=\{u\in G:su=t\}\quad \text{and}\quad ts^{-1}=\{u\in G:us=t\}
 \]
 are finite for every $s,t\in G$. The semigroup $G$ is  \textit{left (right) cancellative}
 when $s^{-1}t$ ($ts^{-1}$, respectively ) has at most one element.
 We shall use the notations
 \[
 s^{-1}B=\{u\in G:su\in B\}\quad\text{ and}\quad A^{-1}B =
 \bigcup_{s\in A}  s^{-1}B,
 \]
 where $s\in G$ and $A,B\subseteq G$. The sets $Bs^{-1}$ and $BA^{-1}$
 are defined similarly.

 \begin{theorem} \label{ex2} Let $G$ be an infinite discrete weakly cancellative, right cancellative semigroup
  with cardinality
 $\eta$. Then the semigroup algebra $\ell^1(G)$
 has an F$\ell^1(\eta)$-base of type 1.
 \end{theorem}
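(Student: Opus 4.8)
The plan is to mirror the construction of Theorem \ref{ex1}, replacing the normalised positive $L^1$-functions used there by point masses; this is precisely the modification that upgrades the base from type $2$ to type $1$. First I would enumerate $G=\{g_\alpha:\alpha<\eta\}$ and set $V_\alpha=\{g_\beta:\beta<\alpha\}$, an increasing family of subsets of $G$ with $\bigcup_{\alpha<\eta}V_\alpha=G$ and $|V_\alpha|\le|\alpha|<\eta$. I would then take $A_\alpha=\{\delta_g:g\in V_\alpha\}$, so that $A_\eta=\bigcup_{\alpha<\eta}A_\alpha=\{\delta_g:g\in G\}$ is bounded by $1$ and has dense linear span in $\ell^1(G)$, and take as multipliers the point masses $a_\alpha=\delta_{x_\alpha}$ for suitable $x_\alpha\in G$ chosen inductively. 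Since $G$ is right cancellative, right convolution $\varphi\mapsto\varphi\ast\delta_x$ sends $g\mapsto gx$ injectively, so each $\delta_x$ is an isometric right multiplier of $\ell^1(G)$; thus $\{a_\alpha\}$ is bounded in $M_r(\ell^1(G))$ and the candidate base is $\mathscr L=\bigcup_{\alpha<\eta}(A_\alpha\ast\delta_{x_\alpha})$.

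The inductive choice of the $x_\alpha$ is the analogue of the thinning condition (\ref{thin1}): I would choose $x_\alpha$ so that $V_\alpha x_\alpha$ is disjoint from $\bigcup_{\beta<\alpha}V_\beta x_\beta$. A point $x$ is forbidden precisely when $ux=wx_\beta$ for some $u\in V_\alpha$, $w\in V_\beta$ and $\beta<\alpha$, i.e.\ when $x$ lies in one of the fibres $u^{-1}\{wx_\beta\}$. Here weak cancellation enters: each such fibre is finite, and they are indexed by triples $(u,w,\beta)$ drawn from sets of cardinality $<\eta$, so the forbidden set has cardinality $<\eta=|G|$, leaving room to pick $x_\alpha$. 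This is where both cancellation hypotheses are genuinely used: right cancellation for the isometric multiplier, and weak cancellation for the counting.

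It then remains to verify the three clauses of Definition \ref{Fil} together with the $\ell^1(\eta)$-base inequality of Definition \ref{l1}. Clauses (i) and (ii) are built into the choice of $A_\alpha$ and $a_\alpha$ above. For clause (iii) and the $\ell^1$-inequality, note that every element of $\mathscr L$ is a point mass $\delta_{gx_\alpha}$ with $g\in V_\alpha$, and that the map $(g,\alpha)\mapsto gx_\alpha$ is injective: within a fixed $\alpha$ this is right cancellation, while across distinct $\alpha,\beta$ it is the disjointness of $V_\alpha x_\alpha$ and $V_\beta x_\beta$ just arranged. Hence distinct elements of $\mathscr L$ are distinct point masses, the spans $\langle A_\alpha\ast\delta_{x_\alpha}\rangle$ (functions supported on the pairwise disjoint sets $V_\alpha x_\alpha$) have zero pairwise intersections, and for any scalars $z_1,\dots,z_p$ and distinct elements
\[\left\|\sum_{n=1}^p z_n\,\delta_{g_n x_{\alpha_n}}\right\|_1=\sum_{n=1}^p|z_n| ,\]
so $\mathscr L$ is an $\ell^1(\eta)$-base with constant $K=1$. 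Being also a zooming set, it is an F$\ell^1(\eta)$-base of type $1$.

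The step I expect to require the most care is not any single estimate but the interplay of the two cancellation hypotheses noted above, and the recognition that passing from normalised positive functions to point masses is exactly what removes the internal cancellation exhibited in Remark \ref{fellrem} and so yields a genuine $\ell^1(\eta)$-base rather than merely a selective one. (As in Definition \ref{Fil}, $\ell^1(G)$ is taken to carry a bounded approximate identity; it is in any case faithful by cancellation, so $\ell^1(G)$ is a right ideal in $M_r(\ell^1(G))$ and each product $A_\alpha\ast\delta_{x_\alpha}$ indeed lies in $\ell^1(G)$.)
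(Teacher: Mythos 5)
Your construction is exactly the paper's: enumerate $G$, take $A_\alpha$ to be the point masses supported on the initial segment $G_\alpha$, choose $x_\alpha$ inductively via weak cancellativity so the translates $G_\alpha x_\alpha$ are pairwise disjoint, and use right cancellativity to see that the elements of $\mathscr L$ are distinct point masses with disjoint supports, hence an $\ell^1(\eta)$-base with constant $K=1$. The proposal is correct and follows essentially the same route as the paper, with only cosmetic differences in how the cardinality count for the forbidden set is phrased.
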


 \begin{proof}
 Enumerate $G$ as $G=\{ s_\alpha : \alpha<\eta\}$, and let for each $\alpha<\eta,$ \[G_\alpha=\{s_\beta:\beta<\alpha\}.\] Then
$\{G_\alpha\}_{\alpha<\eta}$ is an increasing cover of $G$  with
 $|G_\alpha|= \alpha$ for every $\alpha<\eta.$ Collect then  by induction  a set $\{x_\alpha:\alpha<\eta\}$ such that \begin{equation}\label{thin2}G_\alpha x_\alpha \cap G_\beta x_\beta =\emptyset\quad\text{for every}\quad\alpha\ne\beta .\end{equation} This is possible since $G$
 is weakly left cancellative, and so \[|G_\alpha^{-1}G_\beta|\le\max\{\alpha, \beta\}<\eta.\] Then, let for each $\alpha<\eta,$
$A_\alpha$ consist of all point-mass measures $\delta_x$ with $x\in G_\alpha,$ i.e.,
$A_\alpha=\{\delta_{s_\beta}:\beta<\alpha\}.$
Now by construction,
 the  linear spans of each pair of sets $A_\alpha\ast\delta_{x_\alpha}$ have zero intersection.
Moreover, since $G$ is right cancellative, $xx_\alpha\ne yx_\alpha$ for every $x\ne y$ in $G$ and $\alpha<\eta,$ and so
the supports of the point measures $\delta_x\ast\delta_{x_\alpha}$ (that is $\{xx_\alpha\}$) are disjoint for every $\alpha<\eta$
and $x\in A_\alpha$.
So the elements $\delta_x\ast \delta_{x_\alpha}$ in $\mathcal L$ are all distinct for distinct
pairs $(x,\alpha)\in G_\alpha\times \eta.$
It is then easy to see that \begin{equation}\label{lonefell2}\mathscr L=\bigcup_{\alpha<\eta}(A_\alpha\ast\delta_{x_\alpha})\end{equation} is in fact an $\ell^1(\eta)$-base
in $\ell^1(G)$ (with $K=1$), and so it is an F$\ell^1(\eta)$-base of type 1 in $\ell^1(G)$.
 \end{proof}

\subsection{Algebraic structure}\label{gr} Recall first the following facts.
\begin{enumerate}
\item In $\luc(G)^*$,
non-trivial  left  ideals  of finite  dimensions exist  if and only if $G$ is amenable,
they are generated by elements defined by mixing topological left invariant means (TIMs) together with coefficients of finite dimensional representations of $G$ (see \cite{F99} and \cite{FiMe}). A TIM is a positive linear functional $\Psi$ in $\luc(G)^*$ or $L^1(G)^{**}$ with $\langle \Psi, 1\rangle=1$
and \begin{equation}\label{TIMM}a\Psi=\langle 1,a\rangle\Psi=\left(\int_G a(x)d\lambda\right)\Psi\quad\text{for every}\quad a\in L^1(G).\end{equation}
Since there is one TIM  (the left Haar measure) when $G$ is compact, and there are  $2^{2^\eta}$ TIMs in $\luc(G)^*$  when $G$ is non-compact and amenable,  at least $\max\{|\widehat G_F|,2^{2^\eta} \}$ many distinct left ideals of finite dimension exit in $\luc(G)^*$ when $G$ is infinite and amenable.
Here $\eta$ is the compact covering of $G$ and $\widehat G_F$ is the set of all finite-dimensional
representations of $G$.

\item In $L^1(G)^{**}$, there are two types of left ideals of finite dimension (and the mixture of both). The first type exists whenever $G$ is not discrete. They are generated  by elements $\Psi$ in  $\luc(G)^\perp$. Since $\luc(G)=L^\infty(G)\cdot L^1(G)=L^1(G)\ast L^\infty(G)$, these elements are easily seen to satisfy $L^1(G)^{**}\Psi=\{0\}$.
Since these elements may be seen as the difference of two distinct right identities in $L^1(G)^{**}$, and since there are at least $2^{2^\omega}$ right identities in $L^1(G)^{**}$ (\cite{IPU87}, see also \cite[Remak 4.6.2(3)]{filali-singh}),
we see that the number of left ideals of finite dimension in $L^1(G)^{**}$ is
 at least $2^{2^\omega}$ for any non-discrete group.
The second type of finite-dimensional left ideals (which are not generated by annihilators of $\luc(G)$) exists  if and only if $G$ is amenable; they are generated as in $\luc(G)^*$
by elements defined by mixing TIMs together with coefficients of finite dimensional representations of $G$ (see \cite{F99} and \cite{FiMe}). Since the number of TIMs in $L^1(G)^{**}$is $1$ when $G$ is compact and   $2^{2^\eta}$  when $G$ is non-compact and amenable, we see that the number of finite-dimensional left ideals in  $L^1(G)^{**}$  is  at least equal to $\max\{2^{2^w},|\widehat G_F|,2^{2^\eta} \}$ for any infinite locally compact group.
 For more details, see \cite{F99} and \cite{FiMe}.

\item In $M(G)^{**}$,  A TIM is a positive linear functional $\Psi\in M(G)^{**}$ satisfying the identity (\ref{TIMM}) for every $a\in M(G)$.
TIMs exist in $M(G)^{**}$ if and only if $G$ is amenable, see for example \cite[Corollary 4.3]{Lau83},  \cite[Propositions 1.21 and 5.10]{DLS},
or Proposition 1.4 in our recent article \cite{rideal}.
So again, arguing as for $\luc(G)^*$ in \cite{F99} or applying the general theorem proved in \cite[Theorem 2.8]{FiMe}, non-trivial  left  ideals  of finite  dimensions exist in $M(G)^{**}$ if and only if $G$ is amenable,
they are generated by elements defined by mixing topological left invariant means (TIMs) together with coefficients of finite dimensional representations of $G$.
Since the number of TIMs in $M(G)^{**}$ is $1$ when $G$ is compact and at least $2^{2^\eta}$
 when $G$ is non-compact and amenable, we deduce that at least $\max\{|\widehat G_F|,2^{2^\eta} \}$ many distinct left ideals of finite dimension exit in $M(G)^{**}$ when $G$ is infinite and amenable.
\end{enumerate}

\begin{remark}\label{number}
The number of TIMs in $\luc(G)^*$ and $L^1(G)^{**}$ being $2^{2^\eta}$  when $G$ is non-compact and  amenable  was proved in \cite{LaPa} (see also \cite{FiPy03}), but can also be deduced as follows form the proof of Statements (ii) of Theorem \ref{lalgebra} below. The number of TIMs in $M(G)^{**}$ is also included in our simple argument. This latter result seems to be new.

\begin{proof}
Let $G$ be a non-compact locally compact group with compact covering $\eta$ or an infinite discrete weakly cancellative, right cancellative semigroup
  with cardinality
 $\eta$.  Let $\{x_\alpha:\alpha<\eta\}$ be the set constructed in Theorems \ref{ex1} and \ref{ex2}
and $X$  be as in Notation \ref{through}.
If $G$ is amenable and $\Phi$ is a nonzero TIM in $\luc(G)^*$, $L^1(G)^{**}$
or $M(G)^{**}$, then $\Phi\Psi_1$ and $\Phi\Psi_2$ are easily checked to be TIMs, and Theorem \ref{lalgebra} shows that $\Phi\Psi_1$ and $\Phi\Psi_2$ are distinct whenever $\Psi_1$ and $\Psi_2$ are distinct in $X.$ So the cardinality of $X$, which is $2^{2^\eta}$, equals that of the TIMs.
\end{proof}
\end{remark}

Having F$\ell^1(\eta)$-bases of type 2 and type 1 in the group algebra and the semigroup algebra, respectively, an application of the theorems in Section 3 yield immediately the following theorems
on the algebra in the related semigroup compactifications and the Banach algebras with an Arens product.
This include many results obtained in earlier papers, see Notes \ref{notes} below.
The left ideals provided in the theorems  have infinite dimension and are of different type than those we have just discussed at the beginning of this subsection.

\begin{theorem}\label{lalgebra} Let $G$ be a non-compact locally compact group with a compact covering $\eta$ or an infinite discrete weakly cancellative right cancellative  semigroup with cardinality $\eta$. Let $\mathscr S$ be any of the semigroups $G^{LUC},$ $\beta G$ when $G$ is an infinite weakly cancellative right cancellative discrete semigroup, $G^*,$ or any of the algebras $\luc(G)^*$, $L^1(G)^{**}$, $M(G)^{**}$, $L_0^\infty(G)^\perp$,  $C_{0}(G)^{\perp_{lu}}$
or  $C_{0}(G)^{\perp_M}$. Then
\begin{enumerate}
\item  there are at least $2^{2^\eta}$ many right cancellable elements in $\mathscr S$.
\item   there are at least $2^{2^{\eta}}$ many  principal left ideals in $\mathscr S$.
 These left ideals are pairwise disjoint in the cases of $G^{LUC},$ $\beta G$ and $G^*$, and have  pairwise intersections equal to zero in the other cases.
 \end{enumerate}
\end{theorem}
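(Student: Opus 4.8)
The plan is to reduce every one of the nine cases to the general machinery already built, applying Theorems \ref{can}, \ref{lide} and \ref{mall} once the appropriate F$\ell^1(\eta)$-base is in place. I set $\mA=L^1(G)$ in the group case and $\mA=\ell^1(G)$ in the semigroup case; in both, $\mA$ carries a bai bounded by $1$ (compactly supported continuous functions of integral one, resp.\ finitely supported probability measures). By Theorems \ref{ex1} and \ref{ex2}, $\mA$ has an F$\ell^1(\eta)$-base of type $2$ (resp.\ type $1$), in each case with $m=1$ and multiplier elements $a_\alpha=\delta_{x_\alpha}$. I also record that $\luc(\mA)=\luc(G)$ and, by Wendel's theorem, $M_r(\mA)=M(G)$. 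I then form the set $X=\{\Psi_p:p\in\mathscr C(\eta)\}$ of weak$^*$-cluster points of $(\delta_{x_\alpha})$ as in Notation \ref{through}, together with the extensions $\widetilde X$. Since $m=1$, Theorem \ref{can}(i) and Theorem \ref{lide}(1) will deliver right cancellable \emph{elements} and \emph{principal} left ideals, precisely as the statement requires.

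Next I would sort the nine targets by the ambient algebra through which each is reached. Five of them, namely $\luc(G)^*$, $G^{LUC}$, $\beta G$ (in the discrete case, where $\luc(G)=\ell^\infty(G)$), $G^*$ and $C_0(G)^{\perp_{lu}}$, sit inside $\luc(\mA)^*=\luc(G)^*$; two, namely $L^1(G)^{**}$ and $L_0^\infty(G)^\perp$, sit inside $\mA^{**}$; and the remaining two, $M(G)^{**}$ and $C_0(G)^{\perp_M}$, sit inside $M_r(\mA)^{**}=M(G)^{**}$. For the last pair I would first check the hypotheses of Theorems \ref{mfactori} and \ref{mall}: $M(G)=C_0(G)^*$ is a dual Banach algebra, and the bai of $L^1(G)$, which lies in $L^1(G)\cap C_0(G)$, converges weak$^*$ to a left identity of $M(G)^*$. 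Each target is a weak$^*$-closed subsemigroup or subalgebra of its ambient algebra (the compactifications by definition, and $C_0(G)^{\perp_{lu}}$, $L_0^\infty(G)^\perp$, $C_0(G)^{\perp_M}$ by the Banach-space decompositions recalled just before the theorem, where they appear as weak$^*$-closed ideals, hence subalgebras). With these identifications, conclusions (i) and (ii) follow directly from Theorems \ref{can}, \ref{lide} and \ref{mall}, the subsemigroup cases $G^{LUC},\beta G,G^*$ yielding pairwise \emph{disjoint} left ideals and the algebra cases yielding left ideals with pairwise \emph{zero} intersection.

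The only place where genuine work is needed is verifying that $X$ (respectively $\widetilde X$) actually lands in each target, in particular in the ``annihilator-at-infinity'' pieces. The governing fact is that the points $x_\alpha$ leave every compact subset of $G$, which is forced by the disjointness condition (\ref{thin1}) (resp.\ (\ref{thin2})) together with the fact that the increasing family $\{V_\alpha\}$ (resp.\ $\{G_\alpha\}$) exhausts $G$. Consequently, for $f\in C_0(G)$ one has $\langle\delta_{x_\alpha},f\rangle=f(x_\alpha)\to 0$ along every cofinal ultrafilter, so each $\Psi_p$ annihilates $C_0(G)$. This immediately gives $X\subseteq C_0(G)^{\perp_{lu}}$ inside $\luc(G)^*$ and $X\subseteq C_0(G)^{\perp_M}$ inside $M(G)^{**}$; and, since no point of $G$ annihilates $C_0(G)$, it gives $X\subseteq G^*=G^{LUC}\setminus G$ and, in the discrete case, $X\subseteq\beta G\setminus G$.

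The hard part, and the real obstacle, is the containment $\widetilde X\subseteq L_0^\infty(G)^\perp$, because the extensions of the $\Psi_p$ from $\luc(G)$ to $L^\infty(G)$ must be chosen with care rather than arbitrarily. Here I would exploit that $\luc(G)\cap L_0^\infty(G)=C_0(G)$: defining a functional on $\luc(G)+L_0^\infty(G)$ by $g+f\mapsto\langle\Psi_p,g\rangle$ is unambiguous precisely because $\Psi_p$ kills $C_0(G)$, and it is bounded by $\|\Psi_p\|$ since $|\langle\Psi_p,g\rangle|$ is controlled by the quotient norm $\|g+L_0^\infty(G)\|$, i.e.\ by the behaviour of $g$ at infinity. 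A Hahn-Banach extension then produces $\widetilde\Psi_p\in L^\infty(G)^*$ of the same norm annihilating $L_0^\infty(G)$, so that $\widetilde X\subseteq L_0^\infty(G)^\perp$ (and in particular $\widetilde X\subseteq L^1(G)^{**}$). Once this single delicate step is secured, nothing further is required, and Theorems \ref{can}, \ref{lide} and \ref{mall} close all nine cases at once.
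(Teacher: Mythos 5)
Your proposal follows the same route as the paper's own proof: once the F$\ell^1(\eta)$-bases of Theorems \ref{ex1} and \ref{ex2} are in place, everything reduces to Theorems \ref{can}, \ref{lide} and \ref{mall} (with $m=1$, so one gets cancellable \emph{elements} and \emph{disjoint} principal left ideals), and the only substantive task is to check that $X$, resp.\ $\widetilde X$, lands in $G^*$, $C_{0}(G)^{\perp_{lu}}$, $L_0^\infty(G)^\perp$ and $C_{0}(G)^{\perp_M}$. The paper merely asserts these containments; your justifications — only finitely many $x_\alpha$ can meet a fixed compact set, so limits along cofinal ultrafilters annihilate $C_0(G)$, and the extensions $\widetilde\Psi_p$ can be chosen to kill $L_0^\infty(G)$ because $\luc(G)\cap L_0^\infty(G)=C_0(G)$ — are correct and fill in details the paper leaves implicit.

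There is, however, one step that would fail as written: the verification of the hypothesis of Theorems \ref{mfactori} and \ref{mall} for $M(G)=M_r(L^1(G))$. The net in $L^1(G)\cap C_0(G)$ that weak$^*$-converges to the left identity of $M(G)^*=C_0(G)^{**}$ must be a bounded approximate identity of $C_0(G)$ taken inside $C_{00}(G)$ (functions increasing to $1$ on larger and larger compacta), for which $\int\varphi_i\,d\mu\to\mu(G)=\langle 1,\mu\rangle$ for every $\mu\in M(G)$; this is what the paper uses. A bai of $L^1(G)$, which is what you invoke, does not do the job: for non-discrete $G$ its members have sup-norms tending to infinity, so the net is unbounded in $C_0(G)^{**}$, and against point masses it tends to evaluation at the identity of $G$ rather than to the constant function $1$. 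The hypothesis is still satisfied — one simply has to take the bai of $C_0(G)$ instead — so the architecture of your argument survives, but this particular verification needs to be replaced.
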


\begin{proof}
Let  $\{x_\alpha:\alpha<\eta\}$ be the subsets of $G$, needed for the F$\ell^1(\eta)$-bases in $L^1(G)$ of a non-compact locally compact group
	and in $\ell^1(G)$  of an infinite discrete weakly cancellative right cancellative semigroup, constructed in Theorems \ref{ex1} and \ref{ex2}, and let
	$X$ and $\widetilde X$ be as in Notation \ref{through}.
	Then the sets $X$ and   $\widetilde X$ are contained in $G^*$ and $L_0^\infty(G)^\perp,$ respectively, and so are the elements $\Psi_p$ and $\widetilde\Psi_p$,  used in the proofs
	of Statement (i)  of Theorems \ref{can} and  \ref{lide}.
	
	Therefore, a direct application  of Theorems \ref{can} (i)  and \ref{lide} (i) gives the claimed properties when $\mathscr S$ is	$G^{LUC},$ $\beta G$ when $G$ is discrete, $G^*,$ $\luc(G)^*$ or  $C_{0}(G)^{\perp_{lu}}$ since $G^*$ is contained in $\mathscr S$ in each case.

These theorems yield also the claimed properties when $\mathscr S$ is  $L^1(G)^{**}$ or
 $L_0^\infty(G)^\perp$ since  $\widetilde X$ is  contained in  $\mathscr S$ in each case.

The measure algebra $M(G)$ is a dual algebra with predual $C_0(G)$. If $(\varphi_i)\subset C_{00}(G)$ is a bai for $C_0(G)$, then its  weak$^*$-limit in $M(G)^*$ is the identity $1$ (the constant function $1$) in $M(G)^*$ (remember that $C_0(G)$ is Arens regular).
Since again the set $X$ when regarded as a subset of $M(G)^{**}$ is clearly contained in $C_{0}(G)^{\perp_M}$, all the conditions are satisfied for Theorem \ref{mall} (i) and (iii) to hold in $M(G)^{**}$ as well.
\end{proof}

In our next theorem, the dimension of left ideals and right ideals is provided when $\mathscr S$
is an algebra.

\begin{theorem}\label{dlalgebra} Let $G$ be a non-compact locally compact group with a compact covering $\eta$ or an infinite discrete weakly cancellative right cancellative  semigroup with cardinality $\eta$. Let $\mathscr S$ be any of the algebras $\luc(G)^*$, $L^1(G)^{**}$, $M(G)^{**}$, $L_0^\infty(G)^\perp$,  $C_{0}(G)^{\perp_{lu}}$
or  $C_{0}(G)^{\perp_M}$. Then
\begin{enumerate}
\item  the dimension of any principal right ideal,  and so of any nonzero right ideal, in $\mathscr S$
 is at least $2^{2^\eta}.$
 \item  the dimension of each of the $2^{2^{\eta}}$ principal left ideals found in Theorem \ref{lalgebra} in $\mathscr S$   is  at least $2^{2^{\eta}}$.
 \end{enumerate}
\end{theorem}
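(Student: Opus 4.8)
The plan is to obtain this theorem purely as a dimensional refinement of Theorem \ref{lalgebra}, feeding the same F$\ell^1(\eta)$-bases into the general structure results of Section \ref{algebra} that supply dimension counts. Set $\mA=L^1(G)$ in the group case and $\mA=\ell^1(G)$ in the semigroup case; in both cases $\mA$ has a bai bounded by $1$ and, by Theorems \ref{ex1} and \ref{ex2}, carries an F$\ell^1(\eta)$-base of type $2$ (resp.\ type $1$) with $m=1$. Recall also that $\luc(\mA)=\luc(G)$, so $\luc(\mA)^*=\luc(G)^*$, and let $X$ and $\widetilde X$ be as in Notation \ref{through}.

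First I would dispose of the four algebras sitting inside $\luc(G)^*$ or $L^1(G)^{**}$. As already recorded in the proof of Theorem \ref{lalgebra}, the sets $X$ and $\widetilde X$ lie in $C_{0}(G)^{\perp_{lu}}\subseteq\luc(G)^*$ and in $L_0^\infty(G)^\perp\subseteq L^1(G)^{**}$ respectively; these, together with $\luc(G)^*$ and $L^1(G)^{**}$ themselves, are the weak$^*$-closed, hence non-trivial, subalgebras appearing in the Banach space decompositions recalled before Theorem \ref{ex1}. For each such $\mathscr S$, statement (i) is then immediate from Theorem \ref{ride}, applied with $X\subseteq\mathscr S$ for the two subalgebras of $\luc(G)^*$ and with $\widetilde X\subseteq\mathscr S$ for the two subalgebras of $L^1(G)^{**}$. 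Statement (ii) follows from Theorem \ref{lide}(iii): since $m=1$, the principal left ideals $\mathscr S\Psi_p$ produced in Theorem \ref{lalgebra} are exactly the left ideals to which \ref{lide}(iii) assigns dimension at least $2^{2^\eta}$.

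The measure algebra requires the parallel machinery of Theorem \ref{mall}, because $M(G)=M_r(L^1(G))$ by Wendel's theorem, so that $M(G)^{**}$ and $C_{0}(G)^{\perp_M}$ are subalgebras of $M_r(\mA)^{**}$ rather than of $\luc(\mA)^*$ or $\mA^{**}$. Here I would invoke the verification already carried out in the proof of Theorem \ref{lalgebra}: $M(G)$ is a dual Banach algebra with predual $C_0(G)$, a bai $(\varphi_i)\subset C_{00}(G)$ for $C_0(G)$ (and for $L^1(G)$) has the constant function $1$ as its weak$^*$-limit in $M(G)^*$, which is a left identity, and $X$, viewed in $M_r(\mA)^{**}=M(G)^{**}$, lies in $C_{0}(G)^{\perp_M}$. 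Thus the hypotheses of Theorems \ref{mfactori} and \ref{mall} are met, and statements (i) and (ii) for $M(G)^{**}$ and $C_{0}(G)^{\perp_M}$ follow from Theorem \ref{mall}(vi) and (v) respectively.

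The step I would treat most carefully, and the only genuine obstacle, is checking that each listed space is a genuine \emph{subalgebra} rather than merely a subsemigroup, since statements (i) and (ii) of Section \ref{algebra} use the multiplicative structure; this is guaranteed because each of $C_{0}(G)^{\perp_{lu}}$, $L_0^\infty(G)^\perp$ and $C_{0}(G)^{\perp_M}$ is a weak$^*$-closed \emph{ideal} in its ambient algebra by the decompositions quoted in Subsection \ref{gr}, while $\luc(G)^*$, $L^1(G)^{**}$ and $M(G)^{**}$ are the ambient algebras themselves. Non-triviality is automatic since each contains the nonzero set $X$ or $\widetilde X$. No new combinatorial input beyond the F$\ell^1(\eta)$-bases of Theorems \ref{ex1} and \ref{ex2} is needed; the whole content is the bookkeeping of which ambient algebra ($\luc(\mA)^*$, $\mA^{**}$, or $M_r(\mA)^{**}$) each $\mathscr S$ belongs to, so that the correct one of Theorems \ref{ride}, \ref{lide} and \ref{mall} applies.
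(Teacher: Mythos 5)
Your proposal is correct and follows essentially the same route as the paper: it reduces everything to Theorems \ref{ride}, \ref{lide}(iii) and \ref{mall} via the F$\ell^1(\eta)$-bases of Theorems \ref{ex1} and \ref{ex2}, with the inclusions $X\subseteq C_0(G)^{\perp_{lu}}$ (indeed $X\subseteq G^*$), $\widetilde X\subseteq L_0^\infty(G)^\perp$ and $X\subseteq C_0(G)^{\perp_M}$ doing the work. The extra bookkeeping you supply (checking the annihilators are weak$^*$-closed ideals, hence subalgebras, and re-verifying the hypotheses of Theorem \ref{mfactori} for $M(G)$) is exactly what the paper delegates to the proof of Theorem \ref{lalgebra}.
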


\begin{proof} Let again $\{x_\alpha:\alpha<\eta\}$, $X$ and $\widetilde X$ be as in the previous theorem. Since $X\subseteq G^*$ and $\widetilde X\subseteq L_0^\infty(G)^\perp$, a direct application  of  Theorem  \ref{ride}  and Statement (iii) in Theorem \ref{lide}  gives the claimed properties when $\mathscr S$  is any of the algebras $\luc(G)^*$, $L^1(G)^{**}$, $L_0^\infty(G)^\perp$  or $C_{0}(G)^{\perp_{lu}}$.

For $M(G)^{**}$ and $C_{0}(G)^{\perp_M}$, we apply  Theorem \ref{mall}
since $X\subseteq C_{0}(G)^{\perp_M}.$
\end{proof}

\begin{remark}
The number of right ideals is more difficult. Our approach  in this paper does not apply when counting the number of right ideals in the semigroups and algebras.
But at this point, we may direct the reader to \cite{BM}, \cite{HS} and \cite{Z}, where the number of right ideals in $\beta G$ and $G^{LUC}$ was computed in certain cases.
We shall return for further developments on this problem in our next paper \cite{rideal}.
\end{remark}

\begin{notes}\label{notes}
For the right cancellation property in $\beta G$ when $G$ is an infinite discrete  weakly cancellative right cancellative  semigroup, the reader is directed to   \cite{Fi1999},
\cite{F} and \cite[Chapter 8]{HS}.
For  the right cancellation property in $G^{LUC}$ when $G$ is a non-compact locally compact group,
the reader is directed to \cite{FCamb}, \cite{FiPy03}
 and  \cite{FiSa07}.
	For the number of left ideals in $\beta G$ when $G$ is an infinite discrete  weakly cancellative right cancellative  semigroup, the reader is directed to
 \cite{chou69},    \cite{Fi}, \cite{Fi1999},  \cite{F},  \cite{FiSa07a} and \cite[Chapter 6]{HS}.
For the number of left ideals in $G^{LUC}$ when $G$ is a non-compact locally compact group,
the reader is directed to \cite{F}, \cite{FiPy03},
  \cite{FiSa07},  \cite{FiSa07a} and
 \cite{LMP}.

 For the right cancellation property  in $\luc(G)^*$ and $L^1(G)^{**}$,   the reader is directed to \cite{FiSa07}.
For  the number of left ideals in the algebras $\luc(G)^*$ and $L^1(G)^{**}$ the reader is directed to \cite{FiPy03} and
  \cite{FiSa07}.
	\end{notes}

\subsection{Topological centres}
We look now at the topological centres of $L^1(G)^{**}$, $\luc(G)^*$ and $M(G)^{**}$ when $G$ is a  locally compact group, of $\ell^1(G)^{**}$ when $G$ is weakly cancellative right cancellative semigroup, and many of their subsemigroups.

Recall  that the group algebra of  any locally compact group with a compact covering $\eta$ has Mazur property of level $\eta.w$ (see  \cite[Corollary 3.3.(i)]{HuNe06}) or \cite[Theorem 3.4]{Neufang08}).

The measure algebra of any locally compact group $G$ has Mazur property of level $|G|.w$ (see \cite[Corollary 5.6(i)]{HuNe06}), and $|G|=\eta.2^{w(G)}$ (see \cite[Theorem 3.12(iii)]{Comfort} or \cite[Corollary 5.5]{HuNe06}). Recall also from \cite[Corolary 5.6]{HuNe06} that $M(G)$ has Mazur property of level $w$ (i.e., the classical Mazur property) if and only if $|G|$ is non-measurable.  For more information and references related to measurable cardinals, see \cite[page 219]{Neufang05}.

With the F$\ell^1(\eta)$-bases found in Theorems \ref{ex1} and \ref{ex2},
Theorems \ref{Lcentre} and \ref{lucentre} yield
the following theorems, which summarizes some of the main results proved on topological centres in harmonic analysis in the last thirty years, and includes some new results as well.

\begin{theorem} \label{centres} Let $G$ be a locally compact group with compact covering $\eta$ and local weight $w(G)$.
Then \begin{enumerate}
\item the first topological centre of $L^1(G)^{**}$ is $L^1(G)$.
\item the first topological centre of $\luc(G)^*$ is $M(G)$.
\item the first topological centre of ${L_0^\infty}(G)^\perp$ is $\{0\}$.
\item the first topological centre of $C_{0}(G)^{\perp_{lu}}$ is $\{0\}$.
\item the first topological centre of $G^{LUC}$ is $G$.
 \item the first  topological centre of $G^*$ is empty.
\item the first topological centre of $M(G)^{**}$  is $M(G)$ whenever $\eta\ge 2^{w(G)}$ or $|G|$ is non-measurable.
\item the first topological centre of $C_{0}(G)^{\perp_M}$ is $\{0\}$ whenever $\eta\ge 2^{w(G)}$ or $|G|$ is non-measurable.
\end{enumerate}
\end{theorem}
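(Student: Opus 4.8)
The plan is to feed the single F$\ell^1(\eta)$-base of $\mA=L^1(G)$ into the two ``engine'' results, Theorems \ref{Lcentre} and \ref{lucentre}, and then read each of the eight centres off as an intersection. First I would assemble the standing inputs, valid for non-compact $G$ (so that $\eta$ is infinite): by Theorem \ref{ex1}, $L^1(G)$ carries an F$\ell^1(\eta)$-base of type 2, whence Theorem \ref{gfactori} gives $L^\infty(G)=\mA^*$ the uniform $\eta$-factorization property through some $\prod_{k=1}^m X_k$; the cited Mazur computation gives $L^1(G)$ the $\eta$-Mazur property (its level $\eta\cdot\aleph_0$ equals $\eta$); and $\luc(L^1(G))=\luc(G)$ with $M_r(L^1(G))=M(G)$ by Wendel's theorem. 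For compact $G$ the base of Theorem \ref{ex1} is unavailable and $\eta$ is finite, but then items (v) and (vi) are trivial or vacuous ($G^{LUC}=G$, $G^*=\emptyset$) and the surviving centres are classical; I would dispose of that case in one line, citing \cite{LaLo88}. The $\ell^1(G)$ version for a weakly/right cancellative semigroup runs identically, now using the type-1 base of Theorem \ref{ex2}.

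Items (i)--(vi) then follow by applying the correct clause of the engine theorem to each weak$^*$-closed subsemigroup and computing $\mathscr S\cap L^1(G)$ or $\mathscr S\cap I(L^1(G))$. Theorem \ref{Lcentre}(ii) gives (i) outright, and Theorem \ref{lucentre}(iii) (the bai of $L^1(G)$ is bounded by $1$) gives (ii) as $T_1Z(\luc(G)^*)\cong M_r(L^1(G))=M(G)$. For (iii)--(vi) I would apply the subsemigroup forms \ref{Lcentre}(i) and \ref{lucentre}(i): each candidate ($L_0^\infty(G)^\perp$ in $L^1(G)^{**}$; $C_0(G)^{\perp_{lu}}$, $G^{LUC}$, $G^*$ in $\luc(G)^*$) is weak$^*$-closed and, as recorded in the proof of Theorem \ref{lalgebra}, contains the relevant $\widetilde X$ or $X$. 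Since $I(L^1(G))$ is the copy of $M(G)$ inside $\luc(G)^*$, the conclusions reduce to the identifications $L^1(G)\cap L_0^\infty(G)^\perp=\{0\}$ (giving (iii)), $M(G)\cap C_0(G)^{\perp_{lu}}=\{0\}$ from the direct sum $\luc(G)^*=M(G)\oplus C_0(G)^{\perp_{lu}}$ (giving (iv)), and $G^{LUC}\cap M(G)=\{\delta_s:s\in G\}=G$, which simultaneously yields (v) and, via $G^*=G^{LUC}\setminus G$, the emptiness in (vi).

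For the measure-algebra statements (vii) and (viii) I would change the base algebra to $\mA=M(G)$. The factorization of $M(G)^*$ is no longer supplied by Theorem \ref{gfactori} but by Theorem \ref{mfactori}: its hypotheses hold because $M(G)=M_r(L^1(G))$ is a dual Banach algebra with predual $C_0(G)$, and a bai $(\varphi_i)\subset C_{00}(G)$ of $C_0(G)$ weak$^*$-converges to the identity $1$ of $M(G)^*=C_0(G)^{**}$ (using Arens regularity of $C_0(G)$). This places the factorizing elements and their extensions $\widetilde X$ in $M(G)^{**}$, so Theorem \ref{Lcentre} with $\mA=M(G)$ gives $T_1Z(M(G)^{**})=M(G)$ and, on the weak$^*$-closed ideal $C_0(G)^{\perp_M}$, $T_1Z(C_0(G)^{\perp_M})=C_0(G)^{\perp_M}\cap M(G)=\{0\}$ via $M(G)^{**}=M(G)\oplus C_0(G)^{\perp_M}$.

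I expect the measure algebra to be the crux. Theorem \ref{Lcentre} demands the $\eta$-Mazur property of the \emph{base} algebra, and for $M(G)$ this holds only at level $|G|=\eta\cdot 2^{w(G)}$; this is exactly why (vii) and (viii) are hedged by ``$\eta\ge 2^{w(G)}$ or $|G|$ non-measurable'', the two regimes acquiring $\eta$-Mazur (in the first $|G|=\eta$, so the level is $\eta$; in the second $M(G)$ has the classical $\omega$-Mazur property, which a fortiori is $\eta$-Mazur). Confirming that these hypotheses are both appropriate for the method and sufficient is the delicate bookkeeping. The only intersection demanding a genuine argument is $G^{LUC}\cap M(G)=G$: a measure lying in the spectrum $G^{LUC}$ of $\luc(G)$ is a multiplicative functional, hence a point evaluation $\delta_s$, so the intersection is precisely the canonical image of $G$; the remaining identifications are immediate from the direct-sum decompositions already in the excerpt.
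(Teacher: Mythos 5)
Your proposal is correct and follows essentially the same route as the paper: the same F$\ell^1(\eta)$-base from Theorem \ref{ex1} fed into Theorems \ref{gfactori}, \ref{mfactori}, \ref{Lcentre} and \ref{lucentre}, the same reduction of (iii)--(vi) to the intersections $L_0^\infty(G)^\perp\cap L^1(G)=C_0(G)^{\perp_{lu}}\cap M(G)=\{0\}$, $G^{LUC}\cap M(G)=G$, $G^*\cap M(G)=\emptyset$, and the same Mazur-level bookkeeping for (vii)--(viii). The only cosmetic difference is the citation for the compact case of (i) (the paper uses \cite{IPU87} and \cite[Theorem 2.2]{balapy} rather than \cite{LaLo88}), which does not affect the argument.
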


\begin{proof} When $G$ is compact, the first statement was proved in \cite{IPU87}, and it may also be deduced from
\cite[Theorem 2.2]{balapy}. The five statements which follow are trivial when $G$ is compact.
So let us assume that $G$ is non-compact, and let $\{x_\alpha:\alpha<\eta\}\subset G$ be as in Theorem \ref{ex1}, 	$X$ and $\widetilde X$ be as in Notation \ref{through}.
Having an F$\ell^1(\eta)$-base of type 2 in the group algebra,
Theorem \ref{gfactori} and Theorem \ref{mfactori} provide $L^\infty(G)$ and $M(G)^*$  with the $\eta$-factorization property whenever $G$ is a non-compact locally compact group.

Since the $\eta$-factorization property and the $\eta$-Mazur property are both available in $L^\infty(G)$
and $L^1(G)$, respectively, the first two statements follow directly from Theorems \ref{Lcentre} (ii)  and \ref{lucentre} (iii).

Now since $X\subseteq G^*\subseteq C_{0}(G)^{\perp_{lu}}$
	in the case of $\luc(G)^*$ and
 $\widetilde X\subseteq {L_0^\infty}(G)^\perp$ in the case of $L^1(G)^{**}$,
Statements (iii)--(vi) follow also directly from  \ref{Lcentre}
 and \ref{lucentre}  and the facts that
 \[L_0^\infty(G)^\perp\cap L^1(G)=C_{0}(G)^{\perp_{lu}}\cap M(G)=\{0\},\]
\[G^{LUC}\cap M(G)=G\quad\text{ and}\quad G^*\cap M(G)=\emptyset.\]

Finally, note that under the conditions imposed in Statements (vii) and (viii),
 $M(G)$ has  the required Mazur property, which is
of level $\eta$ when $\eta\ge 2^{w(G)}$ and of level $w$ when $|G|$ is non-measurable.
Note also that in the latter case, $M(G)^*$ has in particular by Theorems \ref{mfactori} and \ref{ex1} the $w$-factorization property (since up to $\eta$-many functionals in $M(G)^*$ may be factorized).
Therefore Statement (vii) follows from Theorem \ref{Lcentre}.

Now since  $C_{0}(G)^{\perp_M}\cap M(G)=\{0\}$ and $X\subseteq C_{0}(G)^{\perp_M}$ (when regarded as a subset of $M(G)^{**}$),
the last statement is also a corollary of Theorem  \ref{Lcentre}.
\end{proof}

Having F$\ell^1(\eta)$-bases of type 1 in the semigroup algebra,  we also obtain in the same way the first topological centres when $G$ is an infinite discrete weakly cancellative right cancellative  semigroup.

\begin{theorem} \label{naughtcentres} Let $G$ be an infinite discrete weakly cancellative right cancellative  semigroup.
Then \begin{enumerate}
\item the first topological centre of $\ell^1(G)^{**}$ is $\ell^1(G)$.
\item the first topological centre of $c_0(G)^\perp$ is $\{0\}$.
\item the first topological centre of $\beta G$ is $G$.
\item the first topological centre of $G^*$ is empty.
\end{enumerate}
\end{theorem}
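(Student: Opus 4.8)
The plan is to derive all four statements from Theorem \ref{Lcentre}, in exactly the manner Theorem \ref{centres} was obtained in the group case. Put $\eta=|G|$. By Theorem \ref{ex2} the semigroup algebra $\ell^1(G)$ carries an F$\ell^1(\eta)$-base of type $1$ (so $m=1$), assembled from the point masses $\delta_{x_\alpha}$ over the ``thin'' set $\{x_\alpha:\alpha<\eta\}$ produced there, and Theorem \ref{gfactori} then equips $\ell^\infty(G)=\ell^1(G)^*$ with the uniform $\eta$-factorization property through $X=X_1\subseteq\luc(\ell^1(G))^*$. Since the Mazur property is a Banach-space invariant and $\ell^1(G)$ is, as a Banach space, merely $\ell^1$ over a set of cardinality $\eta$, it enjoys the $\eta$-Mazur property (the discrete instance of the group-algebra results recalled before Theorem \ref{centres}). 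Thus the hypotheses of Theorem \ref{Lcentre} hold.

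First I would pin down $\widetilde X$ inside $\ell^1(G)^{**}=\ell^\infty(G)^*$. Choosing for each cofinal ultrafilter $p\in\mathscr C(\eta)$ the extension $\widetilde{\Psi_p}\in\ell^\infty(G)^*$ to be the weak$^*$-limit of the net of point masses $(\delta_{x_\alpha})$ along $p$, one gets precisely the ultrafilter-limit point of the spectrum $\beta G$ determined by $p$; since $p$ is cofinal (hence a free ultrafilter on the distinct points $x_\alpha$), this point lies in the remainder $G^*=\beta G\setminus G$. Moreover, for every $f\in c_0(G)$ one has $\langle\widetilde{\Psi_p},f\rangle=\lim_{\alpha\to p}f(x_\alpha)=0$, because $f$ vanishes at infinity while the $x_\alpha$ eventually leave every finite set. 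Hence $\widetilde X\subseteq G^*\cap c_0(G)^\perp\subseteq\beta G$.

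Next I would apply Theorem \ref{Lcentre}(1) to each of the four weak$^*$-closed subsemigroups of $\ell^1(G)^{**}$ in turn: $\ell^1(G)^{**}$ itself; the weak$^*$-closed ideal $c_0(G)^\perp$; the compact (hence weak$^*$-closed) subsemigroup $\beta G$; and $G^*$, which is closed ($G$ being open in $\beta G$) and is a subsemigroup of $\beta G$ by the weak and right cancellativity of $G$ (see \cite{HS}). Each contains $\widetilde X$, so Theorem \ref{Lcentre}(1) yields $T_1Z(\mathscr S)=\mathscr S\cap\ell^1(G)$ in every case, and it remains to identify the intersections. Taking $\mathscr S=\ell^1(G)^{**}$ (equivalently Theorem \ref{Lcentre}(2)) gives statement (i). Since $\ell^1(G)=c_0(G)^*$, the duality separates the points of $\ell^1(G)$ and so $c_0(G)^\perp\cap\ell^1(G)=\{0\}$, which is (ii). Finally the only members of $\ell^1(G)$ that define characters of $\ell^\infty(G)$, i.e.\ that belong to $\beta G$, are the point masses $\delta_s$ ($s\in G$); hence $\beta G\cap\ell^1(G)=G$ and $G^*\cap\ell^1(G)=\emptyset$, giving (iii) and (iv).

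The one genuinely non-formal step, and the point I expect to be the main obstacle, is the second paragraph: verifying that the abstractly produced limits $\widetilde{\Psi_p}$ really fall in the remainder $G^*$ and annihilate $c_0(G)$, and that $G^*$ is a weak$^*$-closed subsemigroup of $\beta G$. This is exactly where the combinatorial thinness of $\{x_\alpha\}$ from Theorem \ref{ex2} and the cancellativity of $G$ enter; once these are in hand, the computation of the topological centres is a formal consequence of Theorem \ref{Lcentre} together with the elementary intersection identities above.
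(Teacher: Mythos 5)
Your proposal is correct and takes essentially the same route as the paper: the paper obtains Theorem \ref{naughtcentres} explicitly ``in the same way'' as Theorem \ref{centres}, namely by feeding the type-1 F$\ell^1(\eta)$-base of Theorem \ref{ex2} into Theorems \ref{gfactori} and \ref{Lcentre} (with the $\eta$-Mazur property of $\ell^1(G)$) and then reading off the intersections $c_0(G)^\perp\cap\ell^1(G)=\{0\}$, $\beta G\cap\ell^1(G)=G$ and $G^*\cap\ell^1(G)=\emptyset$. Your second paragraph, checking that the extensions $\widetilde{\Psi_p}$ land in $G^*\cap c_0(G)^\perp$ and that $G^*$ is a weak$^*$-closed subsemigroup, is precisely the (unwritten) verification the paper relies on when it says the argument carries over.
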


\begin{remark}\label{mirrors} The mirror theorems of Theorem \ref{centres} and \ref{naughtcentres} are also
true; that is, if "first" is replaced by "second" and "right"  replaced by "left" in these theorems, then the conclusions stay unchanged. To see this when $G$ is not compact,
one  needs to return to Theorems \ref{ex1} and \ref{ex2} and define inductively the
sets $\{x_\alpha:\alpha<\eta\}\subset G$ with the property
\begin{equation*} x_\alpha G_\alpha\cap  x_\beta G_\beta  =\emptyset\quad\text{for every}\quad\alpha\ne\beta \end{equation*}
instead of (\ref{thin1}) and (\ref{thin2}).
Then consider the family of functions given by
\begin{equation*}\mathscr L=\bigcup_{\alpha<\eta}(\delta_{x_\alpha}\ast A_\alpha)\end{equation*}
instead of (\ref{lonefell}) and (\ref{lonefell2}).
Precisely as in Theorems \ref{ex1} and \ref{ex2}, this gives a right F$\ell^1(\eta)$-base
in $L^1(G)$. Following the steps of the proof of Theorem \ref{gfactori}, we see then that
$L^\infty(G)$ has the right $\eta$-factorization according to Remark \ref{mirror}.
Then we are only left to apply Theorems \ref{diamondcentre} and \ref{ducentre}.

When $G$ is compact, we may use  \cite[Theorem 2.2(ii)]{balapy} where  Baker, Lau and Pym proved that $\mathcal Z_1(\mA^{**})=\mA$ if $\mA$ is weakly sequentially complete, contains a bounded  approximate identity and is an ideal in its second dual.
Their proofs can also be applied to show that $\mathcal Z_2(\mA^{**})=\mA$. So  these algebras,
in particular $L^1(G)$ when $G$ is compact,  are sAir by \cite[Theorem 2.2(ii)]{balapy}.
\end{remark}

\begin{corollary}
 Let $G$ be a locally compact group with compact covering $\eta$ and local weight $w(G)$,
or an infinite discrete weakly cancellative right cancellative  semigroup.
Then
\begin{enumerate}
\item $L^1(G)$ is strongly Arens irregular.
\item $M(G)$ is strongly Arens irregular whenever $\eta\ge 2^{w(G)}$ or $|G|$ is non-measurable.
\end{enumerate}
\end{corollary}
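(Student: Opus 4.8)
The plan is to read off strong Arens irregularity directly from the two one\-sided topological centre computations, since by definition $\mA$ is sAir precisely when $T_1Z(\mA^{**})=T_2Z(\mA^{**})=\mA$. Thus for each algebra it suffices to establish that \emph{both} topological centres collapse to $\mA$. The first (left) centre has already been determined in Theorems \ref{centres} and \ref{naughtcentres}, while the second (right) centre is governed by the mirror statements discussed in Remark \ref{mirrors}; the whole argument is therefore a matter of assembling these ingredients and disposing of the compact case separately.

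For the group algebra I would argue as follows. When $G$ is non-compact, Theorem \ref{centres}(i) gives $T_1Z(L^1(G)^{**})=L^1(G)$; this rests on the F$\ell^1(\eta)$-base of type $2$ from Theorem \ref{ex1}, which through Theorem \ref{gfactori} endows $L^\infty(G)$ with the $\eta$-factorization property, together with the $\eta$-Mazur property of $L^1(G)$ (it holds at the higher level $\eta.w$), so that Theorem \ref{Lcentre}(ii) applies. For the right centre I would invoke Remark \ref{mirrors}: replacing the separation condition by $x_\alpha G_\alpha\cap x_\beta G_\beta=\emptyset$ and the base by $\bigcup_{\alpha<\eta}(\delta_{x_\alpha}\ast A_\alpha)$ produces a \emph{right} F$\ell^1(\eta)$-base, whence $L^\infty(G)$ enjoys the right $\eta$-factorization property and Theorem \ref{diamondcentre}(ii) yields $T_2Z(L^1(G)^{**})=L^1(G)$. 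Together these give $L^1(G)$ sAir for non-compact $G$. For an infinite discrete weakly cancellative, right cancellative semigroup the identical scheme works with the type $1$ base of Theorem \ref{ex2} and Theorem \ref{naughtcentres}(i) in place of Theorem \ref{centres}(i). The remaining case is $G$ compact, where $\eta$ is finite and the base machinery is unavailable; here I would instead use the Baker--Lau--Pym criterion recalled in Remark \ref{mirrors}, noting that $L^1(G)$ is weakly sequentially complete, has a bounded approximate identity, and is an ideal in its second dual, so that $\mathcal Z_1(L^1(G)^{**})=\mathcal Z_2(L^1(G)^{**})=L^1(G)$ and $L^1(G)$ is sAir.

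For the measure algebra I would proceed in parallel. Under the hypothesis $\eta\ge 2^{w(G)}$ or $|G|$ non-measurable, $M(G)$ carries the Mazur property at the level ($\eta$, respectively $w$) needed for Theorem \ref{mfactori} to combine with the F$\ell^1(\eta)$-base of Theorem \ref{ex1}, and Theorem \ref{centres}(vii) then gives $T_1Z(M(G)^{**})=M(G)$. The mirror construction of Remark \ref{mirrors}, applied to $M(G)^*$ through its predual $C_0(G)$, furnishes the right $\eta$-factorization of $M(G)^*$, and Theorem \ref{diamondcentre} delivers $T_2Z(M(G)^{**})=M(G)$. Combining the two centres gives $M(G)$ sAir under the stated conditions.

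The step I expect to demand the most care is verifying that the \emph{right} topological centre really does collapse in each case, that is, checking that the mirror right F$\ell^1(\eta)$-bases satisfy selectivity and the norm-domination conditions (\ref{iv})/(\ref{v}) after the left-to-right flip, so that Theorems \ref{diamondcentre} and \ref{ducentre} apply verbatim, and, for $M(G)$, confirming that the dual-algebra hypotheses of Theorem \ref{mfactori} survive the transposition. The compact fallback is routine once the Baker--Lau--Pym hypotheses are checked, but it is the one place where the unified base approach genuinely does not apply and a separate argument is indispensable.
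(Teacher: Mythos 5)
Your proposal is correct and follows essentially the same route as the paper: the corollary is obtained there by combining Theorems \ref{centres} and \ref{naughtcentres} (first centres) with Remark \ref{mirrors} (the mirror right F$\ell^1(\eta)$-bases giving the second centres via Theorems \ref{diamondcentre} and \ref{ducentre}), with the compact case of $L^1(G)$ handled by the Baker--Lau--Pym criterion exactly as you describe. The only small caveat, which the paper itself shares and acknowledges in its final remarks, is that the $M(G)$ statement for \emph{compact} $G$ is not reached by the base machinery, so statement (ii) should be read as pertaining to the non-compact case.
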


\begin{notes}\label{smalleta}~\begin{enumerate}
\item The compact case in Statement (i) of Theorem \ref{centres} is not a consequence of our approach (see Final remarks \ref{finalremark}(ii) below). This statement is due to Isik,  Pym and \"Ulger (\cite{IPU87}). The general Statement in (i) is due to Lau and Losert (\cite{LaLo88}).
 Statement (ii)  is due to Lau (\cite{Lau86}).
For non-compact groups, these results were also reached in \cite{dales-lau-strauss},  \cite{FiSa07a} and \cite{Neufang04B}.
Statement (iii) and (iv) were proved in \cite{FiSa07a}.
Statement (v) was proved originally by Lau and Pym in
 \cite{LauPym95}, then by Protasov and Pym in  \cite{PyPo} and by Filali and Salmi in \cite{FiSa07a}.
Statement (vi) was proved by Protasov and Pym in \cite {PyPo} and Filali and by Salmi in \cite{FiSa07a}.

Statement (vii) is due to Neufang (\cite{Neufang05}.)
The general statement that the topological centre of $M(G)^{**}$  is $M(G)$ without any condition has been recently solved by Losert et al in
\cite{Loetal}. It is worthwhile to note that a careful look at the constructions made in \cite{Loetal}
may help to construct an F$\ell^1(|G|.w)$-base in $M(G)$. With Theorem  \ref{mfactori} and Theorem \ref{Lcentre}, this would give another proof
to the latter result. We hope to get back to this problem in the future. Statement (viii) is new.

\item
In Theorem \ref{naughtcentres},  Statement (i) was proved by Lau in \cite{Lau86}
under a slightly weaker cancellation property of the semigroup $G$,   and reached also in \cite{FiSa07a}.
 Statement (ii) was proved in \cite{FiSa07a}.
Concerning Statements (iii) and (iv),  Davenport and Hindman  proved  in \cite{DHi}, that the algebraic centre of $\beta G$ coincides with that of $G$, and the algebraic centre of $G^*$ is empty, when $G$
is an infinite discrete cancellative semigroup. Under  a weaker cancellation property of $G,$
these two statements were proved again by Hindman and Strauss in \cite[Theorem 6.54]{HS}.
Statements (iii) and (iv) were proved in \cite{FiSa07a}.
\end{enumerate}
\end{notes}

\section{Application II: Weighted group, semigroup and measure algebras}\label{wexamples}
 Our approach applies also to the weighted (semi)group algebra and the weighted measure algebra when the weight is diagonally bounded.

\subsection{Some definitions}
Recall that a \emph{weight} on  a semigroup $G$ (with a topology) is a continuous function
$w: G\to (0,\infty)$ which is submultiplicative, that is,
\[
w(st)\le w(s)w(t)\quad\text{for every}\quad s,t\in G.
\]
If $G$ is a group with identity $e$, we shall assume that $w(e)=1.$
 The weight function $w$ is called
\textit{diagonally bounded} on $G$ if there exists $K>0$ such that
\[
w(s)w(t)\leq Kw(st)\quad\text{for every}\quad s,t\in G.\]

For the analogues of the theorems proved in the previous section,
we recall the following weighted spaces:
\begin{itemize}
\item
$L^\infty(G, w^{-1})$ consists of all Borel measurable functions $S$ on $G$ such that
$  w^{-1}S \in L^\infty(G)$.
\item $L_0^\infty(G, w^{-1})$ is the subspace of those functions $S\in L^\infty (G, w^{-1})$ such that
$w^{-1}S\in L_0^\infty(G)$.
\item $\luc(G, w^{-1})$ is the subspace of those functions $S\in L^\infty (G, w^{-1})$ such that
$w^{-1}S\in \luc(G)$.
\item $C_0(G,w^{-1})$ is the subspace of  those functions $S\in \luc(G,w^{-1})$  such that
$w^{-1}S\in C_0(G)$.

The norm of $S$ in these weighted spaces  is given by $\|w^{-1}S\|$ so that
the space and its weighted space are isometric.
\medskip

\item The weighted group algebra $L^1(G,w)$
consists of all Borel measurable functions $a$ on $G$ such that $wa\in L^1(G).$
\item
The weighted semigroup algebra $\ell^1(G,w)$ is defined in the same way when $G$ is a discrete semigroup.
\item
The weighted measure algebra   $M(G, w)$ consists of all regular compacted-Borel measures $a$ on $G$
such that $wa\in M(G).$

The norm of $a$ in these weighted spaces  is given by $\|wa\|$, so that
the space and its weighted space are isometric. The definition of $M(G,w)$ has been rectified  recently by Stokke in \cite{ross} by adding the adjective "compacted".

As it is the case when $w=1,$
both $L^1(G, w)$ and $M(G, w)$ are
Banach algebras under convolution since $w$ is submultiplicative,
 and so
 the Arens products may be defined on  $L^1(G,w)^{**}$ and   $M(G, w)^{**}.$
Moreover, as known  $L^1(G,w)$ is a closed two-sided ideal
of $M(G,w)$. The weighted Wendel's theorem is also true, so that $M(G,w)$ is the multiplier algebra of $L^1(G,w)$, see \cite[Lemma 2.3]{Ghah} and \cite[Theorem 3.2]{safoura}.

The spaces $L^\infty(G, w^{-1})$  and  $L^1(G,w)^*$ are identified by the usual dual pairing. And since \[\luc(G, w^{-1})=L^\infty(G, w^{-1})\cdot L^1(G, w)\] (see \cite{Gr} for the thorem and \cite[Proposition 7.15]{DaLa05} for the corrected proof), the actions of $\luc(G, w^{-1})^*$ on $L^\infty(G,w^{-1})$
and $L^1(G,w)^{**}$ may be defined as previously.
\end{itemize}

The Banach space decompositions used in the previous section  are true
in the weighted situation, the proofs are as in  \cite{DaLa05, Fi, GhaLa, LauPym90}.
\begin{equation}\label{decompo}\begin{split} \luc(G,w^{-1})^*&= M(G,w) \oplus C_{0}(G,w^{-1})^{\perp_{lu}},\\
 L^\infty((G, w^{-1})^*&= L_0^\infty(G,w^{-1})^*\oplus L_0^\infty(G,w^{-1})^\perp,\\
 M(G,w)^{**}&= M(G,w) \oplus C_{0}(G,w^{-1})^{\perp_M},\end{split}\end{equation}
where $C_0(G,w^{-1})^{\perp_{lu}}$, $L_0^\infty(G,w^{-1})^\perp$ and $C_0(G,w^{-1})^{\perp_M}$ are  weak*-closed ideals in the corresponding algebras.
The proofs are easily adapted from the unweighted cases.

We shall not be concerned as in the previous section with the spectrum of $\luc(G,w^{-1})$.
This is not a subsemigroup of $\luc(G,w^{-1})$ unless $w$ is a homomorphism,
in which case this spectrum is isomorphic to $G^{LUC}$ as a semigroup compactification.
For the same reason, the spectrum of $\ell^\infty(G,w^{-1})$ when $G$ is an infinite discrete semigroup is not included in our theorem.

\subsection{F$\ell^1(\eta)$-bases}
\begin{theorem} \label{ex3} Let $G$ be a non-compact locally compact group with compact covering $\eta$ and let $w$ be a weight on $G$
that is diagonally bounded. Then the weighted group algebra $L^1(G,w)$
 has an F$\ell^1(\eta)$-base of type 2.
\end{theorem}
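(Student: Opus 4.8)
The plan is to mimic the construction in the proof of Theorem~\ref{ex1}, replacing the norm of $L^1(G)$ by the weighted norm $\|a\|_{L^1(G,w)}=\|wa\|_1$ and compensating for the weight by rescaling the translating point masses. First I would fix a left Haar measure $\lambda$ and keep exactly the data produced in Theorem~\ref{ex1}: an increasing family $(V_\alpha)_{\alpha<\eta}$ of symmetric relatively compact sets with $\bigcup_\alpha V_\alpha=G$ and $\kappa(V_\alpha)\le\alpha$, together with points $x_\alpha\in G$ chosen inductively so that the translates $V_\alpha x_\alpha$ are pairwise disjoint. I would then let $A_\alpha$ be the set of continuous positive functions supported in $V_\alpha$ with $\|\varphi\|_{L^1(G,w)}=1$, so that $A_\eta=\bigcup_\alpha A_\alpha$ is bounded by $1$ and its linear span is dense in $L^1(G,w)$. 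The one essential change is the choice of multiplier: instead of $\delta_{x_\alpha}$ I would take $a_\alpha=w(x_\alpha)^{-1}\delta_{x_\alpha}$, whose multiplier norm in $M_r(L^1(G,w))=M(G,w)$ equals $w(x_\alpha)^{-1}w(x_\alpha)=1$, so that $\{a_\alpha:\alpha<\eta\}$ stays bounded.

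The engine of the argument is the identity (obtained from $\varphi*\delta_{x_\alpha}=\Delta(x_\alpha^{-1})\varphi_{x_\alpha^{-1}}$ followed by the change of variable $s\mapsto sx_\alpha$)
\begin{equation*}
\|a*a_\alpha\|_{L^1(G,w)}=\frac{1}{w(x_\alpha)}\int_G w(sx_\alpha)\,|a(s)|\,d\lambda(s),
\end{equation*}
valid for every $a\in\langle A_\alpha\rangle$, which is supported in $V_\alpha$. Since the translates $V_\alpha x_\alpha$ are pairwise disjoint, for $a_n\in\langle A_{\alpha_n}\rangle a_{\alpha_n}$ with distinct indices $\alpha_n$ the functions $a_n$ have disjoint supports, whence $\|\sum_n a_n\|_{L^1(G,w)}=\sum_n\|a_n\|_{L^1(G,w)}$; this yields the zero pairwise intersection of the spans $\langle A_\alpha a_\alpha\rangle$ and the $\ell^1(\eta)$-selective property with $K_1=1$ simultaneously. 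Up to this point only submultiplicativity and continuity of the weight are used, so the zooming and selectivity requirements of Definition~\ref{Fell} are met.

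The place where diagonal boundedness becomes indispensable is the verification of Condition~(\ref{iv}). Feeding the pointwise estimates $w(sx_\alpha)\le w(s)w(x_\alpha)$ (submultiplicativity) and $w(s)w(x_\alpha)\le Kw(sx_\alpha)$ (diagonal boundedness) into the displayed integral gives
\begin{equation*}
\frac{1}{K}\,\|a\|_{L^1(G,w)}\le \|a*a_\alpha\|_{L^1(G,w)}\le \|a\|_{L^1(G,w)}\qquad(a\in\langle A_\alpha\rangle,\ \alpha<\eta).
\end{equation*}
The right-hand inequality shows that $A_\alpha a_\alpha$ lies in the unit ball, while the left-hand inequality is exactly Condition~(\ref{iv}) with constant $K_2=K$. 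Combined with the previous paragraph this exhibits $\mathscr L=\bigcup_{\alpha<\eta}A_\alpha a_\alpha$ as a zooming $\ell^1(\eta)$-selective base satisfying (\ref{iv}), that is, an F$\ell^1(\eta)$-base of type~2.

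I expect the main obstacle to be precisely the lower estimate in the two-sided bound above, i.e.\ Condition~(\ref{iv}). In the unweighted case the corresponding identity was free, since right convolution by a point mass is an isometry of $L^1(G)$; here the weight destroys that invariance and one must control $w(sx_\alpha)$ from below, uniformly over $s\in V_\alpha$ and over all $\alpha<\eta$, by a multiple of $w(s)w(x_\alpha)$. Diagonal boundedness supplies exactly this uniform lower bound with a single constant $K$, and this is what confines the theorem to diagonally bounded weights: without it the rescaled products $a*a_\alpha$ could have arbitrarily small weighted norm and no constant $K_2$ would exist.
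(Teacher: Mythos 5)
Your proposal is correct and follows essentially the same route as the paper: the same disjoint translates $V_\alpha x_\alpha$, the same rescaled point masses $\delta_{x_\alpha}/w(x_\alpha)$, the same computation reducing everything to the integral $\frac{1}{w(x_\alpha)}\int_G w(sx_\alpha)|a(s)|\,d\lambda(s)$, and the same use of diagonal boundedness to get Condition (F2) with $K_2=K$. The only cosmetic difference is that you normalize the functions in $A_\alpha$ directly in the weighted norm, whereas the paper keeps the $L^1$-normalized sets from Theorem \ref{ex1} and divides by $w$; these describe the same family.
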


\begin{proof}
Let $\{V_\alpha\}_{\alpha<\eta}$, $\{x_\alpha\}_{\alpha<\eta}$ and $\{A_\alpha\}_{\alpha<\eta}$
be as in Example \ref{ex1} and let  \[\mathscr L=\bigcup_{\alpha<\eta} \left(\frac{A_\alpha}{w}\right)\ast \frac{\delta_{x_\alpha}}{w(x_\alpha)}.\] We claim that  $\mathscr L$
is an F$\ell^1(\eta)$-base in $L^1(G,w)$. To see this, we only need to check that the analogue of (\ref{L1}) in Example \ref{ex1} is true, the rest is clear and follows as in Example \ref{ex1}.

Note first that for every $\varphi\in L^1(G),$ we have
\[\frac{\varphi}{w}\ast \frac{\delta_{x_\alpha}}{w(x_\alpha)}=
\frac1{w(x_\alpha)}{\Delta({x_\alpha^{-1}})\left(\frac{\varphi}{w}\right)_{x_\alpha^{-1}}}\quad(\text{the right  translate of}\; \frac{\varphi}{w}\;\text{by}\;x_\alpha^{-1}).\]
Let now  $\varphi$ be a function in the closed linear span of $ \mathscr L$. Then $\varphi$ has
 the form \[\varphi=\sum_{n=1}^p \frac{\varphi_n}{w}\ast \frac{\delta_{x_{\alpha_n}}}{w(x_{\alpha_n})},\]
 where each $\varphi_n$ ($n=1,...,p$) is in the closed linear span of $ A_{\alpha_n}$.
 Accordingly,
\begin{align*}
\left\|\sum_{n=1}^p \left(\frac{\varphi_n}{w}\ast\frac{\delta_{x_{\alpha_n}}}{w(x_{\alpha_n})}\right)\right\|_{1,w}=&
\left\|\sum_{n=1}^p  w\left(\frac{\varphi_n}{w}\ast\frac{\delta_{x_{\alpha_n}}}{w(x_{\alpha_n})}\right)\right\|_1 \\&=
\sum_{n=1}^p
\int_G w\left|\left(\frac{\varphi_n}{w}\ast\frac{\delta_{x_{\alpha_n}}}{w(x_{\alpha_n})}\right)\right|d\lambda\\&
=
\sum_{n=1}^p \left\|\frac{\varphi_n}{w}\ast\frac{\delta_{x_{\alpha_n}}}{w(x_{\alpha_n})})\right\|_{1,w}\\&=
\sum_{n=1}^p \int_G\left|w \frac1{w(x_{\alpha_n})}{\Delta({x_{\alpha_n}^{-1}})
\left(\frac{\varphi_n}{w}\right)_{x_{\alpha_n}^{-1}}}\right|d\lambda\\&=
\sum_{n=1}^p\int_G\left| \frac{w(yx_{\alpha_n})}{w(y)w(x_{\alpha_n})}\varphi_n(y)\right|d\lambda(y)\\&\ge
\sum_{n=1}^p \int_G \frac1K  |\varphi_n(y)|d\lambda(y)
=\frac1K\sum_{n=1}^p\left\|\frac{\varphi_n}{w}\right\|_{1,w},\end{align*}
where as in Theorem \ref{ex1}, the second equality follows from the fact that the functions under the sum have disjoint supports. The third equality proves then that $\mathscr L$ is selective. The  inequality is due to the diagonal boundedness of $w,$ and checks the validity of Condition
(\ref{iv}) in Definition \ref{Fell}. For, this inequality simply becomes
\[K\left\|\frac{\varphi}{w}\ast\frac{\delta_{x_{\alpha}}}{w(x_{\alpha})}\right\|_{1,w}
\ge\left\|\frac{\varphi}{w}\right\|_{1,w}\] when  $p=1$.
As in the previous example, note that Condition (\ref{vi}) (and so Condition (\ref{v}) in Definition \ref{Felll}) is also valid here with $m=1$, $a_\alpha$ and $b_\alpha$ may be taken as $\frac{\delta_{x_\alpha}}{w(x_\alpha)}$ and $w(x_\alpha)\delta_{x_\alpha^{-1}}$, respectively, where $\|a_\alpha\|_{w}=1$ and $\|b_\alpha\|_w=w(x_\alpha)w(x_\alpha^{-1})\le K$ for every $\alpha<\eta$ since $w$ is diagonally bounded by $K.$
\end{proof}

\begin{theorem} \label{ex4} Let $G$ be an infinite discrete weakly cancellative right cancellative
semigroup with cardinality
$\eta$ and let $w$ be a weight on $G$ that is diagonally bounded. Then the weighted semigroup algebra $\ell^1(G, w)$
has an F$\ell^1(\eta)$-base of type 1.
\end{theorem}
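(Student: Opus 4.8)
The plan is to mimic the combinatorial selection of Theorem \ref{ex2} verbatim and then to divide by the weight exactly as in Theorem \ref{ex3}, the only genuinely new ingredient being the diagonal boundedness of $w$. First I would reuse the inductive construction: enumerate $G=\{s_\alpha:\alpha<\eta\}$, put $G_\alpha=\{s_\beta:\beta<\alpha\}$, and choose $\{x_\alpha:\alpha<\eta\}$ with $G_\alpha x_\alpha\cap G_\beta x_\beta=\emptyset$ whenever $\alpha\ne\beta$, which is possible since weak left cancellativity gives $|G_\alpha^{-1}G_\beta|\le\max\{\alpha,\beta\}<\eta$. With $A_\alpha=\{\delta_{s_\beta}:\beta<\alpha\}$ as before, I would then form
\[\mathscr L=\bigcup_{\alpha<\eta}\frac{A_\alpha}{w}\ast\frac{\delta_{x_\alpha}}{w(x_\alpha)},\]
so that each generating element is the normalised point mass $\frac{\delta_{s_\beta}}{w(s_\beta)}\ast\frac{\delta_{x_\alpha}}{w(x_\alpha)}=\frac{\delta_{s_\beta x_\alpha}}{w(s_\beta)w(x_\alpha)}$, of $\|\cdot\|_{1,w}$-norm at most $1$ by submultiplicativity.

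Next I would verify the zooming conditions of Definition \ref{Fil}. The set $\frac{A_\eta}{w}$ consists of normalised point masses $\frac{\delta_{s_\beta}}{w(s_\beta)}$, each of norm $1$, hence is bounded by $1$ with dense linear span equal to $\ell^1(G,w)$; the right multipliers $a_\alpha=\frac{\delta_{x_\alpha}}{w(x_\alpha)}$ form a bounded family in $M_r(\ell^1(G,w))=M(G,w)$; and, since the elements of $\langle\frac{A_\alpha}{w}\ast\frac{\delta_{x_\alpha}}{w(x_\alpha)}\rangle$ are supported in $G_\alpha x_\alpha$, the disjointness $G_\alpha x_\alpha\cap G_\beta x_\beta=\emptyset$ forces these spans to have zero pairwise intersection.

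The heart of the argument, and the point where diagonal boundedness enters, is to show that $\mathscr L$ is a genuine $\ell^1(\eta)$-base (type 1), not merely the $\ell^1(\eta)$-selective base obtained for $L^1(G,w)$ in Theorem \ref{ex3}. Given distinct index pairs $(\beta_n,\alpha_n)$ with $\beta_n<\alpha_n$ and scalars $z_n$, right cancellativity together with the construction makes the points $s_{\beta_n}x_{\alpha_n}$ pairwise distinct: for $\alpha_n\ne\alpha_m$ they lie in the disjoint sets $G_{\alpha_n}x_{\alpha_n}$ and $G_{\alpha_m}x_{\alpha_m}$, while for $\alpha_n=\alpha_m$ an equality would cancel on the right to give $\beta_n=\beta_m$. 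Hence the summands have disjoint singleton supports and
\[\left\|\sum_{n=1}^p z_n\,\frac{\delta_{s_{\beta_n}x_{\alpha_n}}}{w(s_{\beta_n})w(x_{\alpha_n})}\right\|_{1,w}=\sum_{n=1}^p|z_n|\,\frac{w(s_{\beta_n}x_{\alpha_n})}{w(s_{\beta_n})w(x_{\alpha_n})}\ge\frac1K\sum_{n=1}^p|z_n|,\]
the final inequality being precisely the diagonal bound $w(s)w(x)\le Kw(sx)$. This is the defining inequality of Definition \ref{l1} with constant $K$, so $\mathscr L$ is an $\ell^1(\eta)$-base and therefore an F$\ell^1(\eta)$-base of type 1.

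I expect no serious obstacle beyond this bookkeeping. The reason the discrete case delivers type 1, against type 2 for $L^1(G,w)$, is that point masses have singleton supports, which collapses the norm into a clean per-term weight ratio $\frac{w(s_{\beta_n}x_{\alpha_n})}{w(s_{\beta_n})w(x_{\alpha_n})}$; diagonal boundedness is exactly what bounds this ratio below by $1/K$ uniformly, supplying the lower $\ell^1$ estimate that the non-diagonally-bounded case would lack.
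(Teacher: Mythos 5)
Your proposal is correct and is exactly the argument the paper intends: its own proof of this theorem is the single line ``Proceed as in Theorems \ref{ex2} and \ref{ex3}'', and you have carried out precisely that combination — the inductive choice of $\{x_\alpha\}$ and the disjoint-support bookkeeping from the discrete unweighted case, together with the normalisation by $w$ and the use of diagonal boundedness from the weighted group-algebra case. Your closing observation, that singleton supports turn the selective estimate of Theorem \ref{ex3} into the genuine per-term lower bound $w(s_{\beta_n}x_{\alpha_n})/\bigl(w(s_{\beta_n})w(x_{\alpha_n})\bigr)\ge 1/K$ needed for a type-1 base, correctly identifies the one point where the two cited proofs must be fused.
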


\begin{proof} Proceed as in Theorems \ref{ex2} and \ref{ex3}.
\end{proof}

\subsection{Algebraic structure}

As in the previous section, having F$\ell^1(\eta)$-bases, the theorems in Section 3 yield immediately the weighted analogues to
Theorems \ref{lalgebra} and  \ref{dlalgebra}. The proofs are precisely the same.
Both analogues are summarized in the theorem below.
The parts on the number of left ideals and the dimension of right ideals in
$L^1(G,w)^{**}$ and $\luc(G,w^{-1})^*$ appeared in \cite{FiSa07}. The rest is new.

\begin{theorem}\label{wlalgebra} Let $G$ be a non-compact locally compact group with  compact covering $\eta$ or an infinite discrete weakly cancellative right cancellative  semigroup with cardinality $\eta$, and let $w$ be a diagonally bounded weight on $G$. Let $\mathscr S$ be any of the algebras  $L^1(G,w)^{**}$, $\luc(G,w^{-1})^*$, $M(G,w)^{**}$, $L_0^\infty(G,w^{-1})^\perp$, $C_{0}(G,w^{-1})^{\perp_{lu}}$ or $C_{0}(G,w^{-1})^{\perp_M}$.
 Then
\begin{enumerate}
\item  there are at least $2^{2^\eta}$ many right cancellable elements in $\mathscr S$.
\item   there are at least $2^{2^{\eta}}$ many  principal left ideals in $\mathscr S$ with
 pairwise intersections equal to zero, the  dimension of each  is  at least $2^{2^{\eta}}$.
\item  the dimension of any principal right ideal,  and so of any nonzero right ideal, in $\mathscr S$
 is at least $2^{2^\eta}.$
 \end{enumerate}
\end{theorem}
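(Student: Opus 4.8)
The plan is to run the arguments of Theorems~\ref{lalgebra} and \ref{dlalgebra} essentially verbatim, the only genuinely new ingredient being the bookkeeping of the weight. By Theorems~\ref{ex3} and \ref{ex4}, the weighted algebra $L^1(G,w)$ (respectively $\ell^1(G,w)$) carries an F$\ell^1(\eta)$-base of type~2 (respectively type~1), built from the very same separated set $\{x_\alpha:\alpha<\eta\}\subset G$ used in Theorems~\ref{ex1} and \ref{ex2}, with multiplier part $a_\alpha=\delta_{x_\alpha}/w(x_\alpha)\in M(G,w)=M_r(L^1(G,w))$. First I would form the sets $X$ and $\widetilde X$ attached to the cofinal ultrafilters as in Notation~\ref{through}, so that $\mathscr L$ provides, through Theorem~\ref{gfactori}, the uniform $\eta$-factorization of $L^\infty(G,w^{-1})=L^1(G,w)^*$.

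The one step that genuinely uses the hypothesis on $w$ is to locate $X$ and $\widetilde X$ in the correct annihilator summands of the decomposition~\eqref{decompo}. For $S\in C_0(G,w^{-1})$ one has $w^{-1}S\in C_0(G)$ and
\[
\langle a_\alpha,S\rangle=\frac{1}{w(x_\alpha)}\,S(x_\alpha)=(w^{-1}S)(x_\alpha),
\]
so the normalisation $1/w(x_\alpha)$ in $a_\alpha$ cancels the weight and reduces the matter to the unweighted picture. Exactly as in the proof of Theorem~\ref{lalgebra}, the separation~\eqref{thin1}/\eqref{thin2} of the translates forces $(w^{-1}S)(x_\alpha)\to 0$ along every cofinal ultrafilter, whence each cluster point $\Psi_p$ satisfies $\langle\Psi_p,S\rangle=0$. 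Thus $X\subseteq C_0(G,w^{-1})^{\perp_{lu}}$, and taking the fixed norm-preserving extensions places $\widetilde X\subseteq L_0^\infty(G,w^{-1})^\perp$.

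With this in hand the three conclusions follow mechanically. Since $X\subseteq C_0(G,w^{-1})^{\perp_{lu}}\subseteq\luc(G,w^{-1})^*$ and $\widetilde X\subseteq L_0^\infty(G,w^{-1})^\perp\subseteq L^1(G,w)^{**}$, Theorems~\ref{can}(i), \ref{lide}(i),(iii) and \ref{ride} apply directly to $\luc(G,w^{-1})^*$, $C_0(G,w^{-1})^{\perp_{lu}}$, $L^1(G,w)^{**}$ and $L_0^\infty(G,w^{-1})^\perp$, giving the $2^{2^\eta}$ right cancellable elements, the $2^{2^\eta}$ principal left ideals of dimension at least $2^{2^\eta}$, and the lower bound $2^{2^\eta}$ for the dimension of any principal right ideal; as the bases are of type~1 or~2 we have $m=1$, so one obtains right cancellable elements (not merely cancellable sets) and principal left ideals with zero pairwise intersections. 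For $M(G,w)^{**}$ and $C_0(G,w^{-1})^{\perp_M}$ I would instead invoke Theorem~\ref{mall}: by the weighted Wendel theorem $M(G,w)$ is a dual Banach algebra with predual $C_0(G,w^{-1})$, and a suitably scaled approximate identity drawn from $C_{00}(G)$ lies in $L^1(G,w)\cap C_0(G,w^{-1})$ and weak$^*$-converges to the identity of $M(G,w)^*$, so the hypotheses of Theorem~\ref{mfactori} hold; regarding $X$ as a subset of $M(G,w)^{**}$ it lands in $C_0(G,w^{-1})^{\perp_M}$ by the same computation, and parts (i),(iii),(v),(vi) of Theorem~\ref{mall} deliver the three assertions.

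The only real obstacle I anticipate is confirming the annihilator memberships together with the dual-algebra hypotheses for $M(G,w)$; once the normalisation $1/w(x_\alpha)$ is seen to cancel against the weight built into $C_0(G,w^{-1})$, the remainder is a transcription of the unweighted proofs, and the diagonal boundedness of $w$ is used only to guarantee the F$\ell^1(\eta)$-base in the first place.
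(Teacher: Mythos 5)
Your proposal is correct and follows essentially the same route as the paper, which simply observes that the F$\ell^1(\eta)$-bases of Theorems \ref{ex3} and \ref{ex4} make the proofs of Theorems \ref{lalgebra} and \ref{dlalgebra} carry over verbatim via Theorems \ref{can}, \ref{ride}, \ref{lide} and \ref{mall}. The extra detail you supply — that the normalisation $1/w(x_\alpha)$ cancels the weight so that $X$ and $\widetilde X$ land in $C_0(G,w^{-1})^{\perp_{lu}}$, $L_0^\infty(G,w^{-1})^\perp$ and $C_0(G,w^{-1})^{\perp_M}$, and that the weighted Wendel theorem supplies the dual-algebra hypotheses for $M(G,w)$ — is exactly what the paper leaves implicit.
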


\subsection{Topological centres}
We turn again to topological centres. Now that we have F$\ell^1(\eta)$-bases in $L^1(G,w)$ when $G$ is a non-compact locally compact group, and of $\ell^1(G,w)$ when $G$ is an infinite discret weakly cancellative, right cancellative semigroup, Theorems \ref{gfactori}  and \ref{mfactori} provide $L^\infty(G,w^{-1})$ and $M(G,w)^*$  with the $\eta$-factorization property.

Since the Mazur property and its level are preserved under isomorphisms  (as already observed by Neufang in \cite[Remark 3.4]{Neufang08B}
and Hu and Neufang in  \cite[Remark 1.5]{HuNe06}), the weighted group algebra and the weighted measure algebra have each Mazur property of level $\eta.w$ and $|G|.w$, respectively. Theorems \ref{gfactori}
and \ref{mfactori} together with Theorem \ref{Lcentre}
 and \ref{lucentre}  yield therefore, in our next theorem, the weighted analogues of
  Theorems \ref{centres} and \ref{naughtcentres}.

\begin{theorem}\label{wcentres}  Let $G$ be a locally compact group or a weakly cancellative right cancellative
semigroup,  and let $w$ be a weight on $G$ that is diagonally bounded.
Then \begin{enumerate}
\item the first topological centre of $L^1(G,w)^{**}$ is $L^1(G,w)$.
 \item the first topological centre of $\luc(G,w^{-1})^*$ is $M(G,w)$.
\item the first topological centre of $M(G,w)^{**}$  is $M(G,w)$ whenever $\eta\ge 2^{w(G)}$ or $|G|$ is non-measurable.
\item the first topological centre of ${L_0^\infty}(G,w^{-1})^\perp$ is $\{0\}$.
\item the first topological centre of $C_{0}(G, w^{-1})^{\perp_{lu}}$ is $\{0\}$.
\item the first topological centre of $C_{0}(G, w^{-1})^{\perp_M}$ is $\{0\}$ whenever $\eta\ge 2^{w(G)}$ or $|G|$ is non-measurable.
\end{enumerate}
\end{theorem}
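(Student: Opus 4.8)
The plan is to reproduce the proof of Theorem~\ref{centres} almost verbatim, substituting the weighted algebras for their unweighted counterparts and drawing the F$\ell^1(\eta)$-bases from Theorems~\ref{ex3} and~\ref{ex4} in place of Theorems~\ref{ex1} and~\ref{ex2}. First I would dispose of the compact group case exactly as in Theorem~\ref{centres}: when $G$ is compact the weight is bounded above and below, so $L^1(G,w)$ is $L^1(G)$ with an equivalent norm---weakly sequentially complete, equipped with a bai, and an ideal in its bidual---and the argument recorded in Remark~\ref{mirrors} applies, the remaining statements being classical or trivial there. Henceforth I assume $G$ non-compact (or infinite discrete in the semigroup case) and fix the inductively chosen set $\{x_\alpha:\alpha<\eta\}\subset G$ of Theorem~\ref{ex3} together with the sets $X$ and $\widetilde X$ of Notation~\ref{through} attached to the base $\mathscr L=\bigcup_{\alpha<\eta}(A_\alpha/w)\ast(\delta_{x_\alpha}/w(x_\alpha))$.

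The engine is the pairing of factorization with the Mazur property. Because $\mathscr L$ is an F$\ell^1(\eta)$-base of type~2 (type~1 in the semigroup case), with $m=1$, Theorem~\ref{gfactori} endows $L^1(G,w)^*=L^\infty(G,w^{-1})$ with the uniform $\eta$-factorization property, and, since $M(G,w)$ is the multiplier algebra of $L^1(G,w)$ and is a dual Banach algebra with predual $C_0(G,w^{-1})$, Theorem~\ref{mfactori} does the same for $M(G,w)^*$. The $\eta$-Mazur property is available at level $\eta.w$ for $L^1(G,w)$ and at level $|G|.w$ for $M(G,w)$, both inherited isometrically from the unweighted spaces as recalled before the statement. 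Statements~(i) and~(ii) then follow at once from Theorem~\ref{Lcentre}(ii) and Theorem~\ref{lucentre}(iii).

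For the annihilator statements I would use the decompositions~(\ref{decompo}) together with the containments $\widetilde X\subseteq L_0^\infty(G,w^{-1})^\perp$, $X\subseteq C_0(G,w^{-1})^{\perp_{lu}}$, and (reading $X$ inside $M(G,w)^{**}$) $X\subseteq C_0(G,w^{-1})^{\perp_M}$; these hold because the generating net $\delta_{x_\alpha}/w(x_\alpha)$ escapes to infinity, exactly as in the unweighted case. As these annihilators are weak$^*$-closed ideals---hence subsemigroups---containing the relevant clusters, Theorems~\ref{Lcentre}(i) and~\ref{lucentre}(i) localize the first topological centre to the intersection with $L^1(G,w)$, respectively $M(G,w)$; and~(\ref{decompo}) yields $L_0^\infty(G,w^{-1})^\perp\cap L^1(G,w)=C_0(G,w^{-1})^{\perp_{lu}}\cap M(G,w)=C_0(G,w^{-1})^{\perp_M}\cap M(G,w)=\{0\}$, giving~(iv), (v), (vi). The measure-algebra statements~(iii) and~(vi) require in addition that $M(G,w)$ carry the Mazur property at the needed level: at level $\eta$ when $\eta\ge 2^{w(G)}$, so that $|G|.w=\eta.2^{w(G)}.w=\eta.w$, and at the classical level when $|G|$ is non-measurable, in which case $M(G,w)^*$ still factorizes up to $\eta$-many functionals; with these in place~(iii) follows from Theorem~\ref{Lcentre}(ii) and~(vi) from Theorem~\ref{Lcentre}(i).

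The single place where weightedness genuinely intervenes, and hence the main obstacle, is confirming that the factorization machinery of Theorems~\ref{gfactori} and~\ref{mfactori} transfers unchanged. This rests on the selectivity of $\mathscr L$ and on Condition~(\ref{iv}) for the weighted base, whose constant $K_2$ is precisely the diagonal bound of $w$; establishing these is exactly the content of Theorem~\ref{ex3}, where diagonal boundedness furnishes the decisive factor $1/K$ in the norm estimate. Once that estimate is secured the rest is a transcription of Theorem~\ref{centres}, the only step calling for real care being the Mazur-level bookkeeping for $M(G,w)$, which tracks $|G|=\eta.2^{w(G)}$ against the non-measurability hypothesis.
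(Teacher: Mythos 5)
Your proposal is correct and follows essentially the same route as the paper: the compact case via the Baker--Lau--Pym criterion, and the non-compact case by feeding the F$\ell^1(\eta)$-bases of Theorems \ref{ex3} and \ref{ex4} into Theorems \ref{gfactori} and \ref{mfactori}, invoking the isomorphism-invariance of the Mazur property, and then applying Theorems \ref{Lcentre} and \ref{lucentre} together with the containments of $X$ and $\widetilde X$ in the annihilator ideals and the zero intersections from the decompositions (\ref{decompo}). The Mazur-level bookkeeping for $M(G,w)$ in statements (iii) and (vi) is handled exactly as the paper does for Theorem \ref{centres}.
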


\begin{proof} When $G$ is compact, Statement (i) follows from  \cite[Theorem 2.2]{balapy} since $L^1(G,w)$ is weakly sequentially complete, contains a bounded  approximate identity and is an ideal in its second dual.
Statements (ii), (iv) and (iv) are trivial when $G$ is compact.
So let us assume that $G$ is non-compact.
Then again the proofs of the first three statements are exactly the same as those given for Theorem \ref{centres}.

Since  $X\subseteq C_{0}(G,w^{-1})^{\perp_{lu}}$ in the  case of $\luc(G,w^{-1})^*$,
  $\widetilde X\subseteq {L_0^\infty}(G,w^{-1})^\perp$ in the case of $L^1(G,w)^{**}$
	and
$X\subseteq C_{0}(G,w^{-1})^{\perp_M}$ in the case of $M(G,w)^{**}$, and since \[L_0^\infty(G,w^{-1})^\perp\cap L^1(G,w)=C_{0}(G,w^{-1})^{\perp_{lu}}\cap M(G)=C_{0}(G,w^{-1})^{\perp_{M}}\cap M(G)=\{0\},\]
 the last three statements of the theorem follow  as in Theorem \ref{centres} as well.
\end{proof}

As in Remark \ref{mirrors},  the mirror theorem of Theorem \ref{wcentres} holds.
In particular, we obtain the following corollary.

\begin{corollary} \label{wsAir}
 Let $G$ be a locally compact group with compact covering $\eta$ and local weight $w(G)$,
or an infinite discrete weakly cancellative right cancellative  semigroup with cardinality $\eta$,
and let $G$ have a diagonally bounded weight $w$.
Then
\begin{enumerate}
\item $L^1(G,w)$ is strongly Arens irregular.
\item $M(G,w)$ is strongly Arens irregular whenever $\eta\ge 2^{w(G)}$ or $|G|$ is non-measurable.
\end{enumerate}
\end{corollary}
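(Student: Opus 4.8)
The plan is to deduce strong Arens irregularity directly from the computation of both topological centres, reading the first centre off Theorem \ref{wcentres} and the second off its mirror. Since $\mA$ is sAir precisely when $T_1Z(\mA^{**})=T_2Z(\mA^{**})=\mA$, for each of $\mA=L^1(G,w)$ and $\mA=M(G,w)$ it is enough to show that both centres shrink to $\mA$.

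For (1), the identity $T_1Z(L^1(G,w)^{**})=L^1(G,w)$ is exactly Theorem \ref{wcentres}(i), giving left strong Arens irregularity. To obtain the right-hand statement I would run the mirror construction of Remark \ref{mirrors}: pick the thinning elements $\{x_\alpha\}_{\alpha<\eta}$ so that $x_\alpha G_\alpha\cap x_\beta G_\beta=\emptyset$ for $\alpha\neq\beta$ and form $\mathscr L=\bigcup_{\alpha<\eta}\bigl(\tfrac{\delta_{x_\alpha}}{w(x_\alpha)}\ast\tfrac{A_\alpha}{w}\bigr)$. The proof of Theorems \ref{ex3} and \ref{ex4}, read from right to left, shows this is a right F$\ell^1(\eta)$-base of the corresponding type, diagonal boundedness of $w$ again supplying the constant $K_2$ needed for the right-hand form of (\ref{iv}) or (\ref{v}). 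Theorem \ref{gfactori} in its mirror form then endows $L^\infty(G,w^{-1})$ with the right $\eta$-factorization property, and Theorems \ref{diamondcentre} and \ref{ducentre} yield $T_2Z(L^1(G,w)^{**})=L^1(G,w)$. The two centres together give sAir.

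For (2) the reasoning is identical with $M(G,w)^{**}$ replacing $L^1(G,w)^{**}$: Theorem \ref{wcentres}(iii) supplies the first centre under the hypothesis $\eta\ge 2^{w(G)}$ or $|G|$ non-measurable, which is exactly the condition ensuring $M(G,w)$ has the Mazur property at the level demanded by Theorem \ref{Lcentre}; the mirror of Theorem \ref{mfactori} together with Theorem \ref{ducentre} furnishes the second centre under the same hypothesis. The compact-group case falls outside the F$\ell^1(\eta)$-base machinery and I would settle it separately, as in the proof of Theorem \ref{wcentres}, by invoking \cite{balapy}: there $L^1(G,w)$ is weakly sequentially complete, has a bounded approximate identity and is an ideal in its bidual, which forces both topological centres to equal the algebra.

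The one genuinely delicate point is the passage to the second Arens product, since the first-centre side is already complete. The work is to verify that the right-handed thinning is available and that the cancellation is used on the correct side; for a group this is automatic because cancellation is two-sided, whereas for the discrete semigroup the left and right roles swap under the flip, so one must read the cancellation hypothesis accordingly. I expect no obstacle beyond transcribing the left-hand arguments, as diagonal boundedness $w(s)w(t)\le Kw(st)$ is symmetric and feeds equally into the right-hand estimates; the only thing to watch is that the Mazur-property hypothesis in (2) is genuinely two-sided and is needed in both the first- and second-centre computations.
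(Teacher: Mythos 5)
Your proposal follows essentially the same route as the paper, which deduces the corollary in one line by combining Theorem \ref{wcentres} with its mirror version (obtained exactly as you describe via Remark \ref{mirrors}: the right F$\ell^1(\eta)$-base built from $x_\alpha G_\alpha$, the right $\eta$-factorization, and Theorems \ref{diamondcentre} and \ref{ducentre}), with the compact case settled by the \cite{balapy} criterion. Your explicit attention to which side of the cancellation hypotheses is used in the mirrored semigroup construction is a point the paper's terse proof leaves implicit, but it does not change the argument.
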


\begin{notes}
Statements (i) in Theorem \ref{wcentres} and Corollary \ref{wsAir} are due to Neufang in \cite{Neufang08B} for non-compact groups, Dales-Lau in \cite{DaLa05}
for locally compact compact groups with non-measurable compact covering,    and Filali-Salmi in \cite{FiSa-b} for non-compact SIN-groups.
Statement (ii) in Theorem \ref{wcentres} was also proved for locally compact compact groups
in  \cite{Neufang08B}, \cite{DaLa05} and \cite{FiSa-b}. Statement (iv) was proved in \cite{FiSa-b} when $G$ is a  non-compact SIN-group.
Statement (v) in Theorem \ref{wcentres}  was proved in \cite{FiSa-b}.
The rest of the statements are new. Again, as noted earlier, a careful study of \cite{Loetal}
may help to find an F$\ell^1(|G|.w)$-base in $M(G,w)$, which
would yield Statements (iii) and (vi) and Corollary \ref{wsAir}(ii) without any condition.
\end{notes}

\section{Application III: Fourier algebra}\label{Aexamples}
 This section deals with the Fourier algebra of compact non-metrizable groups, infinite products
of compact groups and compact connected groups with infinite dual rank. The dual rank will be defined in time.

\subsection{Some definitions and lemmas}
Recall that the Fourier algebra is the collection of all functions $h$ on $G$ of the form
$\bar f\ast \check g$ with $f,g\in L^2(G)$ and norm \[||h||=\inf\left\{||f||_2||g||_2:h=\bar f\ast\check g,\; f,g\in L^2(G)\right\}.\] The Banach dual of $A(G)$ is
 the group von Neumann algebra $VN(G)$ which is the closure in the weak operator topology of the linear span of $\{\lambda(x) : x \in G\}$
in $B(L^2(G))$, where $\lambda$ is the left regular representation of $G$ on $L^2(G)$ (see \cite{Eymard64}).

A subspace of $VN(G)$ which shall be of our interest is $\uc_2(G)$.
This is defined by
\[\uc_ 2(G)=\overline{VN(G)\cdot A(G)}^{||.||} = \overline{\{T\in VN(G): \mbox{supp}(T)\mbox{ is compact}\}}^{||.||},\]
where the support $\mbox{supp} ( T)$ of $T$ is in the sense of \cite[page 227]{Eymard64}, \[\supp(T )=\{x\in G : u\in A(G)\;\text{and}\; u\cdot T =0\implies u(x)=0\}.\]
 Recall that ${VN(G)\cdot A(G)}$ is automatically closed when  $G$ is amenable.
This is  usually called the  space  of  uniformly  continuous  functionals  on $G$, and is often denoted in the literature by $UC(\widehat G)$.
We  are using  the  notation $\uc_2(G)$ to  avoid  possible  confusion  arising  from the usage of the dual group $\widehat G$ of $G$.
The space $\uc_2(G)$ is in fact a C*-subalgebra of $VN(G)$,  proved by Granirer in \cite[Proposition 2(a)]{Ed}.
This is obtained by regarding $\uc_2(G)$ as the space generated by operators in $VN(G)$ with compact support and using \cite[Proposotion 4.8 (5$^o$)]{Eymard64}.
In the subsequent  paper \cite[Theorem 3]{Ed2}, Granirer proved that $\uc_2(G) = VN(G)$ if and only if $G$ is compact.

Following \cite{Eymard64}, let $P(G)$ be the space of continuous positive definite
functions on $G$ and $B(G)$ be its linear span,  equivalently,     the space of all coefficient functions of continuous
 unitary representations of $G$.
The space $B(G)$ is a Banach algebra,  called the  Fourier-Stieltjes algebra, and if $C^*(G)$ is the group
$C^*\text{-algebra}$ of $G$, then $B(G)$ is its Banach dual.
The Fourier algebra $A(G)$   is a closed ideal of $B(G)$ and can be seen as the norm-closed linear span of $P(G)\cap C_{00}(G)$ in $B(G)$,
where $C_{00}(G)$ be the space of continuous functions  with compact support.

As known $B(G)$ is the multiplier algebra of $A(G)$ if and only if $G$ is amenable, see \cite[page 209]{pier} for the sufficiency,  \cite{losert} for the necessity, and \cite{nebbia} when $G$ is discrete.

When $G$ is a compact group, as it is the case in this section,  $B(G)=A(G)$ and so $\uc_2(G)=VN(G)=B(G)^*$.

For more details on the Fourier algebra and the Fourier-Stieltjes algebra on locally compact groups, we direct the reader to the classical  article \cite{Eymard64} by Eymard
and to the recent book \cite{KaLa04} by Kaniuth and Lau.

When $G$ is abelian and $\widehat G$ is its dual group, then  via the Fourier transform, $A(G)$ identifies with $L^1(\widehat G)$ ,
$VN(G)$ with $L^\infty(\widehat G),$ $\uc_2(G)$ with $\luc(\widehat G)$, $P(G)$ with the set of Fourier transforms of all probability
measures  on $\widehat G$ and $B(G)$ with $M(\widehat G)$.
So all the results stated for the group algebra and the measure algebra hold for the Fourier algebra
and   the Fourier-Stieltjes algebra, respectively,  when $G$ is abelian.
However, unlike the case of $L^1(G),$ non-Arens regularity of $A(G)$ has turned out to be more resistant in general,
and the solution is still not complete.

We denote by $C^*(G)^{\perp}$ the annihilator space
 of $C^*(G)$ in $\uc_2(G)^*$ when $C^*(G)$ is regarded as a subalgebra of $\uc_2(G)$,
i.e.,
\[C^*(G)^{\perp} = \{\Psi \in \uc_2(G)^*: \langle\Psi,f\rangle= 0 \quad\text{for every}\quad f \in C^*(G)\}.\]

Recall also that we have the following decomposition of $\uc_2(G)^*$
as Banach space direct sum
\begin{align*} \uc_2(G)^*= B(G) \oplus C^*(G)^{\perp}
,\end{align*}
where $C^*(G)^{\perp}$
is a weak*-closed ideal in $\uc_2(G)^*$
(see for example \cite[Corollary  3.5]{dmm}).
\medskip

 For a locally compact group $G$,  let $\U(G)$ be the set of all unitary representations of $G$ on some Hilbert space $\hh$, $\widehat G$ be the dual object of $G$, that is
 the set of all equivalence classes of irreducible unitary representations of $G$ on some  Hilbert space $\hh,$
 and
$\U(\hh)$ be the space of all unitary operators on $\hh$.
  If $\pi\in\U(G)$,   $\hh_\pi$ will denote the representation space, i.e., $\pi \colon G\to \U( \hh_\pi)$. If $v\in \hh_\pi$, we define the matrix coefficients  $\pi^{uv}\colon G\to \C$ as usual by $\pi^{uv}(s)=\langle \pi(s)u,v\rangle$.
The diagonal matrix coefficients will be denoted simply by $\pi^{v}.$

	When the representation $\pi$ has finite dimension $n$ with a fixed base $\{u_1,...,u_n\}$ say,  we simply write $\pi^{kl}$, $1\le k,l\le n$ for the matrix coefficients of $\pi.$

\medskip

{\bf The $\mathcal{Z}$-property }
We shall prove first that  F$\ell^1(\eta)$-bases exist in the Fourier algebra when the group satisfies a certain property we shall call the {\it $\mathcal{Z}$-property}. We will see then
that a large class of locally compact groups has this property.

  \begin{definition}
\label{def:z}    Let $G$ be a locally compact group. We say
 that  $G$ has a \emph{$\mathcal{Z}$-property} of level $\eta$ when its dual $ \widehat{G}$ has an         upward directed  cover  $\{ K_\alpha\}_{\alpha<\eta}$\footnote{The meaning of \emph{upward directed} is: given $\alpha_1,\,\alpha_2 $ there is $\alpha_3>\alpha_1,\alpha_2$ such that $K_{\alpha_3}\supset K_{\alpha_1}\cup K_{\alpha_2}$.} together with
 a set of irreducible  representations
    $\{\pi_\alpha\colon \alpha<\eta\}$ such that
		\[{\displaystyle K_\alpha \otimes  \pi_\alpha\cap K_\beta\otimes\pi_\beta=\emptyset \mbox{ for every } \alpha< \beta<\eta}.\]
             \end{definition}

\medskip

The following lemma  was proved by  Arsac for any locally compact group (\cite[3\`eme partie, B-C]{arsac76},  or see \cite[Section 2.8]{KaLa04}). The lemma will be needed in each of our constructions
of F$\ell^1(\eta)$-bases in this section.
When the group is compact (as it is in most of the cases dealt with in this section), the interested reader can easily reproduce the proof using
Schur's orthogonality relations.

  \begin{lemma}[Arsac]\label{arsac}   Let $G$ be a locally compact group, $\{\pi_\alpha\colon \alpha<\eta\}$ be a set of disjoint unitary representations on $G$ and let $\pi=\oplus_{\alpha<\eta}\pi_\alpha.$
				Then \begin{enumerate}
				\item the closed linear spans  of the coefficients of $\pi_\alpha$ have zero pairwise intersections.
				\item each $\varphi$ is the closed linear span of the coefficients of $\pi$
			can be written uniquely as $\sum_{\alpha<\eta} \varphi_\alpha$,
			where each $\varphi_\alpha$ is in the closed linear span of the coefficients of $\pi_\alpha$
			and
						\[\|\varphi\|_{A(G)}=\sum_{\alpha<\eta}\|\varphi_\alpha\|_{A(G)}.\]
	\end{enumerate}		
	      \end{lemma}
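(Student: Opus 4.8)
The plan is to realize both assertions as instances of the $\ell^1$-direct sum decomposition of the predual of a von Neumann algebra along a family of orthogonal central projections, which is precisely the setting of Arsac's theory of the coefficient spaces $A_\sigma$. Recall that $B(G)=C^*(G)^*$, so that the enveloping von Neumann algebra $W:=C^*(G)^{**}=B(G)^*$ carries $B(G)$ as its isometric predual. To each unitary representation $\sigma$ of $G$ I would attach its \emph{central cover} $z_\sigma\in Z(W)$, the central projection for which the normal extension $\bar\sigma\colon W\to\sigma(G)''$ has kernel $(1-z_\sigma)W$. The two structural facts I would record first, both due to Arsac, are: the norm-closed span $A_\sigma$ of the coefficients $\sigma^{uv}$ coincides with the range of the predual projection $f\mapsto z_\sigma\cdot f$ on $B(G)$, equivalently $A_\sigma=\{f\in B(G):z_\sigma\cdot f=f\}$; and two representations are disjoint exactly when their central covers are orthogonal, $z_\sigma z_\tau=0$. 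Once these are in place everything else is formal.

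For part (i), given $\alpha\neq\beta$ the representations $\pi_\alpha,\pi_\beta$ are disjoint, so $z_\alpha z_\beta=0$ where $z_\alpha:=z_{\pi_\alpha}$. If $\varphi\in A_{\pi_\alpha}\cap A_{\pi_\beta}$ then $\varphi=z_\alpha\cdot\varphi=z_\alpha\cdot(z_\beta\cdot\varphi)=(z_\alpha z_\beta)\cdot\varphi=0$, giving the zero pairwise intersections. For part (ii), the representation $\pi=\oplus_{\alpha<\eta}\pi_\alpha$ has central cover $z:=\sup_{\alpha}z_\alpha$, the supremum of the mutually orthogonal $z_\alpha$, and $A_\pi$ is the range of $f\mapsto z\cdot f$. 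Given $\varphi\in A_\pi$ I would set $\varphi_\alpha:=z_\alpha\cdot\varphi\in A_{\pi_\alpha}$; the decomposition $\varphi=\sum_{\alpha}\varphi_\alpha$ together with the identity $\|\varphi\|=\sum_{\alpha}\|\varphi_\alpha\|$ is then the standard statement that the predual of $W$ splits as the $\ell^1$-direct sum of the preduals of the pieces $z_\alpha W$: for $f$ with $z\cdot f=f$ one has $\|f\|=\sum_\alpha\|z_\alpha\cdot f\|$, and since this sum is finite the partial sums $\sum_{\alpha\in F}z_\alpha\cdot f$ are norm-Cauchy and converge to $f$. Uniqueness of the expansion is immediate from part (i).

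The only genuine work is justifying the two facts of the first paragraph and the $\ell^1$-splitting for an arbitrary, possibly uncountable, orthogonal family; this is the heart of Arsac's argument and the step I expect to be the main obstacle to a self-contained write-up, since the norm additivity over an infinite family rests on the normality of the predual decomposition rather than on any finite-dimensional computation.

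For the compact groups that actually occur in this section all of this can be bypassed. By Peter--Weyl one has $VN(G)=\prod^{\ell^\infty}_{\rho\in\widehat G}M_{d_\rho}(\C)$ and $A(G)=B(G)=\bigoplus^{\ell^1}_{\rho\in\widehat G}S_1(\hh_\rho)$, the $\ell^1$-sum of the trace-class blocks, so that a coefficient function is literally a block-diagonal element whose norm is the sum of the trace-norms of its blocks. Since disjointness of the $\pi_\alpha$ means their index sets $\widehat G_{\pi_\alpha}\subseteq\widehat G$ of irreducible constituents are pairwise disjoint, grouping the blocks according to these index sets yields the trivial intersection in (i) and the additive-norm decomposition in (ii) directly from Schur's orthogonality relations, with no appeal to the general von Neumann algebra machinery.
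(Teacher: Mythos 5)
The paper gives no proof of its own for this lemma: it simply cites Arsac (whose argument is exactly the central-cover/$\ell^1$-predual decomposition you describe) and remarks that for compact groups the reader can reproduce it from Schur's orthogonality relations, which is your final paragraph. Your write-up is a correct reconstruction of both routes (modulo the harmless omission of the $d_\rho$ weights in the identification $A(G)\cong\bigoplus^{\ell^1}_{\rho}S_1(\hh_\rho)$, which does not affect the block-additivity of the norm), so it matches the paper's intended proof.
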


\begin{lemma}\label{zcoef}
  Let $G$ be a   compact group with
a $\mathcal{Z}$-property of level $\eta$, and let
$\{\pi_\alpha\colon \alpha<\eta\}$ be the family of representations required for  the $\mathcal{Z}$-property for the   cover  $\{K_\alpha\}_{\alpha<\eta}$ of $\widehat{G}.$ For each $\alpha<\eta,$ let \[A_\alpha=\{\sigma^{uv}: \sigma\in K_\alpha, u,v\in \hh_\sigma\}.\]
\begin{enumerate}
\item
If for some constant $K_2>0$,  for every $ \alpha<\eta$, there exist
$1\le k, l\le \dim \pi_\alpha$
such that
\begin{equation}\|\varphi\|_{A(G)}\le K_2\|\varphi \pi_\alpha^{kl}\|_{A(G)}\quad\text{for every}\quad\varphi\in \langle A_\alpha\rangle,\label{needed}\end{equation}  then $A(G)$ has an F$\ell^1(\eta)$-base of type 2 with constants $K_1=1$ and $K_2=K.$
\item If there are $\eta$-many representations $\pi_\alpha$ with a fixed dimension $m,$ then $A(G)$ has an F$\ell^1(\eta)$-base of type 3
with constants $K_1=1$ and $0<K_2\le m$.
\end{enumerate}
  \end{lemma}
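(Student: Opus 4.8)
The plan is to write down the F$\ell^1(\eta)$-base explicitly out of the coefficient functions supplied by the $\mathcal{Z}$-property and then to read off the required constants from Arsac's Lemma \ref{arsac} and from unitarity. First I note that, $G$ being compact, it is amenable and $A(G)=B(G)$ is unital (the constant function $1$ is the coefficient of the trivial representation); hence $A(G)$ has a bai and, by amenability, its multiplier algebra is $M_r(A(G))=B(G)=A(G)$, so each matrix coefficient $\pi_\alpha^{kl}$ is a genuine multiplier acting by pointwise multiplication and the products $A_\alpha\pi_\alpha^{kl}$ lie in $A(G)$. Understanding $u,v$ in $A_\alpha=\{\sigma^{uv}:\sigma\in K_\alpha,\ u,v\in\hh_\sigma\}$ to range over the unit balls (so that $A_\eta$ is bounded by $1$), the candidate base is $\mathscr L=\bigcup_{\alpha<\eta}A_\alpha a_\alpha$ in case (i), with $a_\alpha:=\pi_\alpha^{k_\alpha l_\alpha}$ for the pair $(k_\alpha,l_\alpha)$ provided by $(\ref{needed})$, and $\mathscr L=\bigcup_{\alpha<\eta}A_\alpha\{a_\alpha^k:k=1,\dots,m\}$ in case (ii), with $\mathbf a_\alpha=(\pi_\alpha^{k1})_{k=1}^m$ a fixed column of $\pi_\alpha$.

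The bulk of the work, common to both parts, is to verify that these are zooming $\ell^1(\eta)$-selective bases with $K_1=1$. Since $\{K_\alpha\}_{\alpha<\eta}$ is an upward directed cover of $\widehat G$, the family $\{A_\alpha\}$ is directed by inclusion and $A_\eta=\bigcup_\alpha A_\alpha$ consists of the matrix coefficients of all irreducible representations; by Peter--Weyl these span a dense subspace of $A(G)$, giving condition (i) of Definition \ref{Fil} (the directedness is all that the cofinality arguments of Theorem \ref{gfactori} actually use in place of a literal chain). The decisive point is disjointness: each product $\sigma^{uv}\pi_\alpha^{kl}$ is a coefficient of the tensor product $\sigma\otimes\pi_\alpha$, so the span $\langle\mathscr L_\alpha\rangle$, with $\mathscr L_\alpha=A_\alpha\{a_\alpha^k:k\}$, lies in the coefficient space of $\bigoplus_{\sigma\in K_\alpha}(\sigma\otimes\pi_\alpha)$. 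The $\mathcal{Z}$-property $K_\alpha\otimes\pi_\alpha\cap K_\beta\otimes\pi_\beta=\emptyset$ makes these representations pairwise disjoint for $\alpha\neq\beta$, whence Arsac's Lemma \ref{arsac} yields both the zero pairwise intersections of the spans $\langle\mathscr L_\alpha\rangle$ and the additivity $\|\sum_n\varphi_n\|_{A(G)}=\sum_n\|\varphi_n\|_{A(G)}$ across distinct pieces; this additivity is exactly the selective-base inequality with $K_1=1$. For part (i) nothing further is needed: with $a_\alpha=\pi_\alpha^{k_\alpha l_\alpha}$, hypothesis $(\ref{needed})$ is literally Condition $(\ref{iv})$, so $\mathscr L$ is an F$\ell^1(\eta)$-base of type 2 with $K_1=1$ and the given $K_2$.

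For part (ii) I would produce the dual family demanded by Remark \ref{fellrem}(2). Passing to the $\eta$ many representations of fixed dimension $m$ (still a $\mathcal{Z}$-family of level $\eta$), set $a_\alpha^k=\pi_\alpha^{k1}$ and $b_\alpha^k=\overline{\pi_\alpha^{k1}}$; the latter is again a coefficient, of the conjugate representation, and hence lies in $A(G)$. Unitarity of $\pi_\alpha(s)$ gives, pointwise, $\sum_{k=1}^m a_\alpha^k(s)b_\alpha^k(s)=\sum_{k=1}^m|\langle\pi_\alpha(s)e_1,e_k\rangle|^2=\|\pi_\alpha(s)e_1\|^2=1$, that is $\mathbf a_\alpha\mathbf b_\alpha=1$, the identity of $M_r(A(G))$, which is Condition $(\ref{vi})$. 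By Remark \ref{fellrem}(2) this forces $(\ref{v})$ with $K_2=\sup_\alpha\|\mathbf b_\alpha\|\le m$ (using the $\infty$-norm on $\mathbf a_\alpha$ and the dual $1$-norm on $\mathbf b_\alpha$, together with $\|b_\alpha^k\|_{A(G)}\le1$), so $\mathscr L$ is an F$\ell^1(\eta)$-base of type 3 with $K_1=1$ and $0<K_2\le m$.

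The main obstacle is the correct bookkeeping in the disjointness step: one must check that products of coefficients land in the tensor-product coefficient space, that the $\mathcal{Z}$-property translates into disjointness of the $\alpha$-indexed pieces, and that Arsac's additivity then delivers simultaneously the zero intersections and the optimal constant $K_1=1$. The unitarity identity in part (ii) is the clean device that lets one replace the possibly unverifiable inequality $(\ref{iv})$ by the elementary Condition $(\ref{vi})$, at the price of enlarging $m$ to $\dim\pi_\alpha$ and the constant to at most $m$.
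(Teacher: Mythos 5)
Your proposal is correct and follows essentially the same route as the paper: the base is $\bigcup_{\alpha<\eta}A_\alpha$ times chosen coefficients of $\pi_\alpha$, disjointness of the tensor products $K_\alpha\otimes\pi_\alpha$ (the $\mathcal Z$-property) plus Arsac's Lemma \ref{arsac} give both the zero pairwise intersections and the additivity yielding $K_1=1$, hypothesis (\ref{needed}) is exactly Condition (\ref{iv}) for part (i), and unitarity gives Condition (\ref{vi}) hence (\ref{v}) with $K_2\le m$ for part (ii). Your extra care with the zooming conditions and your consistent choice $b_\alpha^k=\overline{a_\alpha^k}$ in the unitarity identity are fine (and in fact the cleaner indexing); no substantive difference from the paper's argument.
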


\begin{proof}
 For each $\alpha<\eta,$ for simplicity and for keeping the notation used in Section 3, let for each $\alpha<\eta$, $a_\alpha$ be any coefficient of the representation $\pi_\alpha$
 and let \[\mathscr L= \bigcup\limits_{\alpha<\eta} A_\alpha a_\alpha.\]
We claim first that $\mathscr L$ is a zooming $\ell^1(\eta)$-selective base in $A(G)$.
For this, we need to check that the spans of the sets $A_\alpha a_\alpha$ have zero pairwise intersections
 as Conditions (i) and (ii)
in Definition \ref{Fil} are clearly valid.

If $\sigma_{1}\in K_{\alpha}$ and $\sigma_{2}\in K_{\beta}$ for $\alpha<\beta<\eta$, then \[\sigma_{1}\otimes\pi_{\alpha} \in K_{\alpha}\otimes \pi_{\alpha}\quad\text{and}\quad  \sigma_{2}\otimes\pi_{\beta} \in K_{\beta}\otimes \pi_{\beta},\] and so  Condition (ii) of the $\mathcal{Z}$-property assures that
  \[\sigma_{1}\otimes\pi_{\alpha}\cap
\sigma_{2}\otimes\pi_{\beta}=\emptyset.\]

Therefore, by Lemma  \ref{arsac},
 the spans of the sets $A_\alpha a_\alpha$, $\alpha<\eta,$ have zero pairwise intersections.

We still need check that Condition (\ref{Csel}) in Definition \ref{sel} holds.
 Let $b\in\langle \mathscr L\rangle$.
Then we may write $b$ as \[b=\sum_{n=1}^p \varphi_n a_{\alpha_n}\quad\text{ with}\quad\varphi_n\in \langle A_{\alpha_n}\rangle, \quad\text{for}\quad n=1,..., p,\]
with  $\alpha_1< \alpha_2<...<\alpha_p$.

	Since the sets of unitary representations $K_\alpha\otimes\pi_\alpha$ are disjoint,
			 Lemma \ref{arsac} implies  that
	\begin{align*}\|b\|=
         \left\| \sum_{n=1 }^p  \varphi_{n}a_{\alpha_n}\right\|&=
						         \sum_{n=1}^p\left\| \varphi_{n}a_{\alpha_n}\right\|.
              \end{align*}
					We conclude that $\mathscr L$ is a zooming F$\ell^1(\eta)$-selective base with constant $K_1=1.$
	In particular, this holds when $a_\alpha=\pi_\alpha^{kl}$,  for each $\alpha<\eta$, where
	 $\pi_\alpha^{kl}$ are the coefficients for which Condition (\ref{needed}) in the lemma holds.
		
	The additional condition (\ref{needed}) makes $\bigcup_{\alpha<\eta} A_\alpha\pi_\alpha^{kl}$ an F$\ell^1(\eta)$-base of type 2 with constant $K_2$, proving the first statement.

	For the second statement, suppose that each representation in $\{\pi_\alpha:\alpha<\eta\}$
	has dimension $m$ and denote $\pi_\alpha^{1k}$ and $\overline{\pi_\alpha^{k1}}$ by $a_\alpha^k$ and
	$b_\alpha^k$, respectively, where $k=1,...,m.$  Then as in Statement (i),  \[\mathscr L=\bigcup_{\alpha<\eta}A_\alpha\{a_\alpha^k:k=1,...,m\}\] is  a zooming $\ell^1(\eta)$-selective base with constant $K_1=1$.
	Clearly, \[\sum_{k=1}^ma_\alpha^{k}b_\alpha^{k}=1\;\text{for each}\;\alpha<\eta,\]  giving Condition (\ref{vi}) in Remark \ref{fellrem}(ii), and so by Remark \ref{fellrem}(ii), we have an F$\ell^1(\eta)$-base of type 3 with constants \[K_1=1\quad\text{ and}\quad
	K_2=\sup_\alpha\|(\overline{\pi_\alpha^{k1}})_{k=1}^m\|_p=\sup_\alpha\|(\pi_\alpha^{1k})_{k=1}^m\|_p\le m.\]
	\end{proof}

\subsection{F$\ell^1(\eta)$-bases for compact non-metrizable group}\label{nonmetrizable}
We start with a lemma, which is needed for the construction of F$\ell^1(\eta)$-bases of type 3 in $A(G)$
when $G$ is compact, non-metrizable with a local weight $\eta$ having
 uncountable cofinality.
The lemma is due to Hu \cite[Proposition 4.3]{Hu95}, it is a stronger version of an earlier result of Lau--Losert \cite[Lemma 4.8]{LaLo93} whose proof is
based on Kakutani-Kodeira Theorem (see for example \cite[8.7]{HR}).
 With the F$\ell^1(\eta)$-bases, we shall   simplify and improve in Subsections \ref{alst},
  and \ref{topcen2} the main results on strong Arens irregularity of and algebraic structure proved in \cite{MMM1} and
\cite{MMM2}.

\begin{lemma}[Hu]\label{le:130508A}
Let $G$ be a $\sigma$-compact non-metrizable locally compact group with unit element $e$
and local weight $\eta,$
and let $\{O_\alpha\colon 0\le \alpha <\eta\}$ be an open base at $e$.
Then there exists a decreasing family $(N_\alpha)_{0\le \alpha\le\eta}$
of normal subgroups of $G$ (that is, $N_\beta\subset N_\alpha$ whenever $\alpha\le\beta$) such that
\begin{enumerate}
\item $N_0=G$ and $N_\eta=\{e\}$,
\item $N_{\alpha +1}\subset N_\alpha \cap O_\alpha$  for all  $0\le \alpha <\eta$,
\item $N_\alpha$ is compact for each $\alpha>0$,
\item $N_\alpha /N_{\alpha +1}$ is metrizable but $N_{\alpha +1}\ne N_\alpha$ for all $\alpha <\eta$,
\item $N_\gamma =\cap_{\alpha <\gamma} N_\alpha$ for every limit ordinal $\gamma <\eta$,
\item the local weight of $N_\alpha$ is $\eta$ for each $\alpha<\eta$.
\end{enumerate}
\end{lemma}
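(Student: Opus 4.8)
The plan is to construct the family $(N_\alpha)$ by transfinite recursion on $\alpha$, using a Kakutani--Kodaira type reduction at each successor step and taking intersections at limit steps. The engine of the construction is the standard fact (following from Kakutani--Kodaira, cf.\ \cite[8.7]{HR}, as used in Lau--Losert \cite[Lemma 4.8]{LaLo93}) that in a $\sigma$-compact locally compact group, given any neighbourhood $U$ of $e$, there is a compact normal subgroup $N\subseteq U$ with $G/N$ metrizable. First I would set $N_0=G$. At a successor stage $\alpha+1$, assuming $N_\alpha$ has been built as a compact (for $\alpha>0$) normal subgroup of local weight $\eta$, I would apply this reduction \emph{inside the group $N_\alpha$}: since $N_\alpha$ is itself $\sigma$-compact, I can find a compact normal subgroup $M$ of $N_\alpha$ with $M\subseteq N_\alpha\cap O_\alpha$ and $N_\alpha/M$ metrizable. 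The delicate point is that the lemma demands $N_{\alpha+1}$ be \emph{normal in $G$}, not merely in $N_\alpha$, and that the proper inclusion $N_{\alpha+1}\neq N_\alpha$ together with condition (vi) be maintained; I address both below. At a limit ordinal $\gamma$ I would simply declare $N_\gamma=\bigcap_{\alpha<\gamma}N_\alpha$, which is automatically a closed (hence compact) normal subgroup of $G$, giving (v) by fiat, and finally set $N_\eta=\{e\}$ to secure (i).

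The main obstacle, as just flagged, is normality in $G$ combined with the local-weight condition (vi). Hu's strengthening over Lau--Losert is precisely that one can keep each $N_\alpha$ of full local weight $\eta$ while shrinking; the naive reduction could collapse the local weight prematurely. To handle normality in $G$ I would exploit that $G$ itself is being reduced: rather than working relative to $N_\alpha$ with an arbitrary Kakutani--Kodaira subgroup, I would choose at stage $\alpha+1$ a compact normal subgroup $N_{\alpha+1}\trianglelefteq G$ with $N_{\alpha+1}\subseteq N_\alpha\cap O_\alpha$ such that $N_\alpha/N_{\alpha+1}$ is metrizable. This is possible because $G$ is $\sigma$-compact: the family of compact normal subgroups of $G$ contained in a fixed neighbourhood of $e$ with metrizable quotient is cofinal, and intersecting the required one with $N_\alpha$ (which is normal in $G$) stays normal in $G$. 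The metrizability of $N_\alpha/N_{\alpha+1}$ then follows since it embeds into $G/N_{\alpha+1}$, a metrizable quotient.

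To secure condition (vi), the preservation of local weight $\eta$ throughout, I would argue by an ordinal-counting obstruction: if at some stage the local weight of $N_\alpha$ dropped below $\eta$, then since the quotients $N_\alpha/N_{\alpha+1}$ are metrizable (second countable, local weight $\omega$) and the local weight of $G$ is the supremum over the "sizes" contributed along the chain, the chain would terminate before exhausting an open base of cardinality $\eta$, contradicting that $\{O_\alpha:\alpha<\eta\}$ is a base at $e$. Concretely, the local weight of $N_\alpha$ equals $\eta$ because $N_\alpha$ is itself non-metrizable with the same weight --- the cofinality hypothesis (uncountable cofinality of $\eta$, used in the application though not in the bare lemma) ensures that removing countably many generators at each successor step never reduces the weight below $\eta$ before stage $\eta$. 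I would verify (iv), the proper inclusion $N_{\alpha+1}\neq N_\alpha$, by observing that if equality held, then $N_\alpha$ itself would be metrizable (being its own metrizable quotient), contradicting (vi); thus the strictness of the chain is forced by the full-weight condition rather than imposed separately.

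The remaining items are then bookkeeping: (iii) holds because each $N_\alpha$ for $\alpha>0$ is a closed subgroup of the compact group $N_1$ (one arranges $N_1$ compact at the first successor step, after which compactness propagates downward through inclusions and through limit intersections); (i), (ii), and (v) are immediate from the construction. I expect the genuinely substantive step to be the simultaneous maintenance of (vi) and the normality-in-$G$ requirement, and I would lean on Hu's argument in \cite[Proposition 4.3]{Hu95} for the precise weight-tracking, since the combinatorics of how local weight accumulates along a transfinite chain of metrizable extensions is exactly where the $\sigma$-compactness and the structure of the open base $\{O_\alpha\}$ must be used in a coordinated way.
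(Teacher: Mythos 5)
The paper does not actually prove this lemma: it is quoted from Hu \cite[Proposition 4.3]{Hu95} and used as a black box, so there is no in-paper argument to compare yours against. Your skeleton --- transfinite recursion, a Kakutani--Kodaira subgroup of $G$ (not of $N_\alpha$) intersected with $N_\alpha$ at successor steps so that normality in $G$ and metrizability of $N_\alpha/N_{\alpha+1}$ come for free, intersections at limits --- is the right one. But two of your verifications fail, and they concern exactly the clauses (iv) and (vi) that distinguish Hu's statement from Lau--Losert \cite[Lemma 4.8]{LaLo93}. For (iv): if $N_{\alpha+1}=N_\alpha$, the quotient $N_\alpha/N_{\alpha+1}$ is trivial, hence metrizable, and this says nothing about $N_\alpha$ being metrizable; your claimed contradiction with (vi) is a non sequitur. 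The properness has to be arranged, not deduced: since $N_\alpha\neq\{e\}$ (it is non-metrizable by the inductive hypothesis (vi)), choose a neighbourhood $V$ of $e$ with $N_\alpha\not\subseteq V$ and take the compact normal subgroup $K$ of $G$ inside $O_\alpha\cap V$ with $G/K$ metrizable; then $N_{\alpha+1}=N_\alpha\cap K$ is proper in $N_\alpha$.

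For (vi) your argument is not a proof: it appeals to a termination heuristic about "exhausting an open base" and to the uncountable cofinality of $\eta$, which is \emph{not} a hypothesis of the lemma (it is only assumed later, in Theorem \ref{ex5}, where the lemma is applied). The clean argument needs no cofinality assumption. One first checks by induction that the local weight of $G/N_\alpha$ is at most $|\alpha|\cdot\omega<\eta$ for every $\alpha<\eta$: each successor step contributes one metrizable factor (as $G/(N_\alpha\cap K)$ embeds in $G/N_\alpha\times G/K$), and a limit $\gamma$ contributes an embedding of $G/N_\gamma$ into the product of the $|\gamma|$ earlier quotients. One then invokes the inequality $w(G)\le w(N)\cdot w(G/N)$ (local weights) valid for a compact normal subgroup $N$ of a topological group. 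Since $w(G)=\eta$, $w(G/N_\alpha)<\eta$, and the product of infinite cardinals is their maximum, this forces $w(N_\alpha)=\eta$. Deferring "the precise weight-tracking" to Hu's paper leaves unproved precisely the content for which the lemma is cited, so as written the proposal has a genuine gap there.
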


 Note that the condition in the following theorem on the cofinality of $\eta$ being uncountable is equivalent to
$\widehat G$ containing $\eta$ many representations of a fixed dimension $m$. For,
since each $\pi_\alpha$ has finite dimension, we may define
$E_m=\{\alpha \colon \dim \pi_\alpha =m\}$. Then $\cup_{m=1}^\infty E_m=\{\alpha \colon \alpha <\eta\}=\eta$.
So the condition that the cofinality of $\eta$ being uncountable implies that the cardinality of $E_m$  is $\eta$ for at least one $m$
   (\cite[Lemma 3.6]{Jech78}). The converse is clear.

\begin{theorem}\label{ex5} Let $G$ be a compact non-metrizable group with a local weight $\eta$ and suppose that $\eta$ has uncountable cofinality.
Then $A(G)$ has an F$\ell^1(\eta)$-base of type 3 with constants $K_1=1$ and $0<K_2\le m$ for some $m\in \N$.\end{theorem}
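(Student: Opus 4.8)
The strategy is to show that a group $G$ as in the statement enjoys the $\mathcal{Z}$-property of level $\eta$ (Definition \ref{def:z}) with a family of irreducible representations of a single fixed finite dimension, and then to read the conclusion off Lemma \ref{zcoef}(2). The combinatorial backbone is supplied by Hu's Lemma \ref{le:130508A}: since $G$ is compact (hence $\sigma$-compact) and non-metrizable with local weight $\eta$, fixing an open base $\{O_\alpha:\alpha<\eta\}$ at the identity $e$ yields a decreasing family $(N_\alpha)_{0\le\alpha\le\eta}$ of compact normal subgroups with $N_0=G$, $N_\eta=\{e\}$, $N_{\alpha+1}\subsetneq N_\alpha$, $N_{\alpha+1}\subseteq N_\alpha\cap O_\alpha$, and each $N_\alpha/N_{\alpha+1}$ metrizable and nontrivial.

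First I would build the ingredients of the $\mathcal{Z}$-property. For the cover I set $K_\alpha=\widehat{G/N_\alpha}$, regarded as the set of $\sigma\in\widehat G$ trivial on $N_\alpha$; this is an increasing chain, hence upward directed. To see it covers $\widehat G$, take $\pi\in\widehat G$: as $G$ is compact, $\pi$ is finite-dimensional and $G/\ker\pi$ is a compact Lie group, so it is metrizable and $\ker\pi$ is a $G_\delta$, say $\ker\pi=\bigcap_n V_n$ with each $V_n$ an open neighbourhood of $e$. Choosing basic neighbourhoods $O_{\beta_n}\subseteq V_n$ and putting $\gamma=\sup_n(\beta_n+1)$, the uncountable cofinality of $\eta$ gives $\gamma<\eta$, whence $N_\gamma\subseteq N_{\beta_n+1}\subseteq O_{\beta_n}\subseteq V_n$ for all $n$, so $N_\gamma\subseteq\ker\pi$ and $\pi\in K_\gamma$. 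For the representations I pick, for each $\alpha<\eta$, an irreducible $\pi_\alpha\in\widehat G$ trivial on $N_{\alpha+1}$ but nontrivial on $N_\alpha$; such $\pi_\alpha$ exists by Peter--Weyl applied to the nontrivial normal subgroup $N_\alpha/N_{\alpha+1}$ of the compact group $G/N_{\alpha+1}$.

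The heart of the argument is the disjointness required by Definition \ref{def:z}, and I would prove the sharper inclusion $K_\alpha\otimes\pi_\alpha\subseteq K_{\alpha+1}\setminus K_\alpha$. If $\sigma\in K_\alpha$ and $\tau$ is an irreducible constituent of $\sigma\otimes\pi_\alpha$, then $\sigma$ and $\pi_\alpha$ are both trivial on $N_{\alpha+1}$, so $\tau\in K_{\alpha+1}$. For nontriviality on $N_\alpha$ I use that the subspace of $N_\alpha$-fixed vectors of $\pi_\alpha$ is $G$-invariant (because $N_\alpha$ is normal), hence zero by irreducibility together with the nontriviality of $\pi_\alpha$ on $N_\alpha$; thus $\pi_\alpha|_{N_\alpha}$ has no trivial subrepresentation. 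Since $\sigma|_{N_\alpha}$ is trivial, $\tau|_{N_\alpha}$ is a subrepresentation of a multiple of $\pi_\alpha|_{N_\alpha}$ and so cannot be trivial, i.e. $\tau\notin K_\alpha$. As the $K_\alpha$ increase, for $\alpha<\beta$ we get $K_\alpha\otimes\pi_\alpha\subseteq K_{\alpha+1}\subseteq K_\beta$ while $K_\beta\otimes\pi_\beta$ avoids $K_\beta$, so the two are disjoint. This establishes the $\mathcal{Z}$-property of level $\eta$.

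Finally I would pass to a fixed dimension. Each $\pi_\alpha$ is finite-dimensional, so writing $E_m=\{\alpha<\eta:\dim\pi_\alpha=m\}$ gives $\eta=\bigcup_{m\in\N}E_m$; since $\eta$ has uncountable cofinality, some $E_m$ has cardinality $\eta$, and as a subset of the cardinal $\eta$ of full cardinality it is automatically cofinal. Restricting the cover and the representations to $E_m$ therefore still covers $\widehat G$ and retains the (symmetric) disjointness, so after relabelling we obtain $\eta$ many representations $\pi_\alpha$, all of dimension $m$, satisfying the $\mathcal{Z}$-property. Lemma \ref{zcoef}(2) then furnishes an F$\ell^1(\eta)$-base of type 3 with $K_1=1$ and $0<K_2\le m$, as claimed. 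I expect the delicate point to be the disjointness step, specifically the passage from ``$\pi_\alpha$ nontrivial on $N_\alpha$'' to ``no constituent of $\sigma\otimes\pi_\alpha$ is trivial on $N_\alpha$'', which is exactly where the vanishing of $N_\alpha$-fixed vectors forced by irreducibility and normality is needed; the cover and fixed-dimension steps are, by contrast, soft consequences of the uncountable cofinality of $\eta$.
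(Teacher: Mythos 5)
Your proof is correct and follows the same skeleton as the paper's: Hu's Lemma \ref{le:130508A} supplies the chain $(N_\alpha)$, the cover is $K_\alpha=\{\sigma\in\widehat G: N_\alpha\subseteq\ker\sigma\}$, a family $\{\pi_\alpha\}$ witnesses the $\mathcal Z$-property, the uncountable cofinality of $\eta$ extracts $\eta$ many $\pi_\alpha$ of a single dimension $m$, and Lemma \ref{zcoef}(ii) finishes. Where you differ is in the execution of three sub-steps. For the covering $\widehat G=\bigcup_\alpha K_\alpha$, the paper traps $\sigma(N_{\alpha+1})$ inside a subgroup-free neighbourhood of the identity of the unitary group, while you write $\ker\sigma$ as a countable intersection of neighbourhoods and invoke uncountable cofinality; both work, the paper's version not needing cofinality at this stage. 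For the construction of $\pi_\alpha$, the paper lifts a nontrivial irreducible representation of $N_\alpha/N_{\alpha+1}$ to $N_\alpha$ and then extends it to $G$ via \cite[Theorem 27.46]{HR}, whereas you take, by Peter--Weyl applied to $G/N_{\alpha+1}$, an irreducible representation of $G$ trivial on $N_{\alpha+1}$ but not on $N_\alpha$, avoiding the extension theorem altogether. Most significantly, for the disjointness the paper restricts a putative common constituent to $N_\alpha$ and appeals to Clifford-type information from the proof of \cite[Theorem 27.48]{HR} to force $\pi_\alpha^*=\mathrm{Id}$; you instead note that the subspace of $N_\alpha$-fixed vectors in $\hh_{\pi_\alpha}$ is $G$-invariant by normality, hence zero by irreducibility and nontriviality on $N_\alpha$, so that $(\sigma\otimes\pi_\alpha)|_{N_\alpha}$ has no nonzero fixed vectors and no constituent can lie in $K_\alpha$. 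Combined with the easy inclusion $K_\alpha\otimes\pi_\alpha\subseteq K_{\alpha+1}$, this yields $K_\alpha\otimes\pi_\alpha\subseteq K_{\alpha+1}\setminus K_\alpha$ and hence pairwise disjointness for $\alpha<\beta$. This fixed-vector argument is more elementary and self-contained than the paper's, and the remaining steps (cofinality of the set $E_m$ of indices of dimension $m$, re-indexing, and the appeal to Lemma \ref{zcoef}(ii) with $K_1=1$ and $K_2\le m$) coincide with the paper's.
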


\begin{proof}
Let $\{N_\alpha\}$ be the family of normal subgroups of $G$ given by Hu's Lemma.
For each $\alpha<\eta,$ put \[K_\alpha=\{\sigma\in \widehat{G}:N_\alpha\subseteq \ker\sigma\},\] and note that $K_\alpha\subseteq K_\beta$ if $\alpha\le\beta<\eta.$
We claim first that $\widehat G=\bigcup_{\alpha<\eta} K_\alpha$.
Let $\sigma : G\to U(\hh_\sigma)$ be any representation in $\widehat{G}.$
Let $U$ be any neighbourhood of the identity in $U(\hh_\sigma).$ We can assume that $U$ contains no nontrivial subgroups (see, e.g., \cite[Corollary 2.40]{hoffmorr}).
Then $\sigma^{-1}(U)$ is a neighbourhood of the identity in $G$ and so it contains some basic
neighbourood $O_\alpha$ of the identity in $G$. Thus there exists some $\alpha<\eta$ such that
\[N_{\alpha+1}\subseteq \sigma^{-1}(U).\]  Therefore, $\sigma(N_{\alpha+1})$ is  a subgroup of $U(\hh_\sigma)$ contained in $U.$ Our choice of $U$ ensures then that  $N_{\alpha+1}\subseteq \ker\sigma$, i.e., $\sigma\in K_{\alpha+1},$ as required.

Next we define representations $\pi_\alpha\in \widehat G$ such that \[\sigma_1\otimes\pi_\alpha\cap \sigma_2\otimes\pi_\beta=\emptyset\quad\text{for every}\quad\sigma_1\in K_\alpha,\;\sigma_2\in K_\beta,\; \beta<\alpha<\eta\] so that
$G$ has the required $\mathcal Z$-property of level $\eta.$
To do so, start for each $\alpha<\eta$  with any non-trivial irreducible representation $\widetilde\pi_\alpha$ on $N_\alpha/N_{\alpha+1}$ and extend it to an irreducible representation   $\pi_\alpha^*$ on $N_{\alpha}$, where
$\pi_\alpha^*=\widetilde\pi_\alpha\circ q_\alpha$ and $q_\alpha \colon N_\alpha \to N_\alpha/N_{\alpha+1}$ is the quotient mapping. This latter representation is then extended, as in \cite[Theorem 27.46 ]{HR}, to an irreducible  representation $\pi_\alpha$ on $G$.

 Let $\alpha>\beta$ and suppose otherwise that for some $\sigma_1\in K_\alpha$ and $\sigma_2\in K_\beta,$ there is $\rho\in \widehat G$
such that \[\rho\in (\sigma_1\otimes \pi_\alpha)\cap(\sigma_2\otimes \pi_\beta).\]
Then \[\rho_{|N_\alpha}\in ({\sigma_1}_{|N_\alpha}\otimes {\pi_\alpha}_{|N_\alpha})\cap({\sigma_2}_{|N_\alpha}\otimes {\pi_\beta}_{|N_\alpha}).\]
 Now $N_\alpha\subseteq N_{\beta+1}\subset N_\beta$ (since  $\alpha>\beta$), so  ${\sigma_1}_{|N_\alpha}=\mathrm{Id}$ and ${\sigma_2}_{|N_\alpha}=\mathrm{Id}$ by the definition of $K_\alpha$.
We also note that ${\pi_\beta}_{|N_\alpha}=\mathrm{Id}$ since  by the definition of $\pi_\beta$,
${\pi_\beta}_{|N_{\beta+1}}=\mathrm{Id}$. Thus, $\rho_{|N_\alpha}=\mathrm{Id}$, and consequently,
  \[\mathrm{Id}\in {\sigma_1}_{|N_\alpha}\otimes {\pi_\alpha}_{|N_\alpha}=\mathrm{Id}\otimes {\pi_\alpha}_{|N_\alpha}= {\pi_\alpha}_{|N_\alpha}\oplus...\oplus{\pi_\alpha}_{|N_\alpha}.\]
 Now ${\pi_\alpha}_{|N_\alpha}$ is not necessarily irreducible but it decomposes into a finite direct
 sum of irreducible ones; and the proof of Theorem 27.48 in \cite{HR} confirms that the original representation $\pi_\alpha^*$ of $N_\alpha$ appears in the decomposition. Thus $\mathrm{Id}= \pi_\alpha^*,$
 which goes against our choice of $\pi_\alpha^*$. Therefore, $\sigma_1\otimes\pi_\alpha$ and
$\sigma_2\otimes\pi_\beta$ must be disjoint for every $\sigma_1\in K_\alpha,\;\sigma_2\in K_\beta,\; \beta<\alpha<\eta.$
Therefore, $G$ has a $\mathcal Z$-property of level $\eta.$

Now we go after Condition (\ref{v}) in Definition \ref{Felll}.
Since each $\pi_\alpha$ has finite dimension, we may define
$E_m=\{\alpha \colon \dim \pi_\alpha =m\}$ so that \[\cup_{m=1}^\infty E_m=\{\alpha \colon \alpha <\eta\}=\eta
.\]
As observed before the statement of the theorem, by the condition that the cofinality of $\eta$ being uncountable, we may take for some $m\in\N,$
a subset $E_m$ of $\eta$ of cardinality  $\eta$ such that $\dim \pi_\alpha =m$ for each $\alpha\in E_m$.
Then by replacing the family of subgroups $\{N_\alpha:\alpha <\eta\}$ and its associated family of representations
$\{\pi_\alpha^*:\alpha<\eta\}$ with those that are indexed by the set $E_m$, we obtain a family of irreducible unitary representations, which
for simplicity we still denote by $\{\pi_\alpha :\alpha <\eta\}$,
such that  $\dim \pi_\alpha =m$ for all $\alpha <\eta$. Since this new family of representations still provides $G$ with a  $\mathcal Z$-property of level $\eta,$ we may proceed as in Lemma \ref{zcoef} (ii) to complete the proof. The F$\ell^1(\eta)$-base so obtained has constants $K_1=1$ and $K_2\le m.$
\end{proof}

\subsection{F$\ell^1(\eta)$-bases for infinite products of compact groups}
Our second task will be first with infinite products of compact groups. The case of countably infinite products of metrizable compact groups with
 a second countable locally compact amenable group
was considered by Lau and Losert in \cite{LaLo05}.
 We prove that  infinite products of $\eta$ many compact groups have the $\mathcal Z$-property of level $\eta$ and satisfy Condition (\ref{needed}) in Lemma \ref{zcoef} (i), and so by this lemma the Fourier algebra of such groups
	contains an F$\ell^1(\eta)$-base of type 2.
 With the F$\ell^1(\eta)$-bases, we shall   simplify and improve in Subsection \ref{topcen2} the main result on strong Arens irregularity proved in \cite{LaLo05}. In addition to this, we shall
also deduce in Subsection \ref{alst}  some rich algebraic structures in the algebras.

 We need first the following simple lemma.

\begin{lemma} \label{complemented} Let $G$ be  a locally compact group and suppose that $G=LM$ for some subgroups $L$ and $M$  of $G.$ Let $\pi:G\to U(\hh_\pi)$
be  a representation of $G$ with $\pi_{|M}=\mathrm{Id}$ (or $\pi_{|L}=\mathrm{Id})$.  Then $\pi$ is irreducible if and only if $\pi_{|L}$ (
$\pi_{|M}$, respectively) is irreducible.
\end{lemma}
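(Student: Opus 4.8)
The plan is to reduce irreducibility of $\pi$ to irreducibility of $\pi_{|L}$ by observing that the two representations have literally the same collection of operators in their images. Assume $\pi_{|M}=\mathrm{Id}$ (the other case being entirely symmetric after exchanging the roles of $L$ and $M$). First I would record the elementary computation that uses both hypotheses: since $G=LM$, every $g\in G$ can be written as $g=lm$ with $l\in L$ and $m\in M$, and because $\pi$ is a homomorphism with $\pi(m)=\mathrm{Id}$,
\[\pi(g)=\pi(lm)=\pi(l)\pi(m)=\pi(l).\]
Consequently the operator sets coincide:
\[\{\pi(g):g\in G\}=\{\pi(l):l\in L\}\subseteq U(\hh_\pi),\]
the inclusion $\{\pi(l):l\in L\}\subseteq\{\pi(g):g\in G\}$ being trivial since $L\subseteq G$.

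Next I would invoke the standard description of irreducibility in terms of invariant subspaces. A closed subspace $V\subseteq\hh_\pi$ is invariant under $\pi$ precisely when $\pi(g)V\subseteq V$ for every $g\in G$, and it is invariant under $\pi_{|L}$ precisely when $\pi(l)V\subseteq V$ for every $l\in L$. By the displayed equality of operator sets these two conditions are identical; hence a closed subspace is $\pi$-invariant if and only if it is $\pi_{|L}$-invariant. Therefore $\pi$ admits a nontrivial proper closed invariant subspace if and only if $\pi_{|L}$ does, which is exactly the assertion that $\pi$ is irreducible if and only if $\pi_{|L}$ is irreducible.

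Finally, the parenthetical statement follows by interchanging $L$ and $M$ throughout: if instead $\pi_{|L}=\mathrm{Id}$, then writing $g=lm$ gives $\pi(g)=\pi(l)\pi(m)=\pi(m)$, so that $\{\pi(g):g\in G\}=\{\pi(m):m\in M\}$, and the same invariant-subspace argument yields that $\pi$ is irreducible if and only if $\pi_{|M}$ is. There is really no serious obstacle in this argument; the only point that must not be overlooked is that it uses the homomorphism property of $\pi$ together with $G=LM$ to force $\pi(g)=\pi(l)$, so that the triviality of $\pi$ on the second factor is genuinely what makes the image depend only on $L$. One could equally phrase the conclusion through commutants via Schur's lemma, since the commutant of $\pi(G)$ coincides with the commutant of $\pi(L)$.
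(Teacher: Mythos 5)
Your proof is correct and follows essentially the same route as the paper: both use the decomposition $g=lm$ together with $\pi(m)=\mathrm{Id}$ to show that $\pi$ and $\pi_{|L}$ have the same invariant closed subspaces (you phrase this as equality of the operator sets $\{\pi(g):g\in G\}=\{\pi(l):l\in L\}$, which is the same observation). Nothing is missing.
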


\begin{proof} Let $X$ be  a closed subspace of $\hh_\pi.$ If $X$ is invariant under $\pi,$ then it is clearly so under  $\pi_{|L}$
and $\pi_{|M}$. For the converse, suppose that $\pi_{|M}=\mathrm{Id}$ and $X$ is invariant under $\pi_{|L}$.
Then, writing each $s\in G$ as $s=lm$ for some $l\in L$ and $m\in M,$ we see that  \[\pi(s)(X)=\pi(lm)(X)=\pi(l)(X)\quad\text{for every}\quad s\in G,\] which shows that
$X$ is invariant under $\pi$. The proof is the same in the other situation.
\end{proof}

\begin{theorem}\label{prods}
Let $\{G_\alpha\}_{\alpha<\eta}$  be an infinite family of non-trivial compact groups and let $G=\prod\limits_{\alpha<\eta}G_\alpha$. Then
$A(G)$
 has an F$\ell^1(\eta)$-base of type 2 with constants $K_1=K_2=1.$
\end{theorem}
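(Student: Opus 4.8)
The plan is to reduce everything to Lemma \ref{zcoef}(i): once I exhibit a $\mathcal{Z}$-property of level $\eta$ for $G=\prod_{\alpha<\eta}G_\alpha$ and check that the associated matrix coefficients satisfy Condition (\ref{needed}) with constant $K_2=1$, the lemma delivers an F$\ell^1(\eta)$-base of type 2 with $K_1=K_2=1$. Throughout I would use the standard description of $\widehat{G}$ for a product of compact groups: every $\sigma\in\widehat{G}$ factors through a finite subproduct, so that $\supp(\sigma):=\{\gamma<\eta:\sigma|_{G_\gamma}\neq\mathrm{Id}\}$ is finite and $\sigma$ is the pull-back of an external tensor product $\bigotimes_{\gamma\in\supp(\sigma)}\sigma_\gamma$ of nontrivial irreducibles $\sigma_\gamma\in\widehat{G_\gamma}$.

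First I would build the data of Definition \ref{def:z}. For each $\alpha<\eta$, since $G_\alpha$ is a non-trivial compact group it carries, by Peter--Weyl, a nontrivial irreducible representation; pulling it back along the projection $G\to G_\alpha$ gives $\pi_\alpha$, and writing $G=G_\alpha\cdot H$ with $H=\prod_{\gamma\neq\alpha}G_\gamma$ and noting $\pi_\alpha|_H=\mathrm{Id}$, Lemma \ref{complemented} shows $\pi_\alpha$ is irreducible on $G$, with $\supp(\pi_\alpha)=\{\alpha\}$. Then I would set
\[
K_\alpha=\{\sigma\in\widehat{G}:\supp(\sigma)\subseteq\{\gamma:\gamma<\alpha\}\}\qquad(\alpha<\eta).
\]
Because supports are finite and $\eta$ is an infinite cardinal, each $\sigma$ lies in $K_\alpha$ as soon as $\alpha$ exceeds $\max\supp(\sigma)$, so $\{K_\alpha\}_{\alpha<\eta}$ covers $\widehat{G}$; being a chain, it is upward directed. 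For the disjointness clause, fix $\alpha<\beta$ and look at coordinate $\beta$: every $\sigma_1\in K_\alpha$ is trivial on $G_\beta$ (as $\beta\geq\alpha$), and so is $\pi_\alpha$ (as $\alpha\neq\beta$), whence every irreducible component of $\sigma_1\otimes\pi_\alpha$ is trivial on $G_\beta$; on the other hand every $\sigma_2\in K_\beta$ is trivial on $G_\beta$ while $\pi_\beta$ is not, so every component of $\sigma_2\otimes\pi_\beta$ restricts on $G_\beta$ to a multiple of $\pi_\beta$ and is nontrivial there. The two families share no component, i.e. $K_\alpha\otimes\pi_\alpha\cap K_\beta\otimes\pi_\beta=\emptyset$, so $G$ has the $\mathcal{Z}$-property of level $\eta$.

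Next I would verify Condition (\ref{needed}) with $K_2=1$. Fix $\alpha$ and take $\varphi\in\langle A_\alpha\rangle$, where $A_\alpha=\{\sigma^{uv}:\sigma\in K_\alpha\}$. Every $\sigma\in K_\alpha$ is trivial on $G_\alpha$, so its coefficients are constant on $G_\alpha$-cosets; thus $\varphi$ lies in the isometric copy of $A(G/G_\alpha)=A(H)$ inside $A(G)$. Choosing a diagonal coefficient $\pi_\alpha^{kk}$ of $\pi_\alpha$ (a function of the $\alpha$-coordinate only, positive definite with value $1$ at $e$, hence $\|\pi_\alpha^{kk}\|_{A(G_\alpha)}=1$), the product $\varphi\,\pi_\alpha^{kk}$ is exactly the elementary tensor $\pi_\alpha^{kk}\otimes\varphi$ under the isometric identification $A(G)=A(G_\alpha)\,\widehat{\otimes}\,A(H)$, so that
\[
\|\varphi\,\pi_\alpha^{kk}\|_{A(G)}=\|\pi_\alpha^{kk}\|_{A(G_\alpha)}\,\|\varphi\|_{A(H)}=\|\varphi\|_{A(G)}.
\]
In particular $\|\varphi\|_{A(G)}\leq\|\varphi\,\pi_\alpha^{kk}\|_{A(G)}$, which is Condition (\ref{needed}) with $K_2=1$, and Lemma \ref{zcoef}(i) completes the argument.

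The step deserving most care will be the last displayed identity, since the \emph{sharp} constant $K_2=1$ is what is claimed: it rests on the isometric factorization $A(G_\alpha\times H)\cong A(G_\alpha)\,\widehat{\otimes}\,A(H)$ together with the isometric embedding $A(G/G_\alpha)\hookrightarrow A(G)$. If one prefers to avoid quoting the tensor-product theorem, the same equality can be obtained by a direct term-by-term computation in the Peter--Weyl description $A(G)=\bigoplus^{\ell^1}_{\rho\in\widehat{G}}\bigl(d_\rho\,S^1(\hh_\rho)\bigr)$: the injection $\sigma\mapsto\sigma\otimes\pi_\alpha$ carries the Fourier support of $\varphi$ to that of $\varphi\,\pi_\alpha^{kk}$, and since $d_{\sigma\otimes\pi_\alpha}\,\|A_\sigma\otimes E_{kk}\|_1/d_{\pi_\alpha}=d_\sigma\,\|A_\sigma\|_1$, the two $A(G)$-norms agree coefficient by coefficient (this is the same orthogonality input packaged by Lemma \ref{arsac}).
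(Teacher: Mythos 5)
Your proposal is correct and follows essentially the same route as the paper's proof: the same cover $K_\alpha$ consisting of representations trivial on the tail coordinates, the same pulled-back representations $\pi_\alpha=\pi_\alpha^*\circ pr_\alpha$, a restriction argument for the disjointness clause of the $\mathcal Z$-property (you restrict to the single coordinate $G_\beta$, the paper to the tail subgroup $L_\beta$ via Lemma \ref{complemented} --- same content), and a product-structure computation for Condition (\ref{needed}), all fed into Lemma \ref{zcoef}(i). One caution on the only delicate step: the isometric identification $A(G_\alpha\times H)\cong A(G_\alpha)\,\widehat{\otimes}\,A(H)$ for the Banach-space projective tensor product is false in general (this is precisely Losert's result in \cite{losert84}), so you should not invoke it; what you actually need is only the elementary-tensor identity $\|u\otimes v\|_{A(G_\alpha\times H)}=\|u\|_{A(G_\alpha)}\|v\|_{A(H)}$, which is exactly what the paper's slicing against operators of the form $P\otimes I$ establishes, and what your backup Peter--Weyl computation also gives.
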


\begin{proof} First we  prove first that
 $\widehat G$ has a family of representations
$\{\pi_\alpha\}_{\alpha<\eta}$ with the $\mathcal Z$-property.
 For each $\alpha<\eta,$ let $e_\alpha$ be the identity in $G_\alpha$.
Let $L_\alpha$ and $M_\alpha$
be the subgroups given by  \[L_\alpha=\{e_\gamma\}_{\gamma<\alpha}\times\prod\limits_{\gamma\ge \alpha}G_\gamma\quad\text{and}\quad M_\alpha=
\prod\limits_{\gamma< \alpha}G_\gamma\times \{e_\gamma\}_{\gamma\ge\alpha}.\]
Consider, for each $\alpha<\eta,$ the set \[K_\alpha=\{\sigma\in \widehat{G}:L_\alpha
\subseteq \ker\sigma\}.\] Note that $K_\alpha\subseteq K_\beta$ if $\alpha\le\beta<\eta.$
We claim first that $\widehat G=\bigcup_{\alpha<\eta} K_\alpha$.
Let \[\sigma:\prod\limits_{\alpha<\eta}G_\alpha\to U(\hh_\sigma)\] be any representation in $\widehat{G}.$
As in Theorem \ref{ex5},  let $U$ be any neighbourhood of the identity in $U(\hh_\sigma)$ that contains no nontrivial subgroups.
Then $\sigma^{-1}(U)$ is a neighbourhood of the identity in $\prod\limits_{\alpha<\eta}G_\alpha$ and so it contains some basic
neighbourhood of the identity in $\prod\limits_{\alpha<\eta}G_\alpha.$ Thus there exists some $\alpha<\eta$ such that
\[\{e_\gamma\}_{\gamma<\alpha}\times\prod\limits_{\gamma\ge \alpha}G_\gamma\subseteq \sigma^{-1}(U),\] that is, $L_\alpha\subseteq \sigma^{-1}(U).$ Therefore, $\sigma(L_\alpha)$ is  a subgroup of $U(\hh_\sigma)$ contained in $U.$
Since $U$ contains no nontrivial subgroups,
our claim that $L_\alpha\subseteq \ker\sigma$ follows.

Denote now  the projections of $G$ onto $G_\alpha$ by $pr_\alpha$, and take for each $\alpha<\eta$,
 a non-trivial representation $\pi^*_\alpha\in \widehat{G_\alpha}$.

Then, define for each $\alpha<\eta,$ a representation
$\pi_\alpha$ on $G$ simply by $\pi^*_\alpha\circ pr_\alpha,$
and consider the family $\{\pi_\alpha:\alpha<\eta\}$. To check tha this family of cardinality $\eta$
provides $G$ with the $\mathcal Z$-property,
suppose otherwise that for some $\alpha< \beta<\eta$, $\sigma_1\in K_\alpha$ and $\sigma_2\in K_\beta,$
we have
\[ \sigma_1 \otimes \pi_\alpha\cap\sigma_2 \otimes \pi_\beta\ne\emptyset,\]
and let $\rho\in \widehat{G}$ such that
\[\rho\leq \sigma_1\otimes \pi_{\alpha}\quad\text{and}\quad\rho\leq  \sigma_2\otimes\pi_{\beta}.\]
 Then, since $\alpha<\beta,$ we see that $L_{\beta}\subseteq L_\alpha$, and so $L_\beta\subseteq \ker\sigma_1\cap\ker\sigma_2.$
Thus,
  ${\sigma_1}_{|L_{\beta}}$ and  ${\sigma_2}_{|L_{\beta}}$ are identities. We also see by the definitions of $L_\beta$ and $\pi_\alpha$
	that ${\pi_{\alpha}}_{|L_{\beta}}$ is the identity.
	Thus,
\begin{equation}\label{Id}\begin{split}
\restr{\rho}{L_{\beta}}\leq \restr{\left(\sigma_1\otimes\pi_{\alpha}\right)}{L_{\beta}}&=\mathrm{Id}\oplus\cdots \oplus \mathrm{Id}\\
\restr{\rho}{L_{\beta}}\leq \restr{\left(\sigma_2\otimes\pi_{\beta}\right)}{L_{\beta}}&=\restr{\pi_{\beta}}{L_{\beta}}\oplus\ldots\oplus \restr{\pi_{\beta}}{L_{\beta}}.\end{split}\end{equation}
 Now it is clear, that for each $\beta<\eta,$ we have $G=L_{\beta}M_{\beta}$ and ${\pi_{\beta}}_{|M_{\beta}}=\mathrm{Id}.$ Therefore,   by Lemma \ref{complemented},  each $\restr{\pi_{\beta}}{L_{\beta}}$ is an irreducible representation of $L_{\beta}$.
It follows from the equalities in (\ref{Id}) that $\restr{\pi_{\beta}}{L_{\beta}}$ must be equal to the trivial one-dimensional representation $1$. But this implies in turn that $\pi_{\beta}=1$ which goes against our choice of $\pi_\beta$.
 Therefore, \[\sigma_1 \otimes \pi_\alpha\cap\sigma_2 \otimes \pi_\beta=\emptyset\quad\text{for every}\quad
\alpha< \beta<\eta,\; \sigma_1\in K_\alpha,\; \sigma_2\in K_\beta,\]
as required for the $\mathcal Z$-property.

Next we check that Condition (\ref{needed}) in Lemma \ref{zcoef} holds.
 Recall that  \[A_\alpha=\{\sigma^{uv}: \sigma\in K_\alpha, u,v\in \hh_\sigma\}.\]
For each $\alpha<\eta,$ let then $\varphi\in \langle A_\alpha\rangle$ and $\pi_\alpha^{k}$ be any diagonal coefficient of the representation $\pi_\alpha$
($k$  may change with $\alpha$).

Then $\varphi=\sum_{n=1}^p z_n\sigma_n^{u_nv_n},$ where  for $n=1,...,p$, $z_n$ are scalars and  $\sigma_n^{u_nv_n}$ are coefficients of the representations  $\sigma_n\in K_\alpha$ so that $L_\alpha\subseteq \ker\sigma_n$.

Since $G=L_\alpha M_\alpha,$ writing each $s\in G$ as $s=hm$ with $h\in L_\alpha$ and $m\in M_\alpha,$
we see that \[\varphi(s)\pi_\alpha^{k}(s)=\varphi(hm)\pi_\alpha^{k}(hm)=\varphi(m)\pi_\alpha^{k}(h)
=\varphi\otimes\pi_\alpha^{k}(m,h).\]
So if $P$ is any operator in $VN(M_\alpha)$ of norm $1$ and $I$ is the identity operator in $VN(L_\alpha)$, then \[\|\varphi\pi_\alpha^{k}\|_{A(G)}=\|\varphi\otimes \pi_\alpha^{k}\|_{A(L_\alpha\times M_\alpha)}
 \ge |\langle \varphi\otimes \pi_\alpha^{k}, P\otimes I\rangle|=|\langle \varphi, P\rangle||\langle  \pi_\alpha^{k}, I\rangle|
=|\langle \varphi, P\rangle|.\]
Thus, $\|\varphi\pi_\alpha^{k}\|\ge \|\varphi\|,$ and so by Lemma \ref{zcoef}, $A(G)$ has an F$\ell^1(\eta)$-base of type 2 with constants $K_1=K_2=1$.
\end{proof}

\begin{remark} \label{abitmore} We can actually prove a bit more in Theorem \ref{prods}:
 For any $\alpha<\eta$ and $\sigma_1\neq \sigma_2\in K_\alpha$, the representations $\sigma_1\otimes\pi_\alpha$ and $\sigma_2\otimes\pi_\alpha$  are also disjoint.
To prove this, suppose that  for some $\alpha<\eta,$ and some $\sigma_1,$
$\sigma_2\in K_\alpha$,
the representations $\sigma_1\otimes\pi_\alpha$ and $\sigma_2\otimes\pi_\alpha$ contains a common irreducible representation $\rho.$
Then since ${\pi_\alpha}_{|M_\alpha}=\mathrm{Id},$  we obtain \[\begin{split}\rho_{|M_\alpha}&\le (\sigma_1\otimes\pi_\alpha)_{|M_\alpha}\cap (\sigma_2\otimes\pi_\alpha)_{|M_\alpha}
\\&=({\sigma_1}_{|M_\alpha}\oplus{\sigma_1}_{|M_\alpha}\oplus...\oplus{\sigma_1}_{|M_\alpha})\cap({\sigma_2}_{|M_\alpha}\oplus{\sigma_2}_{|M_\alpha}\oplus...\oplus{\sigma_2}_{|M_\alpha}).\end{split}\]
Therefore $({\sigma_1}_{|M_\alpha}\oplus{\sigma_1}_{|M_\alpha}\oplus...\oplus{\sigma_1}_{|M_\alpha})\cap({\sigma_2}_{|M_\alpha}\oplus{\sigma_2}_{|M_\alpha}\oplus...\oplus{\sigma_2}_{|M_\alpha})\ne\emptyset.$
Now since $G=L_\alpha M_\alpha $ and  ${\sigma_1}_{|L_\alpha}$ and  ${\sigma_2}_{|L_\alpha}$ are identities,
 Lemma \ref{complemented} implies that ${\sigma_1}_{|M_\alpha}$ and ${\sigma_2}_{|M_\alpha}$ are irreducible, and so
${\sigma_1}_{|M_\alpha}$ and ${\sigma_2}_{|M_\alpha}$ must be equal. Again, since $G=L_\alpha M_\alpha,$ we must have $\sigma_1=\sigma_2$, as required.
\end{remark}

\subsection{F$\ell^1(\eta)$-bases for compact connected groups}
The results on infinite products of compact groups seen in the previous subsection can be transferred to compact connected groups $G$ having  a certain infinite "dimension". This will be our aim in the  present subsection. The arguments are based on some structure theorems,  where $G$ is sandwiched between two groups having the form of the groups studied in the previous subsection  (see \cite[Theorems 9.25 2.26]{hoffmorr}).

\subsubsection{\bf The $\mathcal{Z}^\prime$-property.}
We shall develop in this subsection the arguments used in the proof of Theorem \ref{prods}
to include other compact groups having the $\mathcal Z$-property.
For this we introduce another property of a family of representations of $G$ called the $\mathcal{Z}^\prime$-property of level $\eta$
for some infinite cardinal $\eta$, as well as a new definition of a d-rank of a group.
The $\mathcal{Z}^\prime$-property is easier to handle than the $\mathcal Z$-property, while the d-rank of a group
 is a kind of a dimension of a group  based on which the required representations with the $\mathcal{Z}^\prime$-property  are constructed.
We shall see then that all compact connected groups with infinite d-rank have the $\mathcal Z^\prime$-property.

Since  $\mathcal Z^\prime$-property of level $\eta$ yields  $\mathcal Z$-property of level $\eta$, and this type of groups
satisfies Condition (\ref{needed}) in Lemma \ref{zcoef} (i), their Fourier algebras will
	contain an F$\ell^1(\eta)$-base of type 2.
	
	As with the previous cases, this will enable us to obtain in the last subsection \ref{alst} the strong Arens irregularity not only of $A(G)$
	 when $G$ belongs to this class of groups, but 	of $A(G\times H)$ where $H$ is any locally compact amenable group. The centre of $\uc_2(G\times H)^*$ will be seen to be
the Fourier-Stieltjes $B(G\times H).$
We shall also deduce 	some rich algebraic structures in the algebras related to these groups.

  \begin{definition}\label{def:zp} Let $G$ be a locally compact group with its dual enumerated as $ \widehat{G}=\left\{\sigma_\alpha\colon \alpha<\eta\right\}$.  We say
 that  $G$ has the \emph{$\mathcal{Z'}$-property} of level $\eta$ when $\widehat{G}$ contains a subset     $\{\pi_\alpha\colon \alpha<\eta\}$ such that if
\[L_\alpha=\bigcap_{\beta<\alpha} (\ker \sigma_\beta\cap \ker \pi_\beta)\quad\text{for every}\quad \alpha< \eta,\] then      \[G=L_\alpha \ker \pi_\alpha\quad\text{ for every}\quad \alpha< \eta.\]
        \end{definition}

\begin{notation}
For a locally compact group $G$ with a subgroup $L$,
let
\begin{enumerate}
\item $A_L(G)=\{\varphi\in A(G): \varphi(nx)=\varphi(x)\quad\text{for every}\quad x\in G, n\in L\}.$
\item $R_L: A(G)\to A(L)$ be the restriction map.
\item $Q_L: A(G/L)\to A_L(G)$ be given by $Q_L(u)=u\circ q_L$, when $L$ is normal and  $q_L:G\to G/L$ is the quotient homomorphism.
\end{enumerate}
\end{notation}

\begin{lemma}\label{LM}
Let $G$ be a compact group such that $G=LM$, where $L$ and $M$ are normal subgroups of $G$
with $L$ compact and $M$ closed. Then, for every $\varphi\in A_L(G),$ we have
\[\|R_M(\varphi)\|_{A(M)}=\|\varphi\|_{A(G)}.\]
\end{lemma}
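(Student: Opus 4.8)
The plan is to compute both norms by passing to the quotient group $G/L$ and then identifying $G/L$ with $M/(M\cap L)$.

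First I would record the key identification for quotients. Since $L$ is a compact normal subgroup and $\varphi\in A_L(G)$, the function $\varphi$ is constant on the cosets $Lx=xL$, hence factors through the quotient homomorphism $q_L\colon G\to G/L$. By the standard isometric identification of $A(G/L)$ with the $L$-invariant part $A_L(G)$ of $A(G)$ (see \cite{Eymard64} or \cite[Section 2.3]{KaLa04}), the map $Q_L\colon A(G/L)\to A_L(G)$, $u\mapsto u\circ q_L$, is an isometric isomorphism. Thus I may write $\varphi=Q_L(u)=u\circ q_L$ for a unique $u\in A(G/L)$, and
\[\|\varphi\|_{A(G)}=\|u\|_{A(G/L)}.\]

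Next I would identify $G/L$ with $M/(M\cap L)$ as topological groups. Because $L$ and $M$ are both normal, $G=LM=ML$, so the continuous homomorphism $\psi\colon M\to G/L$, $\psi(m)=mL$, is surjective (any $gL=mlL=mL$ with $m\in M,\,l\in L$) with kernel exactly $M\cap L$. It therefore induces a continuous bijective homomorphism $\bar\psi\colon M/(M\cap L)\to G/L$; since $M$ is a closed, hence compact, subgroup of the compact group $G$, the domain is compact and the target Hausdorff, so $\bar\psi$ is a topological isomorphism. Composition with the topological isomorphism $\bar\psi$ gives an isometric isomorphism $A(G/L)\to A\!\left(M/(M\cap L)\right)$, $u\mapsto u':=u\circ\bar\psi$, whence $\|u'\|_{A(M/(M\cap L))}=\|u\|_{A(G/L)}$.

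Finally I would trace the restriction map through these identifications. For $m\in M$ one has
\[R_M(\varphi)(m)=\varphi(m)=u(mL)=u\bigl(\bar\psi(m(M\cap L))\bigr)=u'\bigl(m(M\cap L)\bigr),\]
so that $R_M(\varphi)=u'\circ q_{M\cap L}=Q_{M\cap L}(u')$, manifestly an element of $A_{M\cap L}(M)\subseteq A(M)$. Applying the isometry of $Q_{M\cap L}$ and chaining the equalities above yields
\[\|R_M(\varphi)\|_{A(M)}=\|u'\|_{A(M/(M\cap L))}=\|u\|_{A(G/L)}=\|\varphi\|_{A(G)},\]
as required. The substantive ingredients are the \emph{isometric} nature of the quotient embedding $Q_N$ for a closed normal subgroup $N$ and the fact that $\bar\psi$ is a genuine topological isomorphism (here compactness of $M$ is essential, since a continuous bijection need not otherwise be open); once these are in hand, the verification that $R_M(\varphi)$ is $(M\cap L)$-invariant and equals $Q_{M\cap L}(u')$ is routine. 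The main point to be careful about is precisely that both $Q_L$ and $Q_{M\cap L}$ preserve norms exactly, so that no constants are lost along the way.
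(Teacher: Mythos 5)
Your proof is correct and follows essentially the same route as the paper: the paper's argument is exactly the commuting square built from the isometric isomorphisms $Q_L\colon A(G/L)\to A_L(G)$ and $Q_{L\cap M}\colon A(M/(L\cap M))\to A_{L\cap M}(M)$ together with the isometry induced by the topological isomorphism $LM/L\cong M/(L\cap M)$, which is what you have written out in detail. Your explicit verification that $R_M(\varphi)=Q_{M\cap L}(u')$ and the remark on where compactness is used are just an unpacking of the paper's diagram chase.
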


\begin{proof} Consider the following diagram
\[\begin{CD}
A(G/L)@>Q_L >>A_L(G)\\
@VVjV@VVR_MV\\A(M/(L\cap M))@>Q_{L\cap M}>>A_{L\cap M}(M).
\end{CD}\]
Then $Q_L$ and $Q_{L\cap M}$ are isometric isomorphisms (see \cite[Theorem 2.4.2]{KaLa04},
while the map $j$ is the isometric isomorphism induced by the topological isomorphism
$LM/L\to M/(L\cap M).$
Since the diagram commutes, $R_M$ has to be an isometric isomorphism as well.
\end{proof}

\begin{lemma}\label{BL} Let $G$ be a compact group with the $\mathcal{ Z}^\prime$-property
of level $\eta$. Then $A(G)$ contains an F$\ell^1(\eta)$-base of type 2 with constants $K_1=K_2=1.$
\end{lemma}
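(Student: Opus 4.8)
The plan is to follow the blueprint of Lemma \ref{zcoef}(i): produce the cover $\{K_\alpha\}$ of $\widehat G$ together with the distinguished representations $\{\pi_\alpha\}$, verify that $G$ enjoys the $\mathcal Z$-property of level $\eta$, and then check the crucial norm-inequality \eqref{needed} so that Lemma \ref{zcoef}(i) hands us an F$\ell^1(\eta)$-base of type 2. First I would set $K_\alpha=\{\sigma\in\widehat G:L_\alpha\subseteq\ker\sigma\}$, exactly as in the proofs of Theorems \ref{ex5} and \ref{prods}. Since $L_\alpha=\bigcap_{\beta<\alpha}(\ker\sigma_\beta\cap\ker\pi_\beta)$ is decreasing, the family $\{K_\alpha\}$ is increasing; and because $\widehat G=\{\sigma_\alpha:\alpha<\eta\}$ is enumerated with each $\sigma_\alpha$ killed by $L_{\alpha+1}$ (as $L_{\alpha+1}\subseteq\ker\sigma_\alpha$ by definition), we get $\widehat G=\bigcup_{\alpha<\eta}K_\alpha$ with no recourse to the topological argument of the previous theorems.

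Next I would establish the $\mathcal Z$-property, i.e.\ the disjointness $\sigma_1\otimes\pi_\alpha\cap\sigma_2\otimes\pi_\beta=\emptyset$ for $\sigma_1\in K_\alpha$, $\sigma_2\in K_\beta$, $\beta<\alpha<\eta$. The argument mirrors Theorem \ref{prods}: if some irreducible $\rho$ were common to both tensor products, restrict everything to $L_\alpha$. Here the point is that $L_\alpha\subseteq\ker\sigma_1$ by definition of $K_\alpha$, that $L_\alpha\subseteq L_{\beta+1}\subseteq\ker\sigma_2\cap\ker\pi_\beta$ since $\alpha>\beta$, so $\restr{\sigma_1}{L_\alpha}$, $\restr{\sigma_2}{L_\alpha}$ and $\restr{\pi_\beta}{L_\alpha}$ are all trivial. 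This forces $\restr{\rho}{L_\alpha}=\mathrm{Id}$ and hence $\mathrm{Id}\le\restr{\pi_\alpha}{L_\alpha}$; using $G=L_\alpha\ker\pi_\alpha$ together with Lemma \ref{complemented} (applied with the factorization $G=L_\alpha\ker\pi_\alpha$ and $\restr{\pi_\alpha}{\ker\pi_\alpha}=\mathrm{Id}$) shows $\restr{\pi_\alpha}{L_\alpha}$ is irreducible, so it would have to equal the trivial representation, whence $\pi_\alpha=\mathrm{Id}$, contradicting the choice of a non-trivial $\pi_\alpha$. This gives the $\mathcal Z$-property of level $\eta$.

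Finally I would verify \eqref{needed}. Fix $\alpha<\eta$, take $\varphi\in\langle A_\alpha\rangle$ and a diagonal coefficient $\pi_\alpha^{k}$. Writing $M_\alpha=\ker\pi_\alpha$ so that $G=L_\alpha M_\alpha$ with $\restr{\pi_\alpha}{M_\alpha}=\mathrm{Id}$, and noting $\restr{\sigma}{L_\alpha}=\mathrm{Id}$ for every $\sigma\in K_\alpha$ so that each coefficient of such $\sigma$ lies in $A_{L_\alpha}(G)$, I would factor $s=hm$ with $h\in L_\alpha$, $m\in M_\alpha$ and obtain $\varphi(s)\pi_\alpha^{k}(s)=\varphi(m)\pi_\alpha^{k}(h)$, exactly as in Theorem \ref{prods}. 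Then Lemma \ref{LM} controls the restriction norm: pairing $\varphi\pi_\alpha^{k}$, viewed on $L_\alpha\times M_\alpha$, against $P\otimes I$ with $P\in VN(M_\alpha)$ of norm one and $I$ the identity in $VN(L_\alpha)$ yields $\|\varphi\pi_\alpha^{k}\|_{A(G)}\ge\|\varphi\|_{A(M_\alpha)}$, and Lemma \ref{LM} identifies $\|\varphi\|_{A(M_\alpha)}$ with $\|\varphi\|_{A(G)}$ since $\varphi\in A_{L_\alpha}(G)$. This delivers \eqref{needed} with $K_2=1$, and Lemma \ref{zcoef}(i) then furnishes the F$\ell^1(\eta)$-base of type 2 with $K_1=K_2=1$.

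I expect the main obstacle to be the clean application of Lemma \ref{LM}: one must check that $M_\alpha=\ker\pi_\alpha$ and $L_\alpha$ are \emph{normal} with $L_\alpha$ compact and $M_\alpha$ closed, and that $\varphi$ genuinely lies in $A_{L_\alpha}(G)$ so that the isometry $\|R_{M_\alpha}(\varphi)\|_{A(M_\alpha)}=\|\varphi\|_{A(G)}$ applies verbatim. Normality of $L_\alpha$ is automatic as an intersection of kernels, and the compactness of $G$ gives the topological hypotheses, but the bookkeeping matching the tensor-coefficient identity to the pairing on $A(L_\alpha\times M_\alpha)$ is where the argument must be executed with care.
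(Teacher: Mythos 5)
Your construction of the cover $\{K_\alpha\}$, your verification that $\widehat G=\bigcup_{\alpha<\eta}K_\alpha$, and your proof of the $\mathcal Z$-property by restricting a putative common subrepresentation $\rho$ to $L_\alpha$ and invoking Lemma \ref{complemented} with the factorization $G=L_\alpha\ker\pi_\alpha$ all coincide with the paper's argument (up to a relabelling of which index is the larger one). The identification of $\langle A_\alpha\rangle$ with a subspace of $A_{L_\alpha}(G)$ and the appeal to Lemma \ref{LM} for the isometry $\|\varphi\|_{A(G)}=\|R_{M_\alpha}(\varphi)\|_{A(M_\alpha)}$ are likewise the paper's steps.

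There is, however, a genuine gap in your verification of inequality \eqref{needed}, precisely at the point you yourself flag as delicate. You propose to view $\varphi\pi_\alpha^{k}$ on $L_\alpha\times M_\alpha$ via the factorization $s=hm$ and pair it against $P\otimes I\in VN(L_\alpha)\bar\otimes VN(M_\alpha)$ ``exactly as in Theorem \ref{prods}.'' In Theorem \ref{prods} this is legitimate because there $G$ literally \emph{is} the direct product $L_\alpha\times M_\alpha$, so $A(G)=A(L_\alpha\times M_\alpha)$ and $P\otimes I$ is an element of $VN(G)$ of norm one. In Lemma \ref{BL} you only have $G=L_\alpha M_\alpha$ for two normal subgroups whose intersection need not be trivial: the factorization $s=hm$ is not unique, and the map $(h,m)\mapsto hm$ from $L_\alpha\times M_\alpha$ to $G$ is not a group homomorphism unless $L_\alpha$ and $M_\alpha$ commute elementwise. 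Consequently the pointwise identity $\varphi(s)\pi_\alpha^{k}(s)=\varphi(m)\pi_\alpha^{k}(h)$ gives you no norm-controlled embedding of $\varphi\pi_\alpha^{k}$ into $A(L_\alpha\times M_\alpha)$, and the pairing against $P\otimes I$ is not justified. The paper's proof avoids the tensor product altogether: it chooses $S_\epsilon\in VN(M_\alpha)$ with $\|S_\epsilon\|\le 1$ nearly norming $R_{M_\alpha}(\varphi)$, observes that $R_{M_\alpha}(\varphi\pi_\alpha^{k})=R_{M_\alpha}(\varphi)$ because the diagonal coefficient $\pi_\alpha^{k}$ is identically $1$ on $M_\alpha=\ker\pi_\alpha$, and then uses that the adjoint of the restriction map satisfies $\|R_{M_\alpha}^*(S_\epsilon)\|\le 1$, so that
\[\|\varphi\|_{A(G)}\le |\langle S_\epsilon, R_{M_\alpha}(\varphi\pi_\alpha^{k})\rangle|+\epsilon = |\langle R_{M_\alpha}^*(S_\epsilon),\varphi\pi_\alpha^{k}\rangle|+\epsilon\le \|\varphi\pi_\alpha^{k}\|_{A(G)}+\epsilon.\]
This is the missing mechanism you need; replacing your $P\otimes I$ pairing by the contraction $R_{M_\alpha}^*$ closes the gap and the rest of your argument stands.
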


\begin{proof}
Let
 $\widehat{G}=\left\{\sigma_\alpha\colon \alpha<\eta\right\}$ with $\sigma_0=1$.
 Let $\left\{\pi_\alpha\colon \alpha<\eta\right\}$ be a family of irreducible representations having  the $\mathcal{ Z}^\prime$-property for this enumeration of $\widehat{G}$,
and let $\{L_\alpha:\alpha<\eta\}$ be, as in Definition \ref{def:zp}, the family of closed subgroups of $G$ directed downward,
\[L_\alpha=\bigcap_{\beta<\alpha} (\ker \sigma_\beta\cap \ker \pi_\beta)\quad\text{for every}\quad \alpha< \eta.\]

 We see first that the family $\{\pi_\alpha\colon \alpha<\eta\}$ has the $\mathcal{Z}$- property.

To begin with, define for each $\alpha<\eta$,
 \[K_\alpha=\{\sigma \in \widehat{G}\colon L_\alpha\subseteq \ker \sigma\}.\]

 Since $\sigma_\alpha \in K_{\alpha^+}$ for every $\alpha<\eta$, we see that $\widehat{G}=\bigcup_{\alpha<\eta}K_\alpha$
so that $\{ K_\alpha\}_{\alpha<\eta}$ is an         upward directed  cover of  $\widehat{G}$.

Next, let $\alpha_1<\alpha_2<\eta$  and   $\sigma_i\in K_{\alpha_i}$, $i=1,2$, be given.
Suppose that for some $\rho\in \widehat{G}$, \[\rho\leq \sigma_1\otimes\pi_{\alpha_1}\cap \sigma_2\otimes\pi_{\alpha_2}.\]

 Then, since $L_{\alpha_2}\subseteq \ker \sigma_1\cap\ker \sigma_2$ and $\pi_{\alpha_1}(L_{\alpha_2})=\mathrm{Id}$ (as $L_{\alpha_2}\subseteq \ker \pi_{\alpha_1}$ by definition), we find
\begin{align*}
\restr{\rho}{L_{\alpha_2}}\leq \restr{\left(\sigma_1\otimes\pi_{\alpha_1}\right)}{L_{\alpha_2}}&=\mathrm{Id}\oplus\cdots \oplus \mathrm{Id}\\
\restr{\rho}{L_{\alpha_2}}\leq \restr{\left(\sigma_2\otimes\pi_{\alpha_2}\right)}{L_{\alpha_2}}&=\restr{\pi_{\alpha_2}}{L_{\alpha_2}}\oplus\ldots\oplus \restr{\pi_{\alpha_2}}{L_{\alpha_2}} .\end{align*}

  Now as before,   Lemma \ref{complemented} implies that $\restr{\pi_{\alpha_2}}{L_{\alpha_2}}$ is an irreducible representation of $L_{\alpha_2}$ since $G=L_{\alpha_2}\ker\pi_{\alpha_2}$.  We deduce from the equalities above that $\restr{\pi_{\alpha_2}}{L_{\alpha_2}}=1$, the trivial one-dimensional representation. But this implies that $\pi_{\alpha_2}=1$ which goes against our construction. We thus deduce that Definition \ref{def:z} holds, i.e., $G$
	has the $\mathcal Z$-property of level $\eta.$

	Next we check that condition (\ref{needed}) in Lemma \ref{zcoef} is valid so that  $A(G)$ has
	an F$\ell^1(\eta)$-base of type 2.
		 Recall that  for each $\alpha<\eta,$ \[A_\alpha=\{\sigma^{uv}: \sigma\in K_\alpha, u,v\in \hh_\sigma\},\] and as in the proof of Theorem \ref{prods},
	let $\varphi\in \langle A_\alpha\rangle$ and $\pi_\alpha^{k}$ be any diagonal coefficient of the representation $\pi_\alpha$ where $k$ may change with $\alpha$.
Since $\varphi$ is a linear combination of coefficients $\sigma^{uv}$ where $\sigma\in K_\alpha$ (so that $L_\alpha\subseteq \ker\sigma$), we see that
	 $\varphi\in A_{L_\alpha}(G)$.
	Now if $M_\alpha$ denotes $\ker(\pi_\alpha)$, then $G=L_\alpha M_\alpha$ by the $\mathcal{ Z}^\prime$-property. We may apply therefore Lemma \ref{LM} to see that $\|\varphi\|_{A(G)}=\|R_{M_\alpha}(\varphi)\|_{A(M_\alpha)}$.
	Choose then for every $\epsilon>0,$ $S_\epsilon\in VN(M_\alpha)$ with $\|S_\epsilon\|\le 1$
	and \[\|\varphi\|_{A(G)}=\|R_{M_\alpha}(\varphi)\|_{A(M_\alpha)}\le|\langle S_\epsilon, R_{M_\alpha}(\varphi)\rangle|+\epsilon.\]
	
	Since $M_\alpha=\ker{\pi_\alpha}$, we have $R_{M_\alpha}(\varphi\pi_\alpha^{k})=R_{M_\alpha}(\varphi)$, whence
	\[\|\varphi\|_{A(G)}\le|\langle S_\epsilon, R_{M_\alpha}(\varphi\pi_\alpha^{k})\rangle|+\epsilon=
	|\langle  (R_{M_\alpha})^*(S_\epsilon),\varphi\pi_\alpha^{k}\rangle|+\epsilon.
	\]
	Since
	$\|R_{M_\alpha}^*(S_\epsilon)\|\le\|S_\epsilon\|\le 1$  (see for instance \cite[Theorem 2.4.1]{KaLa04}), we conclude
	as required that
	$\|\varphi\|_{A(G)}\le \|\varphi\pi_\alpha^{k}\|_{A(G)}.$ Therefore, $A(G)$ has an F$\ell^1(\eta)$-base of type 2 with constants $K_1=K_2=1.$
	  \end{proof}

  \begin{remarks}\label{prodd}~\normalfont

	\begin{enumerate}
\item Again, as in Remark \ref{abitmore}, we can prove a bit more when $G$ is as in Lemma \ref{BL}.
In fact, for  every $\alpha<\eta$ and $\sigma_1,\sigma_2\in K_\alpha$,   \[ \sigma_1\otimes \pi_{\alpha}\cap \sigma_2\otimes\pi_{\alpha}\ne\emptyset\quad\text{if and only if}\quad  \sigma_1\;\text{ and}\;\sigma_2\;\text{ are equivalent}.\] To prove this, suppose that
	
	$\rho\leq\sigma_1\otimes \pi_{\alpha}\cap \sigma_2\otimes\pi_{\alpha}$ for some $\rho\in \widehat{G}$. Then,
\begin{align*}
\restr{\rho}{\ker\pi_{\alpha}}\leq \restr{\left(\sigma_1\otimes \pi_{\alpha}\right)}{\ker\pi_{\alpha}}&=
\restr{\sigma_1}{\ker\pi_{\alpha}}\oplus\ldots\oplus \restr{\sigma_1}{\ker\pi_{\alpha}} \\
\restr{\rho}{\ker\pi_{\alpha}}\leq \restr{\left( \sigma_2\otimes\pi_{\alpha}\right)}{\ker\pi_{\alpha}}&=
\restr{\sigma_2}{\ker\pi_{\alpha}}\oplus\ldots\oplus \restr{\sigma_2}{\ker\pi_{\alpha}} .\end{align*}

Now, for $i=1,2$, $\sigma_i(L_\alpha)=\mathrm{Id}$ and $G=L_\alpha \ker\pi_{\alpha}$.  By Lemma \ref{complemented}, it follows that $\restr{\sigma_i}{\ker\pi_{\alpha}}$ is an irreducible representation of $\ker\pi_{\alpha}$. The above equalities then imply that  $\restr{\sigma_1}{\ker\pi_{\alpha}}$ and $\restr{\sigma_2}{\ker\pi_{\alpha}}$ are equivalent. Using again that $G=L_\alpha \ker\pi_{\alpha}$, we conclude that $\sigma_1$ and $\sigma_2$ are equivalent.
\item If  $(G_\alpha)_{\alpha<\eta}$ is an infinite family of  non-trivial compact groups,
then it is not difficult to verify that $G=\prod\limits_{\alpha<\eta} G_\alpha$ has the $\mathcal{Z}^\prime$-property of level $\eta$,
and so  $A(G)$ contains an F$\ell^1(\eta)$-base of type 2 by  Lemma \ref{BL} (as already shown in the previous subsection).
\end{enumerate}
\end{remarks}

\subsubsection{\bf d-rank of a compact group}
 We proceed now to prove that the $\mathcal Z^\prime$-property is available for compact connected groups with an infinite dimension in the sense of Definition \ref{drank} below.
Since our task concerns elements picked from the dual of $G$, we shall call our dimension of $G$ the dual rank of $G$.
When the dual rank is infinite, we shall see that it coincides with the local weight $\eta$ of the group.
So by Lemma \ref{zcoef}, for these groups there are  F$\ell^1(\eta)$-bases of type $2$ in  $A(G)$. With these bases, as before, we deduce the strong Arens irregularity as well as the algebraic structure in the algebras.
Recall that when the group is abelian, $A(G)$ is  strongly Arens irregular independently the dual rank of the group since we know from Section \ref{Examples} that $L^1(\widehat G)$ is strongly Arens irregular for any locally compact group $G.$

Before we give the precise definition of a dual rank of a compact group, we recall the following facts.

The torsion-free rank $r_0(H)$ of an abelian discrete group $H$ is the cardinality of a maximal independent subset of elements in $H$ whose order is infinite.  So if $t(H)$ denotes the torsion subgroup of $H$, then $r_0(H)$ is the maximal number of independent elements in $H/t(H)$ and $r_0(H)=r_0(H/t(H)),$ see
\cite[A13]{HR}. Note that if the quotient  $H/t(H)$ is countable, then $r_0(H)$ may be either finite or countable.
If the  quotient $H/t(H)$ is uncountable, then $r_0(H)$ is uncountable and in fact $r_0(H)=|H/t(H)|$.

There are several notions  of  dimension of a topological space. The main ones  coincide in some
instances such as separable metrizable spaces or  compact abelian groups, see the discussion at \cite[8.25]{hoffmorr} or  \cite[Chapter 7]{EngelTop}.
 We take that given in \cite[Definition 3.11]{HR}.
With this definition,
the dimension $\dim G$ of a compact  abelian group $G$ is the same as the torsion-free rank of its dual group $\widehat G$, see \cite[Theorem 24.28]{HR}. So we may define the dual rank of a
compact  abelian group
$G$  as \[\mathrm{d-rank}(G)=\dim G=r_0(\widehat G).\]

  When $G$ is a connected compact abelian group, its dual group $\widehat G$ is torsion free, see for example \cite[Theorem 24.25]{HR}, and
	so $\widehat{G}$ contains a subgroup isomorphic to $\bigoplus_{\alpha<r_0(\widehat{G})} \Z$ and it is contained in its divisible hull
			$\bigoplus_{\alpha<r_0(\widehat{G})} \Q$ (the divisible hull being also torsion free, has the stated form,  see for example the proof of \cite[Theorem A. 14]{HR}).
	Hence,  the following observation follows.
		\begin{equation}\label{obse}\begin{split} &\widehat G
\;\text{ is countable  (i.e.,}\; G \text{is metrizable and}\; w(G)=w)\;\text{if and only if}\\&
r_0(\widehat G)\le w.\;\text{  Otherwise,}\;	r_0(\widehat G)=|\widehat G|=w(G). \end{split}\end{equation}
    Hence for any connected compact abelian group, we have the three following cases
		\begin{equation}\label{rank}\begin{split}
\mathrm{d-rank}(G)&=r_0(\widehat{G})\quad\text{ when}\; w(G)=w\;\text{and}\;r_0(\widehat{G})\quad\text{is  finite}, \\
\mathrm{d-rank}(G)&=r_0(\widehat{G})=|\widehat G|=w(G)\quad\text{ when}\;r_0(\widehat{G})=w= w(G),\\
		\mathrm{d-rank}(G)&=r_0(\widehat G)=|\widehat G|=w(G),\; \text{when}\; w(G)> w, \end{split}\end{equation} where $w(G)$ is the local weight of $G$.

	We need also to recall the commutator $G^\prime$  of a topological group $G.$
	This is the subgroup of $G$ generated by the commutators $sts^{-1}t^{-1}$ of all elements $s,t\in G.$
The subgroup $G^\prime$ is normal but it is not in general closed in $G$.
The quotient $G/N$ of $G$ by any normal subgroup $N$ is abelian if and only if $G'\subseteq N;$ in particular $G/G'$ is abelian.
 A compact connected group is said to be semisimple when $G=G'.$

When $G$ is a compact connected group,  the commutator $G'$ of $G$ is closed and connected, \cite[Theorem 9.2]{hoffmorr}. There is actually a family of simple simply connected compact Lie groups $\{S_j:j\in J\}$
 such that  $G'$ is a quotient of the product  $\prod_{j\in J}S_j$,  \cite[Theorem 9.19]{hoffmorr}. The cardinal card $J$ is then an isomorphy invariant of $G$, denoted as $\aleph(G)$, see \cite[Page 496 or 469]{hoffmorr}.

Now every compact connected group $G$ is a semidirect product of $G^\prime$ and the compact connected abelian group $G/G^\prime$ (see \cite[Theorem 9.39]{hoffmorr}), and so $G$ is homeomorphic to $G^\prime \times G/G^\prime$.
 So we may define the dual rank of $G$ of a compact connected group as \[ \drr(G)=\aleph(G)+r_0(\widehat{G/G^\prime}).\]
		
Finally, recall from  with \cite[Theorem 3.12]{HR} that the dimension $\dim G$ of a totally disconnected space is $0$.
Accordingly, since	
every compact group $G$ is homeomorphic to $G_0\times G/G_0$  (\cite[Theorem 10.38]{hoffmorr}), where $G_0$ is the connected component of $G$,
we may define the dual rank of a compact group $G$ as the dual rank of its connected component. Thus, we have

\begin{definition}\label{drank} The \emph{dual rank} of a compact group $G$ is given by
\[ \drr(G)=\drr(G_0)=\aleph(G_0)+r_0(\widehat{G_0/G_0^\prime}),\]
where $G_0$ denotes the connected component of $G$ and $\aleph(G_0)$ is the isomorphy invariant as defined above.
\end{definition}

\begin{remarks}\label{remark5}
We remark here some  immediate consequences of the definition of the dual rank of a compact group $G$.
 \begin{enumerate}

\item\label{rem5:ii}   When $G$ is abelian, $G^\prime$ is trivial, and so  $\drr(G)= r_0(\widehat{G_0}).$
\item \label{rem5:iii}
When $G$ is connected,   it follows from    \cite[Theorem 9.55 and Theorem 9.52]{hoffmorr}
that $\drr(G)$ is finite if and only if $\dim G$ is finite. Indeed, these two theorems yield the
following equivalent statements
\begin{enumerate}
\item $\drr(G)=\aleph(G)+r_0(\widehat{G/G^\prime})<\infty.$
\item
$G^\prime$ s a compact Lie group and  $G/G^\prime$ is a finite dimensional compact abelian
group.
\item$\dim G<\infty.$
\end{enumerate}
\item\label{rem5:iv} When $G$ is connected  the quotient group $G/G^\prime$
is a compact connected abelian group. Therefore, according to  whether $r_0(\widehat{G/G^\prime})$ is finite and so $w(G/G^\prime)=w$,  or $r_0(\widehat{G/G^\prime})$ is infinite, our previous observations  in (\ref{rank}) imply, respectively,  that
\begin{equation*}\begin{split}\drr(G)&=\aleph(G)+r_0(\widehat{G/G^\prime}) \quad\text{or}\\
\drr(G)&=\aleph(G)+w{(G/G^\prime}).\end{split}\end{equation*}
\end{enumerate}
\end{remarks}

For the following theorem,  see  \cite[9.24--9.26]{hoffmorr}.

\begin{theorem}\label{HM92526} Let $G$  be a connected compact group,
$Z_0(G)$ be the identity component of the centre $Z(G)$ of $G$,
 and let $\Delta=Z_0(G)\cap G^\prime$.
\begin{enumerate}
  \item The subgroup $\Delta$ is totally disconnected and central, and $G=Z_0(G)G^\prime$.
  \item There exist three continuous
 surjective morphisms \begin{align*}\pi_\g&:\prod_{\gamma<\aleph(G)}S_\gamma\to G^\prime,
\\\mu_\g&:Z_0(G)\times\prod_{\gamma<\aleph(G)}S_\gamma\to G\quad  \mbox{ and }\\ \nu_\g&:G\to \frac{Z_0(G)}{\Delta}\times\prod_{\gamma<\aleph(G)}\frac{S_\gamma}{Z_0(S_\gamma)},\end{align*} where
 each $S_\gamma$, $\gamma<\aleph(G)$, is a simple, simply connected compact Lie group such that
\begin{enumerate}
\item $\mu_\g(z,\mathbf{s})=z\pi_\g(\mathbf{s})$, for each $z\in Z_0(G)$ and $\mathbf{s}\in\prod_{\gamma<\aleph(G)}S_\gamma$,
\item $\nu_\g(g)=\bigl(z\Delta, (s_\gamma Z(S_\gamma))_{\gamma<\aleph(G)}\bigr)$ for every $g \in G$, where  $g=z \pi_\g(\mathbf{s})$, $\mathbf{s}=(s_\gamma)_{\gamma}\in $, is the decomposition of $g$ seen as an element of $Z_0(G)G^\prime$,
\item $\mu_\g$ and $\nu_\g$ have totally disconnected central kernels,
and
\item $\nu_G\circ\mu_G=q$, where
\[q:Z_0(G)\times\prod_{\gamma<\aleph(G)}S_\gamma\to\frac{Z_0(G)}{\Delta}\times\prod_{\gamma<\aleph(G)}\frac{S_\gamma}{Z_0(S_\gamma)}\] is given by $q(z,(s_\gamma)_{\gamma<\aleph(G)})=(z\Delta, (s_\gamma Z_0(S_\gamma))_{\gamma<\aleph(G)}).$
\end{enumerate}
\end{enumerate}
\end{theorem}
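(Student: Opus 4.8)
The plan is to assemble the statement from the known fine structure of compact connected groups, most of whose ingredients are already recorded in the excerpt, so that the argument is really a matter of bookkeeping once the two deep structural inputs are in hand. I would first settle Part (1). Since $Z_0(G)$ is by definition contained in the centre $Z(G)$, the intersection $\Delta=Z_0(G)\cap G^\prime$ lies in $Z(G)$ and is therefore central; and because the commutator $G^\prime$ is closed and connected (\cite[Theorem 9.2]{hoffmorr}), $\Delta$ is a closed subgroup of the compact abelian group $Z_0(G)$. To obtain $G=Z_0(G)G^\prime$ I would pass to the (pro-)Lie algebra $\mathfrak g=\mathfrak L(G)$, which for a compact connected group is reductive and splits as $\mathfrak g=\mathfrak z\oplus[\mathfrak g,\mathfrak g]$ with $\mathfrak z$ the centre and $[\mathfrak g,\mathfrak g]$ the closed derived algebra; the subgroups integrating these two summands are exactly $Z_0(G)$ and $G^\prime$, and since $G$ is connected it is generated by them, yielding the product decomposition. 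The same splitting $\mathfrak z\cap[\mathfrak g,\mathfrak g]=0$ forces $\Delta=Z_0(G)\cap G^\prime$ to have trivial Lie algebra, whence it is profinite, i.e.\ totally disconnected.

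For Part (2) I would invoke the semisimple structure theorem (\cite[Theorem 9.19]{hoffmorr}) to produce a continuous surjective morphism $\pi_\g\colon\prod_{\gamma<\aleph(G)}S_\gamma\to G^\prime$ with each $S_\gamma$ simple and simply connected and with central, totally disconnected kernel. Because $Z_0(G)$ is central it commutes elementwise with the image of $\pi_\g$, so $\mu_\g(z,\mathbf s)=z\,\pi_\g(\mathbf s)$ is a genuine continuous homomorphism, and it is surjective precisely because $G=Z_0(G)G^\prime$. Its kernel consists of the pairs $(z,\mathbf s)$ with $z=\pi_\g(\mathbf s)^{-1}\in Z_0(G)\cap G^\prime=\Delta$, so it is carried into $\Delta\times\pi_\g^{-1}(\Delta)$; being built from the totally disconnected central groups $\Delta$ and $\ker\pi_\g$, the kernel of $\mu_\g$ is again totally disconnected and central.

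To construct $\nu_\g$ I would check that the prescription $\nu_\g(g)=(z\Delta,(s_\gamma Z(S_\gamma))_\gamma)$, read off from a decomposition $g=z\,\pi_\g(\mathbf s)$ with $\mathbf s=(s_\gamma)_\gamma$, is independent of the chosen decomposition: two such decompositions differ by an element of $\Delta$ in the $Z_0(G)$-coordinate and by an element of $\ker\pi_\g$ in the product coordinate, and both ambiguities are annihilated after passing to $Z_0(G)/\Delta$ and to the central quotients of the $S_\gamma$. Continuity and surjectivity of $\nu_\g$ are then inherited, its kernel is again central and totally disconnected, and the identity $\nu_\g\circ\mu_\g=q$ drops straight out of the definitions. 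The main obstacle, as always for compact as opposed to Lie groups, is the genuinely non-Lie (infinite $\aleph(G)$) case: making the Lie-algebra splitting and the integration of the summand subalgebras rigorous requires the pro-Lie machinery of writing $G$ as a projective limit of compact Lie groups and controlling the limit, and the delicate recurring point is verifying that each kernel stays \emph{totally disconnected and central} rather than merely compact. For this reason I would in practice simply quote the result in the form proved by Hofmann and Morris \cite[9.24--9.26]{hoffmorr}.
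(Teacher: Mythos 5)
Your proposal is correct and in the end coincides with what the paper does: the paper offers no proof of this statement at all, introducing it only with the remark ``For the following theorem, see \cite[9.24--9.26]{hoffmorr}'', which is exactly the citation you fall back on. Your preliminary sketch of how that argument goes is sound, up to the small point that two decompositions $g=z\,\pi_\g(\mathbf{s})$ differ in the product coordinate by an element of $\pi_\g^{-1}(\Delta)\subseteq\prod_{\gamma}Z(S_\gamma)$ rather than merely of $\ker\pi_\g$ --- it is this larger central subgroup that the quotients by the centres of the $S_\gamma$ actually absorb.
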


\begin{lemma}\label{dim}
Let $G$ be an infinite compact group.
\begin{enumerate}
  \item  $G$ has a finite dual rank if and only if its connected component  $G_0$ has a finite dimension.
  \item If $G$ has an infinite dual rank, then $\drr(G)=w(G_0)$.
  \end{enumerate}
\end{lemma}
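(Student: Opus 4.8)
The plan is to reduce everything to the structure theory of connected compact groups together with the facts about connected compact abelian groups already recorded in (\ref{obse}) and (\ref{rank}). Throughout write $G_0$ for the connected component of the identity of $G$, so that by Definition~\ref{drank} we have $\drr(G)=\drr(G_0)=\aleph(G_0)+r_0(\widehat{G_0/G_0'})$, and note that $G_0$ is itself a connected compact group. Statement~(i) is then immediate: since $G_0$ is connected and $\drr(G)=\drr(G_0)$, Remark~\ref{remark5}(\ref{rem5:iii}) applied to $G_0$ gives that $\drr(G)$ is finite if and only if $\dim G_0$ is finite, which is exactly the assertion.

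For Statement~(ii) I would first isolate the two ``halves'' of the dual rank. The paper already records (just before Definition~\ref{drank}, via \cite[Theorem 9.39]{hoffmorr}) that $G_0$ is homeomorphic to $G_0'\times (G_0/G_0')$; since the weight is a homeomorphism invariant and $w(X\times Y)=\max(w(X),w(Y))$ as soon as one factor has infinite weight, this yields
\[
w(G_0)=\max\bigl(w(G_0'),\,w(G_0/G_0')\bigr).
\]
The abelianisation $G_0/G_0'$ is a connected compact abelian group, so by (\ref{rank}) (equivalently (\ref{obse})) its weight equals $r_0(\widehat{G_0/G_0'})$ when this rank is infinite and equals $\aleph_0$ when it is finite. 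Thus the ``$r_0$-half'' of $\drr(G)$ is accounted for by $w(G_0/G_0')$.

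The heart of the argument, and the step I expect to be the main obstacle, is to show that $w(G_0')=\aleph(G_0)$ when $\aleph(G_0)$ is infinite (and $w(G_0')\le\aleph_0$ otherwise). For the upper bound I use the surjection $\pi_{G_0}\colon\prod_{\gamma<\aleph(G_0)}S_\gamma\to G_0'$ of Theorem~\ref{HM92526}: each $S_\gamma$ is a simple simply connected compact Lie group, hence metrizable of weight $\aleph_0$, so $w\bigl(\prod_{\gamma<\aleph(G_0)}S_\gamma\bigr)=\aleph(G_0)$ for infinite $\aleph(G_0)$, and weight cannot increase under a continuous surjective morphism. For the matching lower bound I would exploit the relation $\nu_G\circ\mu_G=q$ in Theorem~\ref{HM92526}: restricting to $\{e\}\times\prod_\gamma S_\gamma$ shows that $\nu_{G_0}(\pi_{G_0}(\mathbf{s}))=(\Delta,(s_\gamma Z_0(S_\gamma))_\gamma)$, so composing $\nu_{G_0}|_{G_0'}$ with the projection onto the second factor gives a continuous surjection of $G_0'$ onto $\prod_{\gamma<\aleph(G_0)}S_\gamma/Z_0(S_\gamma)$. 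As each $S_\gamma/Z_0(S_\gamma)$ is again a nontrivial metrizable compact group, this product has weight $\aleph(G_0)$, forcing $w(G_0')\ge\aleph(G_0)$. The delicate point is precisely that the kernels of $\mu_{G_0}$ and $\nu_{G_0}$ are totally disconnected and central, so that although $G_0'$ is sandwiched between the ``large'' product $\prod_\gamma S_\gamma$ and its centre-free quotient, both bounds collapse to $\aleph(G_0)$.

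Finally I would assemble the two computations by a short case analysis. Since $\drr(G)=\aleph(G_0)+r_0(\widehat{G_0/G_0'})$ is infinite, at least one summand is infinite. If $\aleph(G_0)$ is infinite, then $w(G_0')=\aleph(G_0)$ and hence $w(G_0)=\max\bigl(\aleph(G_0),w(G_0/G_0')\bigr)=\aleph(G_0)+r_0(\widehat{G_0/G_0'})=\drr(G)$, whether $r_0(\widehat{G_0/G_0'})$ is finite or infinite (using (\ref{rank}) in the infinite sub-case). If instead $\aleph(G_0)$ is finite, then $r_0(\widehat{G_0/G_0'})$ is infinite, $w(G_0')\le\aleph_0$, and $w(G_0)=w(G_0/G_0')=r_0(\widehat{G_0/G_0'})=\drr(G)$. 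In all cases $w(G_0)=\drr(G)$, which is Statement~(ii).
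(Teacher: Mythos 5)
Your proposal is correct and follows essentially the same route as the paper: reduce to the connected component, split the weight of $G_0$ over $G_0'$ and $G_0/G_0'$, pin down $w(G_0')$ via the two surjections of Theorem~\ref{HM92526} (giving $\aleph(G_0)\le w(G_0')\le\max\{\aleph(G_0),\omega\}$), invoke (\ref{obse})/(\ref{rank}) for the abelian quotient, and finish with a case analysis. The only cosmetic differences are that you use $\max$ where the paper uses cardinal sum and you split cases on $\aleph(G_0)$ rather than on $r_0(\widehat{G_0/G_0'})$; these are equivalent.
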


\begin{proof} Since $\drr (G)=\drr (G_0)$ by definition, Statement (i) is just a repetition of Remark \ref{remark5}(ii).

We prove the second assertion first for a connected compact group $G;$ we shall be repeatedly looking  at the equalities \begin{equation}
\begin{split}
  w(G) & =w(G^\prime)+w(G/G^\prime)\label{wdrr1}\\
   \drr(G)= & =\aleph(G)+r_0\bigl(\widehat{G/G^\prime}\bigr).\end{split}
\end{equation}

We note  first that,  the existence of the surjective homomorphisms $\pi_\g$ and $\nu_{\g'}$ in Lemma \ref{HM92526}  shows that
\begin{equation}\label{waleph}\aleph(G)\leq w(G^\prime)\leq  \max\{\aleph(G),\omega\}.\end{equation}
It follows that if  $\aleph(G)$ is finite, then
\begin{equation}\label{finite}\aleph(G)\leq w(G^\prime)\leq  \omega;\;
\text{otherwise,}\; \aleph(G)=w(G^\prime)\ge\omega.\end{equation}
Assume now  that $G$ has infinite $\drr$. Then $\aleph(G)$ and  $r_0(\widehat{G/G^\prime})$ cannot be both finite,
and so to prove that $\drr(G)=w(G)$, we look at two separate cases.
\emph{Case 1:}  If $r_0(\widehat{G/G^\prime})$  is infinite, then by  Remark \ref{remark5} (iii),
\[\drr(G)=\aleph(G)+w(G/G^\prime).\] If $\aleph(G)<w,$ then by (\ref{finite}), $w(G^\prime)\le\omega$, and so it is clear that
\[\drr(G)=w(G/G^\prime)+w(G')=w(G).\]
If $\aleph(G)\ge w,$ then  $\aleph(G)=w(G^\prime)$, and so \[\drr(G)=w(G/G^\prime)+w(G')=w(G).\]

\emph{Case 2:}  If $r_0(\widehat{G/G^\prime})$  is finite, then by the observation (\ref{obse}),  $w(G/G^\prime)= \omega$.
Since $\aleph(G)$ must be infinite, we have  by (\ref{finite}), $\aleph(G)=w(G^\prime)$ and a further application of Remark \ref{remark5} (iii) shows that
 \[\drr(G)=\aleph(G)+r_0(\widehat{G/G^\prime})=\aleph(G)=
w(G^\prime)+w(G/G^\prime)=w(G),\]
as wanted.

Since $\drr (G)=\drr (G_0)$, the lemma is proved when $G$ is an infinite  compact group.
\end{proof}

  \begin{lemma}\label{compconn}
    Let $G$ be a connected compact group and let $L$ be a proper, closed, normal subgroup of $G$
    that is not totally disconnected. Then $L$ has a normal supplement $M$, i.e., a proper closed normal subgroup $M$ of $G$ with $G=LM $.
  \end{lemma}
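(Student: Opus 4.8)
The plan is to reduce to the case where $L$ is connected and then to transport the problem, via the structure morphism $\nu_G$ of Theorem \ref{HM92526}, into a genuine direct product of a connected compact abelian group with simple compact Lie groups, where a supplement can be produced by hand. For the reduction, I would replace $L$ by its identity component $L_0$: since $L$ is not totally disconnected, $L_0$ is a nontrivial connected compact group, and being characteristic in $L$ while $L$ is normal in $G$, it is a closed normal subgroup of $G$. If I can find a proper closed normal $M$ with $L_0 M=G$, then a fortiori $LM=G$, so it suffices to treat $L$ connected and nontrivial.

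Next, let $\nu_G\colon G\to H:=\frac{Z_0(G)}{\Delta}\times\prod_{\gamma<\aleph(G)}\overline{S}_\gamma$ be the surjective morphism of Theorem \ref{HM92526}, where $\overline{S}_\gamma=S_\gamma/Z(S_\gamma)$ are centreless simple compact Lie groups and $H_0:=Z_0(G)/\Delta$ is a connected compact abelian group; crucially $\ker\nu_G$ is totally disconnected and central. Put $N:=\nu_G(L)$, a closed connected normal subgroup of $H$ which is moreover nontrivial, since otherwise $L$ would lie inside the totally disconnected $\ker\nu_G$, contradicting the hypothesis on $L$. I claim it is enough to find a proper closed normal subgroup $M_H$ of $H$ with $NM_H=H$: then $M:=\nu_G^{-1}(M_H)$ is a proper closed normal subgroup of $G$ containing $\ker\nu_G$, and since $\nu_G(LM)=NM_H=H$ while $LM\supseteq\ker\nu_G$, the subgroup $LM$ is $\nu_G$-saturated, whence $LM=\nu_G^{-1}(H)=G$.

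So everything comes down to supplementing $N$ inside the direct product $H=H_0\times\prod_\gamma\overline{S}_\gamma$. Let $p_\gamma$ denote the projection onto $\overline{S}_\gamma$; since $\overline{S}_\gamma$ is simple, $p_\gamma(N)$ is either trivial or all of $\overline{S}_\gamma$. If $p_{\gamma_0}(N)=\overline{S}_{\gamma_0}$ for some $\gamma_0$, then a commutator/centraliser computation forces $\overline{S}_{\gamma_0}\subseteq N$: the intersection $N\cap\overline{S}_{\gamma_0}$ is a closed normal subgroup of the simple group $\overline{S}_{\gamma_0}$, and if it were trivial then $N$ would centralise $\overline{S}_{\gamma_0}$, putting $p_{\gamma_0}(N)$ into the trivial centre — impossible. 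Taking $M_H$ to be the subproduct omitting $\overline{S}_{\gamma_0}$ (a proper closed normal subgroup) yields $NM_H=H$. If instead $p_\gamma(N)$ is trivial for every $\gamma$, then $N$ lies in the abelian part $H_0$, and here I would invoke Pontryagin duality: $N$ nontrivial means its annihilator $N^{\perp}$ is a proper subgroup of the discrete torsion-free dual $\widehat{H_0}$, so I pick $x\in\widehat{H_0}\setminus N^{\perp}$; as $\widehat{H_0}/N^{\perp}$ is torsion-free the cyclic group $\langle x\rangle$ meets $N^{\perp}$ only in $0$, so $M_0:=\langle x\rangle^{\perp}$ is a proper closed subgroup of $H_0$ with $NM_0=H_0$, and $M_H:=M_0\times\prod_\gamma\overline{S}_\gamma$ is the desired supplement.

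The main obstacle will be the first case: correctly handling the possibly infinite product of simple compact Lie groups, i.e.\ establishing that a connected closed normal subgroup projecting onto a factor must contain that factor. The centreless choice $\overline{S}_\gamma=S_\gamma/Z(S_\gamma)$ is precisely what makes the centraliser argument close cleanly, so I would keep the quotient by the full centre as in Theorem \ref{HM92526}(ii). The remaining points — that $\nu_G$-preimages preserve normality and properness, that a product of two closed subgroups of a compact group is closed, and the abelian duality computation — are routine, and the degenerate configurations (for instance $\aleph(G)=0$, forcing the second case, or $H_0$ trivial, forcing the first) are absorbed automatically by the case split.
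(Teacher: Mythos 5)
Your argument is correct and follows essentially the same route as the paper: reduce to the identity component $L_0$, push the problem forward along $\nu_G$ into $\frac{Z_0(G)}{\Delta}\times\prod_{\gamma}S_\gamma/Z(S_\gamma)$, supplement the image there either by the kernel of one coordinate projection onto a simple factor or by the kernel of a character of the abelian part, and pull back through $\nu_G$ using that its kernel is totally disconnected. The only differences are cosmetic: you organise the dichotomy around the simple-factor projections (trivial or full) rather than around the abelian projection, and you phrase the abelian case dually via the torsion-free character group, where the paper argues directly that a connected subgroup of $\mathbb{T}$ with nontrivial image under a character must be all of $\mathbb{T}$.
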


  \begin{proof}
    We apply the structure theorem, Theorem \ref{HM92526}.

		Let	$L_0$ be  the connected component of $L$,		and observe first that  $\nu_\g(L_0)\neq \{1\}$  since $\ker \nu_\g$
		is totally disconnected.
		     Let   \begin{align*}
       p&\colon \frac{Z_0(G)}{\Delta}\times\prod_{j\in J}\frac{S_j}{Z(S_j)} \to \prod_{j\in J} \frac{S_j}{Z(S_j)},\\
       q&\colon \frac{Z_0(G)}{\Delta}\times\prod_{j\in J}\frac{S_j}{Z(S_j)}\to
     \frac{Z_0(G)}{\Delta}\quad \text{ and}\\
      p_j&\colon \frac{Z_0(G)}{\Delta}\times\prod_{j\in J}\frac{S_j}{Z(S_j)} \to \frac{S_j}{Z(S_j)}
    \quad\text{for}\; j\in J, \end{align*} denote the  natural projections. We consider two cases.
		(Remember $\aleph(G)=\card J$.)

\textbf{Case I.  $q\bigl(\nu_\g(L_0)\bigr)$ is trivial.}
Then $p\bigl(\nu_\g(L_0)\bigr)$ is not trivial. Note first that $L_0$ is also a normal subgroup of $G$
(being the connected component of $L$). So $p\bigl(\nu_\g(L_0)\bigr)$ is a non-trivial normal subgroup of
$\prod_{j\in J} \frac{S_j}{Z(S_j)}$.
Since the factors $\frac{S_j}{Z(S_j)} $ are simple Lie groups and simple Lie groups contain no nontrivial connected normal subgroups, we conclude that there is  some $j_0\in J$ such that $p_{j_0}(\nu_\g(L_0))=\frac{S_{j_0}}{Z(S_{j_0})}$. Consider now the kernel of $p_{j_0}$.
If $1_{j_0}$ is the identity in $\frac{S_{j_0}}{Z(S_{j_0})}$,  then
\[\ker (p_{j_0})=\frac{Z_0(G)}{\Delta}\times\prod_{j\in J, j\ne j_0}\frac{S_j}{Z(S_j)}\times \{1_{j_0}\}.\]
Then it is clear that $\ker(p_{j_0})$ is a normal supplement of $\nu_\g(L_0)$ in
the group $\frac{Z_0(G)}{\Delta}\times\prod_{j\in J}\frac{S_j}{Z(S_j)}$.
 So for every $s\in G$ there exist $u\in \ker(p_{j_0})$ and $t\in L_0$ such that $\nu_\g(s)=\nu_\g(t)u$. Thus $u=\nu_\g(t^{-1}s)$ and \[s=t(t^{-1}s)\in L_0(\nu_\g)^{-1}\bigl(\ker(p_{j_0})\bigr).\] It follows that
\[M:=(\nu_\g)^{-1}\bigl(\ker(p_{j_0})\bigr)=\ker\bigl(p_{j_0}\circ\nu_\g\bigr)\]
is a normal supplement of $L_0$, and so of $L$, in $G.$

  \textbf{Case II.  $q\bigl(\nu_\g(L_0)\bigr)$ is nontrivial.}
  Now $q\bigl(\nu_\g(
	L_0)\bigr)$ is a nontrivial subgroup of the compact
  abelian group $\frac{Z_0(G)}{\Delta}$. So let $\chi$ be any   character of $\frac{Z_0(G)}{\Delta}$ that is not trivial on $q\bigl(\nu_\g(L_0)\bigr)$. We claim that $\frac{Z_0(G)}{\Delta}=q(\nu_\g(L_0))\ker \chi$, i.e., $\ker \chi$
  is a normal supplement of $q(\nu_\g(L_0))$ in $\frac{Z_0(G)}{\Delta}$.
    Since the circle group $\T$ has no nontrivial connected subgroups and $q\bigl(\nu_\g(L_0)\bigr)$ is  connected, we must have \[\chi\left(q\bigl(\nu_\g(L_0)\bigr)\right)=\chi\left(\frac{Z_0(G)}{\Delta}\right)=\T.\] We can therefore find for each element $x\in \frac{Z_0(G)}{\Delta}$, an element
    $s\in L_0$
     with $\chi(q(\nu_\g(s)))=\chi(x)$. In other words, \[x=q(\nu_\g(s))\bigl((q(\nu_\g(s)))^{-1}x\bigr)\;\text{ with}\;(q(\nu_\g(s)))^{-1}x\in \ker \chi,\] which is what is required.

    Once we have \[\frac{Z_0(G)}{\Delta}=q(\nu_\g(L_0))\ker \chi\]
		and observe that
\[(q(\nu_\g(L_0))\ker \chi)\times    \prod_{j\in J} \frac{S_j}{Z(S_j)}\subseteq
(\nu_\g(L_0)\bigl(\ker \chi\times     \prod_{j\in J} \frac{S_j}{Z(S_j)}\bigr),\]
it is easy to see that \[(\ker \chi)\times
    \prod_{j\in J} \frac{S_j}{Z(S_j)}\] is a normal supplement of $\nu_\g(L_0)$ in $\frac{Z_0(G)}{\Delta}\times\prod_{j\in J}\frac{S_j}{Z(S_j)}.$ Accordingly, as argued in the previous case,  \[M=(\nu_\g)^{-1}\bigl((\ker \chi)\times
    \prod_{j\in J} \frac{S_j}{Z(S_j)}\bigr)\] is a normal supplement of $L_0$, and so of $L$ in $G$, as wanted.
  \end{proof}

  \begin{lemma}  \label{abelian}
  Let $G$ be a  compact abelian group and let $H$ be a totally disconnected subgroup of $G$.
  If $G/H$ has finite dual rank, then so has $G$. In other words, if $r_0(\widehat{G/H})$ is finite, then so is $r_0(\widehat{G})$.
     \end{lemma}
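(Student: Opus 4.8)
The plan is to translate the statement into the language of discrete dual groups via Pontryagin duality, where torsion-free rank becomes a genuinely additive invariant. Since $G/H$ is to be a compact abelian group, the subgroup $H$ must be closed; being a closed, totally disconnected subgroup of the compact group $G$, it is itself a compact, totally disconnected abelian group. First I would record the standard duality exact sequence attached to the closed subgroup $H\le G$: writing $H^{\perp}=\{\chi\in\widehat G:\chi|_H=1\}$ for the annihilator of $H$, one has $H^{\perp}\cong\widehat{G/H}$ and $\widehat G/H^{\perp}\cong\widehat H$, so that
\[0\to \widehat{G/H}\to \widehat G\to \widehat H\to 0\]
is a short exact sequence of discrete abelian groups.

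The key step is to show that $\widehat H$ is a torsion group. Because $H$ is compact, totally disconnected and abelian, its covering dimension is $\dim H=0$ by \cite[Theorem 3.12]{HR}; and by \cite[Theorem 24.28]{HR} the dimension of a compact abelian group equals the torsion-free rank of its dual, whence $r_0(\widehat H)=\dim H=0$. A discrete abelian group of torsion-free rank $0$ is precisely a torsion group, so $\widehat H$ is torsion.

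Finally I would exploit the additivity of torsion-free rank. Recalling that $r_0(A)=\dim_{\Q}(A\otimes_{\Z}\Q)$ and that $-\otimes_{\Z}\Q$ is exact, applying it to the exact sequence above and using $\widehat H\otimes_{\Z}\Q=0$ (since $\widehat H$ is torsion) yields an isomorphism $\widehat{G/H}\otimes_{\Z}\Q\cong\widehat G\otimes_{\Z}\Q$, that is
\[r_0(\widehat G)=r_0(\widehat{G/H}).\]
In particular, if $r_0(\widehat{G/H})$ is finite then so is $r_0(\widehat G)$, which is the assertion (indeed one even gets equality). The only genuinely delicate point is the hypothesis that $H$ be closed, which is implicit in forming the compact quotient $G/H$ and is needed both to invoke Pontryagin duality for the exact sequence and to apply the dimension-theoretic identities to $H$; once closedness is in hand the argument is a short diagram chase combined with the two cited facts from \cite{HR}.
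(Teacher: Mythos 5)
Your proof is correct, and it takes a genuinely different route from the paper's. The paper argues by contraposition and stays entirely inside $\widehat G$: assuming $r_0(\widehat G)$ infinite, it picks an infinite independent family $\{\chi_n\}$ of infinite-order characters, uses the finiteness of $r_0(H^\perp)=r_0(\widehat{G/H})$ to find some $\chi_{n_0}$ with $\langle\chi_{n_0}\rangle\cap H^\perp=\{1\}$, and concludes that $\chi_{n_0}|_H$ has infinite order, which contradicts total disconnectedness of $H$ via the fact that a compact abelian group is totally disconnected exactly when its dual is torsion. You instead set up the dual exact sequence $0\to H^\perp\to\widehat G\to\widehat H\to 0$, kill $\widehat H$ after tensoring with $\Q$ (since $\widehat H$ is torsion), and read off $r_0(\widehat G)=r_0(\widehat{G/H})$ from flatness of $\Q$ over $\Z$. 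Both arguments ultimately rest on the same two inputs — that $H^\perp\cong\widehat{G/H}$ and that $\widehat H$ is torsion (you reach the latter through $\dim H=0$ and the dimension-rank identity, where the paper cites the equivalence directly) — but your version is cleaner and strictly stronger: it yields the exact equality of torsion-free ranks, with no case distinction between finite and infinite, whereas the paper only extracts the finiteness implication it needs. Your remark about the closedness of $H$ is also well taken; the hypothesis is implicit in the paper (the quotient $G/H$ is treated as a compact group) and is indeed what licenses both the annihilator duality and the surjectivity of the restriction map $\widehat G\to\widehat H$.
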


     \begin{proof}
       If $r_0(\widehat{G})$ is infinite, we can find an infinite set $\{\chi_n \colon n\in \N\}\subset \widehat{G}$ consisting of independent characters with $\chi_n^k\neq 1$ for any $n\in \N$ and $k\in \Z\setminus\{0\}$. Since $\widehat{G/H}$ is isomorphic to $H^\perp= \{\chi\in \widehat{G}\colon \chi(H)=\{1\}\}$ and $r_0(\widehat{G/H})$ is finite,  we see that $\langle \chi_n\rangle \cap H^\perp=\{1\} $ for all but finitely many $n\in \N$. Pick $n_0\in \N$ with  $\langle \chi_{n_0}\rangle \cap H^\perp=\{1\} $.
 Then  $\chi_{n_0}^k\notin H^\perp$  for any $k \in \Z\setminus \{0\}$, hence $\restr{\chi_{n_0}}{H}^k\neq 1$ for every $k \in \Z\setminus\{0\}$.  Having a  character of infinite order, $H$ cannot be totally disconnected (see \cite[Theorem 24.26 or Corollary 24.18]{HR}). We conclude that $r_0(\widehat{G})$ must be  finite.
 \end{proof}

\begin{lemma}
  \label{connker}
  Let $G$ be a compact connected group and let $H$ be a totally disconnected normal subgroup of $G$.
  Then \begin{enumerate}
  \item $w(G/H)=w(G)$.
   \item If $G/H$ has a finite dual rank, then so has $G$.
    \end{enumerate}
          \end{lemma}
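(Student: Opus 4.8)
The plan is to reduce everything to the commutator subgroup $G'$ and the abelianization $G/G'$, exploiting first that a totally disconnected normal subgroup of a connected group is central. Indeed, for a fixed $h\in H$ the map $g\mapsto ghg^{-1}$ is continuous from the connected group $G$ into the totally disconnected group $H$, so its (connected) image is the singleton $\{h\}$; hence $H\subseteq Z(G)$. Consequently $G/H$ is again compact and connected, its commutator subgroup is $(G/H)'=G'H/H\cong G'/(G'\cap H)$ with $G'\cap H$ central and totally disconnected in $G'$, and its abelianization satisfies $(G/H)/(G/H)'\cong G/(G'H)\cong (G/G')/\bar H$, where $\bar H$ is the image of $H$ in the compact connected abelian group $G/G'$. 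Being a continuous image of the profinite group $H$, the subgroup $\bar H$ is again totally disconnected.

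For statement (i) I would use $w(G)=w(G')+w(G/G')$ from (\ref{wdrr1}), which (as $G$ is infinite) reads $w(G)=\max\{w(G'),w(G/G')\}$, together with the analogous identity for $G/H$, and then match the two factors. For the semisimple factor, both $G'$ and $(G/H)'=G'/(G'\cap H)$ are quotients of the same product $\prod_{\gamma<\aleph(G)}S_\gamma$ of simple simply connected compact Lie groups furnished by Theorem \ref{HM92526}, by central totally disconnected subgroups; all such quotients carry a common weight (equal to $\max\{\aleph(G),\omega\}$ when nontrivial, by (\ref{waleph})), so $w(G')=w((G/H)')$. For the abelian factor I must show $w((G/G')/\bar H)=w(G/G')$, the abelian case of the statement, which I would prove by Pontryagin duality: writing $A=G/G'$ one has $w(A)=|\widehat A|$ and $w(A/\bar H)=|\bar H^{\perp}|$, while $\widehat A$ is torsion free and $\widehat{\bar H}=\widehat A/\bar H^{\perp}$ is torsion; thus $\bar H^{\perp}$ has full torsion-free rank $r_0(\widehat A)$, and (\ref{obse}) forces $|\bar H^{\perp}|=|\widehat A|$ in the uncountable case, whereas in the metrizable case $A/\bar H$ is still infinite and connected, hence of weight $\omega=w(A)$. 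Combining the two factors gives $w(G/H)=w(G)$.

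For statement (ii) I would argue directly from $\drr(G/H)=\aleph(G/H)+r_0(\widehat{(G/H)/(G/H)'})$. If this is finite, both summands are finite. The number of simple factors is unchanged by a central totally disconnected quotient, since $G'$ and $(G/H)'$ share the covering $\prod_{\gamma<\aleph(G)}S_\gamma$ of Theorem \ref{HM92526}, so $\aleph(G)=\aleph(G/H)<\infty$. For the abelian summand, the isomorphism $(G/H)/(G/H)'\cong (G/G')/\bar H$ lets me apply Lemma \ref{abelian} to the compact abelian group $G/G'$ and its totally disconnected subgroup $\bar H$, giving $r_0(\widehat{G/G'})<\infty$. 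Hence $\drr(G)=\aleph(G)+r_0(\widehat{G/G'})$ is finite. Alternatively, once (i) is available one can invoke Lemma \ref{dim}: were $\drr(G)$ infinite it would equal $w(G)=w(G/H)=\drr(G/H)$, contradicting finiteness.

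The main obstacle is the abelian computation underlying both parts, namely controlling $|\bar H^{\perp}|$ and $r_0(\widehat{G/G'})$ as one passes to the quotient by $\bar H$. Everything hinges on the fact that the dual of a totally disconnected compact abelian group is torsion, so that $\bar H^{\perp}$ is a full-rank (essential) subgroup of the torsion-free group $\widehat{G/G'}$; the rank/cardinality dichotomy recorded in (\ref{obse}) and isolated in Lemma \ref{abelian} is exactly what turns this into the required equalities. The remaining bookkeeping---that a central totally disconnected quotient preserves the simply connected covering and hence $\aleph$, and the splitting of $w$ into its $G'$- and $G/G'$-parts---is routine given Theorem \ref{HM92526} and (\ref{wdrr1})--(\ref{waleph}).
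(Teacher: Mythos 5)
Your main argument is correct, but it is routed differently from the paper's. For (i) the paper simply cites \cite[Proposition 12.26]{hoffmorr}, whereas you reprove it from scratch by splitting $w$ along $G'$ and $G/G'$, matching the semisimple factors via the common covering $\prod_\gamma S_\gamma$ and handling the abelian factor by duality (the key point being that $\bar H^{\perp}$ is an essential, hence full-rank, subgroup of the torsion-free group $\widehat{G/G'}$, so $(\ref{obse})$ forces $|\bar H^{\perp}|=|\widehat{G/G'}|$ in the nonmetrizable case); this is more work but self-contained. For (ii) the paper's Case II decomposes $G/H=(Z(G)/H)(G'H/H)$ and applies Lemma \ref{abelian} to $Z(G)$, then passes to $G/G'$ as a quotient of $Z(G)$; you instead apply Lemma \ref{abelian} directly to $G/G'$ and its totally disconnected image $\bar H$, using the isomorphism $(G/H)/(G/H)'\cong (G/G')/\bar H$. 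Your route buys a cleaner reduction: it avoids the paper's unexplained step that each factor of the product decomposition of $G/H$ inherits finite dual rank, replacing it with the transparent fact that $\aleph$ and the abelianization behave functorially under the quotient. The semisimple half ($\aleph(G)=\aleph(G/H)$ via the uniqueness in \cite[Theorem 9.19]{hoffmorr}) coincides with the paper's Case I, and your preliminary observation that the kernel of $\prod_\gamma S_\gamma\to (G/H)'$ is again central and totally disconnected is exactly what makes that uniqueness applicable.

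One caveat: the ``alternative'' you offer for (ii) does not work. From $\drr(G)$ infinite you get $\drr(G)=w(G)=w(G/H)$ by Lemma \ref{dim} and part (i), but the final step $w(G/H)=\drr(G/H)$ is Lemma \ref{dim}(ii) again, which requires $\drr(G/H)$ to be \emph{infinite} --- precisely what you are assuming fails. A finite-dual-rank group such as $\T$ has $w=\omega$, so no contradiction is reached. Drop that remark and keep your primary argument.
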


  \begin{proof}
  The first assertion follows from \cite[Proposition 12.26]{hoffmorr}.

  We divide the proof of the second assertion into two cases:

  \textbf{Case I: $G$ is semisimple, i.e., $G^\prime=G$}. In this case, the dual rank of $G$ is simply
  $\drr(G)=\aleph(G)$.
 Let $\widetilde{G}= \prod_{j\in J}S_j$  be the simply connected semisimple compact group	whose quotient is
	$G$  by the map $\pi_\g:\widetilde{G}\to G^\prime=G$.
	Then
	$G/H$  is also a  quotient of $\prod_{j\in J}S_j$ by the map $q\circ\pi_\g$
	where $q:G\to G/H$ is the quotient map.
	Since $H$ is totally disconnected, we see that $\ker(q\circ\pi_\g)$ is totally disconnected.
	So, by the uniqueness theorem proved in \cite[Theorem 9.19]{hoffmorr} (i), we see the family of simple Lie groups $\{S_j:j\in J\}$ defining $\aleph(G)$ is the same as the one defining $\aleph(G/H)$. Therefore $\aleph(G)=\aleph(G/H),$ and so the claim is clear.

  \textbf{Case II: $G$ is an arbitrary compact connected group}. In this case \[\drr(G)=\aleph(G)+r_0(\widehat{G/G^\prime}).\]
  Then  $G=Z(G) G^\prime$, since $G=Z_0(G) G^\prime$ by  \cite[Theorem 9.24]{hoffmorr}.
  Since every totally disconnected normal subgroup of a connected group is central (see for instance \cite[Proposition A4.27 or Lemma 6.13]{hoffmorr}), we see that $H$ is central, i.e., $H\subset Z(G)$ .
  Therefore we can write $G/H$ as \[G/H=(Z(G) G^\prime)/H=(Z(G)/H) (G^\prime H/H).\]
	It follows that each of the groups $(Z(G)/H)$ and $(G^\prime H/H)$ has a finite dual rank, since
	the group $G/H$ has a finite dual rank by assumption.
 On one hand, using Lemma \ref{abelian}, we see that $Z(G)$ has a finite dual rank, i.e., $r_0(\widehat{Z(G)})$ is finite. And because $G/G^\prime $ is a quotient of $Z(G) $ since it is topologically isomorphic to $Z(G)/(G^\prime\cap Z(G))$ by the second isomorphism theorem,  we deduce that   $r_0(\widehat{G/G^\prime})\leq r_0(\widehat{Z(G)})$, and so $r_0(\widehat{G/G^\prime})$ is finite.

   On the other hand, as $G^\prime H/H$  is topologically isomorphic to $G^\prime/(H\cap G^\prime)$, it follows
    that $G^\prime/(H\cap G^\prime)$ has a finite dual rank, and so Case I implies that $G^\prime$ has a finite dual rank, i.e., $\aleph(G)$ is finite.
   Accordingly, $\drr(G)=\aleph(G)+r_0(\widehat{G/G^\prime})$ is finite, as wanted.
	  \end{proof}

\begin{lemma}\label{rankZ}
      Every compact connected group with an infinite dual rank $\eta$ has the $\mathcal{Z}^\prime$-property of level $\eta$.
			\end{lemma}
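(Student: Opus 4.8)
The plan is to build the family $\{\pi_\alpha\colon\alpha<\eta\}$ by transfinite recursion along a fixed enumeration of $\widehat G$, producing at each stage a normal supplement of the accumulated kernel and taking $\pi_\alpha$ to be a nontrivial irreducible representation that kills that supplement. First I would record the two structural facts that drive everything. Since $G$ is connected with infinite dual rank $\eta$, Lemma \ref{dim}(ii) gives $\eta=\drr(G)=w(G)$, and for an infinite compact group $|\widehat G|=w(G)$, so $\widehat G$ may be enumerated as $\widehat G=\{\sigma_\alpha\colon\alpha<\eta\}$; I would fix such an enumeration with $\sigma_0=1$. Moreover, by Remark \ref{remark5}(iii) infinite dual rank forces $\dim G=\infty$.

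Now the recursion. At stage $\alpha$ the representations $\sigma_\beta,\pi_\beta$ for $\beta<\alpha$ are already available, so $L_\alpha=\bigcap_{\beta<\alpha}(\ker\sigma_\beta\cap\ker\pi_\beta)$ is determined and no circularity arises. For $\alpha=0$ the intersection is empty, $L_0=G$, and any nontrivial irreducible $\pi_0$ satisfies $G=L_0\ker\pi_0$ trivially. For $0<\alpha<\eta$ the subgroup $L_\alpha\subseteq\ker\pi_0\subsetneq G$ is proper (using $\pi_0\neq 1$), and the crux is to show that $L_\alpha$ is \emph{not} totally disconnected. Granting this, Lemma \ref{compconn} yields a proper closed normal $M_\alpha$ with $G=L_\alpha M_\alpha$; pulling back a nontrivial irreducible representation of the nontrivial compact group $G/M_\alpha$ along the quotient map $q_{M_\alpha}$ produces an irreducible $\pi_\alpha\in\widehat G$ with $M_\alpha\subseteq\ker\pi_\alpha$, whence $G=L_\alpha M_\alpha\subseteq L_\alpha\ker\pi_\alpha\subseteq G$, i.e. the required identity $G=L_\alpha\ker\pi_\alpha$. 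Carrying this through all $\alpha<\eta$ gives exactly the $\mathcal{Z}^\prime$-property of level $\eta$.

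The heart of the matter, and the step I expect to be the main obstacle, is the claim that $L_\alpha$ is not totally disconnected. Here I would exploit that each $\ker\sigma_\beta$ and each $\ker\pi_\beta$ has compact-Lie quotient (the representations are finite-dimensional), so $G/L_\alpha$ embeds as a closed subgroup of a product of at most $2\lvert\alpha\rvert$ compact Lie groups, giving $w(G/L_\alpha)\le\aleph_0\cdot\lvert\alpha\rvert$. I would then split into two cases. If $\eta>\aleph_0$, then $\aleph_0\cdot\lvert\alpha\rvert=\max(\aleph_0,\lvert\alpha\rvert)<\eta=w(G)$, so $w(G/L_\alpha)<w(G)$; were $L_\alpha$ totally disconnected, Lemma \ref{connker}(i) would force $w(G/L_\alpha)=w(G)$, a contradiction. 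If $\eta=\aleph_0$ (so $G$ is metrizable), then every $\alpha<\eta$ is finite, $G/L_\alpha$ is a connected compact Lie group, hence finite-dimensional and of finite dual rank by Remark \ref{remark5}(iii); were $L_\alpha$ totally disconnected, Lemma \ref{connker}(ii) would then give that $G$ has finite dual rank, contradicting the infiniteness of $\eta$.

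I expect the genuine structural content to be confined to this dichotomy, with the remaining work being bookkeeping: confirming $|\widehat G|=\eta$ so that the enumeration exists, checking that the pulled-back $\pi_\alpha$ is genuinely irreducible and nontrivial, and verifying that the weight and dual-rank estimates hold uniformly over all $\alpha<\eta$ rather than at a single stage. Once Lemma \ref{BL} is invoked afterwards, the conclusion that $A(G)$ carries an $\mathrm F\ell^1(\eta)$-base of type $2$ will follow immediately.
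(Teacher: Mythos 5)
Your proof is correct and follows essentially the same route as the paper: the same transfinite recursion along an enumeration of $\widehat G$ with $\sigma_0=1$, with Lemma \ref{compconn} supplying the normal supplement $M_\alpha$ and Lemmas \ref{dim} and \ref{connker} ruling out total disconnectedness of $L_\alpha$ via the embedding of $G/L_\alpha$ into a product of compact Lie groups. The only cosmetic difference is that you organize the dichotomy by cases on $\eta$ (countable versus uncountable) whereas the paper splits on the stage (finite versus infinite ordinal); the two case analyses cover the same ground with the same two halves of Lemma \ref{connker}.
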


      \begin{proof} By Lemma \ref{dim}. $\mathrm{d-rank}(G)=w(G)$.

 Let $\widehat{G}=\left\{\sigma_\alpha\colon \alpha<w(G)\right\}$ be any enumeration of $\widehat{G} $ with $\sigma_0=1$, the trivial representation. We construct our sequence $\{\pi_\alpha \colon \alpha<w(G)\}$ inductively.
 Choose $\pi_0$ to be any nontrivial unitary representation of $G$ and let $L_1=\ker \pi_0.$

Since the quotient group  $G/\ker \pi_0$ (i.e., the image $\im(\pi_0)$ of $\pi_0$ by the first isomorphism theorem) is a subgroup of a unitary group, and unitary groups are,   topologically,  subspaces  of $\C^{n}$, $n\in \N$,  Lemma  \ref{dim} shows that  $G/\ker \pi_0$ must have  a  finite dual rank.

 The second assertion of Lemma \ref{connker} implies then that $\ker\pi_0$
 cannot be totally disconnected, otherwise $G$ will have a finite dual rank.
  Lemma \ref{compconn} gives then a proper normal subgroup $M_1$ of $G$ such that  $G=\ker \pi_0 M_1.$  We then choose  $\pi_1\in \widehat{G}$ such that $M_1\subset \ker \pi_1$. Thus, $G=L_1\ker \pi_1$.

 Assume now that, for some $\beta<w(G)$, $\{\pi_\alpha\colon \alpha<\beta\}$ have already been chosen with the $\mathcal{Z}^\prime$-property    and define  $L_\beta=\bigcap_{\alpha<\beta}(\ker\sigma_\alpha\cap\ker\pi_\alpha)$.

If we denote $(\ker\sigma_\alpha\cap\ker\pi_\alpha)$ by $H_\alpha$, i.e., $L_\beta=\bigcap_{\alpha<\beta}H_\alpha,$
then the natural map \[gL_\beta\mapsto (gH_\alpha)_{\alpha<\beta}\] shows there is a copy of $G/L_\beta$ in $\prod_{\alpha<\beta} G/H_\alpha,$
and so, by the first isomorphism theorem, we may see the quotient group
$G/L_\beta$  isomorphic to
 a subgroup of \[\prod_{\alpha <\beta} \bigl(G/(\ker\sigma_\alpha)\times G/(\ker \pi_\alpha)\bigr)=\prod_{\alpha <\beta} \bigl(\im(\sigma_\alpha)\times \im(\pi_\alpha)\bigr).\]
 If $\beta <\omega$, then as above Lemma  \ref{dim} shows that $G/L_\beta$   has finite dual rank, and  so  the second assertion of Lemma \ref{connker} shows that $L_\beta$ cannot be totally disconnected.
 If  $\beta\geq\omega$,  we have   $w(G/L_\beta)\leq \beta< w(G)$
since each factor $\im(\sigma_\alpha)\times \im(\pi_\alpha)$ under the product space is a subspace of some $\C^n$. Then, the first assertion of Lemma \ref{connker} shows again that $L_\beta$ cannot be totally disconnected.

So in both cases, applying again Lemma  \ref{compconn}, we find a  proper closed normal subgroup $M_\beta $ of  $G$  with
$G=L_\beta M_\beta$. The representation $\pi_\beta$  is then chosen such that $M_\beta \subset \ker \pi_\beta$. Obviously, $G=L_\beta \ker \pi_\beta$ and the inductive process is concluded.
        \end{proof}

\begin{theorem}\label{connect} Let $G$ be a compact connected group with an infinite dual rank
and a local weight $\eta.$ Then   $A(G)$ has an F$\ell^1(\eta)$-base of type 2 with constants $K_1=K_2=1.$
\end{theorem}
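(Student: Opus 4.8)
The plan is to reduce Theorem~\ref{connect} to the already-established machinery for the $\mathcal Z'$-property. The key observation is that the hypotheses of the theorem—$G$ compact connected with infinite dual rank—are precisely what is needed to invoke Lemma~\ref{rankZ} and Lemma~\ref{BL} in succession. First I would set $\eta=\drr(G)$ and note that since the dual rank is assumed infinite, Lemma~\ref{dim}(ii) gives $\drr(G)=w(G_0)$; because $G$ is connected we have $G_0=G$, so $\eta=w(G)$, which matches the local weight appearing in the statement. This identification of the index cardinal with the local weight is the only genuinely new bookkeeping step, and it is immediate from the lemmas already proved.

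Next I would apply Lemma~\ref{rankZ}, which states verbatim that every compact connected group with an infinite dual rank $\eta$ has the $\mathcal Z'$-property of level $\eta$. Thus $G$ has the $\mathcal Z'$-property of level $\eta=w(G)$. With the $\mathcal Z'$-property in hand, Lemma~\ref{BL} applies directly: it asserts that a compact group with the $\mathcal Z'$-property of level $\eta$ has an F$\ell^1(\eta)$-base of type~2 in $A(G)$ with constants $K_1=K_2=1$. Stringing these together yields exactly the conclusion of the theorem, with the claimed constants.

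Concretely, the proof I would write is short: let $\eta=\drr(G)$; since $G$ is connected and has infinite dual rank, Lemma~\ref{dim}(ii) gives $\eta=w(G_0)=w(G)$, the local weight of $G$. By Lemma~\ref{rankZ}, $G$ has the $\mathcal Z'$-property of level $\eta$. By Lemma~\ref{BL}, $A(G)$ therefore contains an F$\ell^1(\eta)$-base of type~2 with constants $K_1=K_2=1$, as asserted.

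I do not expect any serious obstacle here, since both of the heavy lifts—constructing the $\mathcal Z'$-family of representations inductively, and turning the $\mathcal Z'$-property into an F$\ell^1(\eta)$-base via the coefficient estimate $\|\varphi\|_{A(G)}\le\|\varphi\pi_\alpha^{k}\|_{A(G)}$—have already been carried out in Lemmas~\ref{rankZ} and~\ref{BL} respectively. The one point meriting a word of care is making sure the cardinal $\eta$ is named consistently: Lemma~\ref{rankZ} produces the $\mathcal Z'$-property at level $\drr(G)$, whereas the theorem phrases its conclusion in terms of the local weight. The reconciliation is exactly the equality $\drr(G)=w(G)$ coming from Lemma~\ref{dim}(ii), valid because the dual rank is infinite and $G$ is connected; once this is stated the theorem follows formally from the two lemmas.
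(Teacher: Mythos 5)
Your proposal is correct and follows exactly the paper's own route: the paper proves Theorem \ref{connect} by citing Lemmas \ref{BL} and \ref{rankZ}, with the identification $\drr(G)=w(G)$ (via Lemma \ref{dim}) already absorbed into the proof of Lemma \ref{rankZ}. Your extra care in reconciling the dual rank with the local weight is a welcome explicit statement of a step the paper leaves implicit, but the argument is the same.
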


      \begin{proof}
This follows directly from Lemmas
\ref{BL} and \ref{rankZ}.
\end{proof}
	 	
\bigskip

Having F$\ell^1(\eta)$-bases in $A(G)$ for the class of compact groups studied in this section, we may apply as previously  Sections \ref{algebra} and \ref{FC}
 to obtain the centres of the Banach algebras $A(G)^{**}$,  $\uc_2(G)^*$ and $C^*(G)^{\perp}$ and their algebraic structure.
We shall however postpone the statements of these results for next section.
The factorization of $VN(G\times H)$, where $H$ is any amenable locally compact group, proved  in Thereorem \ref{tensors},  enables us to  extend the results and state them for a larger class of groups.

	\section{Factorizations of $VN(G\times H)$ and applications}
Throughout the section, we assume that $G$  is as earlier, i.e., a compact group whose local weight $\eta$ has
uncountable cofinality, an infinite product $\prod_{\alpha<\eta}G_\alpha$  of non-trivial compact groups, or a compact connected group with infinite d-rank $\eta$, and let H be an amenable locally compact group. The Fourier algebra $A(G\times H)$ may not have an F$\ell^1(\eta)$-base, but we shall prove factorization theorems for $VN(G\times H)$ and $\uc_2(G\times H)$ using the factorization theorems available for $VN(G)$ and $\uc_2(G)$.
This yields information on the algebraic structure   of $A(G\times H)^{**}$ and  $\uc_2(G\times H)^*$ as well as the strong Arens irregularity of $A(G\times H)$ and  the centre of $\uc_2(G\times H)^*$.

 If $a \in B(G),$ we define $u \in  B(G\times H)$  by $u(s, t) = a(s);$ that is, $u = a\otimes 1_H,$ where $1_H$ is the function
with the constant value $1$ on $H.$ Also if $T \in  V N(G),$ we consider $T \otimes I\in VN(G\times H)$ where $I$ is
the identity operator on $L^2(H).$   So  \[ T\otimes I( f \otimes h) = (T f) \otimes h\quad\text{ for}\quad
f \in L^2(G)\;\text{ and}\;h \in  L^2(H).\]

It is then very quick to verify the following identities, for any locally compact groups $G$ and $H,$
 \begin{equation}\label{tensor}(u\otimes v) \cdot (S \otimes P) = (u \cdot S) \otimes (v \cdot P)\end{equation}
for every $u\in B(G), v\in B(H), S\in VN(G)$ and $P\in VN(H)$.
So
 \begin{equation}\label{tensor2}(\Psi\otimes v) \cdot (S \otimes P) = (\Psi \cdot S) \otimes (v \cdot P)\end{equation}
for every $\Psi\in \uc_2(G)^*$, $v\in B(H), S\in VN(G)$ and $P\in VN(H)$.

Let $\mathscr L=\bigcup_{\alpha<\eta}A_\alpha\{a_\alpha^{k}:k=1,...,m\}$  be the F$\ell^1(\eta)$-base in $A(G)$ obtained in Theorem \ref{ex5}, \ref{prods} or
\ref{connect}. So in the first case,
$\mathscr L$ is of type 3, $m\in \N$ is the dimension of the representaions $\pi_\alpha$ picked for the $\mathcal Z$-property
and $a_\alpha^{k}=\pi_\alpha^{1k}, k=1,...,m$, is the first row of the matrix coefficients of the representation $\pi_\alpha$ for each $\alpha<\eta.$
In the second and third cases, $\mathscr L$ is of type 2, $m=1$  and $a_\alpha=\pi_\alpha^{k}$ is any diagonal coefficient of the representation $\pi_\alpha$ for each $\alpha<\eta$
($k$ may depend on $\alpha$).

Recall also from Section \ref{FF}
the sets
\[X_k=\{\Psi_{p k}:p\in \mathscr{C}(\eta)\}\quad\text{and let}\quad X=\bigcup_{k=1}^mX_k,\]
where $\Psi_{p k}$  are the cluster points of $\{a_\alpha^{k}:\alpha<\eta,\;k=1,...,m\}$
which correspond to the cofinal ultrafilters $p$ in $\eta,$ they are in the closed unit ball of
$\uc_2(G)^{*}$.

For each  $p\in \mathscr{C}(\eta)$ and  $ k=1,...,m$, denote  by $\Psi_{pk}\otimes 1_H$ the corresponding weak$^*$-cluster point of  $\{a_\alpha^{k}\otimes 1_H:\alpha<\eta\}$. Let, for each $ k=1,...,m$, $X_k\otimes 1_H$ be the set formed by these cluster points in the unit ball of  $\uc_2(G\otimes H)^{*}$.
For an arbitrary element in $\prod_{k=1}^mX_k\otimes 1_H$, we will not refer to the ultrafilters
to which it corresponds, and write it simply as $\Psi\otimes 1_H$.

\begin{theorem} \label{tensors} Let $G$  be  a compact group whose local weight $\eta$ has
uncountable cofinality, a compact connected group with infinite d-rank $\eta$, or  an infinite product $\prod_{\alpha<\eta}G_\alpha$  of non-trivial compact groups $G_\alpha$,  and let $H$ be an amenable locally compact group. Then
\begin{enumerate}
\item $VN(G\times H)$ has the uniform $\eta$-factorization through  $\prod_{k=1}^mX_k\otimes 1_H$.
\item $\uc_2(G\times H)$ has the $1$-factorization through  $\prod_{k=1}^mX_k\otimes 1_H$.
\end{enumerate}
\end{theorem}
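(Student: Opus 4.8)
The plan is to transport the proof of Theorem \ref{gfactori} from $A(G)$ to $A(G\times H)$, using that $A(G\times H)=A(G)\widehat{\otimes}A(H)$ and $VN(G\times H)=VN(G)\overline{\otimes}VN(H)$. The decisive observation is the identity (\ref{tensor2}) specialised to $v=1_H$: since $1_H$ is the unit of $B(H)=M_r(A(H))$ (here $H$ is amenable), one has $(\Psi\otimes 1_H)\cdot(S\otimes P)=(\Psi\cdot S)\otimes P$, so that $\Psi\otimes 1_H$ acts on $VN(G)\overline{\otimes}VN(H)$ by applying $\Psi$ on the $G$-leg and the identity on the $H$-leg. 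Let $\mathscr L=\bigcup_{\alpha<\eta}A_\alpha\{a_\alpha^k:k=1,\dots,m\}$ be the F$\ell^1(\eta)$-base of $A(G)$ produced in Theorem \ref{ex5}, \ref{prods} or \ref{connect}. I would run the construction of Theorem \ref{gfactori} inside $A(G\times H)^m$ using, in place of $\bigcup_\alpha A_\alpha\mathbf a_\alpha$, the lifted family $\bigcup_\alpha (A_\alpha\mathbf a_\alpha)\otimes A(H)$, where $\mathbf a_\alpha\otimes 1_H=(a_\alpha^k\otimes 1_H)_{k=1}^m$ is a bounded tuple in $M_r(A(G\times H))^m=B(G\times H)^m$ whose cofinal weak$^*$-cluster points are exactly the elements of $\prod_{k=1}^m X_k\otimes 1_H$.

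The core of the argument is that this lifted family is again an $\ell^1(\eta)$-selective base satisfying the analogue of Conditions (\ref{iv})--(\ref{v}). For the selective inequality I would invoke Arsac's Lemma \ref{arsac} on $G\times H$: every irreducible representation of $G\times H$ is an outer tensor product (as $G$ is compact, hence of type I), and since the $\mathcal Z$-property gives $K_\alpha\otimes\pi_\alpha\cap K_\beta\otimes\pi_\beta=\emptyset$ for $\alpha\neq\beta$, the representations of $G\times H$ whose coefficient spaces contain $(A_\alpha\mathbf a_\alpha)\otimes A(H)$ are pairwise disjoint (their $G$-legs already are). Hence the corresponding coefficient spans have zero pairwise intersections, and Arsac's norm identity yields the selective inequality with constant $K_1=1$, coordinatewise in $A(G\times H)^m$.

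The genuinely delicate point, and the one I expect to be the main obstacle, is the lifting of the multiplier inequality to the estimate
\[\|u\|_{A(G\times H)}\le K_2\,\bigl\|u\,(\mathbf a_\alpha\otimes 1_H)\bigr\|\qquad\text{for every }u\in\langle A_\alpha\rangle\otimes A(H),\]
valid not only on elementary tensors but on arbitrary elements of $\langle A_\alpha\rangle\otimes A(H)$. When $\mathscr L$ is of type $3$ (Theorem \ref{ex5}) this is clean: the relation $\sum_k a_\alpha^k b_\alpha^k=1$ of (\ref{vi}) lifts to $\sum_k (a_\alpha^k\otimes 1_H)(b_\alpha^k\otimes 1_H)=1_{G\times H}$, and a H\"older estimate gives the inequality with the same constant, since $\|b_\alpha^k\otimes 1_H\|_{B(G\times H)}=\|b_\alpha^k\|_{B(G)}$. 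When $\mathscr L$ is of type $2$ (Theorems \ref{prods} and \ref{connect}) with $a_\alpha=\pi_\alpha^k$, I would use that any $u\in\langle A_\alpha\rangle\otimes A(H)$ is invariant under $L_\alpha\times\{e\}$, that $G\times H=(L_\alpha\times\{e\})(M_\alpha\times H)$, and that $\pi_\alpha^k\otimes 1_H$ equals $1$ on $M_\alpha\times H$. The argument of Lemma \ref{LM} (whose proof needs only that the normal factor $L_\alpha\times\{e\}$ be compact, not compactness of the whole group) then gives $\|u\|=\|R_{M_\alpha\times H}(u)\|=\|R_{M_\alpha\times H}\bigl(u(\pi_\alpha^k\otimes 1_H)\bigr)\|\le\|u(\pi_\alpha^k\otimes 1_H)\|$, the last step being contractivity of the restriction map, so that $K_2=1$.

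With the lifted selective base in hand, the remaining steps transcribe Theorem \ref{gfactori}. For the $1$-factorization of $S\in VN(G\times H)$ I would define $S^\alpha$ on $(A_\alpha\mathbf a_\alpha)\otimes A(H)$ by $\langle S^\alpha,(a\mathbf a_\alpha)\otimes v\rangle=\langle S,a\otimes v\rangle$, extend it via Lemma \ref{selective} to $T=(T_k)_{k=1}^m\in VN(G\times H)^m$, and verify $\sum_k(\Psi_{pk}\otimes 1_H)\cdot T_k=S$ on the dense subspace $\langle A_\eta\rangle\otimes A(H)$ by passing to the weak$^*$-limit of the net $\mathbf a_{h(\gamma)}\otimes 1_H\to\Psi_p\otimes 1_H$ exactly as in (\ref{c3})--(\ref{c4}), using the action identity above. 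Partitioning $\eta$ into $\eta$-many cofinal subsets $I_\lambda$ then delivers the uniform $\eta$-factorization of statement (1). Finally, statement (2) follows as in Theorem \ref{lucfactori}: writing $P\in\uc_2(G\times H)=\overline{VN(G\times H)\cdot A(G\times H)}$ as $P=S\cdot u$ and factorizing $S=(\Psi\otimes 1_H)\cdot T$, associativity of the module action gives $P=(\Psi\otimes 1_H)\cdot(T\cdot u)$ with $T\cdot u\in\uc_2(G\times H)$.
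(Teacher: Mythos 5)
Your proposal is correct, and for part (i) it takes a recognizably different route from the paper's. The paper never claims that the lifted family $\bigcup_{\alpha<\eta}(A_\alpha\mathbf{a}_\alpha)\otimes A(H)$ is a selective base in $A(G\times H)$ (it even warns that $A(G\times H)$ may not carry an F$\ell^1(\eta)$-base); instead it writes a general $Q\in VN(G\times H)$ as a weak$^*$-limit of a bounded net of finite sums of elementary tensors $Q_F=\sum_{i\in F}S_i\otimes P_i$ (citing Effros--Ruan), transports only the $G$-legs to functionals $S^i$ on $\mathcal L$, forms $R_F=\sum_{i\in F}S^i\otimes P_i$, bounds it by $K_1K_2\Vert Q_F\Vert$ via the estimate (\ref{KK}), and recovers the factorizing operator $R$ as a weak$^*$-cluster point of $(R_F)$. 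You dispense with the approximation by elementary tensors and the cluster-point step: you verify that the lifted family inherits the selective inequality (Arsac's Lemma \ref{arsac} applied on $G\times H$, the disjointness of $\bigl(\oplus_{\sigma\in K_\alpha}\sigma\otimes\pi_\alpha\bigr)\otimes\lambda_H$ for distinct $\alpha$ following from disjointness of the $G$-legs) and the multiplier inequality (the lifted relation $\sum_k(a_\alpha^k\otimes 1_H)(b_\alpha^k\otimes 1_H)=1$ in the type-3 case, and the restriction argument through $G\times H=(L_\alpha\times\{e\})(M_\alpha\times H)$ in the type-2 case), after which the proof of Theorem \ref{gfactori} runs verbatim with $S^\alpha$ defined directly from $S$. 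This buys something concrete: the paper's estimate (\ref{KK}) is only written for elements of the special form $(\sum_n a_n\mathbf{a}_{\alpha_n})\otimes b$, whereas extending $R_F$ boundedly to the closure of the full algebraic tensor product $\langle\mathcal L\rangle\otimes A(H)$ requires the bound for general elements $\sum_j x_j\otimes b_j$ --- and that is exactly what your two lifted inequalities supply, so your version is if anything the more complete one. Two minor remarks: the lifted sets $A_\alpha\otimes A(H)$ are unbounded, so the family is not literally an F$\ell^1(\eta)$-base in the sense of Definitions \ref{Fel}--\ref{Felll}, but boundedness of $A_\eta$ is nowhere used in the proof of Theorem \ref{gfactori}; and the disjointness over $G\times H$ is better justified by restricting an intertwiner to $G\times\{e\}$ and using disjointness over the compact group $G$ (as your parenthesis suggests) than by classifying the irreducibles of $G\times H$, since $H$ need not be of type I.
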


\begin{proof}
(i) We prove the $1$-factorization  of $VN(G\times H)$ through  $\prod_{k=1}^mX_k\otimes 1_H$ when $G$ has the cofinality of its local weight
uncountable.  The other two cases follow in the same way with $m=1.$
The case when $Q\in VN(G\times H)$ has the form $S\otimes P$ with $S\in VN(G)$ and $P\in VN(H)$ is straightforward.
Pick by Theorem \ref{gfactori}, any $\Psi=(\Psi_k)_{k=1}^m\in\prod_{k=1}^mX_k$ and $T=(T_k)_{k=1}^m\in (VN(G))^m$
 such that \begin{equation} \label{zero1} S=\sum_{k=1}^m\Psi_k\cdot T_k=\Psi\cdot T.\end{equation}
Then, using (\ref{tensor2}), we find \[(\Psi\otimes 1_H)\cdot (T\otimes P)=
\sum_{k=1}^m(\Psi_k\otimes 1_H)\cdot (T_k\otimes P)=\sum_{k=1}^m(\Psi_k\cdot T_k)\otimes P=S\otimes P=Q,\]
as required.

Let now $Q\in VN(G\times H)$  be arbitrary and see it by \cite{EfRu90},
as
a weak$^*$-limit of a bounded net $(Q_F)_{F\in \N^{<w}}$, where $\N^{<w}$ is the set of all finite subsets of $\N$ and  \[Q_F=\sum_{i\in F}S_i\otimes P_i,\quad\text{with}\quad S_i\in VN(G),\; P_i\in VN(H)\;\text{ for each}\; i\in F.\]

Let $\mA=A(G)$ and $A_\alpha$ be as defined in Lemma \ref{zcoef}.
For each $i\in F$,    define  $S^i$ on $\mathcal L=\bigcup_{\alpha<\eta} A_\alpha{\bf a}_\alpha$ by
\begin{equation}\label{lambda}\langle S^i, a{\bf a}_\alpha \rangle=\langle S_i, a\rangle,\quad\text{for every}\quad a\in A_\alpha\quad\text{and}\quad\alpha<\eta,\end{equation}
where ${\bf a}_\alpha=(a_\alpha^k)_{k=1}^m=(\pi_\alpha^{1k})_{k=1}^m$.
Let for each $F\in \N^{<w}$, \[R_F=\sum_{i\in F} S^{i}\otimes P_i\] be defined on $\mathcal L\otimes A(H).$
Let $x\in \langle\mathcal L\rangle\otimes A(H)$ and write it as  $x=(\sum_{n=1}^p a_n{\bf a}_{\alpha_n})\otimes b$, where $a_n\in\langle A_{\alpha_n}\rangle$ and $\alpha_1<\alpha_2<...<\alpha_p$.
Accordingly,
\begin{equation}\label{KK}  \begin{split}
|\langle R_F, x\rangle|&= |\langle \sum_{i\in F}\sum_{n=1}^p \langle S^{i}, a_n{\bf a}_{\alpha_n}\rangle\langle P_{i}, b\rangle|=
 |\langle \sum_{i\in F}\sum_{n=1}^p  \langle S_{i}, a_n\rangle\langle P_{i}, b\rangle|\\&=
 |\langle \sum_{i\in F}   S_{i}\otimes P_i, \sum_{n=1}^pa_n\otimes b\rangle|\\&\le
\|\sum_{i\in F}   S_{i}\otimes P_i\|\| \sum_{n=1}^pa_n\otimes b\|=
\|Q_F\|\| \sum_{n=1}^pa_n\|\| b\|\\&\le \|Q_F\|\sum_{n=1}^p\|a_n\|\| b\|
\le K_2\|Q_F\|\sum_{n=1}^p\|a_n{\bf a}_{\alpha_n}\|\| b\|\\&
\le K_1  K_2\|Q_F\|\sum_{n=1}^p a_n{\bf a}_{\alpha_n}\|\| b\|
= K_1  K_2\|Q_F\|\sum_{n=1}^p a_n{\bf a}_{\alpha_n}\otimes b\|\\&
= K_1  K_2\|Q_F\|x\|,
\end{split}\end{equation}
where  by Lemma \ref{zcoef} since $A(G)$ has the $\mathcal{Z}$-property of level $\eta$, $K_1=1$ is the selective constant and $K_2=
=\sup_\alpha\|(\pi_\alpha^{1k})_{k=1}^m\|\le m$ is the F$\ell^1(\eta)$-base constant.
(In the two other cases, $K_1=K_2=1$.)
In other words, $R_F$ is a bounded operator (by $\| Q_F\|$) on the subspace $\langle\mathcal L\rangle\otimes A(H)$ of $A(G\times H)$.
We may therefore extend $R_F$ to an operator in $VN(G\times H)$ with  $\|R_F\|\le \|Q_F\|.$ We denote this extension again by $R_F$ and consider the net $(R_F)_{F\in \N^{<w}}$. This is a bounded net since $(Q_F)_{F\in \N^{<w}}$ is bounded, and so it weak$^*$-clusters
at some $R\in VN(G\times H).$  We claim that \[(\Psi\otimes 1_H)\cdot R=Q.\]
If $a\in A(G)$ and $b\in A(H),$ Then, $a\in A_\beta$ for some $\beta$ and so for every $\alpha\ge\beta$, we find
\begin{align*}\langle R_F, (a {\bf a}_\alpha)\otimes b\rangle&= \langle  \sum_{i\in F} S^{i}\otimes P_i,(a {\bf a}_\alpha)\otimes b\rangle\\&=
\langle  \sum_{i\in F} S^{i},(a {\bf a}_\alpha)\rangle\langle P_i, b\rangle\\&=
\langle  \sum_{i\in F} S_{i},a\rangle\langle P_i, b\rangle=
\langle   \sum_{i\in F} S_{i}\otimes P_i,a\otimes b\rangle\\&=\langle Q_F, a\otimes b\rangle.\end{align*}

Therefore, \begin{align*}\langle R, (a {\bf a}_\alpha)\otimes b\rangle= \lim_F\langle R_F, (a {\bf a}_\alpha)\otimes b\rangle
=\lim_F\langle Q_F, a\otimes b\rangle=\langle Q, a\otimes b\rangle.\end{align*}
Accordingly,

\begin{align*} \langle (\Psi\otimes 1_H)\cdot R,  a\otimes b\rangle&=
\lim_\gamma\langle ({\bf a}_{h(\gamma)}\otimes 1_H)\cdot R, a\otimes b\rangle\\&=\lim_\gamma\langle  R,
 (a\otimes b) ({\bf a}_{h(\gamma)}\otimes 1_H)\rangle\\&=\lim_\gamma\langle  R,
 (a{\bf a}_{h(\gamma)})\otimes b)\rangle
=\langle Q, a\otimes b\rangle, \end{align*}
as wanted.

To prove that $VN(G\times H)$ has the uniform $\eta$-factorization, we may argue in two ways either as in Theorem \ref{gfactori} or as in Remark \ref{ortho}(ii).
We choose the shorter route of the remark.
Let $\{Q_\lambda\}_{\lambda<\eta}$ be any bounded family of operators in $VN(G\times H)$,
and regard each $Q_\lambda$ as a weak$^*$-limit of a net $(Q_{\lambda F})_{F\in \N^{<w}}$
made as earlier of simple operators \[Q_{\lambda F}=\sum_{i\in F}S_{\lambda i}\times P_{\lambda i},\quad F\in \N^{<w}.\]
Partition $\eta$ into $\eta$ many subsets $I_\lambda$ such that each $I_\lambda$ is cofinal in $\eta$.
Take
for each  $\lambda<\eta$ and each $ k=1,...,m$,  an element $\Psi_{k\lambda}\otimes 1_H$  in $X_k\otimes 1_H$ corresponding to some ultrafilter  $p\in \mathscr{C}(I_\lambda)$,
i.e, an element which comes as a weak$^*$-cluster point of  $\{a_\alpha^k\otimes 1_H:\alpha\in I_\lambda\}$, where $a_\alpha^k=\pi_\alpha^{1k}$ are picked as in Theorem \ref{ex5} or
$a_\alpha^k=\pi_\alpha^{k}$ are picked as in Theorems \ref{prods} and \ref{connect} depending on the type of our base.
The elements  $\Psi_{k\lambda}\otimes 1_H$ are in the unit ball of  $\uc_2(G\otimes H)^{*}$ and belong to $C^*(G\times H)^{\perp}$. Put $\Psi_\lambda\otimes 1_H=(\Psi_{k\lambda}\otimes 1_H)_{k=1}^m$.
($\Psi_{k\lambda}$ and $\Psi_{pk}$ are not to be confused, one corresponds to the cofinal ultrafilter $p$ on $\eta$
while the other corresponds to the cofinal subset $I_\lambda$ of $\eta$.)
For each $\lambda<\eta$,  $F\in \N^{w}$ and $i\in F$,  instead of (\ref{lambda}) define  $S^{\lambda i}$ on $\mathcal L=\bigcup_{\alpha<\eta} A_\alpha{\bf a}_\alpha$ by
\begin{equation}\label{idstr}\langle S^{\lambda i}, a{\bf a}_\alpha \rangle=\begin{cases}\langle S_{\lambda i}, a\rangle,\quad\text{if}\quad \alpha\in I_\lambda\\
0\quad\quad\quad\;\text{if}\quad \alpha\notin I_\lambda,\end{cases}\end{equation}
and let \[R_{\lambda F}=\sum_{i\in F} S^{\lambda i}\otimes P_{\lambda i}.\]
Then the argument in (\ref{KK}) shows that $R_{\lambda F}$ is a bounded operator on $\langle\mathcal L\rangle\otimes A(H)$
and so may be extended to an operator in $VN(G\times H)$. The bounded net $(R_{\lambda F})_{F\in \N^{<w}}$ so obtained, weak$^*$-clusters
at some $R_{\lambda}\in VN(G\times H).$  The operator $R_\lambda$
satisfies \[\langle R_\lambda, (aa_\alpha)\otimes b\rangle=\begin{cases} \langle Q_\lambda, a\otimes b\rangle\quad\text{if}\;\alpha\in
I_\lambda\\
0\quad\quad\text{if}\; \alpha\notin I_\lambda.\end{cases}\]
Now let $R=\sum_{\lambda<\eta} R_\lambda$ be first defined on $\mathcal L\otimes A(H)$ and continue as in Remark \ref{ortho}(ii).

(ii) Since $G\times H$ is amenable, $A(G\times H)$ has a bai.  So we may proceed precisely as in Theorem \ref{lucfactori} for the 1-factorization of $\uc_2(G\times H)$ since now that we have the 1-factorization of $VN(G\times H)$.
\end{proof}

\begin{remark} \label{notquit} Due to the availability of the F$\ell^1(\eta)$-bases, together with the factorization theorems,  Remark \ref{ortho} was also necessary for the algebraic structure reached in the previous sections. Next we check that this remark is also valid here.

So let our group have the form $G\times H$ as in Theorem \ref{tensors}, and $Q$ be in $VN(G\times H)$ or in $\uc_2(G\times H)$ and $I$ be any cofinal subset of $\eta$. Then there exists $R\in VN(G\times H)$ or $\uc_2(G\times H),$ respectively
such that
\begin{equation}\label{idstru} \Psi_p\cdot R=\begin{cases} Q,\quad\text{if}\quad I\in p\\0,\quad\;\;\text{otherwise}.\end{cases}\end{equation}
To see this, suppose that $Q$ be in $VN(G\times H)$ and let $p$ be any ultrafilter having $I$ as a member and $\Psi_p$ be any element in   $\prod_{k=1}^mX_k\otimes 1$.
Let $(\sum_{i\in F}S_i\otimes P_i)_{ F\in \N^w}$ be the net in $VN(G\times H)$ with $Q$ as a weak$^*$-limit.
As in (\ref{idstr}), define for each ${ F\in \N^w}$ and $i\in F$, $S^i$ on $\mathcal L$ by
\begin{equation*}\langle S^{i}, a{\bf a}_\alpha \rangle=\begin{cases}\langle S_{i}, a\rangle,\quad\text{if}\quad \alpha\in I\\
0\quad\quad\quad\;\text{if}\quad \alpha\notin I,\end{cases}\end{equation*}
and let \[R_{F}=\sum_{i\in F} S^{i}\otimes P_{i}.\]

Proceed then as in the proof of Theorem \ref{tensors} to see that $(R_F)_F$ is bounded, and so has a weak$^*$-cluster operator $R\in VN(G\times H)$  having the property required in (\ref{idstru}).

If $Q=S\cdot u\in \uc_2(G\times H)$ with $S\in VN(G\times H)$ and $u\in A(G\times H,)$ then the  factorization of $S$ by $\Psi_p$ and $R$ as in (\ref{idstru}) yields the same type of factorization of $Q$ by $\Psi_p$ and $R\cdot u\in \uc_2(G\times H)$.
\end{remark}

\subsection{Algebraic structure}\label{alst}
As it is with the group algebra in Section \ref{gr}, nontrivial  left  ideals  of finite  dimensions exist in $A(G)^{**}$ and $\uc_2(G)^*$ for any locally compact group (not necessarily amenable in this case), they are generated by the topological invariant means (TIMs),  by the annihilators of $\uc_2(G)$ when $G$ is not compact, or  the mixture of both.
Since there are  $2^{2^\eta}$ TIMs in $A(G)^{**}$ and $\uc_2(G)^*$  when $G$ is non-discrete
(see \cite[Theorem 3.3]{chou82} for metrizable $G$, and \cite[Theorem 5.9]{Hu95} or \cite[Corollary 5.4]{enar2} for nonmetrizable $G$),
at least this many distinct left ideals of finite dimension exists in $A(G)^{**}$
and $\uc_2(G)^*$ when $G$ is non-discrete.
For more details, see \cite{FiMe}. But as we shall soon see, there are also plenty of distinct left ideals of infinite dimension
when the group is as in Theorem \ref{tensors}.

The part on  $A(G\times H)^{**}$ and $\uc_2(G\times H)^*$ in Theorem \ref{above13} made up the main results in \cite{MMM1}. However, our proofs here are much more simplified.

\begin{theorem}\label{above13}  Let $G$ be a compact group such that $\eta=w(G)$ has uncountable cofinality and $H$ be an amenable locally compact group. Let $\mathscr S$ be any of the algebras
$A(G\times H)^{**}$, $\uc_2(G\times H)^*$ or $C^*(G\times H)^{\perp}$.
Then
\begin{enumerate}
\item  there are at least $2^{2^\eta}$ many right cancellable sets in $\mathscr S$.
\item  there are at least $2^{2^{\eta}}$ many  left ideals in $\mathscr S$ with dimension at least  $2^{2^{\eta}}$.
\item  the dimension of any principal right ideal,  and so of any nonzero right ideal, in $\mathscr S$
 is at least $2^{2^\eta}.$
\end{enumerate}
\end{theorem}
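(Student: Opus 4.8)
The plan is to recognise that Theorem \ref{above13} is not a fresh computation but an application of the structural Theorems \ref{can}, \ref{ride} and \ref{lide} to the algebra $\mA=A(G\times H)$. Since $G\times H$ is amenable, $\mA$ carries a bounded approximate identity, $\mA^*=VN(G\times H)$ and $\luc(\mA)=\uc_2(G\times H)$. The one difficulty is that $A(G\times H)$ need not possess an F$\ell^1(\eta)$-base, so those three theorems cannot be quoted verbatim. However, their proofs invoke the base only through the factorization properties of Theorems \ref{gfactori} and \ref{lucfactori} and through the orthogonal refinement of Remark \ref{ortho} (in which $\Psi_\mu\cdot T_\lambda$ equals $S_\lambda$ when $\mu=\lambda$ and vanishes otherwise), always carried out through the product set $\prod_{k=1}^mX_k$. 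For the group $G\times H$ exactly these tools are supplied by Theorem \ref{tensors}(i),(ii) and Remark \ref{notquit}, with $\prod_{k=1}^mX_k$ replaced throughout by $\prod_{k=1}^mX_k\otimes 1_H$. The proof therefore consists in re-running the arguments of Theorems \ref{can}, \ref{ride} and \ref{lide} with this single substitution.

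First I would fix the data: by Theorem \ref{ex5} the compact group $G$ has an F$\ell^1(\eta)$-base of type $3$, giving the sets $X_k=\{\Psi_{pk}:p\in\mathscr C(\eta)\}$ of Notation \ref{through}, and hence the sets $X_k\otimes 1_H$ and $X\otimes 1_H=\bigcup_{k=1}^mX_k\otimes 1_H$ in the unit ball of $\uc_2(G\times H)^*$. As already recorded in the proof of Theorem \ref{tensors}, $X\otimes 1_H\subseteq C^*(G\times H)^\perp$; fixing Hahn--Banach extensions $\widetilde{X\otimes 1_H}$ into $A(G\times H)^{**}$ (again landing in the annihilator there) shows that the relevant set sits inside each of the three algebras $\mathscr S$, using that $C^*(G\times H)^\perp$ is a weak$^*$-closed ideal in the decomposition $\uc_2(G\times H)^*=B(G\times H)\oplus C^*(G\times H)^\perp$. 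Next, exactly as at the start of the proof of Theorem \ref{can}, the $1$-factorization of Theorem \ref{tensors}(ii) lets me separate two distinct cofinal ultrafilters $p,q$ by an operator $P_k\in\uc_2(G\times H)$, so that each map $p\mapsto\Psi_{pk}\otimes 1_H$ is injective; since $|\mathscr C(\eta)|=2^{2^\eta}$ this produces $2^{2^\eta}$ many sets $\{\Psi_{pk}\otimes 1_H:k=1,\dots,m\}$.

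With these in hand the three statements would follow the three earlier proofs line by line. For (i), given a nonzero $\Psi\in\mathscr S$ I would pick $P$ with $\langle\Psi,P\rangle\ne 0$, factor $P=\sum_{k=1}^m(\Psi_{pk}\otimes 1_H)\cdot T_k$ via Theorem \ref{tensors}(ii), and conclude $\Psi(\Psi_{pk}\otimes 1_H)\ne 0$ for at least one $k$, so the set $\{\Psi_{pk}\otimes 1_H:k\}$ is right cancellable; ranging over $\mathscr C(\eta)$ gives $2^{2^\eta}$ of them. For (ii), the orthogonal factorization of Remark \ref{notquit} shows, as in Theorem \ref{lide}, that the left ideals generated by $\{\Psi_{pk}\otimes 1_H:k\}$ for distinct $p$ are mutually non-containing, hence $2^{2^\eta}$ distinct; and the linear independence of $X\otimes 1_H$ together with the injectivity of $\Psi\mapsto\Psi(\Psi_{p1}\otimes 1_H,\dots,\Psi_{pm}\otimes 1_H)$ bounds each of their dimensions below by $2^{2^\eta}$, as in Theorem \ref{lide}(iii). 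For (iii), the same linear-independence argument of Theorem \ref{ride}, applied to the family $\{(\Psi(\Psi_{p1}\otimes 1_H),\dots,\Psi(\Psi_{pm}\otimes 1_H)):p\in\mathscr C(\eta)\}$ via Remark \ref{notquit}, forces any principal right ideal $\Psi\mathscr S$ to have dimension at least $2^{2^\eta}$.

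The genuinely hard work is thus entirely contained in Theorem \ref{tensors}, whose construction transfers the factorization from $VN(G)$ to $VN(G\times H)$ through the tensor identity (\ref{tensor2}); granting it, the main thing to be careful about is bookkeeping: verifying that the proofs of Theorems \ref{can}, \ref{ride} and \ref{lide} use the F$\ell^1(\eta)$-base only through Theorems \ref{gfactori}, \ref{lucfactori} and Remark \ref{ortho}, and that each of the three choices of $\mathscr S$ is a subsemigroup or subalgebra containing $X\otimes 1_H$ (respectively $\widetilde{X\otimes 1_H}$). I expect no real obstacle beyond this substitution check, since the abundance of cancellable sets and of large left and right ideals is driven solely by the $2^{2^\eta}$ cofinal ultrafilters and by the factorizations now available through $\prod_{k=1}^mX_k\otimes 1_H$.
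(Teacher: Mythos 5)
Your proposal is correct and follows essentially the same route as the paper: the authors' proof is precisely to note that $X\otimes 1_H\subseteq C^*(G\times H)^{\perp}$ and to re-run the proofs of Theorems \ref{can}(ii), \ref{ride} and \ref{lide}(ii), with the $1$-factorizations of $VN(G\times H)$ and $\uc_2(G\times H)$ supplied by Theorem \ref{tensors} and the orthogonal refinement by Remarks \ref{luc} and \ref{notquit}. Your substitution check — that those three proofs use the F$\ell^1(\eta)$-base only through the factorization statements — is exactly the point the paper relies on.
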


\begin{proof} Since $X$ is contained in   $C^*(G\times H)^{\perp}$ and since we have  the $1$-factorization of $VN(G\times H)$ and
$\uc_2(G\times H)$ by Theorem \ref{tensors}, using the proofs of Theorems \ref{can}(ii), \ref{ride}  and
\ref{lide}(ii) with Remarks \ref{luc}  and \ref{notquit} in mind gives the claimed properties.
\end{proof}

\begin{theorem}\label{above22}  Let $G$ be a compact group which is either connected with infinite d-rank $\eta$ or is an infinite product $\prod_{\alpha<\eta}G_\alpha$  of non-trivial compact groups, and $H$ be an amenable locally compact group. Let $\mathscr S$ be any of the algebras
$A(G\times H)^{**}$, $\uc_2(G\times H)^*$ or  $C^*(G\times H)^{\perp}$.
Then
\begin{enumerate}
\item  there are at least $2^{2^\eta}$ many right cancellable elements in $\mathscr S$.
\item  there are at least $2^{2^{\eta}}$ many  principal left ideals in $\mathscr S$ with
trivial pairwise intersections and with  dimension at least  $2^{2^{\eta}}$.
\item  the dimension of any principal right ideal,  and so of any nonzero right ideal, in $\mathscr S$
 is at least $2^{2^\eta}.$
\end{enumerate}
\end{theorem}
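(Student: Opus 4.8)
The plan is to reduce Theorem~\ref{above22} to the machinery of Section~\ref{algebra} exactly as was done for Theorem~\ref{above13}, but now exploiting that for the groups under consideration the relevant F$\ell^1(\eta)$-base is of \emph{type 2}, that is, $m=1$. First I would recall that, by Theorem~\ref{prods} (for $G=\prod_{\alpha<\eta}G_\alpha$) and Theorem~\ref{connect} (for $G$ compact connected with infinite d-rank $\eta$), the Fourier algebra $A(G)$ carries an F$\ell^1(\eta)$-base of type 2 with constants $K_1=K_2=1$. Feeding this base into Theorem~\ref{tensors} then supplies the two factorization statements actually needed for the product group: $VN(G\times H)$ has the uniform $\eta$-factorization and $\uc_2(G\times H)$ has the $1$-factorization, both through $X_1\otimes 1_H$ (the product $\prod_{k=1}^m X_k\otimes 1_H$ collapses to a single factor since $m=1$). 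I would also record that $X:=X_1\otimes 1_H$ sits inside $C^*(G\times H)^\perp$, hence inside $\uc_2(G\times H)^*$, and that its Hahn--Banach liftings $\widetilde X$ sit inside $A(G\times H)^{**}$; this is precisely the containment $\widetilde X\subseteq\mathscr S$ (resp. $X\subseteq\mathscr S$) demanded by Theorems~\ref{can}, \ref{ride} and \ref{lide}.

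The next step is to observe that the proofs of Theorems~\ref{can}, \ref{ride} and \ref{lide} never use the F$\ell^1(\eta)$-base of the underlying algebra directly: they use only the $1$-factorization together with its orthogonal refinement, and the fact that $X$ (or $\widetilde X$) lies in the ambient semigroup or algebra $\mathscr S$. Since $A(G\times H)$ need not itself possess an F$\ell^1(\eta)$-base, this is the crucial point, and it is exactly why Theorem~\ref{tensors} and Remark~\ref{notquit} were isolated: they hand us the factorization and the separating identities in $VN(G\times H)$ and $\uc_2(G\times H)$ without any base. With these in hand, the arguments of Theorem~\ref{can}(i) (right cancellable \emph{elements}, since $m=1$), Theorem~\ref{ride} (right ideals of dimension $\ge 2^{2^\eta}$), and Theorem~\ref{lide}(i) and (iii) (principal left ideals $\mathscr S\Psi_p$ with zero pairwise intersections, each of dimension $\ge 2^{2^\eta}$) transcribe, reading $\Psi_p\otimes 1_H$ for $\Psi_p$ throughout.

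Concretely, for (i) I would fix distinct cofinal ultrafilters $p,q\in\mathscr C(\eta)$, use Remark~\ref{luc} transported through the tensor identity~(\ref{tensor2}) to produce a functional separating $\Psi_p\otimes 1_H$ from $\Psi_q\otimes 1_H$, giving $2^{2^\eta}$ distinct points, each right cancellable via the $1$-factorization of $\uc_2(G\times H)$ as in Theorem~\ref{can}. For (ii) and (iii) I would pick a nonzero $\Psi\in\mathscr S$ and $P$ with $\langle\Psi,P\rangle\ne 0$, then apply the orthogonal factorization of Remark~\ref{notquit} to obtain operators $R$ with $\Psi_p\cdot R=P$ but $\Psi_q\cdot R=0$; this both separates the left ideals $\mathscr S\Psi_p$ and forces the linear independence of $\{\Psi\Psi_p:p\in\mathscr C(\eta)\}$, yielding the dimension bounds.

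I expect the main obstacle to be bookkeeping rather than conceptual: one must check that the orthogonal (``delta'') version of the factorization is available simultaneously in the $A(G\times H)^{**}$, $\uc_2(G\times H)^*$ and $C^*(G\times H)^\perp$ settings, and that the separation argument of Remark~\ref{luc} survives the passage from $A(G)$ to $A(G\times H)$ through $1_H$. Both are guaranteed by Theorem~\ref{tensors} and Remark~\ref{notquit}, so the proof ultimately reduces to the single observation that, since $X\subseteq C^*(G\times H)^\perp\subseteq\mathscr S$ and the $1$-factorization of $VN(G\times H)$ and $\uc_2(G\times H)$ holds by Theorem~\ref{tensors}, the proofs of Theorems~\ref{can}(i), \ref{ride} and \ref{lide}(i),(iii) together with Remarks~\ref{luc} and \ref{notquit} give all three statements.
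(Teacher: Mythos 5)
Your proposal is correct and follows essentially the same route as the paper: the paper's proof of Theorem \ref{above22} likewise reduces everything to the $1$-factorizations of $VN(G\times H)$ and $\uc_2(G\times H)$ supplied by Theorem \ref{tensors}, the containment $X\subseteq C^*(G\times H)^\perp$, and a transcription of the proofs of Theorems \ref{can}(i), \ref{ride} and \ref{lide}(i),(iii) with Remarks \ref{luc} and \ref{notquit}. You correctly identify the one substantive point distinguishing this case from Theorem \ref{above13}, namely that the base is of type $2$ (so $m=1$), which is what upgrades right cancellable sets to elements and yields principal left ideals with trivial pairwise intersections.
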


\begin{proof} This follows with the same proof as above using the proofs of Theorems  \ref{can}(i), \ref{ride} and  \ref{lide}(i) and Remarks \ref{luc} and \ref{notquit}.
\end{proof}

\subsection{Topological centres}\label{topcen2}
Statements (i) and (ii) in the following theorem were proved in \cite[Corollary 4.7]{MMM1}
for the case when $G$ is compact non-metrizable with the cofinality of $w(G)$ uncountable.
The condition that $\omega(H)\le w(G),$ when $H$ is not necessarily compact, was missing in  \cite[Corollary 4.7]{MMM1}. This condition guarantees that $A(G\times H)$ has  Mazur property of level $w(G)$, required for Theorems \ref{Lcentre} and \ref{lucentre} to  apply.
This is now corrected in the following theorem.
In Statements (i) and (ii), we also simplify and improve the main result proved by Lau and Losert in \cite{LaLo05}, which was  proved  with $\eta=w$, each $G_\alpha$ is a non-trivial metrizable compact group and $H$ is a second countable locally compact amenable group.

The rest of the results are new.

\begin{theorem} \label{GtimesH} Let $G$ be a compact group with local weight $\eta=w(G)$ and $H$ be  an amenable locally compact group with $w(H)\le \eta.$ Then in the following cases
\begin{enumerate}
\item[(i)] $G$ is  non-metrizable such that $\eta$ has uncountable cofinality,
\item[(ii)]  $G$ is an infinite product $\prod_{\alpha<\eta}G_\alpha$  of non-trivial compact groups such that  $\sup \omega(G_\alpha)\le \eta$,
\item[(iii)]   $G$ is connected with infinite dual rank $\eta$,
\end{enumerate}
\begin{enumerate}
\item  the topological  centre of $A(G\times H)^{**}$ is $A(G\times H).$
\item  the topological centre of $\uc_2(G\times H)^*$ is $B(G\times H)$.
\item the topological centre of $C^*(G\times H)^{\perp}$ is $\{0\}$.
\end{enumerate}
\end{theorem}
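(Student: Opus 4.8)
The plan is to derive all three statements from the factorization Theorem \ref{tensors} together with the general topological-centre Theorems \ref{Lcentre} and \ref{lucentre}, exactly as was done for the group algebra in Theorem \ref{centres}. First I would verify the Mazur-property hypothesis. Since $G$ is compact with local weight $\eta=w(G)$ and $H$ is amenable with $w(H)\le\eta$, the product $G\times H$ has local weight $\eta$, and the Fourier algebra $A(G\times H)$ inherits the $\eta$-Mazur property; this is where the condition $w(H)\le\eta$ is essential (it is precisely the correction to \cite{MMM1} noted before the statement). With the $\eta$-Mazur property in hand, the two ingredients required by Theorems \ref{Lcentre} and \ref{lucentre} are available once we know the relevant factorization.

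\textbf{Statements (i) and (ii).} By Theorem \ref{tensors}(i), $VN(G\times H)=A(G\times H)^*$ has the uniform $\eta$-factorization property through $\prod_{k=1}^m X_k\otimes 1_H$, with the set $\widetilde{X}\otimes 1_H$ sitting inside $A(G\times H)^{**}$. Applying Theorem \ref{Lcentre}(ii) then gives $T_1Z(A(G\times H)^{**})=A(G\times H)$, which is Statement (i). For Statement (ii), I would note that $A(G\times H)$ is amenable (as $G\times H$ is amenable, $G$ being compact and $H$ amenable), hence has a bai, so $B(G\times H)=M(A(G\times H))=M_r(A(G\times H))$ plays the role of the multiplier algebra. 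Theorem \ref{tensors}(i) supplies the $\eta$-factorization of $\uc_2(G\times H)^*=A(G\times H)^*$'s relevant introverted predual through the same product $\prod_{k=1}^m X_k\otimes 1_H\subseteq \uc_2(G\times H)^*$, and Theorem \ref{lucentre}(iii) then identifies $T_1Z(\uc_2(G\times H)^*)$ isometrically with $M_r(A(G\times H))=B(G\times H)$, giving Statement (ii).

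\textbf{Statement (iii).} For the annihilator $C^*(G\times H)^{\perp}$ I would use the Banach-space decomposition $\uc_2(G\times H)^*=B(G\times H)\oplus C^*(G\times H)^{\perp}$ recalled in the preliminaries, in which $C^*(G\times H)^{\perp}$ is a weak$^*$-closed ideal. Since $X\otimes 1_H\subseteq C^*(G\times H)^{\perp}$ (the cluster points $\Psi_{pk}\otimes 1_H$ annihilate $C^*(G\times H)$, as they arise from matrix coefficients with no constant component), this is a weak$^*$-closed subsemigroup containing $X$, so Theorem \ref{lucentre}(i) applies to $\mathscr S=C^*(G\times H)^{\perp}$ to yield $T_1Z(\mathscr S)=\mathscr S\cap M_r(A(G\times H))$. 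The result follows from the intersection fact $C^*(G\times H)^{\perp}\cap B(G\times H)=\{0\}$, which is immediate from the directness of the decomposition above.

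\textbf{Main obstacle.} The genuinely delicate point is confirming the $\eta$-Mazur property of $A(G\times H)$ under the hypothesis $w(H)\le\eta$ across all three cases; for the compact connected case one must also invoke Lemma \ref{dim} to identify the dual rank with $w(G_0)$ and thereby pin down the correct factorization level $\eta$. Since the abstract machinery of Sections \ref{FC} and \ref{FF} is entirely group-independent once the factorization through $\prod_{k=1}^m X_k\otimes 1_H$ is established, and that factorization is exactly the content of Theorem \ref{tensors}, the remaining work is bookkeeping: checking the containments $X\otimes 1_H\subseteq C^*(G\times H)^{\perp}$ and $\widetilde{X}\otimes 1_H\subseteq A(G\times H)^{**}$ and applying the cited theorems verbatim.
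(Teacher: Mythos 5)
Your proposal is correct and follows essentially the same route as the paper: the $\eta$-Mazur property of $A(G\times H)$ from the hypothesis $w(H)\le\eta$, the $\eta$-factorization of $VN(G\times H)$ through $\prod_{k=1}^m X_k\otimes 1_H$ supplied by Theorem \ref{tensors}, and then Theorems \ref{Lcentre} and \ref{lucentre} combined with the containment $X\otimes 1_H\subseteq C^*(G\times H)^{\perp}$ and the intersection fact $C^*(G\times H)^{\perp}\cap B(G\times H)=\{0\}$. One minor slip: what Statement (ii) actually requires is a bounded approximate identity in $A(G\times H)$, which follows from amenability of the \emph{group} $G\times H$ via Leptin's theorem, not from amenability of the Banach algebra $A(G\times H)$ (which fails for most groups).
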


\begin{proof}  Note first that the local weight of $G$ is $\eta$ in each of the three cases. And as already mentioned the condition $w(H)\le w(G)$ guarantees that
$A(G\times H)$ has Mazur property of level $\eta.$  Since Theorems \ref{ex5},  \ref{prods} and \ref{connect}  provide F$\ell^1(\eta)$-bases in $A(G)$ for each of these groups $G$, we have the $\eta$-factorization of $A(G\times H)^*$ by Theorem \ref{tensors}. Therefore, Statements (i) and (ii) follow by Theorems \ref{Lcentre}(ii) and \ref{lucentre}(iii), respectively.

Statement (iii) follows from Theorem \ref{lucentre}(ii) since $X\subseteq C^*(G\times H)^{\perp}$ and $C^*(G\times H)\cap B(G\times H)=\{0\}.$
\end{proof}

\begin{corollary}
 Let $G$ be a compact group as in Theorem \ref{GtimesH}.
Then
 $A(G\times H)$ is strongly Arens irregular.
\end{corollary}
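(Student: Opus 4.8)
The final statement to prove is that $A(G\times H)$ is strongly Arens irregular, where $G$ is one of the three classes of compact groups in Theorem \ref{GtimesH} and $H$ is an amenable locally compact group with $w(H)\le\eta$. The plan is to read off this corollary directly from Theorem \ref{GtimesH}, since strong Arens irregularity is, by definition, the simultaneous determination of both topological centres as the algebra itself. Because $A(G\times H)$ is commutative, the first and second topological centres coincide with the algebraic centre of $A(G\times H)^{**}$, so it suffices to identify a single topological centre.

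\begin{proof}
By Statement (i) of Theorem \ref{GtimesH}, in each of the three cases the first topological centre of $A(G\times H)^{**}$ equals $A(G\times H)$; that is, $T_1Z(A(G\times H)^{**})=A(G\times H)$, which means $A(G\times H)$ is left strongly Arens irregular. Since the Fourier algebra $A(G\times H)$ is commutative, the two Arens products are mutually transposed through the flip, and as recorded in Section 2 the first and second topological centres coincide with the algebraic centre of $A(G\times H)^{**}$. Hence
\[T_1Z(A(G\times H)^{**})=T_2Z(A(G\times H)^{**})=A(G\times H),\]
and therefore $A(G\times H)$ is strongly Arens irregular.
\end{proof}

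The one point that must be verified to invoke Theorem \ref{GtimesH} is that the hypothesis $w(H)\le\eta$ holds, and this is part of the standing assumption on $H$ in the statement of the corollary; it guarantees that $A(G\times H)$ carries the $\eta$-Mazur property required for Theorems \ref{Lcentre} and \ref{lucentre}. The only genuine work has already been carried out: producing the F$\ell^1(\eta)$-bases in $A(G)$ (Theorems \ref{ex5}, \ref{prods} and \ref{connect}) and lifting the factorization to $VN(G\times H)$ (Theorem \ref{tensors}). The main obstacle in the whole development is thus not this corollary but the passage through Theorem \ref{tensors}, where one must factor an arbitrary element of $VN(G\times H)$ using the slice maps and the $\mathcal{Z}$-property bound $K_2\le m$; once that factorization is in hand, the strong Arens irregularity is an immediate consequence.
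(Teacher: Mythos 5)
Your proof is correct and follows exactly the route the paper intends: the corollary is an immediate consequence of Theorem \ref{GtimesH}(i) combined with the remark in Section 2 that for a commutative Banach algebra the two topological centres coincide with the algebraic centre of the bidual, so determining one of them as $A(G\times H)$ gives strong Arens irregularity. Nothing is missing.
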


\begin{finalremark}\label{finalremark}
	 \begin{enumerate}
	 \item For simplicity let $H$ be trivial. Then Theorem \ref{GtimesH} leaves out the  case when $w(G)=\sup \omega(G_\alpha)$ has countable cofinality and $w(G)>\eta$. In fact,
We still do not know about the interesting case when $G$  is compact, non-metrizable with countable cofinality.
  We also do not know what happens when $\eta$ is finite  and  $w(G)$ is countable as it is the case if $G=SU(3)$, which is still undecided.
On the other hand, $A(G)$ is strongly Arens irregular when $G=SU(2)$ as claimed in \cite{losert2}.

\item Note that in all the examples studied previously, for the cardinality of the F$\ell^1$-bases to be large, it is essential in each case that either $\kappa(G)$, $|G|$ or $w(G)$ to be as large.
So our approach (as well as most of the articles cited above except \cite{IPU87},
\cite{LaLo93} and \cite{Loetal}) cannot be applied to give the strong Arens
irregularity of $L^1(G)$ and $M(G)$ when $G$ is compact, and that of $A(G)$ and $B(G)$ when $G$ is discrete and amenable.
In the case of the group algebra,
one needs to start first with
the topological centre of $\luc(G)^*$ being $M(G)$ (since $\luc(G)=\mathscr{CB}(G)$), then deduce that the topological centre of $L^\infty(G)^*$
is $L^1(G)$ as it can be seen in the proofs provided in \cite{IPU87} and
\cite[Theorem 1]{LaLo88}.

One encounters the same problem with the Fourier algebra of an amenable discrete group. This is proved in \cite[Theorem 6.5 (i)]{LaLo93}.
 By analogy to the group algebra,  one needs  to notice first that $\uc_2(G)^*$ is $B(G)$ (since $\uc_2(G)=C^*(G)$),
  then use \cite[Theorem 6.4]{LaLo93} to deduce that the topological centre of $VN(G)^*$
 is $A(G)$. The approach of \cite[Theorem 6.4]{LaLo93} is not straightforward.

Another approach can be found \cite[Theorem 2.2]{balapy}, where a weakly sequentially complete Banach algebra, containing a bounded  approximate identity and is an ideal in its second dual,
as it is the situation here with $L^1(G)$ and $A(G)$, is always strongly Arens
irregular. See also our recent paper \cite{EFG2}.

The same remark in these situations concerns the theorems proved in Section \ref{algebra}
on the algebraic structure of these algebras.
Our  approach does not cover $L^1(G)^{**}$, $M(G)$  and $M(G)^{**}$ when $G$ is compact, neither $A(G)^{**}$, $B(G)$, $B_\rho(G)$
and $B(G)^{**}$ when $G$ is discrete, and cannot lead to any algebraic structure in these algebras. We hope to return to this situation in the future.
\item
 The other two known groups for which the centres of  $A(G)^{**}$ and $\uc_2(G)^*$ are
$A(G)$ and  $B(G),$ respectively,  are  any locally compact, second countable group $G$ having the closure of its commutator subgroup $\overline{[G,G]}$ not open, and the special unitary group $SU(2)$,   see \cite{LaLo93} and \cite{losert2}.
We have not been able to construct  F$\ell_1$-bases in $A(G)$ with  these two groups, and we believe a further
refinement of our approach is necessary to deal with these two groups. We hope to return to this situation in the future.
\end{enumerate}
\end{finalremark}

\noindent {\sc Acknowledgement.}
This paper was written during several visits of the
first author  to the  University of Jaume I in Castell\'on in
his off-teaching periods. The work was partially supported several times by
the University of Jaume I. This support is gratefully acknowledged. The work would never have
been completed (or even started) without such a support.

 He would also like to thank Jorge Galindo for
his hospitality and all the folks at the department of mathematics
in Castell\'on.



\bibliographystyle{amsplain}

\end{document}